\documentclass[11pt,reqno]{amsart}
\usepackage {amssymb}
\usepackage {amsmath}
\usepackage {bbm}
\usepackage{amsthm}
\usepackage{mathtools}
\usepackage{graphicx}
\usepackage {amscd}
\usepackage {epic}
\usepackage{array}

\usepackage{mathrsfs}

\usepackage{etaremune}

\DeclareFontFamily{U}{dutchcal}{\hyphenchar\font=-1}
\DeclareFontShape{U}{dutchcal}{m}{n}{ <-> dutchcal-r}{}
\DeclareSymbolFont{dutchletters}{U}{dutchcal}{m}{n}

\DeclareMathSymbol{\mathdutchcal}{0}{dutchletters}{"41} 
\newcommand{\dcal}[1]{\text{\usefont{U}{dutchcal}{m}{n}#1}}

\usepackage[dvipsnames]{xcolor}
\usepackage[colorlinks,citecolor=OliveGreen,linkcolor=Mahogany,urlcolor=Plum,pagebackref]{hyperref}
\usepackage[alphabetic]{amsrefs}
\usepackage{cleveref}

\usepackage{enumerate}
\usepackage{tikz}
\usepackage{tikz-cd}
\usetikzlibrary{arrows.meta}
\usepackage{verbatim,color,geometry}
\usepackage[all]{xy}
\usepackage{enumitem}
\usepackage{bm}
\usepackage{longtable}
\usepackage{aliascnt}
\usetikzlibrary{calc}
\usepackage{textcomp}
\usepackage{tablefootnote}
\usepackage{float}

\newcommand{\mtf}[1]{\mathfrak{#1}}

\newcommand{\ove}[1]{\overline{#1}}
\newcommand{\wt}[1]{\widetilde{#1}}

\newcommand{\cC}{\dcal{C}}
\newcommand{\cD}{\dcal{D}}
\newcommand{\cE}{\dcal{E}}
\newcommand{\cF}{\dcal{F}}
\newcommand{\cG}{\dcal{G}}
\newcommand{\cH}{\dcal{H}}

\newcommand{\cL}{\dcal{L}}
\newcommand{\cM}{\dcal{M}}
\newcommand{\cN}{\dcal{N}}
\newcommand{\cO}{\dcal{O}}
\newcommand{\cP}{\dcal{P}}
\newcommand{\cQ}{\dcal{Q}}
\newcommand{\cR}{\dcal{R}}
\newcommand{\cS}{\dcal{S}}

\newcommand{\cU}{\dcal{U}}
\newcommand{\cV}{\dcal{V}}

\newcommand{\cX}{\dcal{X}}
\newcommand{\cY}{\dcal{Y}}
\newcommand{\cW}{\dcal{W}}

\newcommand{\tw}{\operatorname{tw}}

\newcommand\AAA{{\mathcal{A}}}
\newcommand\BB{{\mathcal{B}}}
\newcommand\CC{{\mathcal{C}}}

\newcommand\GG{{\mathcal{G}}}
\newcommand\HH{{\mathcal{H}}}

\newcommand\MM{{\mathcal{M}}}

\newcommand\PP{{\mathcal{P}}}

\newcommand\TT{{\mathcal{T}}}
\newcommand\UU{{\mathcal{U}}}
\newcommand\VV{{\mathcal{V}}}

\newcommand\XX{{\mathcal{X}}}

\newcommand{\bP}{\mathbb{P}}
\newcommand{\bC}{\mathbb{C}}
\newcommand{\bR}{\mathbb{R}}

\newcommand{\bQ}{\mathbb{Q}}
\newcommand{\bZ}{\mathbb{Z}}

\newcommand{\bN}{\mathbb{N}}

\newcommand{\red}{\mathrm{red}}

\newcommand{\Id}{\mathrm{Id}}

\newcommand{\KSBA}{\mathrm{KSBA}}

\newcommand{\Gr}{\mathrm{Gr}}

\newcommand{\Sym}{\mathrm{Sym}}

\newcommand{\sA}{\mathscr{A}}
\newcommand{\sB}{\mathscr{B}}

\newcommand{\sD}{\mathscr{D}}

\newcommand{\sP}{\mathscr{P}}

\newcommand{\sX}{\mathscr{X}}

\newcommand{\sZ}{\mathscr{Z}}

\DeclareMathOperator{\Aut}{Aut}

\DeclareMathOperator{\vol}{vol}

\DeclareMathOperator{\sta}{s}

\DeclareMathOperator{\Hilb}{Hilb}
\DeclareMathOperator{\lct}{lct}
\DeclareMathOperator{\Spec}{Spec}
\DeclareMathOperator{\spec}{Spec}

\DeclareMathOperator{\mult}{mult}

\DeclareMathOperator{\GL}{GL}
\DeclareMathOperator{\Cho}{Chow}

\DeclareMathOperator{\bfM}{{\textbf{M}}}
\DeclareMathOperator{\Pic}{Pic}

\DeclareMathOperator{\ord}{ord}

\DeclareMathOperator{\Proj}{Proj}

\DeclareMathOperator{\univ}{univ}
\DeclareMathOperator{\Supp}{Supp}

\DeclareMathAlphabet{\mathbbb}{U}{bbold}{m}{n}

\newcommand{\Chow}{\mathrm{Chow}}
\newcommand{\PGL}{\mathrm{PGL}}
\newcommand{\CM}{\mathrm{CM}}
\newcommand{\Hom}{\mathrm{Hom}}
\newcommand{\Bir}{\mathrm{B}}

\newcommand{\bmu}{\bm{\mu}}

\newcommand{\fM}{\mathfrak{M}}

\newcommand{\sC}{\mathscr{C}}

\newcommand{\slct}{\mathrm{slct}}

\newcommand{\Mat}{\mathrm{Mat}}

\newcommand{\sm}{\mathrm{sm}}
\newcommand{\rk}{\mathrm{rk}}

\newcommand{\PCY}{\cP\textbf{CY}}

\numberwithin{equation}{section}

\newtheorem{prop}{Proposition}[section]

\newcommand{\newaliastheorem}[3]{%
  \newaliascnt{#1}{prop}%
  \newtheorem{#1}[#1]{#2}%
  \aliascntresetthe{#1}%
  \crefname{#1}{#2}{#3}%
  \Crefname{#1}{#2}{#3}%
}

\newaliastheorem{thm}{Theorem}{Theorems}
\newaliastheorem{theorem}{Theorem}{Theorems}
\newaliastheorem{lem}{Lemma}{Lemmas}
\newaliastheorem{lemma}{Lemma}{Lemmas}
\newaliastheorem{cor}{Corollary}{Corollaries}
\newaliastheorem{corollary}{Corollary}{Corollaries}
\newaliastheorem{theodef}{Theorem-Definition}{Theorem-Definitions}
\newaliastheorem{prop-def}{Proposition-Definition}{Proposition-Definitions}
\newaliastheorem{convention}{Convention}{Conventions}
\newaliastheorem{conj}{Conjecture}{Conjectures}
\newaliastheorem{conjecture}{Conjecture}{Conjectures}

\theoremstyle{definition}
\newaliastheorem{defi}{Definition}{Definitions}
\newaliastheorem{defn}{Definition}{Definitions}
\newaliastheorem{exa}{Example}{Examples}
\newaliastheorem{exam}{Example}{Examples}
\newaliastheorem{expl}{Example}{Examples}
\newaliastheorem{rem}{Remark}{Remarks}
\newaliastheorem{rmk}{Remark}{Remarks}
\newaliastheorem{remark}{Remark}{Remarks}
\newaliastheorem{que}{Question}{Questions}

\newtheorem{Step}{Step}

\newtheorem{Step_dm}{Step}

\crefname{prop}{Proposition}{Propositions}
\Crefname{prop}{Proposition}{Propositions}

\title{Projective moduli of log Calabi--Yau fibrations over curves}

\author[G. Inchiostro]{Giovanni Inchiostro}
\address{Department of Mathematics, University of Washington, C-138 Padelford, Box 354350, Seattle, WA 98195, USA}
\email{ginchios@uw.edu}

\author[J. Zhao]{Junyan Zhao}
\address{Department of Mathematics, University of Maryland, William E. Kirwan Hall, 4176 Campus Dr, College Park, MD 20742, USA}
\email{jzhao81@umd.edu}

\date{\today}

\begin{document}

\begin{abstract}
  We introduce a new stability condition for log Calabi--Yau fibrations over curves. We prove that it gives rise to a proper Deligne--Mumford stack with a projective coarse moduli space, whose boundary still parametrizes flat fibrations over curves.
\end{abstract}

\maketitle
\tableofcontents

\section{Introduction}
Constructing projective moduli spaces for algebraic varieties is one of the central problems in algebraic geometry. By the minimal model program, every projective variety is conjecturally birational to a fibration whose building blocks are Fano varieties, Calabi--Yau varieties, and canonically polarized varieties. Of these three types, the moduli theory of Fano varieties and canonically polarized varieties has undergone remarkable progress in the past few decades, culminating in the construction of K-moduli spaces and KSBA moduli spaces, respectively; see e.g. \cites{Kol23,Xu25}. In contrast, unlike Fano or canonically polarized varieties, Calabi--Yau varieties do not carry a distinguished polarization, making the construction of moduli spaces substantially more subtle. For certain special classes, such as K3 surfaces, abelian varieties, and a few other families endowed with additional geometric structures, satisfactory moduli theories have been established. In general, however, one can obtain a well-behaved moduli problem only by equipping a Calabi--Yau variety with additional data, typically an ample divisor. This approach was first carried out in specific examples in \cites{Ale02,AE23}. More generally, \cite{KX20} constructed projective coarse moduli spaces for polarized Calabi--Yau pairs, assuming the boundedness result established later in \cite{Bir23}. This was subsequently generalized by \cite{Bir22} to any good minimal models.

\smallskip

The long-term goal is to develop a satisfactory moduli theory for log Calabi--Yau fibrations. These fibrations arise naturally from good minimal models of Kodaira dimension $1, \dots, n-1$, as well as Mori fiber spaces, which correspond to the case of negative Kodaira dimension, equipped with an appropriate boundary divisor. More precisely, for fixed numerical invariants, one would like to construct an algebraic stack admitting a \textit{projective} good moduli space, together with an open substack parametrizing log Calabi--Yau fibrations with mild singularities \((X,B)\to Z,\) and such that the boundary admits a meaningful modular interpretation. When restricted to smooth, or mildly singular, log Calabi--Yau fibrations, this moduli problem is comparatively well understood. The real difficulty lies in compactifying this moduli problem: one needs to identify the right class of degenerations to add at the boundary so that the resulting moduli space is proper, while still admitting a modular interpretation.

\smallskip

In this paper, we take a step in this direction by constructing a \textit{projective} coarse moduli space parametrizing log Calabi--Yau fibrations over curves.

\smallskip
\subsection{Stable log Calabi--Yau fibrations}
The object for which one would like to construct a moduli space is a morphism
\[
  f\colon (X, cD) \to C,
\]
where $X$ is a normal projective variety, $C$ is a smooth projective curve, $D$ is an $f$-ample integral divisor whose every component is horizontal, and $c\in [0,1)$ is a (unique) rational number such that $K_{X} + cD \sim_{\bQ} f^*L$ for some nef $\bQ$-line bundle $L$ on $C$ and that $(X,cD)$ has klt singularities. Typical examples include the following:
\begin{itemize}
  \item $X$ is a good minimal model of Kodaira dimension $1$, the morphism
    $f\colon X \to C$ is the ample model, and $D$ is any relatively ample divisor, with $c = 0$. This is the higher-dimensional analogue of minimal elliptic surfaces;
  \item $X$ is a Calabi--Yau variety admitting a fibration $f\colon X \to \bP^1$, and $D$ is any divisor that is $f$-ample. In dimension two this is exactly
    the case of elliptic K3 surfaces, and in higher dimensions there are many such
    examples: for instance, there exist K3-fibered or abelian-fibered Calabi--Yau threefolds;
  \item $f\colon X \to C$ is a Fano fibration, and $D$ is a divisor on $X$ that is
    proportional to $-K_X$ up to a pullback from $C$. The simplest examples of this
    form are Mori fiber spaces over curves (e.g. projective bundles) together with a
    horizontal divisor.
\end{itemize}
The morphisms described above are ``good'' objects, and they occupy an open substack of the moduli stack we are trying to construct; we call these log Calabi--Yau fibrations
\emph{non-degenerate} (cf. \Cref{defn:non-degenerate-type}). 

The core difficulty in constructing a projective moduli space is to impose ``correct'' geometric conditions on the objects we will parametrize, in such a way that non-degenerate log Calabi--Yau fibrations satisfy these conditions and, most importantly, that for a family of non-degenerate log Calabi--Yau fibrations over a punctured curve, there exists an extension by an object satisfying these conditions. The following definition is the one we would like to impose, and it is the main definition of this paper: it is this definition that gives rise to a projective coarse moduli space (cf. \Cref{thm:main-theorem}).

\begin{defn}\label{defn:main-defn}
A \emph{stable log Calabi--Yau fibration} $f:(\cX,\cD)\rightarrow \cC$ consists of
\begin{itemize}
    \item a twisted nodal curve $\cC$ (cf.~\cite[4.1.2]{AV02} and \Cref{def_twisted_curve});
    \item a flat projective morphism $f\colon \cX\rightarrow \cC$ from a demi-normal Deligne--Mumford stack $\cX$ with connected fibers;
        \item a $\bQ$-Cartier Weil divisor $\cD\subseteq \cX$,
\end{itemize}
with coarse space $f_X\colon(X,D)\to C$, satisfying the following conditions:
\begin{itemize}
    \item[(LCY)] there exist a positive integer $m$, a unique rational number $c\in [0,1)$, and a line bundle $\cL$ on $\cC$ such that
    \[
    m(K_{\cX}+c\cD)\sim f^*\cL;
    \]

    \item[(Qmap)] there exists an open substack $\cU\subseteq \cC$ whose complement consists of finitely many \emph{smooth} points such that
    \[(\cX,(c+\epsilon_0)\cD)|_{\cU}\ \longrightarrow \ \cU\]
    is KSBA-stable for some rational number $0<\epsilon_0\ll1$;

    \item[(Sing)] for every smooth point $p\in \cC$, the semi-log canonical threshold
    $\slct(\cX,c\cD;\cX_p)$ is positive;

    \item[(Stab)] for every sufficiently small rational number $0<\epsilon\ll1$, the $\bQ$-line bundle
    \[
    K_{X}+(c+\epsilon^2)D+\epsilon\,f_X^*\lambda_{\Chow,\epsilon_0}
    \]
    is ample, where $\lambda_{\Chow,\epsilon_0}$ denotes the descent of the \emph{Chow line bundle} (cf. \Cref{defn:Chow-line-bundle}) associated to the morphism $\big(\cX,(c+\epsilon_0)\cD\big)\to \cC$.
\end{itemize}
\end{defn}
It is worth noting that \textup{(LCY)} together with \textup{(Stab)} implies that $D$ is relatively ample over $C$. Moreover, the rational number $c\in[0,1)$ appearing in \textup{(LCY)} is uniquely determined, so we shall write a stable log Calabi--Yau fibration as $f\colon(\cX,c\cD)\to\cC$; we discuss these conditions in greater detail in \Cref{sec:stability-conditions}.

\smallskip

A stable log Calabi--Yau fibration is said to be \emph{of non-degenerate type} (cf.~\Cref{defn:non-degenerate-type}) if it admits a deformation to a non-degenerate log Calabi--Yau fibration. This condition should be viewed as a natural generalization of smoothability. We impose it primarily for technical reasons: all examples and applications considered in this paper are of non-degenerate type, and restricting to this class allows us to avoid several technical complications.

\smallskip

With this terminology in place, we can now state our main result.

\begin{theorem}\label{thm:main-theorem}
For any fixed numerical invariant $\Phi$ \textup{(cf.~\Cref{def_num_data})}, there exists a proper Deligne--Mumford stack $\overline{\MM}_{\Phi}$ of finite type over $\bC$ parametrizing stable log Calabi--Yau fibrations of non-degenerate type with numerical invariant $\Phi$. Moreover, $\overline{\MM}_{\Phi}$ admits a projective coarse moduli space $\overline{\mathfrak{M}}_{\Phi}$.
\end{theorem}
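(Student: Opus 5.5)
The plan is to build $\overline{\MM}_{\Phi}$ in three stages: an algebraic stack via KSBA-type moduli of the twisted pairs, then boundedness and properness, then projectivity of the coarse space.

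\textbf{Stage 1: algebraicity.} Conditions (LCY) and (Stab) show that for a fixed small rational $\epsilon$ the pair $(X,(c+\epsilon^2)D+\epsilon f_X^*\lambda_{\Chow,\epsilon_0})$ is a stable pair in the KSBA sense. Slc-ness follows from (Sing), (Qmap) and an inversion-of-adjunction argument near the finitely many non-KSBA fibers and the nodes of $C$. I therefore plan to embed the moduli problem into a moduli stack of stable pairs with fixed volume. Here the Chow line bundle is treated as an auxiliary divisor pulled back from the base, or equivalently one works relative to the moduli of twisted curves of \cite{AV02}. The conditions (LCY), (Qmap), (Sing), (Stab) and flatness over $\cC$ should all be locally closed, or open, in families. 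Condition (Stab) is open because ampleness is open. Condition (Qmap) is open because KSBA-stability of $(\cX,(c+\epsilon_0)\cD)$ over an open subset of the base is open. This produces an algebraic stack of finite type, once boundedness is known. The automorphism groups are finite because $K_X+(c+\epsilon^2)D+\epsilon f^*\lambda$ is ample on a slc pair, so the stack is Deligne--Mumford. I then restrict to the closure of the non-degenerate locus.

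\textbf{Stage 2: boundedness and properness.} Boundedness should follow from the fixed invariant $\Phi$, since it fixes the volume of the polarization above. One then applies boundedness of stable pairs with coefficients in a finite set, in the spirit of \cite{Bir23} / Hacon--McKernan--Xu.

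For properness I use the valuative criterion along a DVR $R$ with a family of non-degenerate fibrations over the generic point $\eta$. The steps are:
\begin{enumerate}
\item Take the relative KSBA limit of the generic-fiber family $(\cX_\eta,(c+\epsilon_0)\cD_\eta)\to\cC_\eta$ over a twisted stable limit of the base curve. This is a moduli of stable-map type into the KSBA stack, in the style of Abramovich--Vistoli, and it gives the (Qmap) structure.
\item Run a relative MMP over the limit base to enforce (LCY). Because $K+c D$ is numerically trivial on the generic fiber, it descends after taking a good minimal model.
\item Fix (Sing) by semistable reduction in the fiber direction over finitely many points, in the style of Kollár--Xu, together with a base change.
\item Contract the curve components on which (Stab) fails, by taking the ample model of $K+(c+\epsilon^2)D+\epsilon f^*\lambda$.
\end{enumerate}
Separatedness comes from uniqueness of lc/ample models, and uniqueness of $c$ is used throughout.

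\textbf{Stage 3: projectivity.} I would apply Kollár's ampleness lemma, or the Codogni--Patakfalvi / Xu--Zhuang positivity results, to the CM-type line bundle. Concretely, I take the descent of
\[
\lambda_{\epsilon}=\det f_*\mathcal O\bigl(m(K_{\cX/B}+(c+\epsilon^2)\cD+\epsilon f^*\lambda_{\Chow,\epsilon_0})\bigr)
\]
on the coarse space and show it is ample using nefness plus the injectivity of the classifying map. The Chow line bundle is chosen precisely so that it is nef on moduli (Kollár--Xu for polarized CY) and separates the base directions.

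\textbf{Main obstacle.} I expect the hardest step to be the properness construction in Stage 2, and specifically making (LCY), (Sing) and flatness over a possibly nodal twisted base compatible simultaneously. The MMP may destroy flatness over nodes or create non-$\bQ$-Cartier $\cD$. To handle this I would first pass to the stacky base, using twisted curves to absorb the non-Cartier behavior. I would then argue flatness via an $S_2$/$S_3$-type depth argument, using that fibers over smooth points are slc and Cohen--Macaulay along the central fiber in the non-degenerate case.
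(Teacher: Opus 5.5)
\paragraph{Boundedness.} Your outline has genuine gaps, and they sit where the paper does its real work. The clearest failure is the boundedness step.

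The class $K_X+(c+\epsilon^2)D+\epsilon f_X^*\Lambda_{\Cho}$ is not $K_X+\Delta$ for an intrinsic boundary $\Delta$, because $\Lambda_{\Cho}$ is only a class on $C$. To get an slc pair you must choose an effective $S$ representing a multiple of it. You then need $\epsilon\cdot\mathrm{mult}_p S$ to stay below $\slct(X,cD;X_p)$, whereas (Sing) only gives positivity of the slct, with no uniform lower bound. Likewise, the range of $\epsilon$ allowed in (Stab) depends on the object, for instance on how negative $D$ is on horizontal curves.

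HMX-type boundedness of stable pairs needs a DCC coefficient set and one volume. That forces a single $\epsilon$ that works for all objects, and such uniform constants are not available before boundedness. In the paper, the uniform bound $\delta$ of \Cref{lemma_slct_positive_is_open} is extracted from an already bounded, quasi-compact family. So your Stage 2 gives no control on, for example, the number of irreducible components of $C$.

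The paper proceeds differently. It first bounds fibrations over curves with at most $d$ components: it adds three general fibers per component and applies Birkar's boundedness of stable minimal models with volume $V(t,3k)$ (\Cref{cor_cms_are_bounded}). It then bounds $d$ only for fibrations of non-degenerate type. The base of any limit is a contraction of the source of a Kontsevich-stable map to $\mathbf{PCY}$, whose degree is bounded using the non-degenerate fibrations (\Cref{cor:number-components-bounded} and the proof of \Cref{thm_bounding_fam}). This is where the non-degenerate-type hypothesis is genuinely used, and your plan never uses it.

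\paragraph{Properness, separatedness and projectivity.} These steps also miss the key ideas.

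\emph{Existence of limits.} Your step (1) cannot be applied to $f_\eta$ itself, since $(\cX_\eta,(c+\epsilon_0)\cD_\eta)\to\cC_\eta$ has non-KSBA-stable fibers. The twisted-stable-map limit is a limit of the stacky KSBA model, whose generic fiber differs from $f_\eta$ over the points $p_{j,\eta}$. None of your steps (2)--(4) recovers $(X_\eta,cD_\eta)$. The paper does this with several tools:
\begin{itemize}
\item Moraga's extraction of divisors, together with the auxiliary divisor $G_\eta$;
\item a dlt modification and a relative lc model over $C^{\sta}$;
\item an MMP for the generalized pair $(C^{\sta},B+\mathbf{M})$ that only contracts rational tails, lifted to the total space. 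This is also what gives (Sing) and keeps (Qmap) over the nodes.
\end{itemize}

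\emph{Condition (Stab) in the limit.} ``Taking the ample model'' presupposes that $K_C+B+\mathbf{M}+\epsilon\Lambda_{\Cho}$ is nef on the central fiber, and that the contraction is still a flat fibration over a twisted nodal curve. What is actually needed is strict positivity of the Chow degree on every central component where $K_C+B+\mathbf{M}$ has degree $0$. The paper gets this from its generalization of Wang--Xu's minimization of the Chow degree (\Cref{thm_generalize_wx}, which must allow $D$ to contain fiber components), combined with stability of the pointed twisted map (\Cref{step_properness_where_we_prove_stability}).

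\emph{Separatedness.} This is not just uniqueness of lc models. Two limits can a priori have different base curves, and the polarization involves a Chow class rather than a boundary. The paper argues as follows:
\begin{itemize}
\item it uses (Sing) to show that components contracted by one base map go to nodes of the other (\Cref{prop:contracted-to-node});
\item it then uses KSBA separatedness over those nodes;
\item it uses (Stab) to rule out extra rational tails or bridges;
\item it adds a separate extension-of-automorphisms argument.
\end{itemize}

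\emph{Projectivity.} The classifying map must also record the morphism $X\to C$, because the fibration is not determined by $(X,cD)$; the paper adds the graph $X\subseteq X\times_B C$ as a third summand. The resulting non-$\GL$ structure group is why the paper has to prove \Cref{thm:ampleness-lemma} rather than quote Koll\'ar's ampleness lemma.
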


The main insight behind \Cref{thm:main-theorem} is that imposing the right notion of stability, namely \Cref{defn:main-defn}, is already enough to produce a well-behaved moduli theory. Once this definition was in place, the rest of the paper is devoted to verifying that it indeed does so; while this verification is often technical, the strategy at each step is guided by the geometric meaning of the stability conditions, as explained in \Cref{sec:stability-conditions} below.
\subsection{Stability conditions}\label{sec:stability-conditions}

The condition (LCY), which stands for ``Log Calabi--Yau'', is the most natural part of Definition \ref{defn:main-defn} and requires no further explanation. We therefore focus on the necessity and geometric meaning of the remaining three conditions (Qmap), (Sing), and (Stab), which stand for ``Quasimap'', ``Singularity'', and ``Stability'', respectively.

\subsubsection{Canonical bundle formula and Chow line bundles}
The conditions \textup{(Sing)} and \textup{(Stab)} in \Cref{defn:main-defn} are the key ingredients in the proof of \Cref{thm:main-theorem}; we explain their roles in more detail below.

Let $f\colon (X,cD)\to C$ be a non-degenerate log Calabi--Yau fibration. The canonical bundle formula gives
\[
K_{X/C}+cD \ \sim_{\bQ} \ f^*(B+\bfM),
\]
where
\[
B\ \coloneqq\ \sum_{p\in C}\bigl(1-\lct(X,cD;X_p)\bigr)[p]
\]
is the discriminant divisor and $\bfM$ is the moduli part; see e.g.\ \cite{Kol07}. The condition \textup{(Sing)} is equivalent to the pair $(C,B)$ being klt, and can also be phrased as requiring every log canonical center of $(X,cD)$ to dominate $C$. This condition is crucial for the separatedness of the moduli stack. Already in dimension two there are examples of this phenomenon: a punctured family of elliptic K3 surfaces with a section can admit more than one limit satisfying all conditions except (Sing), the limits being related by blowing up log canonical centers contained in fibers.
The condition \textup{(Stab)} implies that $K_X+cD$, and hence $K_C+B+\bfM$, is nef. When $K_C+B+\bfM$ is nef but not ample, for example when $X\to C=\bP^1$ is an elliptic K3 surface and $D$ is a multi-section with $c=0$, the condition \textup{(Stab)} is equivalent to the ampleness of the Chow $\bQ$-line bundle $\lambda_{\Chow,\epsilon_0}$.

If $f|_D\colon D\to C$ is a well-defined family of divisors, then $\lambda_{\Chow,\epsilon_0}$ agrees with the Chow--Mumford (CM) line bundle associated to the polarized family of pairs
\[
f\colon (X,cD;L)\to C,
\qquad L=K_X+(c+\epsilon_0)D.
\]
Introduced in \cite{Tia97}, the CM line bundle plays a fundamental role in the theory of K-stability, and is known to be ample on KSBA moduli spaces (cf.\ \cite[Theorem~1.1]{PX17}). Moreover, a key result of \cite[Theorem~3.3]{WX14} shows that its degree is minimized by the KSBA-stable limit. Consequently, for a fixed, sufficiently small rational number $\epsilon_0>0$ as in (Qmap) of \Cref{defn:main-defn}, the Chow line bundle $\lambda_{\Chow,\epsilon_0}$ is ample whenever the family $f\colon (X,(c+\epsilon_0)D)\to C$ either has maximal variation or contains a KSBA-unstable fiber. In particular, when $K_C+B+\bfM$ is nef but not ample and either condition holds, the divisor $K_X+(c+\epsilon^2)D+ f^*(\epsilon\lambda_{\Chow,\epsilon_0})$ is ample for all $0<\epsilon\ll 1$.

For our purposes, one of our key technical inputs is a generalization of the minimizing property of \cite[Theorem~3.3]{WX14} to the setting where $f|_D\colon D\to C$ need not be a well-defined family of divisors, for instance when $D$ contains components of fibers of $f\colon X\to C$ (cf.~\Cref{thm_generalize_wx}), which may be of independent interest. This generalization yields an equivalent, more concrete reformulation of (Stab):
\begin{enumerate}
    \item[(Stab$'$)] the line bundle $\cL$ in (LCY) is nef, and for every component $\cR$ of $\cC$ along which $\cL$ fails to be ample, the restricted family $(\cX,\cD)|_{\cR}\to \cR$ is not isotrivial, i.e.\ it has at least two non-isomorphic closed fibers.
\end{enumerate}
This minimizing property is also the key ingredient in proving the properness of the moduli stack.

\subsubsection{Stackiness of stable log Calabi--Yau fibrations and the quasimap condition}
A further essential ingredient of \Cref{defn:main-defn} is the quasimap condition (Qmap), which is intimately related to the stacky structure of $\cC$.

The condition (Qmap) implies that there exists a natural morphism from $\cU$ to the moduli stack $\PCY$ of polarized log Calabi--Yau pairs (cf.\ \cite{KX20} and \Cref{defn:moduli-stable-CY}). This morphism may be regarded as a rational map from $\cC$ to $\PCY$, or, if one has a suitable enlargement of $\PCY$, a quasimap in the sense of \cites{qmap1,qmap2}. This condition is useful because it guarantees that the geometry of $\big(\cX,(c+\epsilon)\cD\big)\to \cC$ over the nodes of $\cC$ is well behaved: it is a KSBA-stable family, and in particular the morphism $\cX\to \cC$ is flat. The price of this control is the presence of stackiness at certain nodes.

This stackiness is both mild and unavoidable. On the one hand, requiring the analogous condition in \Cref{defn:main-defn} with the stackiness removed imposes, in general, too strong a constraint: the resulting moduli stack would then fail to be proper. On the other hand, the stackiness occurring at the nodes is fully determined by the coarse space: for each branch $\cR$ of a given node, it is uniquely prescribed by the unique root stack that needs to be performed on the coarse space of $\cR$ in order to obtain a representable morphism from $\cR$ to $\PCY$; see \Cref{lem:uniquely-determined}.

\subsection{Comparison with existing moduli theories}

Before sketching the proof of \Cref{thm:main-theorem}, we compare the moduli stack constructed here with several related existing moduli theories, and to explain why none of them can be applied directly to our moduli problem.

\subsubsection{Moduli of maps}

Twisted stable maps are a useful tool for studying fibrations over curves. Morphisms from a twisted curve of fixed genus $g$ to a given proper Deligne--Mumford stack $\MM$, with fixed image class $\beta$ and satisfying an appropriate stability condition, form a projective coarse moduli space $\ove{\fM}_g(\MM;\beta)$ (cf. \cite{AV02}). A log Calabi--Yau fibration $f:(X,cD)\rightarrow C$, however, is in general not a locally stable family even when non-degenerate, so $C$ need not admit a morphism to the moduli stack $\MM\coloneqq \PCY$ parametrizing the fibers. One way to address this is to pass to the theory of quasimaps: enlarge $\MM$ to a stack $\wt{\MM}\supseteq\MM$ that also parametrizes the degenerate fibers of $f$, subject to the requirement that $\wt{\MM}$ admit a projective good moduli space. When $\MM$ satisfies the hypotheses needed for a space of stable quasimaps to exist, this quasimap space can be used to compactify the space of certain non-degenerate stable log Calabi--Yau fibrations; instances of this idea appear in \cite{twisted_map_2}. The difficulty with this approach is that constructing an enlargement $\MM\subseteq \wt{\MM}$ satisfying the required hypotheses is itself highly non-trivial. We therefore take a different route: we adapt the strategy used to construct spaces of stable quasimaps, but replace the existence of a target stack $\wt{\MM}$ with tools from birational geometry.

\subsubsection{Moduli of stable minimal models}

A second relevant theory is the moduli of stable minimal models constructed in \cite{Bir22} and discussed further in \Cref{sec:moduli-of-stable-minimal-models}; its construction is guided by the same philosophy as KSBA moduli theory. Its objects are good minimal models with ample model $(X,B)\rightarrow Z$, together with a divisor $A$ such that $(X,B+\epsilon A)$ is KSBA-stable for $0<\epsilon\ll1$; here $Z$ is determined by the pair $(X,B)$ rather than being part of the data, whereas $A$ is part of the data. This theory is designed around a different collection of examples than the ones we have in mind, and several cases of interest to us fall outside its scope for this reason. For instance, it does not directly apply to the case of an elliptic K3 surface $X\rightarrow \bP^1$ with $B=\emptyset$ and $A$ a section, since here the ample model of $X$ is a point rather than $\bP^1$ and $K_X+A\sim A$ fails to be ample. Likewise, when $(X,B)$ has Kodaira dimension $1$, the base $Z$ must be a nodal curve, and questions such as the flatness of $X\rightarrow Z$, its equi-dimensionality over the smooth points of $Z$, and the behavior of $(X,B)\rightarrow Z$ over the nodes of $Z$ lie outside the scope of that theory as currently developed. 

At the same time, one significant advantage of the moduli of stable minimal models is that $Z$ may have arbitrary dimension, and it is natural to ask whether our theory can likewise be extended to fibrations over higher-dimensional bases. So far, however, we lack the tools for a straightforward generalization to the higher-dimensional case. For example, the theory of maps from higher-dimensional varieties to a fixed moduli stack, which we rely on heavily in this paper for the case of curves, remains poorly understood.

\subsubsection{Moduli of K-stable Calabi--Yau fibrations over curves}

More recently, a different approach to the moduli of Calabi--Yau fibrations over curves has been developed using K-stability; see \cites{Haha25b,HaHa25,Ha25,Hat26}. There, they construct a coarse moduli space parametrizing polarized klt-trivial fibrations $f\colon (X,A)\rightarrow C$ over curves that are \emph{uniformly adiabatically K-stable}, a notion introduced in \cite{Ha24} that detects the existence of a certain cscK metric. Under additional assumptions, Hashizume and Hattori prove in \cites{Haha25b,Hat26} that this moduli space is quasi-projective. Compared with what one would ultimately like to have, the main property that remains to be established is properness: the resulting stack is known to be separated, but properness has not yet been established. The history of K-moduli of Fano varieties is instructive here: even in that more developed setting, establishing properness proved to be considerably more difficult than establishing the other foundational properties (cf. \cite{LXZ22}). This reflects a feature of K-stability itself: properness calls for filling in every one-parameter degeneration with a K-polystable limit, and verifying K-stability of an object tends to be hard. We would also like to point out that the use of quasimaps in the aforementioned work is essential, particularly in the proof of quasi-projectivity. For us instead, the proof of projectivity follows from an application of Koll\'ar's ampleness lemma.


\medskip The relation between the moduli space constructed in this paper and these other theories, as well as applications and examples, will be explored in future work. In examples where more than one of these theories applies, it would be interesting to construct morphisms between the resulting moduli
spaces and compare their boundaries. We would also like to mention that, in the case of surfaces, there is already a substantial body of work \cites{la2002explicit, ABtwisted, inchiostro_elliptic,ascher_invariance_pluri,AB_del_pezzo,ABBDILW23,AB_k3,BL24,ISZ25,IZ25,BFHIMZ24}; these results provide valuable foundational examples that inform the general theory developed in this paper.

\subsection{Outline of the paper}\label{sec:Sketch of proofs}

\subsubsection{Strategy of the proofs} 
The proofs of properness, separatedness, and boundedness of the stack are not independent of each other; the arguments are intertwined. We start with proving the universal closedness of the stack, for which we verify the valuative criterion. Given a stable log Calabi--Yau fibration $(X_{\eta},cD_{\eta})\to C_{\eta}$ over the generic point $\eta$ of a DVR $R$, to get an extension to the whole DVR, we first construct a filling with good geometric structure, and then perform stable reduction by running a minimal model program. To get a good filling, we apply the properness of the moduli of twisted stable maps. However, since $(X_{\eta},(c+\epsilon) D_{\eta})\to C_{\eta}$ is not necessarily locally stable, one cannot view it as a morphism $C_{\eta}\to \PCY$. Instead, we perform a root stack $\cC^{\sta}_{\eta}\rightarrow C_{\eta}$ to get a morphism $\cC^{\sta}_{\eta}\rightarrow \PCY$, which induces a new family $(\cX^{\sta}_{\eta},(c+\epsilon) \cD^{\sta}_{\eta})\to \cC^{\sta}_{\eta}$. Applying properness of the moduli of \emph{pointed} twisted stable maps, one obtains an extension to $\Spec R$ for this new family. This, however, is not an extension of the original family, and producing one requires the most technical part of the argument: a delicate sequence of birational modifications that eventually yields an extension of the original family over $R$. The stable reduction step is analogous to \cite[Proposition 3.2]{ISZ25}, with some generalization of the techniques to adapt to our setup. To verify that the limit is stable (cf.~\Cref{defn:main-defn}), the Conditions (LCY), (Qmap) and (Sing) more or less follow from the construction, while the Condition (Stab) is the place requiring a genuinely new ingredient: on those components $\Gamma$ of the central fiber of $C\rightarrow \Spec R$ on which $K_C+B+\bfM$ is nef but not ample, the required ampleness must instead come from the positivity of the Chow line bundle. This positivity follows from our generalization (cf.~\Cref{thm_generalize_wx}) of the minimizing property of \cite[Theorem~3.3]{WX14}: this stability forces the family over $\Gamma$ either to have maximal variation or to contain a fiber with bad singularities, and both possibilities contribute positively to the degree of the Chow line bundle on $\Gamma$.

To verify the valuative criterion of separatedness, for any two extensions $(\cX_i,\cD_i)\rightarrow \cC_i\rightarrow \Spec R$ with isomorphic generic fibers, we first show that $\cC_1$ and $\cC_2$ are isomorphic, where the condition (Sing) plays an essential role. Then the isomorphism of the two families is straightforward: over the generic points of the central fiber of $\cC_i$, the isomorphism follows from the separatedness of the KSBA-moduli, since the fibers over these points are both KSBA-stable for $i=1,2$; one then extends this isomorphism, defined away from a locus of codimension at least two, using the standard $S_2$-property of the section rings. This, however, does not prove separatedness, since one still needs to show that this isomorphism uniquely extends the given isomorphism of the generic fiber. To this end, we may assume that the two isomorphic extensions are the ones constructed in the proof of universal closedness, and show that, for this family over $\Spec R$, any automorphism of the generic fiber extends uniquely to the whole family, by proving this at each step of the universal closedness argument.

To prove boundedness, one first proves the boundedness of stable log Calabi--Yau fibrations with a fixed numerical invariant $\Phi$ and with the base curve having a bounded number of irreducible components. As a corollary of the argument used to prove universal closedness, one can bound the number of irreducible components appearing in any stable log Calabi--Yau fibration limit, which completes the proof of boundedness. As for the proof of algebraicity of the stack and the projectivity of the coarse moduli space, the main new ingredient throughout the whole proof is that one needs to keep track of the morphism $(X,cD)\rightarrow C$ instead of only $(X,cD)$ and $C$, since the morphism is part of the data; this is also the main difference between the moduli stack constructed in this paper and the moduli space in \cite{Bir22}. For the proof of algebraicity, we incorporate the morphism into the moduli problem by using an appropriate Hom-stack. For the proof of projectivity, we refine the structure groups of certain vector bundles so that an ampleness lemma (cf.~\Cref{thm:ampleness-lemma}) can be applied. To this end, we slightly generalize Koll\'{a}r's ampleness lemma (cf.~\cites{kol90,KP17}) in Appendix \ref{appendix:Ampleness Lemma} by removing a technical assumption, so that it applies to our setting; this generalization may have further applications. The essential reason why \cite{kol90,KP17} cannot be applied directly is that the morphism $(X,cD)\rightarrow C$ is not determined by $(X,cD)$ alone.

\subsubsection{Organization of the paper}

In \Cref{sec:prelim}, we collect the necessary preliminaries, with particular emphasis on the moduli theory of stable minimal models developed in \cite{KX20,Bir22}, as well as the Chow and CM line bundles. We also generalize the minimizing property established in \cite{WX14}. In \Cref{sec:stable-lCY}, we introduce the notion of stable log Calabi--Yau fibrations and establish their basic properties.

In \Cref{Valuative criterion}, we verify the valuative criteria for universal closedness and separatedness. The argument concerns families over DVRs and is independent of the boundedness and algebraicity of the moduli stacks. Compared with the later sections, this section contains a number of new ideas and better illustrates the stability condition that we impose. We also establish a key property of all stable limits of non-degenerate log Calabi--Yau fibrations, which plays a role in proving the boundedness of stable log Calabi--Yau fibrations of non-degenerate type in \Cref{sec:Boundedness of stable log Calabi--Yau fibrations}.

In \Cref{sec:Representability by a Deligne--Mumford stack}, we prove that the moduli stack is algebraic and Deligne--Mumford; its properness then follows immediately from the results of the previous two sections. Finally, in \Cref{sec:projectivity}, we prove that the coarse moduli space is projective.

\subsection*{Acknowledgments}
We thank Dori Bejleri, Giulio Codogni, Philip Engel, Stefano Filipazzi, Yuchen Liu, Andrea Maffei, Zsolt Patakfalvi, Roberto Svaldi, Xiaowei Wang, Lingyao Xie, Chenyang Xu, and Ziquan Zhuang for helpful conversations, and Caucher Birkar, Kenta Hashizume, and Masafumi Hattori for comments on an earlier version of this manuscript. We are also grateful to J\'{a}nos Koll\'{a}r and S\'{a}ndor Kov\'{a}cs for helpful conversations regarding the ampleness lemma. Part of this work was completed during a visit of GI to the Brin Mathematics Research Center at the University of Maryland. GI was supported by NSF grant DMS-2502104. JZ was supported by an AMS-Simons Travel Grant.

\subsection*{Conventions} We work over the field $\bC$ of complex numbers; we expect all the results to hold for any algebraically closed field $k$ of characteristic 0. 

\begin{enumerate}
   \item Throughout this paper, we denote algebraic stacks (resp.~schemes) over $\bC$ by calligraphic (resp.~Roman) letters, e.g.~$\CC, \XX$ (resp.~$C, X$). Families of algebraic stacks (resp.~schemes) over a base scheme $S$ are denoted by Dutch calligraphic (resp.~script) letters, e.g.~$\cC, \cX$ (resp.~$\sC, \sX$).

\item Let $(R, \eta, \xi)$ be a triple consisting of a discrete valuation ring (abbr. DVR), together with its generic point $\eta$ and special point $\xi$ of $\Spec R$. Throughout this paper, we refer to such a triple $(R, \eta, \xi)$ simply as a DVR, and we denote a uniformizer of $R$ by $\varpi$.
\item For a family of objects $\sX\rightarrow \Spec R$ over a DVR $(R,\eta,\xi)$, we will denote by $\sX_\eta$ (resp. $\sX_{\xi}$) the generic (resp. special) fiber.
\item For an algebraic stack $\XX$, we denote by $\XX^{\red}$ its associated reduced substack.
\end{enumerate}

\section{Preliminaries}\label{sec:prelim}

In this section we recall a few results on twisted stable maps, moduli of varieties of log-general type and stable minimal models, and finally the CM and Chow divisor.

\subsection{Moduli of twisted stable maps}
In this subsection, we introduce twisted stable maps, a key tool needed for the construction of our moduli space. We first report two definitions of \cite{AV02}.
\begin{defn}[Twisted curves; {\cite[Definition 4.1.2]{AV02}}]\label{def_twisted_curve}
    Let $S$ be a Noetherian scheme. A \emph{twisted curve} over $S$ is a proper flat morphism \[\cC \ \longrightarrow \ S\] from a Deligne--Mumford stack $\cC$, which is \'etale locally a nodal curve over $S$, satisfying that the coarse moduli space $\cC\rightarrow C$ is an isomorphism over the smooth locus of $C\to S$. 
    \end{defn}
\begin{defn}[Twisted pointed curves; {\cite[Definition 4.1.2]{AV02}}]
    A \textit{twisted $n$-pointed curve} over $S$ is the data of a pair \[(\cC;\ \sigma_1,\ldots,\sigma_n) \ \longrightarrow \ S,\text{ where }\]\begin{enumerate}
        \item $\cC\to S$ is a flat and proper morphism from a Deligne--Mumford stack, which is \'etale locally a nodal curve over $S$,
        \item $\sigma_i\to \cC$ is a closed embedding contained in the smooth locus of $\cC\to S$,
        \item $\sigma_i\to S$ is a (possibly trivial) gerbe for every $i$, and
        \item $\sigma_i$ is disjoint from $\sigma_j$ for $i\neq j$.
    \end{enumerate}Moreover, we require that the coarse moduli space $\cC\to C$ is an isomorphism away from the nodal locus of $C\to S$ and away from the gerbes $\sigma_i$.
\end{defn}
\begin{remark}
    A twisted $n$-pointed curve over a smooth scheme $S$ is the data of a twisted curve $\cC'\to S$, together with $n$ sections of its coarse moduli space $s_1,\ldots,s_n$ (i.e. the coarse moduli spaces of the gerbes $\sigma_i$), and a root stack of order $d_i$ at $s_i$. This follows since the only possible stacky structure for $\sigma_i$ is the one of a root stack, from \cite{geraschenko2017bottom}.
\end{remark}

The formation of the coarse moduli space commutes with base change over $S$ by \cite[Lemma 2.3.3]{AV02}. In particular, each fiber of $C\to S$ is the coarse moduli space of a twisted curve over a point, and when $S$ is a point one can check that $C$ is a nodal curve (cf. \cite[Proposition 4.1.1]{AV02}). One can then think of a twisted curve as a nodal curve equipped with additional data at the nodes.

The structure of a twisted curve over a node of $C$ can also be described explicitly, via the versal deformation space of a twisted node, which is well understood (cf. \cite[Proposition 2.2]{olsson2007log}). Over a field of characteristic $0$, a node of a twisted curve is, \'etale locally over its coarse space, isomorphic to
\[
\big[(\spec k[x,y]/xy)/\bmu_n\big]
\]
with the action given by
\[
\zeta * x = \zeta x \qquad \text{and} \qquad \zeta * y = \zeta^k y
\]
for some $k$ coprime to $n$. One is often interested in the case $k=-1$, i.e.\ in twisted curves that are \'etale locally of the form
\[
\big[(\spec k[x,y]/xy)/\bmu_n\big], \qquad \text{with } \qquad \zeta * x = \zeta x \ \text{ and } \ \zeta * y = \zeta^{-1} y.
\]
Such nodes are called \emph{balanced}, and they are the only twisted nodes admitting a smoothing, since the smoothing parameter $xy$ is $\bmu_n$-invariant. As all twisted curves appearing in our moduli problems will be balanced, we will not make this distinction explicit in what follows.

\smallskip

Let $\cX$ be a Deligne--Mumford stack with projective coarse moduli space $X$, and fix an ample class $H$ on $X$.

\begin{defn}
     A \emph{twisted stable map} $\phi\colon \cC\to \cX$ is a representable morphism from a twisted curve $\cC$ such that, denoting by $f\colon C\to X$ the induced morphism on coarse moduli spaces, the class $
    \omega_C\otimes f^*H^{\otimes 3}$ is ample.
\end{defn}
The following is the most important result on twisted stable maps that will be needed.

\begin{theorem}[\textup{cf. \cite[Theorem 1.4.1]{AV02}}]
    Fix two positive integers $d,g\in \bN$. Then there is a proper Deligne--Mumford stack parametrizing twisted stable maps $\phi\colon \cC\to \cX$ such that the induced map on coarse spaces $f\colon C\to X$ has degree $d$ with respect to $H$, and such that $C$ has genus $g$.
\end{theorem}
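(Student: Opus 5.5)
The plan is to follow the strategy of \cite{AV02}: realize the stack of twisted stable maps as an open substack of a relative Hom-stack over the stack of twisted curves, prove that it is of finite type and Deligne--Mumford, and check the valuative criterion for properness. Throughout, $\cX$ has projective coarse space $X$ with ample class $H$, and we fix $g$ and $d$.

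\textbf{Algebraicity.} First I would use that the stack $\mathfrak{M}^{\mathrm{tw}}_{g}$ of balanced twisted nodal curves of genus $g$ is algebraic and locally of finite type. This comes from the local description of a twisted node as $[(\spec k[x,y]/xy)/\bmu_n]$ with its versal deformation \cite{olsson2007log}, together with the root-stack description at smooth points. For the universal twisted curve $\cC\to\mathfrak{M}^{\mathrm{tw}}_{g}$, the relative Hom-stack $\underline{\Hom}_{\mathfrak{M}^{\mathrm{tw}}_{g}}(\cC,\cX)$ is algebraic and locally of finite presentation, because $\cC$ is proper and flat and $\cX$ has finite inertia. Representability of a morphism is an open condition. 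Ampleness of $\omega_C\otimes f^*H^{\otimes 3}$ on the coarse curve is also open, using that coarse spaces commute with base change \cite[Lemma 2.3.3]{AV02}. Together these cut out an open substack $\cK_{g}(\cX,d)$. It is Deligne--Mumford because the stability condition makes the automorphism group of each object finite and unramified: on contracted components the usual Deligne--Mumford stability argument applies, and representability forces the stabilizers of $\cC$ to inject into those of $\cX$.

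\textbf{Finite type.} Boundedness comes from the coarse maps. The coarse maps $f\colon C\to X$ are Kontsevich stable maps of genus $g$ and degree $d$, so they form the bounded family $\overline{M}_{g}(X,d)$. Over a bounded family of coarse maps, the twisted structure is bounded as well. A representable $\cC\to\cX$ can only have stabilizer orders at nodes dividing the (bounded) orders of the inertia of $\cX$ at the image points. Given the coarse map and these orders, the possible twisted curves and lifts form a finite-type family, for instance through the finite-type Hom-stack over the corresponding root stacks.

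\textbf{Properness.} This is the main step, and I would check it with the valuative criterion over a DVR $R$, allowing a finite extension of $R$. Given a twisted stable map over the generic point $\eta$, first extend the coarse map using properness of $\overline{M}_{g}(X,d)$, after a base change. Next, remove the finitely many points of the special fiber and the nodes where the lift to $\cX$ is not yet defined. Over the smooth locus of the total space, the map $C\smallsetminus\{\text{points}\}\to\cX$ extends over codimension-one points after a further ramified base change. The key input is a purity lemma: a representable morphism from the complement of a codimension-$2$ closed point on a normal surface to a Deligne--Mumford stack extends, after replacing the surface by a suitable twisted (stacky) structure at that point, to a representable morphism. At smooth points of the total space, this extension is given by taking a root of the local branch. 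At nodes of the special fiber, where the total space is locally $xy=\varpi^r$, I would pass to the $\bmu_n$-cover $uv=\varpi^{r/n}$. This is exactly where balanced twisted nodes appear, and where the base change $R\to R[\varpi^{1/n}]$ is used. After these extensions, stabilize by contracting the unstable rational components with no twisted points that are contracted by $f$; stability is checked on coarse spaces. Separatedness follows from the same purity statement: two representable extensions agree off finitely many points, hence agree everywhere. I expect the main obstacle to be the purity lemma together with the analysis of the twisted nodes.
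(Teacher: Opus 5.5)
The paper does not prove this statement; it quotes it from \cite[Theorem~1.4.1]{AV02}. Your outline follows the architecture of that proof: an open substack of a Hom-stack over twisted curves, boundedness over $\overline{M}_g(X,d)$ with node indices bounded by the inertia of $\cX$, and the valuative criterion via the coarse stable limit, purity, and $\bmu_n$-covers at nodes.

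The properness step, however, has a genuine omission. Your extension argument only handles isolated closed points of a \emph{normal} total space, and it works with the lift on the coarse curve $C_\eta$ away from finitely many points. If $\cC_\eta$ is singular, the closure of its nodes is a horizontal curve along which the coarse total space is not normal (locally $xy=0$ over $\spec R$). At twisted nodes the lift is not even defined on $C_\eta$. Your normal-surface purity lemma does not cover this locus, so a separate step is needed. For instance:
\begin{itemize}
\item normalize along this locus, so that each generic node becomes a pair of horizontal marking gerbes on branches smooth over $R$;
\item extend each branch as a pointed twisted map, which is where a root along a horizontal section is genuinely needed;
\item reglue, extending the gluing isomorphism between the two marking gerbes (compatible with the maps to $\cX$) from $\eta$ to $\spec R$ using finiteness of the relevant Isom-space, after a finite extension of $R$ if necessary.
\end{itemize}
Alternatively, extend the chart $[\{uv=0\}/\bmu_r]$ over $\spec R$ and apply a purity statement for $S_2$ surfaces with trivial local fundamental group.

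Several other statements need correcting. First, at a regular point of the total space not lying on such a horizontal section, no root is taken. The punctured spectrum of a strictly henselian regular local ring of dimension $2$ is simply connected, so the purity lemma of \cite{AV02} extends the lift with no stacky structure at all. This is exactly why the limit has stackiness only over nodes and is a twisted curve in the sense used here; inserting a root at such a point would leave that class. Second, at a node $xy=\varpi^r$ no base change is needed. The punctured neighbourhood has local fundamental group $\mathbb{Z}/r$ with universal cover $uv=\varpi$, so the monodromy has order $n\mid r$ and the twisted node $[\{uv=\varpi^{r/n}\}/\bmu_n]$ already exists over $R$. Finally, the closing stabilization step is vacuous: the twisted curve you build has the stable coarse limit as its coarse space, so it is already stable.
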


\subsection{Moduli space of stable minimal models}\label{sec:moduli-of-stable-minimal-models}
In this subsection, we review the moduli theory of stable minimal models and stable Calabi--Yau pairs (cf.~\cite{KX20,Bir22}). We will work only over reduced schemes over $\bC$. The relevant moduli theory is the KSBA-moduli theory, which is well known; we refer the reader to \cite{Kol23}, especially \cite[Chapter~4]{Kol23}, for more details on locally stable and KSBA-stable families.

Local stability \cite[Definition--Theorem~4.7]{Kol23} is invariant under \'etale covers. More precisely, let $(X,D)\to S$ be a well-defined family of pairs as in \cite[Definition--Theorem~4.7.1]{Kol23}, let $\iota\colon U\to X$ be an \'etale cover, and set $D_U \coloneqq \iota^{-1}(D)$. Then $(X,D)\to S$ is locally stable if and only if $(U,D_U)\to S$ is locally stable. In particular, the definition of local stability and well-defined family of pairs extends verbatim to morphisms $(\cX,c\cD)\to S$, where $\cX$ is a Deligne--Mumford stack equipped with a flat morphism $\cX\to S$ whose fibers have dimension $n$, $\cD\hookrightarrow \cX$ is a closed substack, flat over $S$, whose fibers have dimension $n-1$, and $c\in \bQ$. 

\begin{defn}[\textup{\cite[Definition 1.1]{Bir22}}]\label{defn:stable-mimimal-model}
    A \emph{stable minimal model} $(X,B;A)$ over a field $k$ of characteristic $0$ consists of a pair $(X,B)$ and an effective $\bR$-divisor $A$, where
    \begin{itemize}
        \item $(X,B)$ is a projective connected slc pair,
        \item $K_X+B$ is semi-ample defining a contraction $f\colon X \rightarrow Z$,
        \item $A$ is ample over $Z$, and
        \item $(X, B + tA)$ is slc for some $t > 0$.
    \end{itemize}
As $Z$ is uniquely determined by $(X,B;A)$, we also denote the stable minimal model by $(X, B;A)\rightarrow Z$. If $Z$ is a point, i.e. $K_X+B$ is $\bQ$-linearly trivial, then we call $(X,B;A)$ a \emph{stable Calabi--Yau pair}.
\end{defn}
One should think of the coefficient $t>0$ as being sufficiently close to $0$. Two special cases that we will use most frequently later are the following:
\begin{itemize}
    \item $B=\emptyset$, $K_X\sim_{\bQ}0$, and $A$ is ample; and
    \item $B=A$ is ample and $K_X+B\sim_{\bQ}0$.
\end{itemize}

\begin{defn}
    Let $S$ be a reduced $\bC$-scheme. A \emph{family of stable minimal models} over $S$ consists of a projective morphism $X \rightarrow S$ of schemes and $\bQ$-divisors $B, A$ on $X$ such that
    \begin{itemize}
        \item $(X, B+tA)\rightarrow S$ is a locally stable family for any sufficiently small rational $t>0$, and
        \item $(X_s, B_s;A_s)$ is a stable minimal model over $k(s)$ for each $s\in S$.
    \end{itemize}
Here $B_s, A_s$ are the divisorial pullbacks of $B, A$ to the fiber $X_s$.
\end{defn}

\begin{defn}\label{defn:data-set}
A \emph{data set} for stable minimal models, denoted by
$\Xi=(n,d,v,\sigma(t))$, consists of
\begin{itemize}
    \item two positive integers $n,d$;
    \item a rational number $v > 0$; and
    \item a polynomial $\sigma(t)\in \bQ[t]$.
\end{itemize}
A \emph{$\Xi$-stable minimal model} is a stable minimal model
$(X,B;A)\to Z$ satisfying the following conditions:
\begin{itemize}
    \item $\dim X=n$;
    \item the coefficients of $B$ and $A$ belong to $\{0,\frac{1}{d},\frac{2}{d},\ldots,1\}$;
    \item $\vol(A|_F)=v$ for a general fiber $F$ of $X\to Z$ over each irreducible component of $Z$; and
    \item $(K_X+B+tA)^n=\sigma(t)$ for all $0\le t\ll 1$.
\end{itemize}
\end{defn}

The preceding definition gives rise to the following result.

\begin{thm}[\textup{cf. \cite[Theorem 1.14]{Bir22}}]
\label{thm:stable-minimal-model}
For a fixed data set $\Xi$, the moduli functor of
$\Xi$-stable minimal models is represented by a proper
Deligne--Mumford stack of finite type over $\bC$, and its coarse
moduli space is projective. 
\end{thm}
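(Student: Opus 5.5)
The plan is to reduce the statement to the standard KSBA machinery. The key observation is that a $\Xi$-stable minimal model $(X,B;A)$ is determined by the KSBA-stable pair $(X,B+tA)$ for a fixed small rational $t>0$. Here $t$ depends only on $\Xi$, so that $K_X+B+tA$ is ample and $(X,B+tA)$ is slc. The coefficient set $\{0,\tfrac1d,\ldots,1\}$ is finite and $\sigma(t)$ fixes the volume. So the first step is a \emph{boundedness} statement: there exist a uniform $t_0=t_0(\Xi)>0$ and a uniform $r$ such that, for every $\Xi$-stable minimal model and every $0<t\le t_0$:
\begin{itemize}
  \item $(X,B+tA)$ is slc;
  \item $r(K_X+B+t_0A)$ is a very ample Cartier divisor with bounded Hilbert function.
\end{itemize}
I would obtain this from Birkar's boundedness of stable minimal models and its polarized Calabi--Yau input \cite{Bir23}. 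The route is to bound the general fibres over $Z$ using $\vol(A|_F)=v$ and the fixed coefficients, and then the total space using $\sigma$. I expect this to be the main obstacle. The naive approach fails because $K_X+B$ is only semiample, so one cannot bound volumes of $K_X+B$. One must instead control the lc thresholds of $A$ uniformly to produce $t_0$, via ACC-type arguments for lc thresholds.

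Granting boundedness, I would next construct the stack. Embed each $X$ by $|r(K_X+B+t_0A)|$ into a fixed $\bP^N$, and consider the locally closed subscheme of a product of Hilbert and Chow schemes parametrizing $(X,B,A)$. The conditions imposed there are:
\begin{itemize}
  \item $(X,B+tA)\to S$ is a locally stable family in the sense of \cite[Chapter~4]{Kol23};
  \item $K_X+B$ is semiample on fibres;
  \item the embedding is given by the stated linear system.
\end{itemize}
Local stability is representable by a locally closed condition by Koll\'ar's theory of families of pairs. Semiampleness of $K_X+B$ is open and closed in locally stable families with the given numerical data, by invariance of plurigenera/abundance for slc pairs with fixed volume polynomial. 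The moduli stack is then the quotient by $\PGL_{N+1}$.

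The remaining properties follow from the KSBA theory applied to $(X,B+tA)$:
\begin{itemize}
  \item \emph{Deligne--Mumford:} automorphism groups are finite because they preserve the ample log canonical divisor $K_X+B+tA$ of an slc pair.
  \item \emph{Separatedness:} uniqueness of lc models, applied to $K+B+tA$ over a DVR.
  \item \emph{Properness:} semistable reduction followed by an MMP, taking the relative lc model of $(\sX,\sB+t\sA+\sX_\xi)$ over $R$. One must check that the limit still has $K_X+B$ semiample and that $A$ keeps the correct coefficients and volume. This uses that $K+B$ is a limit of the semiample classes and that the volume polynomial is locally constant.
\end{itemize}

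Finally, for projectivity of the coarse space I would apply Koll\'ar's ampleness lemma, in the form of Kov\'acs--Patakfalvi \cite{KP17}, to the pushforwards $f_*\cO(r(K_{X/S}+B+t_0A))$. Their positivity comes from weak positivity of pushforwards of log pluricanonical sheaves. The fibres of the classifying map have finite automorphisms, so we obtain an ample line bundle on the coarse space.
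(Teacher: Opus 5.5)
The paper does not prove this statement; it quotes it from \cite{Bir22}. It records only that the decisive input is Birkar's boundedness theorem, which gives a uniform $\epsilon_0$ and hence a proper, representable, quasi-finite (so finite) morphism $\MM^{\Bir}_\Xi\to\MM^{\KSBA}$ to a moduli stack of marked KSBA pairs. Your outline has the same architecture: boundedness, then reduction to KSBA theory for $(X,B+t_0A)$. The gap is in semiampleness of $K_X+B$, the one condition that is \emph{not} a KSBA condition, which your arguments do not establish.

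First, you claim semiampleness of $K_X+B$ is open and closed in locally stable families ``by invariance of plurigenera/abundance for slc pairs''. No such result is available. When $K_{X_0}+B_0$ is semiample but not big, lifting sections of $m(K_{X_0}+B_0)$ to nearby fibres (deformation invariance of log plurigenera) is not known for slc or even lc pairs. Semicontinuity only bounds the Kodaira dimension of nearby fibres from above, and passing from nefness to semiampleness there is the abundance conjecture. Openness is also unnecessary. For representability over reduced bases you only need the $\Xi$-locus in your parameter space to be locally closed. It suffices that it is constructible and stable under specialization, hence closed. Constructibility is where boundedness enters, e.g.\ through a uniform $m$ with $m(K_X+B)$ base point free.

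Second, stability under specialization is exactly your properness step, and there ``$K+B$ is a limit of semiample classes'' proves nothing. A limit of semiample classes need not be semiample. The lc model of $(\mathscr X,\mathscr B+t\mathscr A+\mathscr X_\xi)$ for one fixed $t$ need not be independent of $t$. Even if it were, ampleness of $K+B+tA$ for all small $t$ only gives nefness of $K_{X_\xi}+B_\xi$, not semiampleness.

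The missing step is as follows:
\begin{enumerate}
\item Take a relative good minimal model of $(\mathscr X,\mathscr B+\mathscr X_\xi)$ over $R$. It exists because the generic fibre is already a good minimal model, by a result of the type of \cite{HX13}.
\item Let $\mathscr Z$ be its relative ample model.
\item Take the lc model of $K+B+tA$ over $\mathscr Z$. It is independent of $0<t\ll1$, and on it $K+B$ is pulled back from $\mathscr Z$, hence semiample.
\end{enumerate}
The volume polynomial is then preserved, since $K+B$ and $A$ are $\mathbb{Q}$-Cartier on the total space and $K+B$ is nef on both fibres. The uniform $t_0$ from boundedness identifies this limit with the KSBA limit at $t_0$, which gives separatedness and closedness of the $\Xi$-locus at once.

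A minor point on projectivity: the ampleness lemma needs the classifying map to have finite fibres, which is not the same as finite automorphism groups. Its quotient data must also record $B$ and $A$, not just $f_*\mathcal{O}(r(K+B+t_0A))$. The finite morphism to the projective KSBA coarse space, which the paper uses right after the statement, avoids redoing this.
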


We denote the moduli stack by $\MM^{\Bir}_{\Xi}$ and its coarse moduli space by $\fM^{\Bir}_{\Xi}$. A key input in the proof of the above theorem is the boundedness of stable minimal models (cf. \cite[Theorem 1.12]{Bir22}). In particular, there exists a rational number $\epsilon_0>0$, depending only on $\Xi$, such that for every family of stable minimal models $(X,B;A)\rightarrow S$ parametrized by $\MM^{\Bir}_{\Xi}$, the corresponding family $(X,B+tA)\rightarrow S$ is KSBA-stable (cf. \cite[Definition–Theorem 4.7]{Kol23}) for all $0<t\leq \epsilon_0$. Therefore, there is a natural morphism
    \[
    i\colon \MM^{\Bir}_{\Xi}\ \longrightarrow \ \MM^{\KSBA}_{n, \Phi', \sigma(\epsilon_0)},
    \]
    inducing $j\colon \mathfrak{M}^{\Bir}_{\Xi}\to \mathfrak{M}^{\KSBA}_{n, \Phi', \sigma(\epsilon_0)}$ on coarse moduli spaces. Here $\MM^{\KSBA}_{n, \Phi', \sigma(\epsilon_0)}$ denotes the moduli stack of KSBA-stable marked pairs $(X,D)$ of \cite[Section 8.2]{Kol23}, where $X$ has dimension $n$, the coefficients of $D$ lie in $\Phi'$, which is the smallest subset of the form $\{0,\frac{1}{\delta},\frac{2}{\delta},\ldots,1\}$ containing $\{0,\epsilon_0, 2\epsilon_0, \ldots ,\lfloor\tfrac{1}{\epsilon_0}\rfloor\epsilon_0\}$ and $\{0,\frac{1}{d},\frac{2}{d},\ldots,1\}$; and $\vol(K_X + D)=\sigma(\epsilon_0)$ is the value at $\epsilon_0$ of the polynomial $\sigma(t)$ associated to $\Xi$.
    
    Let $\pi\colon(\sX,\sB;\sA)\to \MM^{\Bir}_\Xi$ be the universal family. Then $\pi\colon(\sX,\sB+\epsilon_0\sA)\rightarrow \MM^{\Bir}_\Xi$ is a KSBA-stable family. Let $\lambda_{\CM}$ be the CM line bundle (cf. \Cref{defn:Chow-line-bundle}) on $\MM^{\Bir}_\Xi$ associated to this KSBA-stable family.

\begin{lemma}
   A positive multiple of $\lambda_{\CM}$ descends to an ample line bundle on $\mathfrak{M}^{\Bir}_\Xi$.
\end{lemma}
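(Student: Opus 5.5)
The plan is to reduce the statement to the known ampleness of the CM line bundle on KSBA moduli spaces, via the natural morphism $i\colon \MM^{\Bir}_{\Xi}\to \MM^{\KSBA}_{n,\Phi',\sigma(\epsilon_0)}$ constructed above.

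First, we identify $\lambda_{\CM}$ as a pullback. The CM line bundle of a KSBA-stable family is functorial under base change, since it is built from the Knudsen--Mumford expansion of $\det \pi_*\cO(m(K+D))$. Hence $\lambda_{\CM}$ on $\MM^{\Bir}_\Xi$ is the pullback under $i$ of the CM line bundle $\lambda^{\KSBA}_{\CM}$ of the universal KSBA-stable family on $\MM^{\KSBA}_{n,\Phi',\sigma(\epsilon_0)}$. On the latter stack, a positive multiple of $\lambda^{\KSBA}_{\CM}$ descends to an ample line bundle on the coarse space $\fM^{\KSBA}$ by \cite[Theorem~1.1]{PX17}, extended to pairs as in \cite{Kol23}. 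Replacing by further multiples so that descent is compatible, a positive multiple of $\lambda_{\CM}$ descends to $j^*$ of that ample bundle on $\fM^{\Bir}_\Xi$.

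It then remains to show that $j\colon \fM^{\Bir}_\Xi\to\fM^{\KSBA}$ is finite, because the pullback of an ample bundle under a finite morphism is ample. Since $\fM^{\Bir}_\Xi$ is proper by \Cref{thm:stable-minimal-model}, $j$ is proper, so it suffices to check quasi-finiteness on geometric points. A point of the target is a KSBA-stable pair $(X,\Delta)$ with $\Delta = B+\epsilon_0 A$. Its preimages are the stable minimal models $(X,B;A)$ with $B+\epsilon_0A=\Delta$. The underlying variety $X$ is fixed, and $B$ and $A$ are effective with supports inside $\Supp\Delta$ and coefficients in $\{0,\tfrac1d,\dots,1\}$. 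There are only finitely many ways to split the finitely many components of $\Delta$ with such coefficients, so each fiber of $j$ is finite.

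The main obstacle I expect is the descent and finiteness at the level of stacks versus coarse spaces: one must ensure that $i$ induces a well-defined morphism of coarse spaces and that quasi-finiteness on geometric points gives finiteness of $j$. Properness and quasi-finiteness give finiteness by Zariski's main theorem once $j$ is shown to be separated and of finite type, which follows from the projectivity of both coarse spaces. If the injectivity or splitting argument were problematic, an alternative is Kollár's ampleness lemma applied directly on $\MM^{\Bir}_\Xi$, but I expect the finiteness argument to suffice.
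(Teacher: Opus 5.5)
Your proposal is correct and follows essentially the same route as the paper. You write $\lambda_{\CM}$ as the pullback along $i$ by functoriality, invoke the ampleness of a multiple of the CM line bundle on $\fM^{\KSBA}$ from \cite[Theorem 1.1]{PX17}, and conclude via finiteness of $j$; the only difference is that you check finiteness of $j$ directly on coarse spaces (properness plus quasi-finiteness, using the finitely many splittings $\Delta=B+\epsilon_0A$ with coefficients in $\{0,\tfrac1d,\dots,1\}$), whereas the paper asserts that $i$ is proper, representable and quasi-finite, hence finite, and deduces that $j$ is finite.
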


\begin{proof}
   The morphism $i$ is proper, representable, and quasi-finite; hence it is finite, and so is $j$. The desired statement then follows from the functoriality of the CM line bundle, together with the fact that a sufficiently high multiple of the CM line bundle on $\MM^{\KSBA}_{n, \Phi', \sigma(\epsilon_0)}$ descends to an ample line bundle on $\mathfrak{M}^{\KSBA}_{n, \Phi', \sigma(\epsilon_0)}$ by \cite[Theorem 1.1]{PX17}.
\end{proof}

A specific case of the moduli spaces $\MM^{\Bir}_{\Xi}$, namely the case when $(X,B;A)$ is a stable Calabi--Yau pair, is studied in \cite{KX20}, where the authors prove the projectivity of each irreducible component of the coarse moduli space; the required boundedness, however, is due to \cite{Bir22}.

\begin{defn}\label{defn:moduli-stable-CY}
    For $\Xi$ as above, we denote by $\PCY_{\Xi}$ the moduli stack of \Cref{thm:stable-minimal-model} in the case when $Z$ is a point, and by $\textbf{PCY}_\Xi$ its coarse moduli space. When $\Xi$ is clear from the context, we simply write $\PCY$ (resp.\ $\textbf{PCY}$) for $\PCY_{\Xi}$ (resp.\ $\textbf{PCY}_\Xi$). We adopt the analogous conventions for $\MM^{\Bir}_{\Xi}$ and $\mathfrak{M}^{\Bir}_{\Xi}$. Similarly, when $n,d$ and $v$ are clear from the context, we will simply write $\MM^{\Bir}_{\sigma(t)}$.
\end{defn}

\subsection{Chow and CM line bundles}
In this subsection we recall the construction of the Chow line bundle and the CM line bundle associated to a family, and we generalize the minimizing property of KSBA-stable limits with respect to the CM-degree established in \cite[Theorem~3.3]{WX14}.

The CM line bundle, introduced in \cite{Tia97} and reformulated in \cite{PT09}, is expected to be positive on the moduli space of K\"{a}hler--Einstein varieties; see also \cite{FR06,XZ20,CP21} for further details. It is defined for any family of polarized pairs $(X,D;\cL)\rightarrow S$ over an arbitrary base scheme $S$ of finite type, subject to the requirement that $D\rightarrow S$ be a family of well-defined divisors. It is possible to generalize the CM line bundle to the setting where $D\rightarrow S$ is not a family of well-defined divisors; however, doing so may result in a loss of geometric meaning. For our purposes, we instead use the Chow line bundle, which is isomorphic to the CM line bundle for KSBA-stable families.

\begin{thm}[\textup{cf.~\cite[Theorem~4]{KM76}}]
    Let $f\colon X\to T$ be a flat and proper morphism with fibers of pure dimension $n$, and let $L$ be a line bundle on $X$ which is ample over $T$. Then there exist $m_0>0$ and unique line bundles $\cL_0,\dots,\cL_{n+1}$ on $T$ such that, for every $m\ge m_0$,
    \begin{equation}\label{eq:KM-expansion}
    \det f_*(L^{\otimes m}) \ \simeq\ \bigotimes_{i=0}^{n+1}\cL_i^{\otimes\binom{m}{i}}.
    \end{equation}
    We call \eqref{eq:KM-expansion} the \emph{Knudsen--Mumford expansion} of $\det f_*(L^{\otimes m})$.
\end{thm}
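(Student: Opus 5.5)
The plan is to reduce to the case where $T$ is the spectrum of a Noetherian ring over which $f_*(L^{\otimes m})$ is free for all large $m$, and then glue. First choose $m_0$ so that $R^if_*(L^{\otimes m})=0$ for all $i>0$ and all $m\ge m_0$; this is possible locally on $T$ by Serre vanishing, and $T$ is quasi-compact, or one can work locally and then use uniqueness to glue. By cohomology and base change, $f_*(L^{\otimes m})$ is then locally free and its formation commutes with arbitrary base change. So $\det f_*(L^{\otimes m})$ is a line bundle whose formation commutes with base change. More generally, I would use the perfect complex $Rf_*(L^{\otimes m})$ and its determinant $\det Rf_*$ in the sense of Knudsen--Mumford, which is defined for every $m$ and agrees with $\det f_*$ once $m\ge m_0$.

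The heart of the argument is to show that $m\mapsto \det Rf_*(L^{\otimes m})$ is ``polynomial of degree $\le n+1$'' in the Picard group, in a way that is canonical rather than merely numerical. The plan is to argue by induction on $n$ after a local choice of a relative hyperplane section. Locally on $T$, pick a section $s$ of $L^{\otimes k}$ whose zero locus $H$ is flat over $T$ of relative dimension $n-1$. After twisting $L$ one may take $k=1$ after passing to a power; because a power suffices and the binomial basis is preserved up to a triangular change, I would first prove the result for $L^{\otimes k}$ and deduce it for $L$. The exact sequence $0\to L^{\otimes(m-1)}\to L^{\otimes m}\to L^{\otimes m}|_H\to 0$ and multiplicativity of $\det$ in exact triangles give
\[
\det Rf_*(L^{\otimes m})\otimes \det Rf_*(L^{\otimes (m-1)})^{-1}\simeq \det R(f|_H)_*(L|_H^{\otimes m}).
\]
By induction the right side is $\bigotimes_{i=0}^{n}\cM_i^{\binom{m}{i}}$. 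Summing the differences via $\binom{m}{i}-\binom{m-1}{i}=\binom{m-1}{i-1}$ gives the expansion for $X$ with $\cL_{i+1}=\cM_i$ and $\cL_0=\det Rf_*\cO_X$. The base case $n=0$ uses that $f$ is finite flat, so $f_*(L^{\otimes m})$ is locally $f_*\cO_X$ twisted by the norm, giving $\det f_*\cO_X\otimes N(L)^{m}$.

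Uniqueness is formal. If two families $\cL_i,\cL_i'$ both satisfy the identity for all $m\ge m_0$, the quotients $\cN_i=\cL_i\otimes\cL_i'^{-1}$ satisfy $\bigotimes\cN_i^{\binom{m}{i}}\simeq\cO$ for all $m\ge m_0$. Taking iterated finite differences in $m$ kills the lower terms, so $\cN_{n+1}\simeq\cO$, and then descending induction gives $\cN_i\simeq \cO$ for all $i$. Uniqueness also lets the local constructions glue, provided the isomorphisms are canonical and compatible. This is where I expect the main obstacle: the local expansions depend on the choice of $s$, so one must check that the resulting $\cL_i$, with their identifications, are independent of $s$. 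The first thing I would try is to compare two sections $s,s'$ through the locus where both are transverse. An alternative is to characterize $\cL_i$ intrinsically as iterated Deligne-pairing-type differences $\Delta^i\det Rf_*(L^{\otimes m})$, which are visibly canonical and base-change compatible. With that in hand, the local pieces descend to $T$.
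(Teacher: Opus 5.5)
The paper does not prove this statement; it quotes \cite[Theorem~4]{KM76}. Your outline has the right general shape: use $\det Rf_*$ for all $m$, cut by relative effective Cartier divisors, use multiplicativity of $\det$ on exact sequences, and induct on relative dimension. Your uniqueness argument by finite differences in $\Pic(T)$ is also correct. However, the existence part has a genuine gap exactly where you place the ``main obstacle'', and your proposed remedy does not close it.

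Put $\lambda(m)\coloneqq\det Rf_*(L^{\otimes m})$. Defining the $\cL_i$ intrinsically, e.g.\ $\cL_i\coloneqq\Delta^i\lambda(0)$, does give canonical, base-change-compatible line bundles on $T$. But by Newton interpolation in $\Pic(T)$, the required isomorphisms for $m\ge m_0$ are then \emph{equivalent} to $\Delta^{n+2}\lambda(m)\simeq\cO_T$ for $m\ge m_0$. Your hyperplane induction trivializes $\Delta^{n+2}\lambda(m)$ only locally on $T$, by trivializations that depend on the chosen $s$. A Zariski-locally trivial line bundle is just a line bundle: the local trivializations differ on overlaps by units forming a \v{C}ech cocycle. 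Uniqueness of the $\cL_i$ up to isomorphism does not help with this. Nothing descends unless you prove that the trivialization is independent of $s$ and compatible with restriction. That independence is the canonical triviality of the alternating symbol $\bigotimes_{I\subseteq\{1,\dots,n+2\}}\det Rf_*\bigl(\mathcal F\otimes\bigotimes_{i\in I}L_i\bigr)^{(-1)^{n-|I|}}$ for $\mathcal F$ flat over $T$; taking $\mathcal F=L^{\otimes m}$ and all $L_i=L$ recovers $\Delta^{n+2}\lambda(m)$. This is the substantive content of the theorem you are proving. ``Compare two sections through the locus where both are transverse'' names the problem but does not supply that argument.

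There is a second, more elementary gap in the passage from $L^{\otimes k}$ to $L$. An expansion for $L^{\otimes k}$ only controls $\lambda$ on $k\bZ$, or, after twisting by $L^{\otimes j}$, on each residue class modulo $k$ separately. A map $\bZ\to\Pic(T)$ that is polynomial on each residue class need not be polynomial; think of $m\mapsto(m\bmod 2)\,a$ with $a$ of infinite order. The triangular change of binomial basis under $m\mapsto km$ says nothing about the other residues. Equivalently, cutting by $s\in H^0(L^{\otimes k})$ gives only the step-$k$ relation $\lambda(m)\otimes\lambda(m-k)^{-1}\simeq\det Rf_*(L^{\otimes m}|_H)$, whose summation leaves $\lambda(0),\dots,\lambda(k-1)$ unrelated.

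The standard repair is to write, locally on $T$, $L\simeq\cO_X(D_1)\otimes\cO_X(D_2)^{-1}$, with $D_1\in|L^{\otimes(k+1)}|$ and $D_2\in|L^{\otimes k}|$ relative effective Cartier divisors avoiding the associated points of the fibers. The two resulting exact sequences give the step-one relation
\[
\lambda(m)\otimes\lambda(m-1)^{-1}\ \simeq\ \det Rf_*\bigl(L^{\otimes m}|_{D_1}\bigr)\otimes\det Rf_*\bigl(L^{\otimes(m-1)}|_{D_2}\bigr)^{-1},
\]
to which your induction applies. A harmless slip: telescoping $\lambda(m)\otimes\lambda(m-1)^{-1}\simeq\bigotimes_i\cM_i^{\binom{m}{i}}$ yields $\cL_j=\cM_{j-1}\otimes\cM_j$ for $1\le j\le n$, not $\cL_j=\cM_{j-1}$. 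Even after these repairs, the choice-independence described above remains the missing idea.
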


By cohomology and base change, together with the uniqueness of the $\cL_i$, the formation of $\cL_i$ is functorial. Moreover, under mild assumptions on the singularities of $X$, one has
\begin{equation}\label{eq:first-chern-class}
    c_1(\cL_{n+1}) \ = \ f_*\big(c_1(L)^{n+1}\big)
\end{equation}
by \cite[Appendix~A]{CP21}, specifically \cite[Lemmas~A.2 and~A.4]{CP21}. We now specialize this construction to the case of interest.

\begin{defn}
A \emph{weak KSBA-stable family} $f\colon(X,D)\to T$ consists of a projective variety $X$ satisfying Serre's condition $S_2$ and the condition $G_1$ (Gorenstein in codimension $1$), an effective $\bQ$-divisor $D$ on $X$, and a flat morphism $f\colon X\to T$ to a Gorenstein reduced scheme $T$, such that:
\begin{enumerate}
    \item every fiber of $f$ satisfies Serre's condition $S_1$;
    \item $K_{X/T}+D$ is an $f$-ample $\bQ$-Cartier divisor; and
    \item there exists a dense open subset $U\subseteq T$ such that the restriction $f|_U\colon(X,D)|_U \to U$ is a KSBA-stable family.
\end{enumerate}
\end{defn}

We emphasize that $D\to T$ is not assumed to be flat, nor even a Mumford divisor: $D$ may contain components of some fibers of $f$.

\begin{defn}\label{defn:Chow-line-bundle}
    Let $f\colon (X,D)\to T$ be a weak KSBA-stable family, let $r>0$ be an integer such that $r(K_{X/T}+D)$ is Cartier, and set $L\coloneqq\cO_X\big(r(K_{X/T}+D)\big)$. We define the \emph{Chow line bundle} associated to $f\colon(X,D)\rightarrow T$ to be the $\bQ$-line bundle
    \[
    \lambda_{\Cho,f,L} \ \coloneqq \ \frac{1}{r^{n+1}}\,\cL_{n+1},
    \]
    where 
     $\cL_{n+1}$ is as in the Knudsen--Mumford expansion \eqref{eq:KM-expansion}. When $f$ or $L$ is clear from the context, we suppress it from the notation. If, moreover, $f\colon(X,D)\rightarrow T$ is a locally stable (and hence KSBA-stable) family, then $\lambda_{\Cho}$ agrees with the Chow--Mumford (abbr. CM) line bundle $\lambda_{\CM,f,D}$ associated to $f$.
\end{defn}

\begin{remark}
    The Chow line bundle is only a $\bQ$-line bundle, but it follows from \eqref{eq:KM-expansion} that it is independent of the choice of $r$: if $\lambda_1$ and $\lambda_2$ denote the Chow $\bQ$-line bundles defined using $r_1$ and $r_2$ respectively, then there exists $N>0$ such that $\lambda_1^{\otimes N}$ and $\lambda_2^{\otimes N}$ are line bundles differing by a torsion line bundle; in particular, after replacing $N$ by a sufficiently divisible multiple, they are isomorphic.
\end{remark}

Let $f\colon (X,D)\rightarrow (0\in C)$ be a weak KSBA-stable family over a smooth projective pointed curve such that $D$ is $\bQ$-Cartier. Let $\nu\colon X^\nu\rightarrow X$ be the normalization of $X$ and $f^\nu\colon X^\nu\rightarrow  C$ be the composition. Then $\nu^*K_X= K_{X^\nu}+E$ for some \textit{effective} divisor $E$ by \cite[Equation (5.7.1)]{Kol13}. Hence $\nu\colon (X^\nu,\nu^*D+E)\rightarrow (X,D)$ is a crepant morphism, and using \cite[Lemma A.4]{CP21} one has \begin{equation}\label{eq:normalization-Chow}
    \deg \lambda_{\Cho,f^\nu,\nu^*D+E} \ =\  \deg \lambda_{\Cho,f,D}.
\end{equation}

\begin{prop}
    Let $f\colon (X,D)\rightarrow (0\in C)$ be a weak KSBA-stable family over a smooth projective pointed curve such that $D$ is $\bQ$-Cartier. Let $C'\rightarrow C$ be a degree $d$ finite cover, $f'\colon X'\rightarrow C'$ be the pullbacks of $f$, and $\phi\colon X'\rightarrow X$ the induced morphism. Then \[\deg \lambda_{\Cho,f',D'} \ = \  d\cdot \deg \lambda_{\Cho,f,D},\] where $D'\coloneqq \phi^*D$.
\end{prop}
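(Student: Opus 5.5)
Write $g\colon C'\to C$ for the cover, so $\phi$ is the base change of $g$ along $f$. The plan is to compute both degrees as intersection numbers and compare them through the projection formula for $\phi$. First we identify the polarizations. Since $f$ is flat and $g$ is a finite morphism of smooth curves, $\phi$ is finite and flat of degree $d$, and $X'=X\times_C C'$. Relative dualizing sheaves commute with flat base change, so $\omega_{X'/C'}\simeq\phi^*\omega_{X/C}$. Because $X$ is $S_2$ and $G_1$, and $K_{X/C}+D$ is $\bQ$-Cartier, we can choose $r$ with $r(K_{X/C}+D)$ Cartier. Its pullback is then Cartier, and
\[
\phi^*\cO_X\big(r(K_{X/C}+D)\big)\ \simeq\ \cO_{X'}\big(r(K_{X'/C'}+D')\big).
\]
To justify this, we compare the two sides on the open set where $X$ is Gorenstein and $D$ is Cartier, whose complement has codimension $\ge 2$. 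Both sheaves are reflexive of rank one on $X'$, which is $S_2$ because $\phi$ is flat with $S_2$ source, and on that open set they agree. Set $L\coloneqq\cO_X(r(K_{X/C}+D))$ and $L'\coloneqq\phi^*L$.

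Next we apply the functoriality of the Knudsen--Mumford expansion, which the paper notes follows from cohomology and base change. Since $f$ is flat and $L^{\otimes m}$ has vanishing higher cohomology on the fibers for $m\gg0$, we get $f'_*(L'^{\otimes m})\simeq g^*f_*(L^{\otimes m})$. Taking determinants gives $\det f'_*(L'^{\otimes m})\simeq g^*\det f_*(L^{\otimes m})$ for all $m\ge m_0$. The uniqueness of the $\cL_i$ then forces $\cL'_{n+1}\simeq g^*\cL_{n+1}$. Hence
\[
\lambda_{\Cho,f',D'}\ \simeq\ g^*\lambda_{\Cho,f,D}
\]
as $\bQ$-line bundles, and $\deg g^*=d\cdot\deg$ on a curve gives the claim.

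An alternative route uses \eqref{eq:first-chern-class}: $\deg\lambda_{\Cho,f',D'}=r^{-(n+1)}(L'^{n+1})=r^{-(n+1)}\,d\,(L^{n+1})$ by the projection formula for the finite map $\phi$ of degree $d$.

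The main obstacle is the base-change step. We need $f'$ to remain a family to which the construction applies, and we need the identification of $L'$ with $r(K_{X'/C'}+D')$, since $D$ may contain fiber components and $D'=\phi^*D$ is only a pullback of $\bQ$-Cartier divisors. The argument through $\cL'_{n+1}\simeq g^*\cL_{n+1}$ avoids needing $X'$ to satisfy any singularity hypothesis. It requires only flatness of $f$ and ampleness of $L$ over $C$, both of which are preserved by base change. We therefore use that argument as the main proof, and use the reflexive-sheaf comparison above only to interpret $L'$ as the polarization attached to $(X',D')$.
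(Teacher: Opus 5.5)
Your proposal is correct, and your ``alternative route'' is exactly the paper's proof. The paper notes $K_{X'/C'}+D'\sim_{\bQ}\phi^*(K_{X/C}+D)$ and reads the degree off \eqref{eq:first-chern-class}. The claim then reduces to $\big((\phi^*L)^{n+1}\big)=d\,(L^{n+1})$, by the projection formula for the finite flat map $\phi$.

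Your main route is genuinely different. You use base-change functoriality of the Knudsen--Mumford expansion to get $\lambda_{\Cho,f',D'}\simeq g^*\lambda_{\Cho,f,D}$ as $\bQ$-line bundles. This gives a stronger conclusion, an isomorphism rather than just a degree identity. It also avoids the Chern class formula of \cite{CP21}, whose ``mild assumptions on the singularities'' would otherwise have to be checked for $X'$ as well as $X$. The paper's route is a one-liner once that formula is available. It also stays in the language of intersection numbers, which is how the Chow degree is handled in \Cref{thm_generalize_wx}.

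A few small points of presentation:
\begin{itemize}
\item Finiteness and flatness of $\phi$ come from $g$ being finite and flat (a finite surjective map of smooth curves), not from $f$.
\item Since $g$ is flat, flat base change gives $f'_*(L'^{\otimes m})\simeq g^*f_*(L^{\otimes m})$ for every $m$, with no vanishing needed.
\item The possibly different thresholds $m_0$ for $f$ and $f'$ are harmless: for any large $m$, $\cL_{n+1}\simeq\bigotimes_{j=0}^{n+1}\big(\det f_*L^{\otimes(m+j)}\big)^{\otimes(-1)^{n+1-j}\binom{n+1}{j}}$.
\item $X'$ is $S_2$ and $G_1$ because the fibers of $\phi$ are zero-dimensional Gorenstein schemes, not merely because $\phi$ is flat. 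This is what makes your reflexive-sheaf identification $\phi^*L\simeq\cO_{X'}\big(r(K_{X'/C'}+D')\big)$ valid. It also makes $f'$ a weak KSBA-stable family, so $\lambda_{\Cho,f',D'}$ is defined in the paper's sense.
\end{itemize}
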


\begin{proof}
    Notice that $K_{X'/C'}\sim_{\bQ} \phi^*K_{X/C}$, and hence $K_{X'/C'}+D' \sim \phi^*(K_{X/C}+D)$. Then the equality follows immediately from \Cref{eq:first-chern-class}. 
\end{proof}

The following result generalizes \cite[Theorem 3.3]{WX14}; its proof is also a mild generalization of the one in \cite{WX14}.

\begin{thm}\label{thm_generalize_wx}
Let $f^{\sta}\colon (X^{\sta},D^{\sta})\to C$ and $f\colon (X,D)\to C$ be two weak KSBA-stable families of pairs over a pointed smooth projective curve $(0\in C)$. Let $r>0$ be an integer such that
\[
\cL \coloneqq r(K_{X/C}+D),
\qquad
\cL^{\sta} \coloneqq r(K_{X^{\sta}/C}+D^{\sta})
\]
are both Cartier. Assume that
\[
(X,D;\cL)|_{C^\circ}
\simeq
(X^{\sta},D^{\sta};\cL^{\sta})|_{C^\circ},
\]
where $C^\circ \coloneqq C\setminus \{0\}$, and that $(X^{\sta},D^{\sta} + X_0^{\sta})$ is log canonical. Then
\[
\deg \lambda_{\Cho,f^{\sta},\cL^{\sta}}
\ \le\ 
\deg \lambda_{\Cho,f,\cL}.
\]
Moreover, the equality holds if and only if $f$ and $f^{\sta}$ are isomorphic over $C$.
\end{thm}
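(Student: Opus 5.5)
We follow the strategy of \cite[Theorem~3.3]{WX14}: compare both families on a common resolution and express the difference of Chow degrees as an intersection number on the central fiber. First, \Cref{eq:first-chern-class} gives
\[
\deg\lambda_{\Cho,f,\cL}=(K_{X/C}+D)^{n+1},\qquad \deg\lambda_{\Cho,f^{\sta},\cL^{\sta}}=(K_{X^{\sta}/C}+D^{\sta})^{n+1},
\]
where the top self-intersections are taken on the total spaces. Using \eqref{eq:normalization-Chow}, we may pass to normalizations, with the conductor added to the boundary, so the pairs can be treated componentwise. Now take a common normal resolution $W$ with birational morphisms $p\colon W\to X$ and $q\colon W\to X^{\sta}$ that are isomorphisms over $C^\circ$. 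Set $L=p^*(K_{X/C}+D)$ and $L^{\sta}=q^*(K_{X^{\sta}/C}+D^{\sta})$, and write $L = L^{\sta}+E$, where $E$ is a $\bQ$-Cartier divisor supported on the fiber $W_0$.

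The key step is to show that $E$ is effective; more precisely, that $-E$ is $q$-exceptional-free and ``anti-effective'' as in \cite{WX14}. Write $K_W+D_W = p^*(K_X+D)$, where $D_W$ is the crepant boundary. Since $(X^{\sta},D^{\sta}+X^{\sta}_0)$ is lc, the discrepancies give $q^*(K_{X^{\sta}}+D^{\sta}+X_0^{\sta}) = K_W+q^{-1}_*(D^{\sta}+X^{\sta}_0)+\sum(1-a_F)F$ with $a_F\ge 0$. Since $X_0$ and $X^{\sta}_0$ are both linearly equivalent to the pulled-back fiber, we can shift by fibers and compare coefficients along each divisor $F\subseteq W_0$. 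The components of $X_0$ appear with coefficient $\le 1$ in $D_W + W_0$ (here we use that $X_0$ is reduced in codimension one, which follows from the $S_1$/$G_1$ weak-stability hypotheses), while every divisor over $X^{\sta}$ has log discrepancy $\ge0$ for the lc pair. This yields $E\ge 0$ after adjusting by a multiple of $W_0$, which changes neither degree since $W_0^2\cdot(\cdot)=0$ and $W_0\cdot L^n$ is fixed by the generic fiber. There is one subtlety: $D$ may contain fiber components, and this is exactly where the generalization beyond \cite{WX14} occurs. We absorb those components into the comparison, because they only increase the coefficients on $X$'s side, which goes in the favorable direction.

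Next we expand the difference telescopically:
\[
L^{n+1}-(L^{\sta})^{n+1}=\sum_{i=0}^{n} E\cdot L^{i}\cdot (L^{\sta})^{n-i}.
\]
Each term is nonnegative because $E\ge0$ is supported on $W_0$, and $L$ and $L^{\sta}$ are nef on $W_0$: they are pullbacks of divisors that are relatively ample over $C$. Hence $\deg\lambda^{\sta}\le\deg\lambda$. For the equality case, vanishing of the $i=n$ term $E\cdot L^n$, with $L$ big on every component of $W_0$ not contracted by $p$, forces $E$ to be $p$-exceptional. A negativity-lemma argument then gives $E=0$. Consequently $p^*(K_{X/C}+D)=q^*(K_{X^{\sta}/C}+D^{\sta})$, and both $X$ and $X^{\sta}$ are the relative Proj of the same section ring over $C$. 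This uses that each is $S_2$, so it is determined by its section ring of the relatively ample divisor, and hence $X\simeq X^{\sta}$ over $C$.

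I expect the main obstacle to be the effectivity of $E$ when $D$ contains fiber components and $X$ is only demi-normal. Coefficient bookkeeping along components of $X_0$, together with the non-normal locus and the conductor, is where the weak KSBA-stable hypotheses must be used carefully.
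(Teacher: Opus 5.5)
Your route differs from the paper's. The paper follows \cite{WX14} and never needs $E\ge 0$. On the normalized graph it interpolates between $(p^{\sta})^*\cL^{\sta}$ and $p^*\cL$ via the function $F(t)$, gets $F'(t)\ge 0$ from the Hodge index theorem (because $E$ is supported on a fiber), and compares only the endpoint $F(r)$ with $r^{n+1}\deg\lambda_{\Cho,f,\cL}$. That comparison uses the effective divisors $G$ (from the lc hypothesis on $X^{\sta}$) and $H$ (from $D\ge 0$ and the conductor).

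Your telescoping argument rests entirely on $E\ge 0$, and the reason you give for it does not work. A coefficient comparison only sees components of $W_0$ that are strict transforms of components of $X_0$, i.e.\ it proves $p_*E\ge 0$. For such $F$ the coefficient of $E$ is $a_F(X^{\sta},D^{\sta})-a_F(X^\nu,D^\nu+M)\ge \ord_F(W_0)-1+\mult_F(D^\nu+M)\ge 0$, with $a_F$ the log discrepancy and $M\ge 0$ the conductor part. This uses that $F$ has coefficient $\ge 1$ in $D_W+W_0$, not $\le 1$ as you wrote, and it needs no reducedness. Your claim that $S_1$ and $G_1$ force $X_0$ to be reduced in codimension one is false anyway, e.g.\ for multiple fibers.

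For a $p$-exceptional vertical $F$, the coefficient $a_F(X^{\sta},D^{\sta})-a_F(X^\nu,D^\nu+M)$ has no sign visible from local data. The hypotheses give no upper bound on $a_F(X^\nu,D^\nu+M)$, which can be large over smooth points of $X$. Your fallback of shifting $E$ by a multiple of $W_0$ is also wrong. Since $W_0\cdot W_0\equiv 0$, one has $(L+cW_0)^{n+1}=L^{n+1}+(n+1)c\,(K_{X_t}+D_t)^n$ with $(K_{X_t}+D_t)^n>0$. So a shift does change the degree, and $E+cW_0\ge 0$ with $c>0$ proves nothing.

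The missing idea is the negativity lemma. Since $p^*L$ is $p$-trivial, $-E\equiv_p q^*(K_{X^{\sta}/C}+D^{\sta})$, which is $p$-nef because every $p$-contracted curve is vertical. Together with $p_*E\ge 0$ this gives $E\ge 0$ with no shift. Then your telescoping sum of the nonnegative terms $E\cdot L^i\cdot (L^{\sta})^{n-i}$ proves the inequality.

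Repaired this way, your argument is a valid and shorter alternative to the paper's. It works because both families are polarized by their own log canonical divisors, which makes $E\ge 0$ available. The $F(t)$ device of \cite{WX14} is built for arbitrary polarizations, where $E$ need not be effective.

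The equality case needs two further fixes.
First, $E\cdot L^n=0$ only makes $E$ $p$-exceptional, and an effective $p$-exceptional $E$ with $-E$ $p$-nef need not vanish (think of a blow-up). Use the $i=0$ term instead: $E\cdot (L^{\sta})^n=0$ makes $E$ $q$-exceptional, and $E\equiv_q p^*L$ is $q$-nef. Negativity over $q$ then gives $E\le 0$, hence $E=0$.
Second, $E=0$ only gives $X^\nu\simeq X^{\sta}$, since $p_*\cO_W=\cO_{X^\nu}$ rather than $\cO_X$. You still have to read off from $p_*E=0$ that $\mult_F(D^\nu+M)=0$ for every component $F$ of $X^\nu_0$. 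Then $X$ is $R_1$, hence normal by $S_2$, exactly as in the paper's last step. A non-normal generic fiber must be handled by the codimension-one gluing argument.
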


\begin{proof}
We first assume that the generic fiber $X|_{C^{\circ}}$ is normal. Let $\nu:X^\nu\rightarrow X$ be the normalization of $X$, and $D^\nu\coloneqq \nu^*D$. One can write \[\nu^*(K_X+D) \ \sim_{\bQ} \ K_{X^{\nu}}+D^\nu+M\] for some effective $\bQ$-divisor $M$. Let $Y$ be the normalization of the graph of the birational map $X \dashrightarrow X^{\sta}$. Then $Y\rightarrow X$ factors through a morphism $p^\nu:Y\rightarrow X^\nu$:
\[\begin{tikzcd}[ampersand replacement=\&]
	\&\& Y \& \\
	X \& {X^\nu} \&\& {X^{\sta}}
	\arrow["p"', from=1-3, to=2-1]
	\arrow["{p^\nu}", from=1-3, to=2-2]
	\arrow["{p^{\sta}}", from=1-3, to=2-4]
	\arrow["\nu"', from=2-2, to=2-1]
\end{tikzcd}\] One can write
\[
(p^{\sta})^*(K_{X^{\sta}/C}+D^{\sta}) + E
\sim_{\bQ}
p^*(K_{X/C}+D)
\]
for some (not necessarily effective) $\bQ$-divisor $E$ supported on $Y_0$. Since $(X^{\sta},D^{\sta}+X_0^{\sta})$ is log canonical, we have
\[
K_{Y/C}+\wt D
\sim_{\bQ}
(p^{\sta})^*(K_{X^{\sta}/C}+D^{\sta}) + G,
\]
where $\wt D \coloneqq (p^{\sta})^{-1}_*D^{\sta}$ and $G$ is a $p^{\sta}$-exceptional $\bQ$-divisor, which is effective from \cite[Equation (5.7.1)]{Kol13}. Moreover, there exists an effective $\bQ$-divisor $H$ supported on $X_0$ such that we have the following equality for the following two codimension-one cycles
\[
p_*(K_{Y/C}+\wt D)
=
K_{X/C}+D-H.
\] Consider the function
\begin{equation}\nonumber \begin{split} F(t)\ & \coloneqq \ (n+1)\left(\big((p^{\sta})^*\cL^{\sta}+tE\big)^n.\ (p^{\sta})^*\cL^{\sta}\right) - n\big((p^{\sta})^*\cL^{\sta}+tE\big)^{n+1} \\ \ & =\ \big((p^{\sta})^*\cL^{\sta}+tE\big)^{n+1} - t(n+1)\left(\big((p^{\sta})^*\cL^{\sta}+tE\big)^n.E\right) \end{split} \end{equation} By definition,
\[
F(0)
\ =\ 
(\cL^{\sta})^{n+1}
\ =\ 
r^{n+1}\deg \lambda_{\Cho,f^{\sta},\cL^{\sta}}.
\] On the other hand,
\begin{align}\label{equations_from_WX}
F(r)
&=
(n+1)\bigl((p^*\cL)^n \cdot (p^{\sta})^*\cL^{\sta}\bigr)
-
n(\cL)^{n+1} \\
&=
r(n+1)\bigl((p^*\cL)^n \cdot (K_{Y/C}+\wt D-G)\bigr)
-
n(\cL)^{n+1} \\
&\le
r(n+1)\bigl((p^*\cL)^n \cdot (K_{Y/C}+\wt D)\bigr)
-
n(\cL)^{n+1} \\
&=
r(n+1)\bigl(\cL^n \cdot (K_{X/C}+D-H)\bigr)
-
n(\cL)^{n+1} \\
&\le
r(n+1)\bigl(\cL^n \cdot (K_{X/C}+D)\bigr)
-
n(\cL)^{n+1} \\
&=
r^{n+1}\deg \lambda_{\Cho,f,\cL}.
\end{align} For any $t_0\in (0,r)$, we compute
\[
F'(t_0)
\ =\ 
-t_0n(n+1)
\bigl(((p^{\sta})^*\cL^{\sta}+t_0E)^{n-1}\cdot E^2\bigr).
\] Notice that $(p^{\sta})^*\cL^{\sta}+t_0E$ is a positive linear combination of $p^*\cL$ and $(p^{\sta})^*\cL^{\sta}$. In particular, it is ample for every $t_0\in (0,r)$ since $Y$ is the normalization of the graph. Therefore, the Hodge index theorem implies that $F'(t_0)\ge 0$. Hence
\[
\deg \lambda_{\Cho,f^{\sta},\cL^{\sta}}
\ \le\ 
\deg \lambda_{\Cho,f,\cL}.
\] Finally, note that $F'(t_0)>0$ unless $E$ is a multiple of $Y_0$. However, if $E=e\cdot Y_0$ for some $e\neq0$, then $e>0$ by \Cref{equations_from_WX} and hence \[F(r)\ =\ r^{n+1}\deg \lambda_{\Cho,f,\cL}-er^{n+1}(n+1)\big((K_X+D)|^n_{X_0}\big)\ <\ r^{n+1}\deg \lambda_{\Cho,f,\cL}.\] Therefore, if equality holds, then \(E=0\), and
\[
(p^{\sta})^*\cL^{\sta}
\sim_{\bQ}
p^*\cL.
\]
Consequently,
\[
X^{\sta}
\simeq
\Proj_C R\bigl(Y/C,(p^{\sta})^*\cL^{\sta}\bigr)
\simeq
\Proj_C R(Y/C,p^*\cL)
\simeq
X^\nu,
\]
and \(D^{\sta}=D^\nu+\Xi\) for some $\bQ$-divisor $\Xi$ whose support is contained in $X_0^{\nu}$. Since \(D^{\sta}\to C\) is a family of Mumford divisors and \(\Supp \Xi\subseteq X^\nu_0\), it follows that \(\Xi=0\). Hence \(D^{\sta}=D^\nu\), and \(X\) is regular in codimension~\(1\). Furthermore, because the fibers of \(X\to C\) satisfy \(S_1\), the total space \(X\) satisfies \(S_2\). Since \(X\) is both \(S_2\) and regular in codimension~\(1\), it is normal. Therefore, one has $(X,D)\simeq (X^{\sta},D^{\sta})$.

If the generic fiber of $f$ is not normal, let $\nu\colon X^\nu\to X$ and $\nu^{\sta}\colon X^{\sta,\nu}\to X^{\sta}$ be the normalizations. Let $D^\nu\coloneqq \nu^*D$ and let $\Gamma^\nu$ denote the conductor divisor on $X^\nu$. Then
\[
K_{X^\nu}+D^\nu+\Gamma^\nu
\ \sim_{\bQ}\ 
\nu^*(K_X+D),
\]
and similarly,
\[
K_{X^{\sta,\nu}}+D^{\sta,\nu}+\Gamma^{\sta,\nu}
\sim_{\bQ}
(\nu^{\sta})^*(K_{X^{\sta}}+D^{\sta}).
\] Therefore, by \Cref{eq:normalization-Chow} and the case where the generic fiber is normal, we obtain
\[
\deg \lambda_{\Cho,f^{\sta},\cL^{\sta}}
\le
\deg \lambda_{\Cho,f,\cL}.
\]
Moreover, equality holds if and only if
\[
(X^\nu,D^\nu+\Gamma^\nu)
\ \simeq\ 
(X^{\sta,\nu},D^{\sta,\nu}+\Gamma^{\sta,\nu}).
\] In this case, the conductor divisor $\Gamma^\nu$ is horizontal. It follows that $(X,D)$ and $(X^{\sta},D^{\sta})$ are isomorphic in codimension one. Since both $X$ and $X^{\sta}$ satisfy Serre's condition $S_2$, they are obtained as the relative $\Proj$ of the same graded algebra. Hence $(X,D)\simeq (X^{\sta},D^{\sta})$.
\end{proof}

\begin{remark}\label{rem:local-nature-of-minimizing}
The inequality in \Cref{thm_generalize_wx} is local in nature. More precisely, one may compare the Chow degrees of a weak KSBA-stable family and a KSBA-stable family over an open smooth pointed curve $(0\in C)$, since, after a finite base change, the curve $C$ can be compactified and KSBA-stable reduction can be performed away from the marked point $0\in C$.
\end{remark}

Let $f\colon (X,D)\to C$ be a weak KSBA-stable family over a smooth projective curve $C$ defined over a possibly non-closed field $K$ of characteristic $0$, and let $g\colon C\to \mathfrak{M}^{\KSBA}$ be the induced morphism to the KSBA-moduli space parametrizing the general fibers of $f$. Let $N>0$ be a sufficiently divisible integer such that the $N$-th tensor power of the Chow line bundle (or equivalently, of the CM line bundle) on $\MM^{\KSBA}$ descends to an ample line bundle $L_{\Cho}^{\otimes N}$ on $\mathfrak{M}^{\KSBA}$ (cf. \cite[Theorem 1.1]{PX17}). The locus of $C$ over which $f$ fails to be KSBA-stable is closed; after possibly replacing $K$ by a finite extension $K'$, it is a finite union of $K'$-rational points $p_1,\dots,p_k$.

\begin{cor}\label{cor_degree_of_chow_minimized_by_pull_back_from_KSBA}
There exist positive integers $a_1,\dots,a_k$ such that
\[
\lambda_{\Cho,f,D}^{\otimes N} \ \simeq\ g^*L_{\Cho}^{\otimes N}\otimes \cO_C\big(a_1p_1+\cdots+a_kp_k\big).
\]
\end{cor}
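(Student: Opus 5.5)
The plan is to compare the Chow line bundle of $f$ with that of a KSBA-stable model of the same generic family, and to measure the discrepancy locally at each $p_i$ using \Cref{thm_generalize_wx} together with \Cref{rem:local-nature-of-minimizing}. Since $C$ is a smooth curve, the line bundles $\lambda^{\otimes N}_{\Cho,f,D}$ and $g^*L^{\otimes N}_{\Cho}$ are both defined. Over $U\coloneqq C\setminus\{p_1,\dots,p_k\}$ the family $f$ is KSBA-stable, so by functoriality of the Knudsen--Mumford expansion, and because the Chow and CM bundles agree on KSBA-stable families, we get $\lambda^{\otimes N}_{\Cho,f,D}|_U\simeq g^*L^{\otimes N}_{\Cho}|_U$. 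Two line bundles on a smooth curve that are isomorphic on $U$ differ by $\cO_C(\sum a_ip_i)$ for some integers $a_i$. More canonically, the isomorphism on $U$ gives a rational section of $\lambda^{\otimes N}_{\Cho,f,D}\otimes g^*L^{-N}_{\Cho}$ that is regular and nowhere vanishing on $U$, and $a_i$ is its order at $p_i$. What remains is to show $a_i>0$.

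To compute $a_i$, I would localize at $p_i$. After a finite base change $C'\to C$ of degree $d$, ramified at the $p_i$, perform KSBA-stable reduction of the generic fiber to get a KSBA-stable family $f'^{\sta}\colon(X'^{\sta},D'^{\sta})\to C'$. By properness of $\MM^{\KSBA}$, its classifying map is $g\circ(C'\to C)$, so $\lambda_{\Cho,f'^{\sta}}^{\otimes N}\simeq (g')^*L_{\Cho}^{\otimes N}$. The pullback $f'$ of $f$ to $C'$ is again weak KSBA-stable. Its Chow bundle is the pullback of $\lambda_{\Cho,f,D}$, by functoriality (equivalently, by the degree-$d$ proposition above). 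Both families agree away from the preimages $q_{ij}$ of the $p_i$. Pulling back the rational section above, the order at $q_{ij}$ becomes $e_{ij}a_i$, where $e_{ij}$ is the ramification index. Since the pullback of $\lambda_{\Cho,f,D}$ and $\lambda_{\Cho,f'^{\sta}}$ differ only by a divisor supported at the $q_{ij}$, their degree difference decomposes as a sum of local contributions.

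Now apply \Cref{thm_generalize_wx} one point at a time, in its local form from \Cref{rem:local-nature-of-minimizing}. For a single $q=q_{ij}$, consider the hybrid family that equals $f'$ near $q$ and equals $f'^{\sta}$ elsewhere; these glue because the two families agree on the overlap. The stable family satisfies the hypothesis that $(X'^{\sta},D'^{\sta}+X'^{\sta}_q)$ is log canonical, by inversion of adjunction, since its fibers are slc. So the theorem gives that the contribution at $q$ is $\geq 0$, with equality only if the two families are isomorphic over a neighborhood of $q$. Equality would make $f'$ KSBA-stable over $q$, hence $f$ KSBA-stable over $p_i$, because KSBA-stability of a fiber can be checked after a finite base change. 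That contradicts the choice of $p_i$. Hence $e_{ij}a_i>0$, so $a_i>0$.

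The main obstacle is the gluing-and-localization step: I need to make sure the degree difference really is additive over the points, and that both the stable and the hybrid families are genuinely weak KSBA-stable over a projective curve. Properness is needed so that the intersection numbers in \Cref{thm_generalize_wx} make sense. I also need to check that the non-closedness of $K$ is harmless; this should be handled by passing to $K'$ and using that degrees and the $a_i$ are invariant under field extension.
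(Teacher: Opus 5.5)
Your proposal is correct and follows essentially the same route as the paper: you build a KSBA-stable model whose Chow bundle is the pullback of $L_{\Cho}^{\otimes N}$, then compare with $f$ point by point via \Cref{thm_generalize_wx} in the local form of \Cref{rem:local-nature-of-minimizing}, using the equality case to get strict positivity at each $p_i$. The only difference is that the paper obtains the KSBA-stable model over a root stack of $C$ (via \cite{BV24} and \cite{bejleri2025root}), so its Chow bundle descends to $g^*L_{\Cho}^{\otimes N}$ on $C$ itself, whereas you pass to a global branched cover and keep track of the ramification indices $e_{ij}$; your hybrid-family gluing spells out the localization that the paper leaves implicit in that remark.
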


\begin{proof}
It follows from \cite[Theorem 3.1]{BV24} and \cite[Proposition~1.6]{bejleri2025root} that there exists a root stack $\cC$ over $C$ together with a KSBA-stable family $
\phi\colon (\cX,\cD)\to \cC$ extending the generic fiber of $f$. For $N\gg 0$ sufficiently divisible, the line bundle
\(\lambda_{\Cho,\phi,\cD}^{\otimes N}\) descends to a line bundle on $C$. By the universal property of the Chow line bundle, this descended line bundle is naturally isomorphic to $g^*L_{\Cho}^{\otimes N}$. Then the desired property follows immediately from \Cref{thm_generalize_wx} and \Cref{rem:local-nature-of-minimizing}.
\end{proof}

\section{Stable log Calabi--Yau fibration over curves}\label{sec:stable-lCY}

In this section, we introduce the main object of this paper, namely stable log Calabi--Yau fibrations. We then study their basic properties in preparation for the subsequent sections.

\subsection{Families of stable log Calabi--Yau fibrations}

The following is the main definition of this paper, which is the relative version of \Cref{defn:main-defn}.

\begin{defn}\label{def_CY_fibration} Let $B$ be a reduced scheme of finite type over $\bC$. A \textit{stable log Calabi--Yau fibration} over $B$, denoted by \[f\colon (\cX,c\cD)\ \longrightarrow \  \cC\ \longrightarrow \ B,\] consists of 
    \begin{itemize}
        \item a twisted curve $\cC\to B$;
        \item a (representable) flat projective morphism $\pi\colon \cX\rightarrow \cC$ with connected fibers such that for every $b\in B$, $\cX_b$ is a demi-normal Deligne--Mumford stack;
        \item a closed substack $\cD\subseteq \cX$ of pure codimension $1$ whose associated Weil divisor is $\bQ$-Cartier, flat over $B$ and with $S_1$ fibers
    \end{itemize} with coarse space $\phi:(X,cD)\stackrel{\psi}{\rightarrow} C\rightarrow B$ satisfying the following conditions:
    \begin{enumerate}
    \item[(LCY)] for a sufficiently divisible $m$, one has \[\cO_{\cX}\big(m(K_{\cX/B}+c\cD)\big)\ \simeq \  \pi^*\cL\] for some line bundle $\cL$ on $\cC$;
    \item[(Qmap)] there is an open substack $\cU\subseteq \cC$ such that for every geometric point $b\in B$, the support of $\cC_b\smallsetminus \cU_b$ is a (possibly empty) set of smooth points $\{x_1,\ldots,x_n\}\subseteq \cC_b$; and such that there exists an $\epsilon_0>0$ such that \[\big(\cX, (c+\epsilon_0)\cD\big)|_\cU\ \longrightarrow \ \cU\] is KSBA-stable;
        \item[(Sing)] for every geometric point $b\in B$ and for every smooth point $p\in \cC_b$, the semi-log canonical threshold of the fiber of $(\cX_b,c\cD_b)\to \cC_b$ over $p$ is strictly positive;

        \item[(Stab)] the $\bQ$-line bundle
        \[
        K_{X/B} + (c+\epsilon^2)D + \psi^* (\epsilon \Lambda_{\Cho})
        \]
        is ample over $B$ for every $0<\epsilon\ll 1$, where $\Lambda_{\Cho}$ is the descent to $C$ of the Chow line bundle associated to the family $\big(\cX,(c+\epsilon_0)\cD\big)\rightarrow \cC$.
    \end{enumerate}   
\end{defn}

The labels (Sing) and (Stab) stand for ``singularity condition" and ``stability condition", (Qmap) stands for ``quasimap condition", and (LCY) stands for ``log Calabi--Yau".

\begin{remark}\label{rem:stable-LCY}
\begin{enumerate}
    \item In condition (Stab) of \Cref{def_CY_fibration}, the exponent $2$ in $\epsilon^2$ is not essential. It can be replaced by any positive continuous function $g(\epsilon)$ such that $g(\epsilon)=o(\epsilon)$ as $\epsilon\to 0$.
    \item If the base scheme $B$ is not specified, then by default we assume that $B=\Spec \bC$. In this case, we simply refer to $(\cX,c\cD)\to \cC$ as a stable log Calabi--Yau fibration.
    \item Since $c$ is uniquely determined by the morphism $(\cX,\cD)\to \cC$, we will sometimes simply refer to $(\cX,\cD)\to \cC$, or  $\bigl(\cX,(c+\epsilon)\cD\bigr)\to \cC$ for $0<\epsilon\ll 1$, as a stable log Calabi--Yau fibration.
    \item The morphism $(\cX,c\cD)\to B$ is locally stable from \cite[Definition-Theorem 4.7]{Kol23}, as $K_{\cX/B} + c\cD$ is $\bQ$-Cartier, $\cD$ is flat over $B$ and the fibers of $(\cX,c\cD)\to B$ are semi-log canonical.
    \item An $f$-fiber over $p\in \cU$, denoted by $(\cX_p,c\cD_p)$ can be viewed as a polarized Calabi--Yau pair $(\cX_p,c\cD_p;\cD_p)$. By the universal property of $\PCY$ (cf. \Cref{defn:moduli-stable-CY}), the family of polarized Calabi--Yau pairs $(\cX,c\cD;\cD)|_{\cU}\to \cU$ induces a natural morphism $\cU\to \PCY$. We will use this morphism throughout the paper. As explained after \Cref{thm:stable-minimal-model}, we do not distinguish between the moduli space of polarized Calabi--Yau pairs and the corresponding KSBA moduli space. Accordingly, we simply say that the morphism $\cU\to \PCY$ is induced by the family
$\big(\cX,(c+\epsilon)\cD\big)|_{\cU}\to \cU$.
\item Although it is now standard to use K-flat Mumford divisors when defining families of pairs, we do not do so here because $\cX$ is a stack and the notion of K-flatness requires a \textit{projective} family. Instead, we impose the stronger condition that $\cD$ is flat with $S_1$ fibers. The only issue to verify is that this condition is preserved under taking limits, so that the valuative criterion for properness remains valid. More precisely, let $(R,\eta,\xi)$ be a DVR and let $(\cX,c\cD)\to \Spec R$ be a family. Then by \cite[Proposition~5.1]{FI24}, the scheme-theoretic closure of $\cD_\eta$ in $\cX$ is flat over $R$ and has $S_1$ fibers. Indeed, the properties of being flat and having $S_1$ fibers can be checked \'etale locally on $\cX$, where \cite[Proposition~5.1]{FI24} applies.
\end{enumerate}
\end{remark}

\begin{defn}
Let $f\colon (\cX,c\cD)\xrightarrow{\pi}\cC\xrightarrow{g} B$ and $f'\colon (\cX',c\cD')\xrightarrow{\pi'}\cC'\xrightarrow{g'} B'$
be two stable log Calabi--Yau fibrations over reduced schemes of finite type. A morphism
\[\big[f\colon(\cX,c\cD)\xrightarrow{\pi}\cC\xrightarrow{g} B\big]\ \longrightarrow \ \big[f'\colon(\cX',c\cD')\xrightarrow{\pi'}\cC'\xrightarrow{g'} B'\big]\]
consists of morphisms
\[
\phi\colon B\to B',\qquad
\sigma\colon \cC\to \cC',\qquad
\tau\colon \cX\to \cX'
\]
such that $\tau^*\cD'=\cD$ and such that the diagram
\[
\xymatrix{
\cX \ar[d]_{\tau} \ar[r]^{\pi}
& \cC \ar[d]_{\sigma} \ar[r]^{g}
& B \ar[d]^{\phi} \\
\cX' \ar[r]^{\pi'}
& \cC' \ar[r]^{g'}
& B'
}
\]
commutes and both squares are Cartesian.
\end{defn}

\begin{defn}[Admissible quadruple]\label{def_num_data}
    Let $\Phi\coloneqq \big(g,c, v(t),V(t_1,t_2)\big)$ be a quadruple consisting of
    \begin{itemize}
        \item an integer $g\in \mathbb{N}$ and a rational number $c\in [0,1)\cap \bQ$, and
        \item two rational polynomials $v(t)\in \bQ[t]$ and $V(t_1,t_2)\in \bQ[t_1,t_2]$.
    \end{itemize}
    A stable log Calabi--Yau fibration $(\cX,c\cD)\to \cC$ \emph{has numerical invariant $\Phi$} if
    \begin{enumerate}
    \item $\cC$ has genus $g$;
    \item the fiber $(\cX_p,c\cD_p)$ over a general point $p\in \cC$ satisfies \[ \left(K_{\cX_p} + (c+t)\cD_p \right)^{\dim \cX_p}\ =\ v(t) \] and \[\left(K_{\cX/\cC}+(c+t_1)\cD  + t_2\cX_p\right)^{\dim \cX} \ =\ V(t_1,t_2) .\]
        
    \end{enumerate}
    For a given quadruple $\Phi$, if there exists a stable log Calabi--Yau fibration with numerical invariant $\Phi$, then we call $\Phi$ an \emph{admissible quadruple}. 
\end{defn}

\begin{remark}
Given an admissible quadruple $\Phi$, it follows from \cite[Theorem 1.14]{Bir22} and \cite[Theorem 2]{KX20} that there exists a proper Deligne--Mumford stack $\PCY_{\Phi}$ whose closed points parametrize polarized Calabi--Yau pairs $(Z,(c+t)D_Z)$ of dimension $n-1$ and volume $v(t)$. Since the choice of $\Phi$ is fixed throughout this paper, we omit it from the notation and simply write $\PCY$ instead of $\PCY_{\Phi}$.
\end{remark}

\begin{defn}\label{defn:epsilon-coefficient}
    Given an admissible quadruple $\Phi$, there is an $\epsilon_0>0$ such that every pair $(Z,cD_Z + tD_Z)$ of $\PCY$ is KSBA-stable for $0<t\le \epsilon_0$. We call $\epsilon_0$ an \textit{$\epsilon$-coefficient for} $\Phi$.
 \end{defn} 

\begin{defn}[Index of a quadruple]\label{def_index_num_data}
Let $\Phi$ be an admissible quadruple with $\epsilon$-coefficient $\epsilon_0$, and let
\[
\big(\sZ^{\univ},(c+\epsilon_0)\sD^{\univ}\big)\xrightarrow{\pi}\PCY
\coloneqq \PCY_{\Phi}
\]
be the universal family. As before, we do not distinguish the polarized Calabi--Yau pair $(Z,cD_Z;D_Z)$ from the corresponding KSBA-stable pair $(Z,(c+\epsilon_0)D_Z)$. Define $N$ to be the smallest positive integer such that:
\begin{itemize}
    \item $N\bigl(K_{\sZ^{\univ}/\PCY} +(c+\epsilon_0)\sD^{\univ}\bigr)$ is Cartier;
    \item the $N$-th power of the Chow $\bQ$-line bundle associated to $\pi$ descends to a very ample line bundle on the coarse moduli space $\bf{PCY}$ of $\PCY$; and
    \item for every pair $(Z,(c+\epsilon_0)D)\in\PCY$, the order of $\Aut(Z,(c+\epsilon_0)D)$ divides $N$.
\end{itemize}
We call $N$ the \emph{index} of $\Phi$.
\end{defn}

We now record several examples of stable log Calabi--Yau fibrations that the reader may keep in mind.

\begin{exa}[Elliptic surfaces]
Let $f\colon X\to C$ be a smooth minimal elliptic surface of non-negative Kodaira dimension with integral fibers, and let $D$ be a multisection of $f$. Then $f\colon (X,D)\to C$ is a stable log Calabi--Yau fibration. A particularly important case is when $X$ is an elliptic K3 surface.
\end{exa}

\begin{exa}[Abelian fibrations]
The previous example admits a higher-dimensional analogue. Let $f\colon X\to C$ be a flat and proper morphism with irreducible fibers whose generic fiber is a principally polarized abelian variety, and let $D\subseteq X$ be a divisor whose restriction to each fiber is a theta divisor. If $(X,\epsilon D)\to C$ has maximal variation for all sufficiently small $\epsilon>0$, then $f\colon (X,D)\to C$ is a stable log Calabi--Yau fibration.
\end{exa}

\begin{exa}[Minimal models of Kodaira dimension one]
Let $X$ be a good minimal model of Kodaira dimension $1$, and let $f\colon X\to C$ be its ample model morphism. If $D\subseteq X$ is any $f$-ample $\bQ$-Cartier divisor, then $f\colon (X,D)\to C$ is a stable log Calabi--Yau fibration.
\end{exa}

\begin{exa}[Mori fiber spaces]
Let $\bP\cE\to C$ be a projective bundle over $C$ with fibers isomorphic to $\bP^n$, and let $D\subseteq \bP\cE$ be a divisor, flat over $C$, whose restriction to each fiber has degree $d>n+1$ and is of maximal variation. If the pair $\left(\bP\cE,\frac{n+1}{d}D\right)$ has mild singularities, then $\left(\bP\cE,\frac{n+1}{d}D\right)\to C$ is a stable log Calabi--Yau fibration.
\end{exa}

For technical reasons, and because it encompasses most applications and all of the examples discussed above, we will primarily focus on stable log Calabi--Yau fibrations that admit a degeneration to a mildly singular stable log Calabi--Yau fibration. This motivates the following definition.
\begin{defn}[Non-degenerate (type)]\label{defn:non-degenerate-type}
A stable log Calabi--Yau fibration $(\cX,c\cD)\to \cC$ is called
\emph{non-degenerate} if $\cX$ is normal, which implies that $\cC$ is a smooth scheme, and
$\cD$ has no vertical components. We say that $(\cX,c\cD)\to \cC$ is of
\emph{non-degenerate type} if it can be deformed to a non-degenerate stable log Calabi--Yau fibration. More precisely, there exist a DVR $(R,\eta,\xi)$ and a
stable log Calabi--Yau fibration
\[
(\cX_R,c\cD_R) \ \longrightarrow \  \cC_R \ \longrightarrow \  \Spec R
\]
such that
\begin{enumerate}
    \item the special fiber
    \((\cX_\xi,c\cD_\xi)\to \cC_\xi\) is isomorphic to
    \((\cX,c\cD)\to \cC\),
    and
    \item the geometric generic fiber
    \((\cX_{\bar\eta},c\cD_{\bar\eta})\to \cC_{\bar\eta}
    \) is non-degenerate.
\end{enumerate}
\end{defn}

\subsection{Properties of stable log Calabi--Yau fibrations}

In this subsection, we establish several preliminary results on stable log Calabi--Yau fibrations that will be used throughout the paper.

\begin{lem}\label{lem:uniquely-determined}
    Given a fibration $(X,cD)\to C$, there exist only finitely many stable log Calabi--Yau fibrations $(\cX,c\cD)\to \cC$ with coarse space $(X,cD)\to C$.
\end{lem}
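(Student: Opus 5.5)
Given the coarse fibration $(X,cD)\to C$, a stable log Calabi--Yau fibration with this coarse space is determined by two pieces of data: the twisted curve $\cC$ with coarse space $C$, and the stack $\cX$ over $\cC$. The plan is to show that each is determined up to finitely many choices. Since $\cC\to C$ is an isomorphism away from the nodes, the only freedom in $\cC$ is the stabilizer order $n$ at each node together with the balanced local structure $[\Spec k[x,y]/(xy)/\bmu_n]$. The set of nodes is finite, so it suffices to bound each $n$.

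\textbf{Step 1: fixing the open locus and the morphism to $\PCY$.} By (Qmap), over $\cU$ the family $(\cX,(c+\epsilon_0)\cD)|_\cU$ is KSBA-stable. On the open locus $U^\circ\subseteq C$ of smooth points over which $X\to C$ is already KSBA-stable (which is determined by the coarse data), $\cX=X$. This gives a morphism $U^\circ\to\PCY$, which is in particular determined by $(X,cD)\to C$.

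\textbf{Step 2: bounding the stabilizers at the nodes.} Near a node $q$, consider each branch $R$ of $C$ through $q$, and let $\cR$ be the corresponding branch of $\cC$, which is locally the root stack $\sqrt[n]{(R,q)}$. Since $\cX\to\cC$ is representable and the family over $\cR$ near $q$ is KSBA-stable, the induced morphism $\cR\to\PCY$ is representable. By properness of $\PCY$ and \cite[Theorem 3.1]{BV24}, the rational map $R\dashrightarrow\PCY$ extends over a unique minimal root stack $\sqrt[n_R]{(R,q)}$, on which the map is representable. Any other representable extension factors through this one, so $n_R\mid n$. Representability of $\cR\to\PCY$ then forces $n=n_R$, since the stabilizer $\bmu_n$ must inject into $\Aut$ of the fiber, and the kernel of $\bmu_n\to\Aut$ is $\bmu_{n/n_R}$. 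In any case $n$ divides the index $N$ of \Cref{def_index_num_data}, which bounds automorphism orders. Hence there are finitely many choices of $n$ at each node, and there are finitely many balanced actions (choices of $k$ coprime to $n$, with $k=-1$ forced). This is the step I expect to be the main obstacle: one has to make precise that representability of $\cX\to\cC$ makes $\cR\to\PCY$ representable, and to invoke the uniqueness of the minimal root.

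\textbf{Step 3: uniqueness of $\cX$ given $\cC$.} Over $\cU$, the family is the pullback of the universal family via $\cU\to\PCY$. This morphism is determined by its restriction to the dense open $U^\circ$, since $\PCY$ is separated and $\cU$ is reduced; so $\cX|_\cU$ is determined up to isomorphism. Over the finitely many smooth points outside $\cU$, $\cC=C$ and $\cX=X$ there. Gluing, $\cX$ is determined by $\cC$. An alternative is to take $\cX$ to be the normalization of $X\times_C\cC$ in its function field (or, via $S_2$, the relative $\Proj$ of the section ring of $K+(c+\epsilon_0)D$), which is determined by $\cC$. Combining Steps 1–3 gives finiteness.
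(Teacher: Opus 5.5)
Your Steps 1 and 2 follow the paper's proof: reduce to the nodes, and on each branch use \cite[Theorem 3.1]{BV24} to pin down the unique root stack, hence $\cC$.

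\textbf{The gap is in Step 3.} There you conclude that $\cX$ is \emph{uniquely} determined by $\cC$. The principle you invoke is that a morphism from a reduced source to a separated target is determined by its restriction to a dense open. This holds for targets that are schemes or algebraic spaces. It fails for a Deligne--Mumford stack such as $\PCY$ at non-normal points of the source, and $\cU$ contains the twisted nodes.

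What does hold is the following. On each branch, which is a root stack of a smooth curve and hence normal, a $2$-isomorphism given on a dense open extends uniquely. This is because $\mathrm{Isom}$ is finite over the base, and a finite birational morphism onto a normal base is an isomorphism.

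Near a twisted node, however, $\cX$ is obtained by gluing the two branch families along the fibre over the gerbe point. Regluing by a $\bmu_n$-equivariant automorphism of that fibre that is not induced by automorphisms of the branch families gives a new family. It agrees with the original away from the node, but need not be isomorphic to it. A toy case: morphisms from a nodal cubic to $BG$, with $G$ finite, all restrict to the trivial torsor on the smooth locus, yet they are classified by conjugacy classes in $G$.

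Fixing the coarse space does not by itself remove this ambiguity. It only fixes the induced gluing of the quotients by $\bmu_n$.

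Your fallbacks do not repair this either:
\begin{itemize}
\item The normalization of $X\times_C\cC$ is normal, while $\cX$ is only demi-normal. Normalizing cuts $\cX$ along the fibre over the node.
\item The relative $\Proj$ needs the graded $\cO_{\cC}$-algebra. The coarse data determines only its pushforward to $C$, i.e.\ the $\bmu_n$-invariant part.
\end{itemize}

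\textbf{The fix} is exactly the paper's last step: conclude finiteness rather than uniqueness. The branch families are determined. The gluing at each of the finitely many nodes ranges over a set controlled by the automorphism group of the stable Calabi--Yau pair over that node. That group is finite because $\PCY$ is Deligne--Mumford. This is precisely why the lemma asserts ``finitely many''.

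\textbf{A further remark on Step 2.} You flag as the main obstacle showing that representability of $\cX\to\cC$ implies representability of $\cR\to\PCY$. This implication is false. Suppose $X\to C$ is a stable log Calabi--Yau fibration over a nodal scheme $C$, and $\cC\to C$ puts a balanced $\bmu_k$ at a node. Then for every $k$:
\begin{itemize}
\item $X\times_C\cC\to\cC$ is representable;
\item it satisfies all four conditions;
\item it has coarse space $X$.
\end{itemize}
So representability of the classifying map $\cU\to\PCY$ must be taken as part of the setup, not derived. The paper uses it tacitly, and imposes it in the moduli construction via representability of $\cC\to\HH$.
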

\begin{proof}
    We may assume that such a stable log Calabi--Yau fibration $(\cX,c\cD)\to \cC$ exists. Since the stackiness of $(\cX,c\cD)\to \cC$ is concentrated only over the twisted nodes of $\cC$, it suffices to show that, over a node of $C$, the stable log Calabi--Yau fibration is determined by its coarse space up to finitely many choices. For the normalization $R$ of every irreducible component of $C$, there is an induced rational map $R\dashrightarrow \PCY$ to the moduli of stable Calabi--Yau pairs parametrizing fibers, which may fail to be defined over a node $p\in C$. By \cite[Theorem 3.1]{BV24}, there is a unique root stack $\cR\to R$ and a unique representable extension over $p$ of the rational map $R\dashrightarrow  \PCY$. It follows that there exists a unique twisted curve $\cC$ with coarse moduli space $C$ and, by the finiteness of the automorphism group of stable Calabi--Yau pairs, finitely many KSBA-stable families
    \[
    (\cX,(c+\epsilon)\cD)\to \cC
    \]
    defined over a neighborhood of $p\in \cC$, whose coarse space is $(X,(c+\epsilon)D)\to C$.
\end{proof}

\begin{prop}\label{prop_descent_KX_D}
    Let $(\cX,c\cD)\stackrel{\pi}{\rightarrow} \cC\to B$ be a stable log Calabi--Yau fibration over a reduced
    scheme $B$ of finite type, and let $(X,cD)\to C\to B$ be its coarse space with $\rho\colon \cX\to X$ the coarse space morphism. Then
    \begin{enumerate}
        \item[\textup{(1)}] for every sufficiently divisible $m>0$, the divisors $mK_{\cX/B}$
        and $m\cD$ descend to $mK_{X/B}$ and $mD$ respectively, that is
        \[
        \cO_{\cX}(mK_{\cX/B})\simeq \rho^*\cO_X(mK_{X/B})
        \qquad\text{and}\qquad
        \cO_{\cX}(m\cD)\simeq \rho^*\cO_X(mD);
        \]
        
        \item[\textup{(2)}]  $K_{X/B} + cD$ is $\bQ$-linearly equivalent to the pull-back of a line bundle on $C$.
    \end{enumerate}
\end{prop}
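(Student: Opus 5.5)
The plan is to reduce (1) to a local statement about stabilizers, and then deduce (2) from (1) together with (LCY).

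For (1), a line bundle $\cM$ on a Deligne--Mumford stack $\cX$ with coarse space $\rho\colon\cX\to X$ descends if and only if, at every geometric point $x$, the stabilizer $G_x$ acts trivially on the fiber $\cM|_x$. A reflexive rank-one sheaf $\cO_\cX(m\Delta)$ descends to $\cO_X(m\rho_*\Delta)$ under the same condition, and this can be checked \'etale locally. So the first step is to locate the stack points. The morphism $\pi\colon\cX\to\cC$ is representable, so the stabilizer of $x\in\cX$ injects into the stabilizer of $\pi(x)\in\cC$. By the definition of a twisted curve, the points of $\cC$ with nontrivial stabilizer lie over the twisted nodes of the fibers of $\cC\to B$. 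Away from these nodes, $\cX\to X$ is an isomorphism and there is nothing to prove. Near a twisted node, \'etale locally on $C$, the stack $\cC$ has the form $[\Spec(\cO[x,y]/(xy-t))/\bmu_n]$, and $\cX$ is locally $[V/\bmu_n]$ for a scheme $V$ with a $\bmu_n$-action. Since every stabilizer divides the node orders, and these are bounded by the index $N$ of \Cref{def_index_num_data}, the order of every stabilizer divides a fixed integer $N'$ (locally on $B$, and globally by quasi-compactness). The stabilizer $G_x\subseteq \bmu_{N'}$ acts on the fiber of the line bundle $\cO_\cX(N''K_{\cX/B})$ (resp.\ $\cO_\cX(N''\cD)$) by a character of order dividing $N'$, where $N''$ is a Cartier index. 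Taking $m$ divisible by $N'N''$ kills all of these characters, so both bundles descend.

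The step I expect to be the main obstacle is the identification $\rho_*$-descent $= \cO_X(mK_{X/B})$. One needs $K_{\cX/B}=\rho^*K_{X/B}$ as Weil divisors, that is, $\rho$ has no ramification divisor. I will argue this from the explicit local structure over a balanced node: $\bmu_n$ acts with $xy$ invariant, so it acts freely away from the preimage of the node locus of $\cC$. Because $\pi$ is flat with $S_1$ fibers, that preimage has codimension $\ge 2$ in each fiber, and it contains no divisor of $\cX$ over $B$. Therefore $\rho$ is \'etale in codimension one. On the open set where $\rho$ is \'etale, the canonical sheaves and $\cD$ agree with the pullbacks. Both sides are $S_2$ reflexive sheaves, because $\cX_b$ is demi-normal and $\cX$ is flat over the reduced base. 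So the isomorphism extends from a big open set.

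For (2), fix $m$ as in (1), also divisible enough for (LCY), so that $\cO_\cX(m(K_{\cX/B}+c\cD))\simeq\pi^*\cL$. Enlarging $m$ once more, $\cL^{\otimes k}$ descends to a line bundle $L$ on $C$, by the same stabilizer-order argument applied to $\cC\to C$. Then
\[
\rho^*\cO_X\big(km(K_{X/B}+cD)\big)\simeq\pi^*\cL^{\otimes k}\simeq \rho^*\psi^*L .
\]
Since $\rho_*\cO_\cX=\cO_X$ and pullback along a coarse moduli map is fully faithful on descended line bundles (by the projection formula), this gives $\cO_X(km(K_{X/B}+cD))\simeq\psi^*L$, which proves (2).
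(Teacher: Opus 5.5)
There is a genuine gap, and it sits exactly at the step you flagged as the main obstacle. You claim that the preimage of the nodal locus of $\cC$ has codimension at least $2$ in each fiber $\cX_b$, so that $\rho$ is \'etale in codimension one. This is false, and flatness in fact gives the opposite. The map $\pi_b\colon\cX_b\to\cC_b$ is flat onto a curve, so the fiber $(\cX_b)_p$ over a twisted node $p$ has pure dimension $\dim\cX_b-1$. It is therefore a divisor in $\cX_b$, contained in the double locus of the demi-normal stack $\cX_b$. At the generic point $\xi$ of a component of this fiber, $\cX_b$ is \'etale locally of the form $[\Spec(K(\xi)[x,y]/(xy))/\bmu_n]$. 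The coarse map is $x\mapsto x^n$, $y\mapsto y^n$ on the two branches, so whenever the stabilizer there is nontrivial, $\rho$ is ramified along a divisor. Already for $B=\Spec\bC$ your ``big open set'' is not big. A naive Riemann--Hurwitz count along such a component $E$ would even predict an extra term $(n-1)E$. So your argument establishes neither the identification of the descent of $\cO_{\cX}(mK_{\cX/B})$ with $\cO_X(mK_{X/B})$ nor the corresponding statement for $\cD$.

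The missing idea is the local computation the paper performs at exactly these points. The dualizing sheaf of the node is generated by the residue form $\frac{\operatorname{d}x\wedge\operatorname{d}y}{xy}$ (that is, $\operatorname{d}x/x$ on one branch). This form is $\bmu_n$-invariant and descends to a unit multiple of $\frac{\operatorname{d}x^n\wedge\operatorname{d}y^n}{x^ny^n}$. Equivalently, on the normalization the ramification lies along the conductor $\Delta$, and log Riemann--Hurwitz shows that $K_{X^\nu}+\Delta_X$ pulls back to $K_{\cX^\nu}+\Delta_{\cX}$. This gives agreement of $\omega_{\cX}$ and $\rho^*\omega_X$ at the generic points of the fibers over twisted nodes. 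Only then does the $S_2$ extension go through, and it must be from an open set containing all codimension-one points of every fiber. The paper arranges this by restricting to the Gorenstein loci and reducing to $B=\Spec k$ via duality and base change.

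For $\cD$ you additionally need condition (Qmap), which guarantees that $\cD$ contains none of these generic points. If $\cD$ contained such a component $E$ with ramification index $e$, then $\rho^*D$ would contain $E$ with multiplicity $e$, and $\cO_{\cX}(m\cD)\not\simeq\rho^*\cO_X(mD)$. Your proposal never uses (Qmap) here, which is a symptom of the same gap. Your deduction of (2) from (1) is fine.
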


\begin{remark}\label{rem:weaken-the-conditions}
    In fact, as we will see in the proof below, the assumptions can be considerably weakened: the fibration $\pi$ need not satisfy condition (Stab) in \Cref{def_CY_fibration}, and, in (1), it need not satisfy conditions (LCY) and (Sing) either.
\end{remark}

\begin{proof}[Proof of \Cref{prop_descent_KX_D}]To prove (1), consider $\cU\subseteq \cX$, the largest open substack of $\cX$ with coarse moduli space $U\subseteq X$, such that the fibers of $\cU\to B$ and $U\to B$ are Gorenstein. Denoting by $\rho_\cU\colon \cU\to U$ the coarse moduli space map, it suffices to prove that
\[
\rho_{\cU}^*\omega_{U/B}\ \simeq \ \omega_{\cU/B},
\]
since, for every $b\in B$, the fibers $\cU_b$ and $U_b$ contain all codimension-one points of $\cX_b$ and $X_b$, respectively. In particular, we may replace $\cX$ by the locus
where the fibers of $\cX\to B$ and $X\to B$ are Gorenstein; equivalently, we
may replace the properness assumption on $\cX\to B$ with the assumption that
both $\cX\to B$ and $X\to B$ have Gorenstein fibers.
We first treat the statement on $K_{\cX/B}$.
Pick $m>0$ divisible enough that $\cO_\cX(mK_{\cX/B})$ is a line bundle
descending to $X$. By Grothendieck duality
\cite[Theorem 1.17]{nironi2008grothendieck}, there is a natural map
\[
\rho_*\omega_{\cX/B}\to \omega_{X/B}.
\]
It is straightforward to check that this map commutes with base change, since we have
replaced $\cX$ and $X$ by their Gorenstein loci over $B$. It thus suffices to
show that $\rho_*\omega_{\cX/B}\to \omega_{X/B}$ is an isomorphism in the case
$B=\spec k$.

For $B=\spec k$, the result follows from a local computation: by
\cite[Proposition 2.2]{olsson2007log}, a twisted node $n\in \cX$ is locally
$[\spec(k[x,y]/xy)/\bmu_n]$, with $\bmu_n$ acting by $\zeta * x = \zeta x$ and
$\zeta * y = \zeta^{r} y$ for some $r$ coprime to $n$. Hence, if we take the
reduced fiber over $n$ and let $\xi$ be the generic point of one of its
irreducible components, then $\cX$ is étale-locally of the form
$[\spec(K(\xi)[x,y]/xy)/\bmu_n]$, and the coarse moduli space map is induced
by
\[
\spec(K(\xi)[x,y]/xy)\to \spec(K(\xi)[x^n,y^n]/x^ny^n).
\]
A generator of the dualizing sheaf on $\spec(K(\xi)[x,y]/xy)$ is
$\frac{\operatorname{d}x\wedge\operatorname{d}y}{xy}$, on which $\bmu_n$ acts
trivially, while a generator of the dualizing sheaf of $\spec(K(\xi)[x^n,y^n]/x^ny^n)$ is
$\frac{\operatorname{d}x^n\wedge\operatorname{d}y^n}{x^ny^n}$. Since
$\frac{\operatorname{d}x\wedge\operatorname{d}y}{xy}$ is $\bmu_n$-invariant,
it descends, and one checks that it descends to a nonzero multiple of the
latter generator.

The statement on $\cO_{\cX}(m\cD)$ is simpler: again it suffices to check
that, for $m$ divisible enough, $\cO_{\cX}(m\cD)$ descends to $\cO_X(mD)$ in
codimension one of each fiber. This holds away from the twisted nodes, since
the coarse moduli space map is an isomorphism on that locus. It also holds at the
generic points of the fibers over the twisted nodes, since, by condition
(Qmap) of \Cref{def_CY_fibration}, $\cD$ does not contain those points.

For (2), descending the relation $m(K_{\cX/B}+c\cD)\sim\pi^*\cL$ of condition
(LCY) along $\rho$ via (1) shows that a sufficiently divisible multiple of
$K_{X/B}+cD$ is pulled back from a line bundle on $C$, so $(X,cD)\to C$ is a
log Calabi--Yau fibration.
\end{proof}

\begin{lemma}\label{lemma_ampleness_of_omega_C_plus_lambda}
    Let $(\cX,c\cD)\to \cC$ be a stable log Calabi--Yau fibration, and let $\cC\to C$ be its
    coarse space. Then $(\omega_\cC\otimes \lambda_{\Cho}^{\otimes n})^{\otimes M}$ descends
    to an ample line bundle on $C$ for any sufficiently divisible $M,n>0$.
\end{lemma}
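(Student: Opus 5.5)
The plan is to separate the statement into a descent part and an ampleness part. Since $C$ is a connected nodal curve, ampleness of a line bundle on $C$ reduces to positivity of its degree on every irreducible component.

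\textbf{Descent.} Away from the twisted nodes, $\cC\to C$ is an isomorphism, so only the stabilizers $\bmu_n$ at the (balanced) twisted nodes matter.
\begin{itemize}
\item For $\omega_\cC$, the local computation in the proof of \Cref{prop_descent_KX_D}, now done on the curve itself, applies. The generator $\frac{dx\wedge dy}{xy}$ of the dualizing sheaf of $\spec k[x,y]/(xy)$ is $\bmu_n$-invariant for the balanced action $\zeta*x=\zeta x$, $\zeta*y=\zeta^{-1}y$. Hence $\omega_\cC$ is the pullback of $\omega_C$.
\item The stabilizers act on the fibers of $\lambda_{\Cho}$ through characters of finite order bounded by the stabilizer orders. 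So $\lambda_{\Cho}^{\otimes M}$ descends for $M$ divisible by all of them; in the setting of \Cref{def_index_num_data}, $N$ suffices.
\end{itemize}
Therefore $(\omega_\cC\otimes\lambda_{\Cho}^{\otimes n})^{\otimes M}$ is the pullback of $\omega_C^{\otimes M}\otimes\Lambda_{\Cho}^{\otimes nM}$, where $\Lambda_{\Cho}$ is the descended $\bQ$-line bundle.

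\textbf{Nonnegativity of $\Lambda_{\Cho}$.} Let $R$ be an irreducible component of $C$ and $R^\nu$ its normalization. Then $\deg \omega_C|_R = 2g(R^\nu)-2+\#\{\text{nodes of } C \text{ on } R\}$. I would pull the family back to $R^\nu$ and view it as a weak KSBA-stable family for the coefficient $c+\epsilon_0$; by (Qmap) it is KSBA-stable away from finitely many smooth points. \Cref{cor_degree_of_chow_minimized_by_pull_back_from_KSBA} then gives
\[
\Lambda_{\Cho}|_{R^\nu}^{\otimes N}\simeq g_R^*L_{\Cho}^{\otimes N}\otimes\cO\Big(\sum a_ip_i\Big),
\]
with $a_i>0$ and $L_{\Cho}$ ample on $\mathbf{PCY}$. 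This shows $\deg\Lambda_{\Cho}|_R\ge 0$. Moreover, equality holds exactly when $g_R$ is constant and there are no bad points $p_i$, that is, when the family over $R$ is KSBA-stable and isotrivial.

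\textbf{The degenerate components.} This is the main step. I must show that if $\deg\Lambda_{\Cho}|_R=0$ then $\deg\omega_C|_R>0$. In that case, after a finite base change $R'\to R^\nu$, the family becomes a product $(F\times R',(c+\epsilon_0)D_F\times R')$ (the family is isotrivial and stable, with finite automorphism groups). In particular, the moduli part of the canonical bundle formula is torsion over $R$. The discriminant vanishes at smooth points because all fibers over $R$ are stable fibers of a product, which is where (Sing) enters. At the nodes, the conductor contributes exactly the node points. Consequently $\cL|_R$ in (LCY) is numerically $\frac1m(\omega_C|_R)$, up to the factor coming from $K_{X/B}$ versus $K_X$, which for $B=\spec\bC$ coincide.

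Now restrict the ample class of (Stab) to $X_R$. Since $\deg \Lambda_{\Cho}|_R=0$, this restriction is numerically $\psi^*(\omega_C|_R)+\epsilon^2 D$. On a product $F\times R'$, ampleness of this class forces the base term to have positive degree: intersect with $\epsilon^2$-small perturbations, or test against a horizontal curve $\{z\}\times R'$ with $z\notin D_F$. Hence $\deg\omega_C|_R>0$. I expect the delicate point to be making the canonical bundle formula computation precise at the twisted nodes and at the finite base change.

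\textbf{Conclusion.} There are finitely many components $R$ of $C$. On those with $\deg\Lambda_{\Cho}|_R>0$, choose $n$ large enough that $\deg\omega_C|_R+n\deg\Lambda_{\Cho}|_R>0$. On the others, $\deg\omega_C|_R>0$ by the previous step. So every sufficiently large $n$ works, and the descended bundle has positive degree on every component of $C$, hence is ample.
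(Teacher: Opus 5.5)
Your proposal is correct and follows essentially the paper's argument. Nonnegativity of $\Lambda_{\Cho}$ on each component comes from \Cref{cor_degree_of_chow_minimized_by_pull_back_from_KSBA}; vanishing degree forces the family over that component to be KSBA-stable and isotrivial; and (Stab) then rules such components out unless $\omega_C$ is positive there.

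The only difference is organizational. The paper first reduces to rational tails and bridges and derives a contradiction with (Stab) there. You instead show uniformly that $\deg\Lambda_{\Cho}|_R=0$ forces $\deg\omega_C|_R>0$, via $(K_X+cD)|_{X_R}\equiv\psi^*\omega_C|_R$ and a horizontal test curve. This spells out the step the paper leaves implicit, and it also covers irreducible $C$ of genus at most one. One harmless slip: the relation is $\cL|_R\equiv m\,\omega_C|_R$, not $\tfrac1m\,\omega_C|_R$, which does not affect the sign argument.
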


\begin{proof}
    By \Cref{cor_degree_of_chow_minimized_by_pull_back_from_KSBA} and the ampleness of the Chow line bundle (or equivalently, the CM line bundle) $L_{\Cho}$ on the KSBA-moduli space of stable pairs (cf. \cite[Theorem 1.1]{PX17}), the line bundle
    $\lambda_{\Cho}$ has non-negative degree on every irreducible component of $\cC$, and its
    degree on a component $\cR$ vanishes if and only if every fiber of $\big(\cX,(c+\epsilon)\cD\big)|_{\cR}\rightarrow \cR$ is KSBA-stable. Since $\omega_\cC$ is ample on every component of $\cC$ whose coarse space is
    neither a rational tail nor a rational bridge, it suffices to show that $\lambda_{\Cho}$
    has strictly positive degree on each component $\cR$ whose coarse space is a rational tail
    or a rational bridge.

    Fix such a component $\cR$. Then $\cR$ is a root stack of $\bP^1$ at at most two points, so
    there is a (possibly ramified) cover $\bP^1\to\cR$; let $(Y,(c+\epsilon_0)D_Y)$ be the
    pullback family. Suppose for contradiction that $\deg_\cR\lambda_{\Cho}=0$. Then every
    fiber of $(Y,(c+\epsilon_0)D_Y)\to\bP^1$ is KSBA-stable, so the induced
    classifying map $\bP^1\to\PCY$ has image of dimension zero; as $\bP^1$ is simply
    connected, this map is constant and the family $(Y,(c+\epsilon_0)D_Y)$ is isotrivial,
    hence a product after a finite \'etale cover. This contradicts condition~(Stab) of
    \Cref{def_CY_fibration}, which forces $\lambda_{\Cho}$ to have positive degree on any
    such rational component. Therefore $\deg_\cR\lambda_{\Cho}>0$, and the claim follows.
\end{proof}

Let $\cC\rightarrow B$ be a flat family of proper twisted curves over an integral base scheme $B$ of finite type, let $\pi\colon\cX\to \cC$ be a projective surjective morphism with connected pure-dimensional fibers, and $\cD\subseteq \cX$ a $\bQ$-Cartier divisor flat over $B$. Let $c\in \bQ$ and assume that $(\cX,c\cD)\rightarrow B$ is a locally stable family. We define
\[
U\coloneqq\big\{
b\in B \ | \
\operatorname{slct}(\cX_b,c\cD_b;\cX_p)>0
\text{ for every }
p\in \cC_b^{\sm}
\big\}.
\]

\begin{lemma}\label{lemma_slct_positive_is_open}
The subset $U\subseteq B$ is constructible. Moreover, if $\pi$ is flat and the fibers of the morphism $(\cX,c\cD)\to \cC$ over the nodal locus of $\cC\to B$ are semi-log canonical, then $U$ is open, and in this case, there is a $\delta>0$ such that for every $b\in U$, $\slct(\cX_b,c\cD_b;\cX_p)>\delta$ for every $p\in \cC_b^{\sm}$.
\end{lemma}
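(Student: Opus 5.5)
We plan to reduce both assertions to the behaviour of log canonical thresholds in families, working étale locally on $\cX$. By definition, $b\notin U$ exactly when there is a smooth point $p\in\cC_b$ such that $(\cX_b,c\cD_b+t\cX_p)$ fails to be slc for every $t>0$. Equivalently, some log canonical centre of the slc pair $(\cX_b,c\cD_b)$ (including components of the double locus, i.e.\ lc centres of the normalization with its conductor) lies inside a fibre of $\pi_b$ over a smooth point of $\cC_b$.

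For constructibility, we first pass to a stratification $B=\bigsqcup B_i$ into locally closed smooth strata and restrict the family to each stratum; constructibility can be checked stratum by stratum. Over each $B_i$, after a finite étale base change if needed, we fix a simultaneous log resolution of the normalization of $(\cX,c\cD+\text{conductor})$ relative to $B_i$ such that every fibre is a log resolution of the corresponding fibre. This uses generic smoothness and the fact that $(\cX,c\cD)\to B$ is locally stable. Using the coarse space and the fact that the stackiness is tame and finite, this can be done on the coarse space $X$, or on an étale atlas. The lc centres of the fibres are then images of strata of divisors of discrepancy $-1$. These form finitely many families of subvarieties over $B_i$.

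The condition ``some such stratum maps into a point of $\cC_b^{\sm}$'' then reduces to whether the image in $\cC_b$ of the corresponding stratum is a point lying in the smooth locus. That is a constructible condition in $b$ by Chevalley's theorem applied to the finitely many morphisms from strata to $\cC_{B_i}$ together with the relative dimension of their images. Hence $U$ is constructible.

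For openness, it suffices to show that $U$ is stable under generization. So take a DVR $\Spec R\to B$ with closed point $b_0\in U$ and generic point $\eta$, and suppose there is a smooth point $p_\eta$ with $\slct(\cX_\eta,c\cD_\eta;\cX_{p_\eta})=0$. Let $p_0$ be the specialization of the closure of $p_\eta$. There are two cases.
\begin{itemize}
\item If $p_0$ is smooth, lower semicontinuity of lc thresholds in a locally stable family (via inversion of adjunction applied to $(\cX_R,c\cD_R+\cX_{b_0}+t\overline{\cX_{p_\eta}})$) gives $\slct$ at $p_0$ less than or equal to the limit of the thresholds at $p_\eta$, that is, $0$. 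This contradicts $b_0\in U$.
\item If $p_0$ is a node, the hypothesis that the fibres over nodes are slc provides the contradiction. Here the lc centre of $\cX_\eta$ inside $\cX_{p_\eta}$ would specialise to an lc centre of $\cX_{b_0}$ contained in $\cX_{p_0}$, a fibre over the node. Near the node, flatness of $\pi$ and the slc property of $\cX_{p_0}$ give, by inversion of adjunction, that $(\cX_{b_0},c\cD_{b_0}+\cX_{p_0})$ is slc near $\cX_{p_0}$. Since $\cX_{p_0}$ is Cartier locally (étale locally $\{xy=0\}$), the double locus over the node has lc centres that are exactly the components mapping to the node. A vertical lc centre of $(\cX_{b_0},c\cD_{b_0})$ over the node must then be one of these. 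This cannot be a limit of lc centres lying over smooth points $p_\eta\to p_0$ unless the centre over $p_\eta$ is itself non-klt of the same codimension; that in turn forces a double-locus divisor over $p_\eta$, contradicting flatness with demi-normal fibres.
\end{itemize}
Constructible plus stable under generization gives that $U$ is open.

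For the uniform $\delta$, we use the fact that $B$ is of finite type and that on each stratum $B_i\cap U$ of the simultaneous resolution above, $\slct$ at $\cX_p$ is computed by finitely many divisors. Thus $\slct(\cX_b,c\cD_b;\cX_p)$ is the minimum over finitely many divisors $E$ of $(1-c\,\ord_E\cD+\dots)/\ord_E\cX_p$, with multiplicities taking finitely many values as $(b,p)$ vary. So the function takes finitely many positive values on $U$, and we may take $\delta$ to be half of the minimum.

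The main obstacle is the nodal case of the specialization argument, i.e.\ controlling vertical lc centres that specialise into fibres over nodes. This is where the slc hypothesis over the nodal locus is used. If the argument above is too loose, we would instead pass to the normalization along the node, where each branch becomes a smooth point, and apply the smooth-point argument there using adjunction to the conductor.
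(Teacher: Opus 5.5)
Your constructibility argument and the smooth-point case of the specialization step match the paper: simultaneous normalization and log resolution over strata, then inversion of adjunction for $(\cX_R,c\cD_R+t\,\pi^*\overline{p}+\cX_{b_0})$ followed by restriction to the generic fibre. The genuine gap is the case where $\overline{p}$ specializes to a node $p_0$ of $\cC_{b_0}$, which is exactly where the slc hypothesis on nodal fibres must be used.

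Your key claim there is false: it is not true that the only lc centres of $(\cX_{b_0},c\cD_{b_0})$ contained in $\cX_{p_0}$ are the components of $\cX_{p_0}$. Near $p_0$, $\cX_{b_0}$ consists of two families $\cX^1,\cX^2$ over the branches, glued along $\cX_{p_0}$. So $\cX_{p_0}$ is the double locus, and the pair $(\cX_{b_0},c\cD_{b_0}+\cX_{p_0})$ you write down is not a well-defined slc pair. One must work with $(\cX^i,c\cD^i+\cX_{p_0})$ on the normalization. By adjunction, the lc centres of these pairs inside $\cX_{p_0}$ include every lc centre of the slc fibre $(\cX_{p_0},c\cD_{p_0})$ (e.g.\ its own double locus, since nodal fibres are typically non-normal), as well as intersections of horizontal lc centres with $\cX_{p_0}$. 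So a lower-dimensional lc centre $W_\eta\subseteq\cX_{p_\eta}$ could a priori specialize onto such a centre, and your dimension count does not exclude this.

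Your closing inference, that a double-locus divisor over $p_\eta$ contradicts flatness with demi-normal fibres, is also not valid. The map $\{uv=0\}\subseteq\mathbb{A}^3\to\mathbb{A}^1$, $t=u+v$, is flat with demi-normal source, yet its double locus lies in the fibre over the smooth point $t=0$; ruling this out is precisely what the lemma asserts. Your fallback of normalizing along the node does not work either. If the node is smoothed in $\cC_\eta$, then $\cC_R$ is already normal there. If the node persists, then on a branch $\cX_{p_0}$ already carries coefficient $1$ as conductor, so adding $\epsilon\,\cX_{p_0}$ destroys log canonicity.

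The missing idea, used in the paper, is to separate $\overline{p}$ from the node by blowing up. After passing to an étale neighbourhood, blow up $\cC_R$ over the node until the strict transform of $\overline{p}$ meets the special fibre in a smooth point $q$ of an exceptional curve, and set $\cX'_R=\cX_R\times_{\cC_R}\cC'_R$. Over the exceptional curves this family is the product of the slc pair $(\cX_{p_0},c\cD_{p_0})$ with the curve. Hence it stays locally stable there and $\slct(\cX'_{b_0},c\cD'_{b_0};\cX'_q)>0$. Your smooth-point argument then applies verbatim and gives $\slct(\cX_\eta,c\cD_\eta;\cX_{p_\eta})>0$.

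A smaller issue concerns $\delta$. Divisors on a resolution of $(\cX,c\cD)$ alone compute $\slct$ along $\cX_p$ only if that resolution is also a log resolution of $\cX_p$, and $p$ ranges over a positive-dimensional set. The paper first stratifies so that the closed, quasi-finite locus $Z\subseteq C$ of non-locally-stable fibres is a union of sections. It then takes a simultaneous log resolution of $(X,cD+\pi^{-1}(Z))$. Off $Z$ the threshold is $1$ by inversion of adjunction.
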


\begin{proof}
We first prove that $U$ is constructible. This part of the proof only involves
the morphism $\pi$ over the smooth locus of $\cC$, where $\pi$ is schematic.
Hence, we replace all Deligne--Mumford stacks by their coarse moduli spaces and
use roman letters for the resulting objects.

Working one connected component at a time, we may assume that $B$ is connected
and reduced. By shrinking $B$ and arguing by Noetherian induction, we may
further assume that $B$ is smooth with generic point $\eta$, and that there
exists a simultaneous normalization and a simultaneous log resolution
\[
(Y,D_Y)\xrightarrow{\psi}(X^\nu,D^\nu+\Delta)
\xrightarrow{\nu}(X,cD),
\]
where $\Delta$ is the conductor divisor and $D_Y$ is the divisor for which
$\psi$ is crepant. Write $D_Y=P-N$, where $P,N\ge 0$ have no common components. Let
$\Gamma_1,\dots,\Gamma_r$ be the strata of $\lfloor P\rfloor$, and let $Z_i\subseteq C$ be the image of $\Gamma_i$ under the morphism $Y\to C$.
Since $Y\to C$ is proper, each $Z_i$ is closed. Suppose that, for some $i$, the generic fiber $(Z_i)_\eta$ is contained in
$C_\eta^{\sm}$ and has codimension one in $C_\eta$. After shrinking $B$, the
same holds fiberwise. Then there exists a log canonical center dominating a
divisor contained in the smooth locus of $C\to B$, and hence
$\slct(X_b,cD_b;(X_b)_p)=0$ for every point $p$ on this divisor. Therefore, the condition defining $U$ fails on $B$. We may therefore assume that, for every $i$, the generic fiber $(Z_i)_\eta$
either dominates an irreducible component of $C_\eta$ or is contained in the
singular locus of $C_\eta$. After shrinking $B$, the same property holds
fiberwise. Consequently, no log canonical center of
$(Y_b,D_{Y_b})$ dominates a divisor contained in $C_b^{\sm}$.
Equivalently, $\slct(X_b,cD_b;(X_b)_p)>0$ for the generic point $p$ of every
prime divisor contained in $C_b^{\sm}$. This proves that $U$ is constructible.

We now prove the second assertion by proving that $U$ is stable under generalization. Let $(R,\eta,\xi)$ be a DVR together with a morphism $\mathfrak t\colon \Spec R\to B$ such that
$\mathfrak t(\xi)\in U$, let \[(\cX_R,c\cD_R)\xrightarrow{\pi_R}\cC_R\to \Spec R\] be the induced family. After a finite base change, let $p\in \cC_\eta$ be the generic point of a prime divisor contained in $\cC_\eta^{\sm}$, and let $\overline p\subseteq \cC_R$ denote its closure. Since we will not use the properness of $\cC_R\to\Spec R$, we may replace $\cC_R$ by an \'etale neighborhood of the closed point of $\overline p$, which we continue to denote by $\cC_R$, and assume that it is a scheme. In other words, we may assume that $\cC_R\to\Spec R$ is a family of non-proper nodal curves.

If $\overline p$ meets the nodes of $\cC_\xi$, we may perform a sequence of blowups $\cC'_R\to\cC_R$ supported over the nodal locus so that the strict transform of $\overline p$ is contained in the smooth locus of
$\cC'_R\to\Spec R$. Set $\cX'_R:=\cX_R\times_{\cC_R}\cC'_R$. Then, from the assumption that the fibers of $(\cX,c\cD)\to \cC$ are slc over the nodal locus of $\cC$, the induced morphism $\pi'_R\colon(\cX'_R,c\cD'_R)\to\cC'_R$ remains locally stable over the exceptional divisors of $\cC_R'\to \cC_R$. Since $\mathfrak t(\xi)\in U$, every prime divisor contained in the smooth locus of $\cC'_\xi$ has positive semi-log canonical threshold. Hence, for sufficiently small $\epsilon>0$, the pair \[\big(\cX'_R,c\cD'_R+\epsilon(\pi'_R)^*\overline p+\cX'_\xi\big)\] is semi-log canonical in a neighborhood of the special fiber. Restricting to the generic fiber shows that
$(\cX_\eta,c\cD_\eta+\epsilon\cX_p)$ is semi-log canonical, and therefore $\slct(\cX_\eta,c\cD_\eta;\cX_p)>0$. Since $p$ was arbitrary, we conclude that $\eta\in U$. Thus $U$ is stable under generalization. Since $U$ is constructible and stable under generalization, it follows that $U$ is open.

The statement regarding $\delta$ is proved as follows. As above, since the
statement only involves the morphism $\pi$ over the smooth locus of $\cC$,
and $\cC$ is a scheme, we may replace all Deligne--Mumford stacks by their
coarse moduli spaces, and use roman letters for the resulting objects. We
proceed by Noetherian induction, so we may assume that $B$ is irreducible.
Since the sublocus $Z\subseteq C$ over which the fiber of $(X,cD)\to C$ is
not locally stable is closed and quasi-finite over $B$, after a
stratification and a finite étale base change we may assume that
$Z\to B$ is a union of sections, and that there exist a simultaneous
normalization and a simultaneous log resolution
\[
(Y,D_Y)\ \stackrel{\psi}{\longrightarrow}\ (X^\nu,D_{X^{\nu}})
\ \stackrel{\nu}{\longrightarrow}\ (X,cD+\pi^{-1}(Z))\ \longrightarrow \ (C,Z).
\]
It then follows that the minimum of the semi-log canonical thresholds of the
fibers $(X_b,cD_b)$ is locally constant as a function of $b\in B$, and the
desired statement follows.
\end{proof}

\begin{prop}\label{lem:stable-lcy-implies-slc}
Let $(\cX,c\cD)\to \cC\to B$ be a stable log Calabi--Yau fibration over a
reduced scheme $B$ of finite type, and let $(X,cD)\to C\to B$ be its coarse
space. Then there exists $\epsilon_1>0$ such that the morphism
$(X,(c+\epsilon)D)\to B$ is locally stable for every rational number
$0\le\epsilon\le \epsilon_1$.
\end{prop}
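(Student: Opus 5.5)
The plan is to reduce to the Deligne--Mumford stack $\cX$, check the fibers there, and then obtain a uniform $\epsilon_1$ by Noetherian induction.

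\emph{Reduction to the stack.} Local stability over a reduced base means three things: $K_{X/B}+(c+\epsilon)D$ is $\bQ$-Cartier, $D$ is a well-defined family of divisors, and every fiber $(X_b,(c+\epsilon)D_b)$ is semi-log canonical.
\begin{itemize}
\item By \Cref{prop_descent_KX_D}(1), $mK_{\cX/B}$ and $m\cD$ descend to $mK_{X/B}$ and $mD$. Hence $K_{X/B}+(c+\epsilon)D$ is $\bQ$-Cartier for every rational $\epsilon$.
\item Taking invariants under the finite stabilizers is exact in characteristic $0$. So flatness of $\cD\to B$ and the $S_1$ property of its fibers pass to $D\to B$, and $D$ is a well-defined family of divisors.
\item For the fibers, \Cref{prop_descent_KX_D}(1), applied fiberwise, gives $\rho_b^*\big(K_{X_b}+(c+\epsilon)D_b\big)\sim_{\bQ}K_{\cX_b}+(c+\epsilon)\cD_b$. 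So $\rho_b$ is a finite crepant morphism. Semi-log canonicity descends along finite crepant morphisms (\cite[Chapter 5]{Kol13}), and can be checked \'etale locally on $\cX_b$.
\end{itemize}
It therefore suffices to find $\epsilon_1>0$ such that $(\cX_b,(c+\epsilon)\cD_b)$ is slc for all $b\in B$ and $0\le\epsilon\le\epsilon_1$.

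\emph{Fiberwise slc on the stack.} By \Cref{rem:stable-LCY}(4), $(\cX_b,c\cD_b)$ is slc. We distinguish three kinds of points.
\begin{itemize}
\item Over $\cU_b$, including all twisted nodes, condition (Qmap) says that $(\cX,(c+\epsilon_0)\cD)|_{\cU}\to\cU$ is KSBA-stable. Local stability of a family over a nodal (twisted) curve makes $(\cX_b,(c+\epsilon_0)\cD_b)$ slc over a neighborhood of $\cU_b$. At a node this is inversion of adjunction on each branch, with the fiber over the node as part of the double locus. Interpolating between the coefficients $c$ and $c+\epsilon_0$ then gives slc for all $\epsilon\in[0,\epsilon_0]$.
\item Near the finitely many smooth points $x_i\notin\cU_b$, we use (Sing). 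It says that every lc center of $(\cX_b,c\cD_b)$ meeting $\cX_{x_i}$ dominates a component of $\cC_b$, so its generic point lies over $\cU_b$.
\item Over $\cU_b$ the pair with coefficient $c+\epsilon_0>c$ is slc, so $\cD_b$ contains no lc center of $(\cX_b,c\cD_b)$ there. Consequently $\cD_b$ contains no lc center of $(\cX_b,c\cD_b)$ at all. On a log resolution, the only divisors of discrepancy $-1$ lie over lc centers, and those are not contained in $\cD_b$. Since $\cD_b$ is $\bQ$-Cartier, $(\cX_b,(c+\epsilon)\cD_b)$ is slc for $0<\epsilon\le\epsilon(b)$.
\end{itemize}

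\emph{Uniformity, the main obstacle.} The remaining task is to show that $\epsilon(b)$ can be chosen independently of $b$. I would follow the proof of the $\delta$-statement in \Cref{lemma_slct_positive_is_open}, using Noetherian induction on $B$. Shrinking $B$ to be irreducible and applying a stratification and a finite \'etale base change, we may assume three things: the non-locally-stable locus $Z\subseteq C$ is a union of sections, there is a simultaneous normalization, and there is a simultaneous log resolution of $(X,cD+D+\pi^{-1}(Z))$. Then the discrepancies of $(X_b,(c+\epsilon)D_b)$ are affine functions of $\epsilon$ with coefficients independent of $b$. This gives a single $\epsilon_1$ on each stratum, and finitely many strata give the final $\epsilon_1$. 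The delicate point is the condition that $\cD_b$ contains no lc center of $(\cX_b,c\cD_b)$ uniformly in the family: on the simultaneous resolution it must be read off from the fact that the strata of the reduced boundary meet $D$ only over $\cU$. I expect checking this compatibility after the base changes to be the main technical step.
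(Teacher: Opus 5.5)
Your proposal is correct and has the same overall architecture as the paper's proof. Both descend $\bQ$-Cartierness via \Cref{prop_descent_KX_D}, use (Qmap) and (Sing) to get fiberwise slc of $(\cX_b,(c+\epsilon)\cD_b)$, and make $\epsilon$ uniform by a stratification argument. The routes differ in the middle step.

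\textbf{The paper's route.} The paper obtains its uniform constant by quoting the $\delta$-part of \Cref{lemma_slct_positive_is_open}. That is a uniform lower bound on $\slct(\cX_b,c\cD_b;\cX_p)$, a threshold in the \emph{fiber} direction. From it the paper concludes in one line that $(\cX_b,(c+\epsilon)\cD_b)$ is slc. The step from a bound along fibers to slc-ness in the $\cD$-direction is left implicit. So is the passage from the stack fibers $\cX_b$ to the coarse fibers $X_b$, which the paper attributes to ``the definition of locally stable families.''

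\textbf{Your route.} You identify what actually controls the $\cD$-direction:
\begin{itemize}
\item (Sing) forces every lc center of $(\cX_b,c\cD_b)$ to meet $\cX_{\cU_b}$.
\item (Qmap), with coefficient $c+\epsilon_0$, forbids such a center from lying in $\cD_b$.
\item Together these give a pointwise threshold $\epsilon(b)>0$ for $\cD_b$.
\item You then rerun the stratification argument of \Cref{lemma_slct_positive_is_open} for this threshold, instead of quoting that lemma's conclusion about fibers.
\item You justify descent to $X_b$ explicitly, via exactness of invariants and the finite crepant coarse map $\rho_b$. It is crepant also along the fibers over twisted nodes, where the conductor has coefficient $1$.
\end{itemize}
Your argument is longer, but it supplies exactly the steps the paper compresses.

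\textbf{The point you flag as delicate is not needed.} On a stratum with a simultaneous normalization and simultaneous log resolution, the crepant-pullback coefficients are the same affine functions of $\epsilon$ for every $b$. So the pointwise $\epsilon(b)$ at any single point of a stratum already works for the whole stratum. You therefore never have to read the no-lc-center condition off the resolution. Including $\pi^{-1}(Z)$ in the boundary is also unnecessary for this statement.
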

\begin{proof}
By condition (LCY) and the assumption that $\cD$ is $\bQ$-Cartier, for every
rational number $\epsilon$ and every sufficiently divisible integer $m$
(depending on $\epsilon$), the sheaf
$\cO_{\cX}\bigl(m(K_{\cX/B}+(c+\epsilon)\cD)\bigr)$ is a line bundle. By
\Cref{prop_descent_KX_D}, this line bundle descends to
\[
\cO_X\bigl(m(K_{X/B}+(c+\epsilon)D)\bigr).
\]
By conditions (Sing) and (Qmap), together with the quasi-compactness of $B$,
the last part of \Cref{lemma_slct_positive_is_open} shows that there exists
$\epsilon_1>0$ such that $(\cX_b,c\cD_b)$ has semi-log canonical threshold at least $\epsilon_1$ with respect to every fiber of $(\cX_b,c\cD_b)\to \cC_b$. In particular, for every rational number $0\le\epsilon\le\epsilon_1$, the pair $(\cX_b,(c+\epsilon)\cD_b)$ is
semi-log canonical for every $b\in B$. The desired conclusion now follows
from the definition of locally stable families
\cite[Definition--Theorem 4.7]{Kol23}.
\end{proof}

\subsection{KSBA-stable models}\label{sec:KSBA-model}

Let $f\colon (\cX,c\cD)\to \cC$ be a stable log Calabi--Yau fibration with numerical invariant
\(\Phi=\big(g,c,v(t),V(t_1,t_2)\big)\), and let \(p_1,\ldots,p_m\in \cC\) be the (smooth) points over which $f:\big(\cX,(c+\epsilon)\cD\big)\rightarrow \cC$ is not KSBA-stable for $0<\epsilon\ll1$. Then there is a morphism
\[
\cC\setminus \{p_1,\ldots,p_m\}
\ \longrightarrow\ 
\PCY
\]
to the moduli stack of stable Calabi--Yau pairs parametrizing the fibers. Since \(\PCY\) is proper, after replacing \(\cC\) with a suitable root stack $(\cC^{\sta};\sigma_1,...,\sigma_m)\to (\cC;p_1,...,p_m)$ along the points \(p_i\), this morphism extends to a KSBA-stable family
\[
\big(\cX^{\sta},(c+\epsilon)\cD^{\sta}\big) \ \longrightarrow \ \cC^{\sta} .
\]

\begin{defn}[KSBA model]\label{defn:KSBA-model}
The family \((\cX^{\sta},(c+\epsilon)\cD^{\sta})\to\cC^{\sta}\) is called the \emph{stacky KSBA model} of \(f\), and its coarse space \((X^{\sta},(c+\epsilon)D^{\sta})\to C^{\sta}\) is called the \emph{KSBA model} of \(f\). 
\end{defn}

\begin{lemma}\label{lemma_tsm_family_is_stable}
With the notation as above, the morphism
\(\phi\colon (\cC^{\sta};\sigma_1,\ldots,\sigma_m)\to \PCY\)
induced by \(\pi^{\sta}\) is a twisted stable $m$-pointed map.
\end{lemma}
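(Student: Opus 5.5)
We need to check two things: that $\phi$ is representable, and that $\omega_{C^{\sta}}(\sum_i \bar\sigma_i)\otimes \bar\phi^*H^{\otimes 3}$ is ample on the coarse curve $C^{\sta}$. Here $H$ is an ample class on the coarse space $\textbf{PCY}$; we take it to be the descent of $\lambda_{\Cho}^{\otimes N}$ from \Cref{def_index_num_data}. This is the twisted $m$-pointed analogue of the definition of twisted stable map recalled above.

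Representability comes first, and it is local on $\cC^{\sta}$. Over the open set $\cC^{\sta}\setminus\{\sigma_i\}\cong \cC\setminus\{p_i\}$, the family is the restriction of $(\cX,(c+\epsilon)\cD)$. The morphism $\cX\to\cC$ is representable by \Cref{def_CY_fibration}. Hence the stabilizer of any point of $\cC$ injects into the automorphism group of the fiber, which is exactly representability of the classifying map. This covers the twisted nodes, since by (Qmap) they lie in $\cU$. At the new gerbes $\sigma_i$, we choose the root stack as the minimal one given by \cite[Theorem 3.1]{BV24} (as in the proof of \Cref{lem:uniquely-determined}). This choice makes the extension $\cC^{\sta}\to\PCY$ representable by construction.

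For stability, write $\omega$ for $\omega_{C^{\sta}}(\sum \bar\sigma_i)$. The map $\cC^{\sta}\to\cC$ is an isomorphism on coarse spaces, so $C^{\sta}=C$. Ampleness can be checked on each irreducible component $R$ of $C$, where the degree of $\bar\phi^*H$ is non-negative. If $\omega|_R$ has positive degree there is nothing to prove. The remaining components are those where $\omega|_R$ has degree $\le 0$: a smooth genus one curve with no markings and no nodes, or a rational curve carrying at most two special points (nodes or markings $p_i$). On such an $R$ we must show $\deg_R\bar\phi^*H>0$, i.e.\ that the classifying map is non-constant on $R$.

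Suppose instead that $\bar\phi$ is constant on $R$. Then, as in the proof of \Cref{lemma_ampleness_of_omega_C_plus_lambda}, after a finite cover the KSBA model over $R$ is isotrivial, and $\deg_R$ of the Chow line bundle of $\pi^{\sta}$ vanishes. We now compare with the original family using \Cref{cor_degree_of_chow_minimized_by_pull_back_from_KSBA}:
\[\deg_R\Lambda_{\Cho}=\deg_R g^*L_{\Cho}+\textstyle\sum a_j,\]
where the sum runs over the points $p_j\in R$ and each $a_j>0$. Constancy means the first term vanishes.
\begin{itemize}
\item If $R$ contains no $p_j$, then $\deg_R\Lambda_{\Cho}=0$. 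In that case (Stab) together with (LCY) forces $\deg_R\cL>0$, because the restriction of $K_X+(c+\epsilon^2)D+\epsilon f^*\Lambda_{\Cho}$ to $X_R$ must be ample while $D$ is only relatively ample. This leads to a contradiction in the rational or elliptic case. The reason is that an isotrivial family over such $R$ with all fibers KSBA-stable has $\cL$ of degree $\deg\omega_{\cC}|_R+(\text{moduli part})\le 0$; the moduli part is torsion for an isotrivial family, and this bound is the canonical bundle formula with trivial discriminant.
\item If $R$ contains some $p_j$, then we are in the rational case with at most two special points, so at most two of the $p_j$ together with the nodes.
\end{itemize}

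The main obstacle is this last case: a rational component carrying one or two non-KSBA-stable points $p_j$ while the stable model is isotrivial. Here I would argue directly through (Stab). The quantity $\deg_R\cL=\deg(K_C+B+\bfM)|_R$ is at most $-2+(\#\text{special points})+\sum(1-\lct_j)$ minus the nodal contributions. With at most two special points this degree is $<0$ as soon as the discriminant coefficients $1-\lct_j$ are $<1$, which is guaranteed by (Sing). That would contradict nefness of $\cL$, which is forced by (Stab). Making this precise requires the canonical bundle formula on the isotrivial pullback, where $\bfM$ is torsion, so I expect this bookkeeping to be the delicate point.
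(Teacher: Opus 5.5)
Your architecture is the paper's. On a component $R$ where $\omega_C(\sum_i\bar\sigma_i)$ has non-positive degree and the classifying map is constant, you derive a contradiction in two ways. When $R$ carries no $p_j$, you use the vanishing Chow degree together with (Stab); this is the paper's first case. A small correction there: what stops $D|_{X_R}$ from being ample is isotriviality (after a finite cover $D$ is pulled back from a fiber), not relative ampleness. When $R$ carries one or two $p_j$, you use the canonical bundle formula together with (Sing) and nefness of $\cL$.

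The second case is the content of the lemma, and it is the step you leave as ``bookkeeping''. The identity $\deg_R\cL=\deg(K_R+B+\bfM)|_R$ on which it rests is not available. The canonical bundle formula needs a normal total space with connected fibers, but $X_R$ is only demi-normal. The components $Y_i$ of its normalization map to $R$ through Stein factorizations $\rho_i\colon Z_i\to R$, which may be ramified over the twisted node and over the $p_j$. Then $K_{Z_i}$ picks up ramification and the discriminant on $Z_i$ is not a pullback from $R$. So the count $-2+n_R+\sum_j(1-\lct_j)$, with $n_R$ the number of nodes on $R$, is unjustified. (The markings should enter only through these coefficients, not also as special points.) Your proposed ``isotrivial pullback'' still requires exactly this comparison of discriminants under ramified covers.

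The missing idea is the ramification control the paper supplies. Pull back along a cyclic cover $\bP^1\to\cR$ totally ramified over the node and over the unique $p_j$. Because the classifying map is constant, the family is \'etale-locally a product over every point with KSBA-stable fiber, including the preimage of the node, so each $\rho_i$ is \'etale there. Hence $\rho_i$ is a cover of $\bP^1$ branched over at most one point, so it is an isomorphism. Only then does the canonical bundle formula, with trivial moduli part and coefficients $<1$ by (Sing), give negative degree; the case $\cC=\bP^1$ is handled the same way.

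Alternatively, you can avoid identifying $Z_i$ altogether. Set $\beta_p\coloneqq 1-\min_{z\mapsto p}(1-b_z)/e_z$, where $b_z$ are the discriminant coefficients of $Y_i\to Z_i$ and $e_z$ the ramification indices. Then $\beta_p\le 1-\slct_p<1$, and one checks
$\rho_i^*\bigl(K_R+\sum_{q\ \mathrm{node}}q+\sum_p\beta_p p\bigr)\ge K_{Z_i}+B_{Z_i}$.
Since $\deg\bfM_{Z_i}=0$, this gives $\deg_R\cL\le -2+n_R+\sum_{p_j\in R}\beta_{p_j}<0$ directly.

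Separately, your representability argument at the twisted nodes is wrong. Representability of $\cX\to\cC$ does not force the stabilizer of a point of $\cC$ to act faithfully on the fiber. A constant family $F\times\cC\to\cC$ is representable, yet its classifying map factors through a point and kills every stabilizer. Representability at the nodes does not follow from (LCY)--(Stab). It reflects the convention, used in \Cref{lem:uniquely-determined}, that the twisted structure at a node is the one making $\cU\to\PCY$ representable. The paper takes this for granted and checks only the ampleness condition. At the new gerbes $\sigma_i$, your appeal to the minimal root stack is fine.
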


\begin{proof}
    It suffices to show that for every irreducible component $\cR^{\sta}\subseteq \cC^{\sta}$ such that $\phi|_{\cR^{\sta}}$ has $0$-dimensional image, the line bundle $\omega_{\cC^{\sta}}(\sigma_1+\cdots+\sigma_m)|_{\cR^{\sta}}$ is ample. Let $\cR$ be the corresponding irreducible component of $\cC$.

    Suppose $\omega_{\cC^{\sta}}|_{\cR^{\sta}}$ is nef but no marking $\sigma_i$ lies on $\cR^{\sta}$. Then $\cC$ and $\cC^{\sta}$ are isomorphic in a neighborhood of $\cR^{\sta}=\cR$. In particular, $\pi$ and $\pi^{\sta}$ are isomorphic over $\cR$, so the map \[(\cX,(c+\epsilon)\cD)|_{\cR}\to \cR\] has nef but non-ample Chow line bundle. By \Cref{cor_degree_of_chow_minimized_by_pull_back_from_KSBA}, the family over $\cR$ is therefore isotrivial, i.e., all closed fibers are isomorphic. By condition (Stab), the coarse space of
    $(\cX|_{\cR},(c+\epsilon)\cD|_{\cR} + \cF)$ is KSBA-stable, where $\cF$ denotes the fibers over the nodes of $\cC$ lying on $\cR$. This implies that $\omega_\cC|_\cR$ is ample.

    Suppose now that $\omega_{\cC^{\sta}}|_{\cR^{\sta}}$ is not nef. Then either the coarse moduli space of $\cR^{\sta}$ is a rational tail, or $\cC=\cR=\bP^1$.  Assume first that the coarse moduli space of $\cR^{\sta}$ is a rational tail. If all fibers over $\cR$ are KSBA-stable with coefficient $c+\epsilon$, i.e., there is no marking $\sigma_i$ on $\cR^{\sta}$, one can proceed as above to obtain a contradiction. Hence there must exist KSBA-unstable fibers over $\cR$, and we need to show that there are at least two of them; we argue by contradiction.
    
    Consider the cover $\bP^1\to \cR$ whose preimage of the stacky point of $\cR$ is a single point. Let $(Y,cD_Y + F)$ be the pulled-back family over $\bP^1$, and let $p_0\in \bP^1$ be the point over which the fiber is $F$.
    Consider the normalization of $Y$, with connected components $\{Y_i\}_{i=1}^k$ and conductor divisors $\Delta_i\subseteq Y_i$. Let $Z_i$ be the Stein factorization of the morphism $Y_i\to \bP^1$. We obtain the following commutative diagram
    \[
    \xymatrix{\coprod_{i=1}^k(Y_i,cD_i + F_i + \Delta_i) \ar[rr]\ar[d]_{\coprod \pi_i}& & (Y,cD+F)\ar[d] \\\coprod_{i=1}^k Z_i\ar[rr]^-{\coprod \rho_i} && \bP^1.}
    \]
    Now, take a smooth point $q\in \bP^1$.
    Since the map $\bP^1\to \textbf{PCY}$ is constant, if the fiber over $q\in \bP^1$ is KSBA-stable, then in an \'etale neighborhood of $q$ the family $(Y,cD)\to \bP^1$ is a product. In particular, $q$ is not a ramification point of $\rho_i$ for any $i$. 
   Since the fiber over $p_0$ is KSBA-stable when replacing the coefficient $c$ with $c+\epsilon$, the maps $\rho_i$ are unramified over $p_0$. On the other hand, they cannot be ramified elsewhere either, since otherwise we would have at least two markings on $\bP^1$ as any map from a smooth curve to $\bP^1$ cannot be ramified at a single point. Hence each $\rho_i$ is \'etale, as $\bP^1$ is simply connected, each $\rho_i$ is an isomorphism.

    The fibrations $\pi_i$ are still log Calabi--Yau, and since the source is now normal, we may apply the canonical bundle formula. Since by assumption the map to $\textbf{PCY}$ is constant, the moduli part is trivial on the generic fiber, hence trivial. By condition (Sing) in \Cref{def_CY_fibration}, the boundary part has coefficients strictly less than $1$. Thus, in order for $K_{Z_i}+B_i+\mathbf{M}_i$ to be nef, $\Supp B_i$ must contain at least two points. Hence there are at least two points marked with $\sigma_i$, as desired.

    Assume finally that $\cR=\cC=\bP^1$, and consider the normalization as before:
     \[
    \xymatrix{\coprod_{i=1}^k(X_i,cD_i + \Delta_i) \ar[rr]\ar[d]_{\coprod \pi_i}& & (X,cD)\ar[d] \\\coprod_{i=1}^k Z_i\ar[rr]^-{\coprod \rho_i} && \bP^1.}
    \]
    Note that $F_i$ and $F$ are omitted here, since $\cR$ has no nodes. Now each map $\rho_i$ is either an isomorphism or ramified at no more than two points. Proceeding as above, we see that the boundary part of $\pi_i$ has coefficients strictly less than $1$, and that the moduli part is trivial. Hence $\pi_i$ has at least three unstable fibers, and $Z_i\to \bP^1$ is either an isomorphism or ramified at two points. Either way, there are three marked points on $\bP^1$.
\end{proof}

The following is analogous to \cite[Theorem 8.13]{AV_complete_moduli} and \cite[Theorem 4.2]{ABtwisted}, and the same proof as in \textit{loc. cit.} applies.

\begin{lemma}\label{lemma_singularities_tsp}
 Let $(\cC,\sigma_1,\ldots,\sigma_m)\rightarrow B$ be a twisted curve over a connected smooth scheme \(B\), and $(\cC,\sigma_1,\ldots,\sigma_m)\to \PCY$ be a twisted stable $m$-pointed map. Let
\[
\pi\colon (\cX,\cD)\longrightarrow (\cC,\sigma_1,\ldots,\sigma_m)\longrightarrow B
\]
be the pullback of the universal family. Let \(\cX_i\) be the fiber of \(\pi\) over \(\sigma_i\), and let \(X_i\) denote the coarse space of \(\cX_i\). Then the induced morphism of the coarse spaces \(
p\colon \bigl(X,D+\sum_i X_i\bigr)\to B\) is a locally stable family.
\end{lemma}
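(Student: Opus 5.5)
The plan follows the proof of \cite[Theorem 8.13]{AV_complete_moduli} and \cite[Theorem 4.2]{ABtwisted}. Local stability is étale local, and the coarse space map commutes with the relevant constructions in characteristic $0$. So it suffices to prove that $(\cX,\cD+\sum_i\cX_i)\to B$ is locally stable, and then to descend the statement to coarse spaces. The descent uses the argument of \Cref{prop_descent_KX_D}: $K_{\cX/B}$ and $\cD$ descend in codimension one of each fiber. The gerbes $\sigma_i$ are root stacks of sections of $C\to B$, and pulling back $X_i$ to $\cX$ gives $\cX_i$ with the reduced structure, up to the root multiplicity $d_i$. This multiplicity is exactly compensated by the ramification term in $K_{\cX}=\rho^*K_X+\sum (1-1/d_i)\cX_i$. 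Hence $K_{\cX/B}+\cD+\sum\cX_i$ is the pullback of $K_{X/B}+D+\sum X_i$, and the coarse pair is slc exactly when the stacky one is, by finite crepant pullback.

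We then check local stability on $\cX$ locally over $\cC$. Since $\cX\to\cC$ is the pullback of the universal KSBA-stable family over $\PCY$, the morphism $(\cX,\cD)\to\cC$ is locally stable. Local stability over a locally stable base composes, by \cite[Definition--Theorem 4.7]{Kol23} and inversion of adjunction. It therefore suffices to show that $(\cC,\sum\sigma_i)\to B$ is locally stable as a family of pointed nodal curves, and that the resulting pair $(\cX,\cD+\pi^*(\text{boundary of }\cC))$ is slc on fibers over $B$.

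The base statement is standard. A family of twisted pointed nodal curves over a smooth base with reduced sections in the smooth locus is locally stable, since étale locally it is $xy=t$ or a smooth morphism with a section. For the total space, work étale locally on $\cC$ and split into three cases. Over smooth unmarked points, $\cX\to \cC$ is locally a locally stable family over a smooth curve-fibration, so we are done. Over a marked point $\sigma_i$, the fiber $\cX_i$ is slc, and inversion of adjunction applied to the divisor $\cX_i=\pi^*\sigma_i$ gives that $(\cX,\cD+\cX_i)$ is slc near $\cX_i$. Over a twisted node, étale locally $\cX$ is the pullback along $[\spec(k[x,y,t]/(xy-t))/\bmu_n]\to\PCY$. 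After passing to the $\bmu_n$-cover, we are reduced to showing that a locally stable family over the surface $xy=t$ is locally stable over $B$. This holds because $(S,\{xy=0\}\text{-fiber})$ is lc and local stability composes.

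The main obstacle I expect is this composition step over the nodes. There one must know that $\cX\to\cC$ locally stable and $\cC\to B$ locally stable imply $\cX\to B$ locally stable, with the correct boundary. I would prove it by base-changing to a smooth curve (or a DVR) in $B$ via \cite[Definition--Theorem 4.7]{Kol23}, reducing to the claim that $(\cX,\cD+\cX_\xi)$ is slc. This in turn follows by adjunction from $(\cC,\cC_\xi)$ being lc and $(\cX,\cD+\pi^*\cC_\xi)$ being slc, since $\pi$ is locally stable.
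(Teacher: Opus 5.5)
Your route is the argument of \cite[Theorem 8.13]{AV_complete_moduli} and \cite[Theorem 4.2]{ABtwisted}, which the paper simply invokes, and it works: check local stability of $(\cX,\cD+\sum_i\cX_i)\to B$ \'etale locally on $\cC$ at smooth, marked and nodal points, then descend along the finite quotient $\rho\colon\cX\to X$. One correction: along a component $E$ of $\cX_i$, the map $\rho$ is ramified to order $e_E$, the order of the generic stabilizer of $E$, which divides $d_i$ but need not equal it (representability makes $\bmu_{d_i}$ act faithfully on the fiber, so often $e_E=1$, and it is the fiber of $X\to C$ over the coarse image of $\sigma_i$, not $X_i$, that pulls back to $d_i\cX_i$); because $X_i$ has coefficient $1$, you still get $\rho^*(K_X+D+\sum_i X_i)=K_{\cX}+\cD+\sum_i\cX_i$ for every $e_E$, so your descent step is unaffected.
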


\begin{remark}
    Let $\Sigma_i\subseteq C$ be the coarse space of $\sigma_i$. With the notation above, $X_i$ is \emph{not} the fiber over $\Sigma_i$. Indeed, $X_i$ is reduced since it is the coarse moduli space of the reduced stack $\cX_i$. In other words, $X_i$ is the reduced structure of the fiber over $\Sigma_i$. This reflects the fact that the fibers of a KSBA model need not be reduced, so a KSBA model is not necessarily locally stable. Nonetheless, we see in \Cref{lemma_singularities_tsp} that every log canonical center of $(X,(c+\epsilon)D)$ intersects the generic fiber of $X\rightarrow C$. Similarly, if we denote by $\cC\to C$ the coarse moduli space of $\cC$, since $\pi_*\cO_\cX = \cO_\cC$ and since $\cX\to X$ and $\cC\to C$ are coarse moduli spaces, we have that $p_*\cO_X=\cO_C$, where $p\colon X\to C$ is the map induced by $\pi$.
\end{remark}

\section{The valuative criterion of properness}\label{Valuative criterion}

In this section, we establish the valuative criteria for universal closedness and separatedness of the moduli stack of stable log Calabi--Yau fibrations, whose construction will be carried out in \Cref{sec:Representability by a Deligne--Mumford stack}. Combined with the boundedness of stable log Calabi--Yau fibrations of non-degenerate type proved in \Cref{sec:Boundedness of stable log Calabi--Yau fibrations}, these results imply the properness of the stack.

We choose to address the valuative criteria first for two main reasons. First, the existence and uniqueness of stable limits illustrate both the significance and the elegance of the stability condition for log Calabi--Yau fibrations. Second, the arguments yield a useful property of stable limits (cf. \Cref{cor:number-components-bounded}), which will play a role in the proof of boundedness (cf. \Cref{thm_bounding_fam}).

\smallskip

We begin by proving universal closedness (cf. \Cref{thm:valuative-criterion}) and then show that any automorphism of the generic fiber extends to the family over the DVR constructed there (cf. \Cref{thm:extension-of-automorphism}). This extension result will be used in the final step of the proof of separatedness (cf. \Cref{thm:separatedness}).

\subsection{Existence of stable limits}\label{sec:Existence-of-stable-limits}

\begin{thm}[Universal closedness]\label{thm:valuative-criterion}
    Let $(R,\eta,\xi)$ be a DVR, let $C_\eta\to \eta$ be a smooth projective curve, and let \[f_\eta\colon (X_\eta,cD_\eta )\ \longrightarrow \ C_\eta\] be a non-degenerate stable log Calabi--Yau fibration over $\eta$. Then, up to replacing $R$ with a finite extension, there is a family of twisted curves $\cC\to \Spec R$ and a stable log Calabi--Yau fibration
    \[
    f\colon (\cX,c\cD) \ \longrightarrow \  \cC
    \]
    extending $f_\eta$. 
\end{thm}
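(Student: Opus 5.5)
We follow the strategy outlined in \Cref{sec:Sketch of proofs}: first build a filling of the family with controlled geometry over the special fiber using twisted stable maps, then run an MMP to enforce (LCY), (Sing) and (Stab).

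\emph{Step 1: stable reduction of the quasimap.} Let $p_{1,\eta},\dots,p_{m,\eta}\in C_\eta$ be the points over which $f_\eta$ fails to be KSBA-stable with coefficient $c+\epsilon_0$. After a finite extension of $R$ we may assume these points are rational over $\eta$. Take the stacky KSBA model $(\cC^{\sta}_\eta;\sigma_1,\dots,\sigma_m)\to \PCY$ of \Cref{defn:KSBA-model}. By \Cref{lemma_tsm_family_is_stable} this is a twisted stable $m$-pointed map. Properness of the moduli of pointed twisted stable maps \cite{AV02}, after a further finite base change, extends it to a twisted stable map $(\cC^{\sta};\sigma_i)\to\PCY$ over $\Spec R$. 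Pulling back the universal family gives $(\cX^{\sta},(c+\epsilon_0)\cD^{\sta})\to\cC^{\sta}$. By \Cref{lemma_singularities_tsp}, its coarse space $(X^{\sta},(c+\epsilon_0)D^{\sta}+\sum X_i)\to\Spec R$ is locally stable.

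\emph{Step 2: restore the original family.} This is the step I expect to be the main obstacle. The family from Step 1 agrees with $f_\eta$ only away from the sections $\overline{p_i}$. Near these sections we must reglue the original degenerate fibers, which requires a birational modification. I would first choose a model of $C_\eta$ over $R$: take the coarse curve $C^{\sta}$ with the sections $\Sigma_i$, and consider the closure $\sX$ of $X_\eta$ inside a fiber product/graph over $C^{\sta}$. Then I would pass to a log resolution of $(\sX,c\sD+\sX_\xi+\sum \sX_{\Sigma_i})$ and run a relative MMP over $C^{\sta}$ with scaling for $K+c\sD+\sX_\xi^{\red}$. This works as in \cite[Proposition 3.2]{ISZ25}. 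Near the nodes and generic points of the special fiber, the log Calabi--Yau condition together with \Cref{lemma_singularities_tsp} should force the output to be isomorphic to the KSBA-stable family, because separatedness of KSBA moduli identifies them in codimension one and $S_2$ closure extends the identification. Over the sections $\Sigma_i$ we must ensure that (Sing) holds on the special fiber. Here the difficulty is that a log canonical center may land in a vertical fiber of the special fiber. To remove it, I would blow up $C^{\sta}$ at the points of $\Sigma_i\cap C^{\sta}_\xi$, add rational components, and repeat, so that every lc center of $(\sX,c\sD+\sX_\xi^{\red})$ dominates a component of the central curve. Positivity of the slct of the generic fiber over $\Sigma_i$ (condition (Sing) on $f_\eta$) should guarantee, via inversion of adjunction, positivity of the slct over the smooth points of the new special fiber. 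Flatness over the smooth points then comes from the equidimensionality produced by the MMP together with the $S_2$ condition. Over the nodes, flatness is automatic from (Qmap), since those fibers are KSBA-stable.

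\emph{Step 3: stabilization.} After Step 2 we have a family satisfying (LCY), (Qmap) and (Sing), with $\cL$ relatively nef. We run the MMP for $K+(c+\epsilon^2)\cD+\epsilon\,\psi^*\Lambda_{\Cho}$ over $R$. Since this divisor is pulled back from the curve up to the $\epsilon^2\cD$ term, this amounts to contracting components $\Gamma$ of the central curve on which $\omega+B+\bfM+\epsilon\Lambda_{\Cho}$ has non-positive degree. We contract such $\Gamma$ in the base and take the lc model of the family over the contracted curve. After these contractions, (Stab) reduces to (Stab$'$) on the remaining components $\Gamma$ where $\cL$ has degree zero. On such $\Gamma$, the family either has positive Chow degree or is isotrivial, by \Cref{thm_generalize_wx} and \Cref{cor_degree_of_chow_minimized_by_pull_back_from_KSBA}. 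An isotrivial component with $\deg\cL=0$ and (Sing) satisfied must be a rational bridge or tail with trivial moduli part, argued exactly as in the proof of \Cref{lemma_tsm_family_is_stable}. Such a component is contracted, and contracting it preserves (Sing) and (Qmap). Termination follows because each contraction drops the number of components. Finally, stackiness is reinserted at the new nodes by the unique root stacks of \Cref{lem:uniquely-determined}, which yields the twisted curve $\cC$ and the family $f$.
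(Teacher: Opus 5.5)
Your Step 1 agrees with the paper. Step 2, however, carries the real content of the theorem, and it has a genuine gap.

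The claim that positivity of the slct of $f_\eta$ along $\Sigma_i$ gives, ``via inversion of adjunction'', positivity of the slct over $p_\xi=\Sigma_i\cap C^{\sta}_\xi$ is not a valid deduction. Log canonical thresholds can drop under specialization. Inversion of adjunction only restates what you need: that $(\sX,c\sD+\epsilon'\sX_{\Sigma_i}+\sX_\xi)$ is lc near the fiber over $p_\xi$. That is a property of the particular model, not of the generic fiber. Your log resolution introduces divisors lying over the closed point $p_\xi$ with coefficient one, as components of $\sX_\xi^{\red}$. Nothing in your sketch shows that the MMP removes them rather than leaving lc centers over $p_\xi$, which would violate (Sing) there. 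The remedy of blowing up $C^{\sta}$ at $p_\xi$ and repeating has no termination argument. The new rational components must then be handled in Step 3, where you again only assert that (Sing) is preserved.

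The paper needs no modification of $C^{\sta}$ near $p_\xi$. Instead it builds the model so that the local stability of $(X^{\sta},(c+\epsilon)D^{\sta}+(F^{\sta})^{\red}+X^{\sta}_\xi)$ from \Cref{lemma_singularities_tsp} is inherited with room to spare. After a dlt modification it extracts, via Moraga's theorem, only the components of $F_\eta$. All of these dominate $\Spec R$, so no divisor over $p_\xi$ ever appears; this is also what makes \Cref{lem:relative-log-can-model} work. For the boundary it takes the closure $G^{\sta}$ of a divisor $G_\eta$ with $\lfloor G_\eta\rfloor=0$, chosen so that the lc model over $C_\eta$ is still $X_\eta$. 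The coefficients of $G^{\sta}$ are strictly smaller than those of the crepant pullback of $(F^{\sta})^{\red}$ on every component of the fiber over $\Sigma_i$. This yields that $(Y,(c+\epsilon)D_Y+\epsilon'F_Y+Y_\xi)$ is lc (\Cref{lem:two-statements,step_10_properness}), and (Sing) at $p_\xi$ follows by adjunction.

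Step 3 likewise asserts rather than proves the essential points. After Step 2 the bundle $\cL$ need not be nef, since rational tails of $C^{\sta}_\xi$ can be $(K+B+\bfM)$-negative. Saying ``contract $\Gamma$ and take the lc model over the contracted curve'' requires showing that this lc model:
\begin{itemize}
\item exists, even though the pair is only lc;
\item is independent of $\epsilon$;
\item is log Calabi--Yau with equidimensional fibers over the new base;
\item satisfies (Sing) at the image point of the tail.
\end{itemize}
This is \Cref{step_properties_of_Yk_properness} (via \cite{HX13} and \cite[Lemma 3.1]{ISZ25}) together with Case (b) of \Cref{step_properness_on_slct}. The latter uses a general section $s$ through the tail and the fact that the MMP for $B+\bfM$ is also one for $B+\bfM+\epsilon's$.

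For (Stab), the paper does not contract isotrivial degree-zero bridges. Stability of the pointed twisted stable map $(\cC^{\sta};\mathfrak p_j)\to\PCY$ forces every rational tail or bridge of $C^{\sta}_\xi$ either to meet the closure of the KSBA-unstable locus or to have positive degree for the pulled-back Chow bundle. Hence $\Lambda_{\Cho}^{(0)}$ has positive degree on all of them, and by flatness this survives the tail contractions (\Cref{step_properness_where_we_prove_stability}), so no isotrivial component ever needs removing. Your alternative would additionally require constructing a twisted node, and a KSBA-stable family over it, after contracting a bridge. \Cref{lem:uniquely-determined} is a uniqueness statement and does not provide this.
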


\smallskip

The proof is organized as follows. We begin by constructing a family
\[
  (X^{\sta},cD^{\sta}) \ \longrightarrow \ C^{\sta}\ \longrightarrow \ \spec R
\]
which need not agree with the original family $(X_\eta,cD_\eta)$ over the
generic fiber. It is only birational to it, but has the advantage of
extending over all of $\spec R$. The most technical part of the proof consists of performing a sequence of birational modifications to $(X^{\sta},cD^{\sta})$ to produce a new family $(W^{\sta},T_{W^{\sta}} + cD_{W^{\sta}})$, together
with a divisor $G^{\sta}$, such that the canonical model $(Y,cD_Y)$ of
\[
  (W^{\sta},T_{W^{\sta}} + (c+\epsilon)D_{W^{\sta}} + G^{\sta})
\]
over $C^{\sta}$ exists and, over $C_\eta$, recovers the original family
$(X_\eta,cD_\eta)$. 
Once this is achieved, we have extended the original family to a family
\[
  (Y,cD_Y)\longrightarrow C\longrightarrow \spec(R).
\]
Since this family will be a Calabi--Yau fibration, we may consider its boundary
part $B$ and moduli part $\mathbf{M}$ in $C^{\sta}$; following \cite{ISZ25},
we then obtain a weak canonical model of $(Y,cD_Y+Y_0)$ by first taking a
minimal model of the generalized pair $(C^{\sta},B + \mathbf{M})$, as in \Cref{eq:diagram-of-MMP}. Finally, we show that this minimal model is a stable log Calabi--Yau fibration. The following diagrams may help the reader visualize the proof of \Cref{thm:valuative-criterion}.

\smallskip

\begin{figure}[ht]
\centering
\begin{tikzcd}[ampersand replacement=\&]
	{(X_{\eta},cD_{\eta})} \& {(Y_{\eta},cD_{Y_{\eta}})} \& {(Y,cD_Y)} \& \\
	{(W_{\eta},cD_{W_{\eta}})} \& {(W^{\sta}_{\eta},T_{W^{\sta}_{\eta}}+cD_{W_{\eta}^{\sta}})} \& {(W^{\sta},T_{W^{\sta}}+cD_{W^{\sta}})} \& {(Z,T_Z+cD_Z)} \\
	{(X_{\eta},cD_{\eta})} \& {(X^{\sta}_{\eta},cD^{\sta}_{\eta})} \& {(X^{\sta},cD^{\sta})} \\
	\& {C_{\eta}} \& C^{\sta}
	\arrow["\sim", no head, from=1-1, to=1-2]
	\arrow[hook, from=1-2, to=1-3]
	\arrow[from=2-1, to=1-1]
	\arrow[from=2-1, to=3-1]
	\arrow[dashed, from=2-2, to=1-2]
	\arrow[hook, from=2-2, to=2-3]
	\arrow[from=2-2, to=3-2]
	\arrow[dashed, from=2-3, to=1-3]
	\arrow[from=2-3, to=2-4]
	\arrow[from=2-3, to=3-3]
	\arrow[from=2-4, to=3-3]
	\arrow[dashed, <->, from=3-1, to=3-2]
	\arrow[from=3-1, to=4-2]
	\arrow[hook, from=3-2, to=3-3]
	\arrow[from=3-2, to=4-2]
	\arrow[from=3-3, to=4-3]
	\arrow[hook, from=4-2, to=4-3]
\end{tikzcd}
\caption{The construction of birational modifications.}
\label{fig:stable-reduction}
\end{figure}

\begin{figure}[ht]
\centering
\begin{tikzcd}
	{\big(Y,cD_{Y}\big) = \big(Y^{(0)},cD_{Y^{(0)}}\big)} & {\big(Y^{(1)},cD_{Y^{(1)}}\big)} & \cdots & {\big(Y^{(k)},cD_{Y^{(k)}}\big) = \big(X,cD\big)} \\
	{\big(C^{\sta}, B^{(0)}+\bfM^{(0)}\big)} & {\big(C^{(1)}, B^{(1)}+\bfM^{(1)}\big)} & \cdots & {\big(C^{(k)}, B^{(k)}+\bfM^{(k)}\big).}
	\arrow[ dotted, from=1-1, to=1-2]
	\arrow["{\phi^{(0)}}", from=1-1, to=2-1]
	\arrow[ dotted, from=1-2, to=1-3]
	\arrow["{\phi^{(1)}}",from=1-2, to=2-2]
	\arrow[ dotted, from=1-3, to=1-4]
	\arrow[dotted, from=1-3, to=2-3]
	\arrow["{\phi^{(k)}}",  from=1-4, to=2-4]
	\arrow[ from=2-1, to=2-2]
	\arrow[ from=2-2, to=2-3]
	\arrow[ from=2-3, to=2-4]
\end{tikzcd}
\caption{The MMP of the generalized pairs with birational transformations of $(Y,cD_Y)$.}
\label{eq:diagram-of-MMP}
\end{figure}

\begin{proof} Let $\PCY$ denote the moduli stack of polarized Calabi--Yau pairs parametrizing general $f_{\eta}$ fibers with coarse moduli space $\bf{PCY}$; see \Cref{rem:stable-LCY}(5).
\begin{Step}\label{step_1_properness}
Construct the family $\big(X^{\sta},cD^{\sta} + \sum (F_i^{\sta})^{\red}\big)\to C^{\sta}\to \spec R$.   
\end{Step}

\smallskip

The locus in $C_\eta$ where the fibers of $f_{\eta}:\big(X_{\eta},(c+\epsilon)D_{\eta}\big)\rightarrow C_{\eta}$ are not semi-log canonical for $0<\epsilon\ll1$ is closed, and up to enlarging $R$ it consists of $n$ disjoint sections of $C_\eta\to \eta$, which we denote by $p_{j,\eta}\in C_{\eta}$ for $j=1,...,n$. Let \[\begin{tikzcd}[ampersand replacement=\&]
	{\big(\cX^{\sta}_\eta,(c+\epsilon)\cD^{\sta}_{\eta}\big)} \& {(\cC^{\sta}_\eta, \mtf{p}_{1,\eta},\ldots,\mtf{p}_{n,\eta})} \& \eta
	\arrow["{f^{\sta}_{\eta}}", from=1-1, to=1-2]
	\arrow["{\pi^{\sta}_{\eta}}", from=1-2, to=1-3]
\end{tikzcd}\] be the stacky KSBA-model of $f_\eta$, where $\mtf{p}_j\to \eta$ are (possibly trivial) gerbes. From \Cref{lemma_tsm_family_is_stable}, this induces a pointed twisted stable map
    \[
    (\cC^{\sta}_\eta, \mtf{p}_{1,\eta},\ldots,\mtf{p}_{n,\eta}) \ \longrightarrow \  \PCY
    .\] By \cite[Theorem 1.4.1]{AV02}, this map can be extended to a twisted stable map
    \[
    (\cC^{\sta}, \mtf{p}_{1},\ldots,\mtf{p}_{n}) \ \longrightarrow \ \PCY
    \]
    over $\spec R$, which, by the universality of $\PCY$, corresponds to a family \[\textstyle \big(\cX^{\sta}, (c+\epsilon)\cD^{\sta} + \sum_{j=1}^n\cF^{\ \sta}_j \big) \ \longrightarrow \  (\cC^{\sta}, \mtf{p}_1,\ldots,\mtf{p}_n),\]
    where we denote by $\mtf{p}_j$ the closure of the gerbes $\mtf{p}_{j,\eta}$ and by $\cF^{\ \sta}_j$ the fibers over $\mtf{p}_j$. We denote by $p_j$ the coarse spaces of $\mtf{p}_j$.
    Consider then its coarse moduli space 
    \[\textstyle \big(X^{\sta},(c+\epsilon)D^{\sta} + \sum_{j=1}^n(F^{\sta}_j)^{\red}\big)\ \longrightarrow \ (C^{\sta}, p_1,\ldots,p_n).\]
Here $F^{\sta}_j$ denotes the scheme theoretic fiber of $X^{\sta}\rightarrow C^{\sta}$ over $p_j$, which might be non-reduced; while the coarse space of $\cF^{\ \sta}_j$ is always reduced. In particular, the coarse moduli space of $\cF_j^{\ \sta}$ is $(F_j^{\sta})^{\red}$.

As the next birational modifications we will perform are all local over $C^{\sta}$, we fix an index $j$ and denote $\mtf{p}_j$ (resp. $p_j,\cF^{\ \sta}_j,F^{\sta}_j$) by $\mtf{p}$ (resp. $p,\cF^{\ \sta},F^{\sta}$).

\begin{lem}\label{step_2_properness}
    The $\bQ$-divisor $K_{X^{\sta}}+cD^{\sta}+(F^{\sta})^{\red}$ is  $\bQ$-linearly trivial over $C^{\sta}$.
\end{lem}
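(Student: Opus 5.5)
The plan is to reduce the claim to the stack $\cX^{\sta}$, where $\cF^{\ \sta}$ is a Cartier divisor pulled back from the gerbe $\mtf{p}$, and then descend to the coarse space. Since the statement is local over $C^{\sta}$ near $p$, I work over a neighbourhood of $p$ in which $\mtf{p}$ is the only special point.

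\emph{Step 1: triviality on the stack.} The family $\big(\cX^{\sta},(c+\epsilon)\cD^{\sta}\big)\to\cC^{\sta}$ is pulled back from the universal family over $\PCY$. On that universal family, $K_{\sZ^{\univ}/\PCY}+c\sD^{\univ}$ is $\bQ$-linearly trivial relative to $\PCY$ (fibers are Calabi--Yau) and $\bQ$-Cartier; take the index $N$ from \Cref{def_index_num_data}. Hence $K_{\cX^{\sta}/\cC^{\sta}}+c\cD^{\sta}\sim_{\bQ}0$ over $\cC^{\sta}$. Near $\mtf{p}$ the curve $\cC^{\sta}$ is smooth, so $K_{\cC^{\sta}}$ is a line bundle. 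The fiber $\cF^{\ \sta}$ equals $(f^{\sta})^*\mtf{p}$, which is reduced because the family is KSBA-stable, hence flat with reduced fibers. Its class is therefore the pullback of a $\bQ$-Cartier divisor on $\cC^{\sta}$. So $K_{\cX^{\sta}}+c\cD^{\sta}+\cF^{\ \sta}\sim_{\bQ}0$ over $\cC^{\sta}$.

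\emph{Step 2: descent to the coarse space.} Let $\rho\colon\cX^{\sta}\to X^{\sta}$ be the coarse map. Near $\mtf{p}$ it is a ramified cover; étale locally it is a $\bmu_d$-quotient with $\bmu_d$ acting on the local coordinate $t$ of the root stack by $t\mapsto\zeta t$, where $t^d=s$ is the coordinate at $p$. I will show that $\rho^*\big(K_{X^{\sta}}+cD^{\sta}+(F^{\sta})^{\red}\big)=K_{\cX^{\sta}}+c\cD^{\sta}+\cF^{\ \sta}$, the log-crepant pullback familiar from root constructions. The ramification of $\rho$ occurs only along $\cF^{\ \sta}$. If a component of $\cF^{\ \sta}$ has ramification index $e$, then $\rho^*K_{X^{\sta}}=K_{\cX^{\sta}}-(e-1)\cF$ on that component, and $\rho^*(F^{\sta})^{\red}=e\,\cF$. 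Summing gives the desired identity. Away from $\cF^{\ \sta}$ the map $\rho$ is étale; there it only has the stabilizers of the pairs, which act trivially in codimension one as in the proof of \Cref{prop_descent_KX_D}. Hence $D^{\sta}$ pulls back to $\cD^{\sta}$ there, as in \Cref{prop_descent_KX_D}(1).

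\emph{Step 3: triviality on $X^{\sta}$.} A multiple $m\big(K_{X^{\sta}}+cD^{\sta}+(F^{\sta})^{\red}\big)$ is $\bQ$-Cartier near $F^{\sta}$; this follows from \Cref{lemma_singularities_tsp} plus invariance under the finite stabilizer. Its pullback is trivial over $\cC^{\sta}$ by Step 1. Then $(f^{\sta})_*$ of the trivializing section is invariant, since it is canonical up to a unit pulled back from $\cC^{\sta}$. It therefore descends to a section on $X^{\sta}$ that is nowhere vanishing over $C^{\sta}$. This uses $p_*\cO_{X^{\sta}}=\cO_{C^{\sta}}$ from the Remark following \Cref{lemma_singularities_tsp}.

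The main obstacle is Step 2. The key facts needed there are, first, that the divisor $\cF^{\ \sta}$ is exactly the locus where $\rho$ ramifies in codimension one, and second, that its ramification index matches the multiplicity of $F^{\sta}$. Both should follow from the local structure of the root stack and flatness of $\cX^{\sta}\to\cC^{\sta}$. If a direct computation is awkward, I would instead pass to a local cyclic base change of degree $d$, which trivializes the root. On the cover the statement is Step 1, and the claim descends by $\bmu_d$-invariance, exactly as in the Riemann--Hurwitz formula for the cover.
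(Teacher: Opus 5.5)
Your proposal is correct and follows essentially the same route as the paper. The paper's three steps are: relative $\bQ$-triviality of $K_{\cX^{\sta}}+c\cD^{\sta}+\cF^{\ \sta}$ on the stack, coming from the definition of $\PCY$; crepancy of the coarse map $\rho$ by Riemann--Hurwitz, since $\rho$ ramifies in codimension one only along the fiber over $\mtf{p}$; and descent using $\rho_*\cO_{\cX^{\sta}}=\cO_{X^{\sta}}$, which the paper phrases as injectivity of $\Pic(X^{\sta})\to\Pic(\cX^{\sta})$ rather than as descending a trivializing section.

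Two points in your write-up need fixing. In Step 3, say explicitly, as the paper does, that you first replace the line bundle on $\cC^{\sta}$ by a multiple so that it is pulled back from $C^{\sta}$; near $\mtf{p}$ it can be a nontrivial character of $\bmu_d$, and then no invariant trivializing section exists. Also, the ramification index $e$ of a component is not its multiplicity in $F^{\sta}$: rather, $e$ times that multiplicity equals $d$, though your argument never actually uses this.
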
 

\begin{proof}
Consider the coarse moduli space map 
\[\textstyle \rho:\big(\cX^{\sta}, (c+\epsilon)\cD^{\sta} + \cF^{\ \sta} \big) \ \longrightarrow \  \big(X^{\sta},(c+\epsilon)D^{\sta} + (F^{\sta})^{\red}\big),\] which can be ramified only over $(F^{\sta})^{\red}$ in codimension one, and hence $\rho$ is crepant by Riemann--Hurwitz. By the definition of the moduli stack $\PCY$, there is a line bundle $\cM$ on $\cC^{\sta}$ such that 
    \begin{equation}\label{eq:log-CY-condition}
        \pi^*\cM \ \simeq \  m(K_{\cX^{\sta}}+ c\cD^{\sta} + \cF^{\ \sta})
    \end{equation}
    for $m>0$ divisible enough. Up to replacing $\cM$ with a multiple of it, we can assume it is the pullback of a line bundle $M$ on $C^{\sta}$. Since $\rho_*\cO_{\cX^{\sta}} = \cO_{X^{\sta}}$, the pull-back map 
    \[
    \Pic(X^{\sta})\to \Pic(\cX^{\sta})
    \]
    is injective, so $m(K_{X^{\sta}}+cD^{\sta} + (F^{\sta})^{\red})\simeq \pi^*m(K_{\cX^{\sta}}+ c\cD^{\sta} + \cF^{\ \sta})$ agrees with the pull-back of $M$.
\end{proof}

\smallskip

\begin{Step}[Modification on $X_{\eta}$]\label{step_3_properness}
Construct a birational morphism $g_\eta\colon W_{\eta}\to X_\eta$
extracting the irreducible components of the fiber $F^{\sta}_{\eta}$,
together with a divisor $G_\eta$ on $W_\eta$, such that the canonical model
of $(W_{\eta},(c+\epsilon)D_{W_\eta} + G_{\eta})$ over $C_\eta$ agrees with that of $(X_\eta,(c+\epsilon)D_\eta)$ for every
$0\le \epsilon \ll 1$.
\end{Step}

\begin{lem}\label{lem:negative-discrepancy}
    The discrepancy of $(X^{\sta}_\eta,cD_{\eta}^{\sta}+(F^{\sta}_{\eta})^{\red})$ with respect to every divisor over the fiber $F_{\eta}\subseteq X_\eta$ is negative.
\end{lem}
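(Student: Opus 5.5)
The plan is to compare the two log Calabi--Yau structures over a neighbourhood of $p_\eta\in C_\eta$ on a common resolution. We read ``divisor over the fiber $F_\eta$'' as a divisorial valuation of the common function field whose center on $X_\eta$ lies in $F_\eta$. The main case, and the one used in Step \ref{step_3_properness}, is that of the prime components of $F_\eta$. The key claim is that the two pairs differ, crepantly, by a \emph{positive} multiple of the fiber.

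\textbf{Step 1: the two pairs differ by a multiple of the fiber.} Shrink $C_\eta$ to a neighbourhood of $p_\eta$ on which $X_\eta$ and $X^{\sta}_\eta$ are isomorphic away from the fibers over $p_\eta$. Take a normal common resolution $Y\to X_\eta$, $Y\to X^{\sta}_\eta$ (write $\alpha,\beta$ for the two maps) and write
\[
\alpha^*(K_{X_\eta}+cD_\eta)\ \sim_{\bQ}\ \beta^*\bigl(K_{X^{\sta}_\eta}+cD^{\sta}_\eta+(F^{\sta}_\eta)^{\red}\bigr)+\Theta .
\]
The left-hand side is $\bQ$-linearly trivial over $C_\eta$ by (LCY) and \Cref{prop_descent_KX_D}; the first term on the right is $\bQ$-linearly trivial over $C^{\sta}$ by \Cref{step_2_properness}. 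Hence $\Theta$ is supported over $p_\eta$ and is numerically trivial over $C_\eta$. The fiber of $Y$ over $p_\eta$ is connected, since $f_{\eta*}\cO=\cO$. Zariski's lemma therefore gives $\Theta=t\,Y_{p_\eta}=t\,\alpha^*F_\eta$ for a single rational number $t$. Consequently, for every divisor $E$ over $F_\eta$,
\[
a\bigl(E;X^{\sta}_\eta,cD^{\sta}_\eta+(F^{\sta}_\eta)^{\red}\bigr)\ =\ a(E;X_\eta,cD_\eta)-t\,\mult_E(F_\eta).
\]

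\textbf{Step 2: $t>0$.} Pick a component $G'$ of $(F^{\sta}_\eta)^{\red}$. Its discrepancy for the $X^{\sta}$-pair is $-1$. On the $X$ side, $X_\eta$ is normal and $D_\eta$ has no vertical components, since $f_\eta$ is non-degenerate. By (Sing) there is $s>0$ with $(X_\eta,cD_\eta+sF_\eta)$ log canonical near $F_\eta$. Hence
\[
a(G';X_\eta,cD_\eta)\ \ge\ -1+s\,\mult_{G'}F_\eta\ >\ -1 .
\]
If $G'$ is a divisor on $X_\eta$, then in fact $a(G';X_\eta,cD_\eta)=0$. Either way the identity of Step 1 gives $t\,\mult_{G'}F_\eta=a(G';X_\eta,cD_\eta)+1>0$, so $t>0$.

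\textbf{Step 3: conclusion.} For a prime component $G$ of $F_\eta$ we have $a(G;X_\eta,cD_\eta)=0$, because $G\not\subseteq\Supp D_\eta$. Hence the discrepancy of $G$ for the $X^{\sta}$-pair is $-t\,\mult_G F_\eta<0$. For a general exceptional $E$ over $F_\eta$, the same formula reduces negativity to the inequality $a(E;X_\eta,cD_\eta)<t\,\mult_E F_\eta$. I would try to get this from the log canonicity of the stacky KSBA model (\Cref{lemma_singularities_tsp}) together with the fact that $\cF^{\ \sta}$ is Cartier on $\cX^{\sta}$, being the pullback of the gerbe $\mtf p$.

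\textbf{Main obstacle.} I expect this last inequality, for arbitrary exceptional $E$, to be the delicate point, and it may only hold in the weaker form needed later. The statement that is actually used in Step \ref{step_3_properness}, namely that the divisors of $F_\eta$ and of $(F^{\sta}_\eta)^{\red}$ all acquire negative discrepancy, follows from Steps 1--2 alone. I would therefore secure that case first, and only afterwards handle general exceptional divisors via the Cartier property of $\cF^{\ \sta}$ on the stack.
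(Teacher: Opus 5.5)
Your Steps 1--3 correctly prove the lemma in the form in which the paper proves and uses it, namely for the components of $F_\eta$. The paper's argument ends with ``every component of $F_\eta$'', and that is what feeds the extraction in Step~\ref{step_5_properness}.

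The organization differs from the paper's. The paper introduces both thresholds $\alpha$ and $\alpha^{\sta}$. It uses the canonical bundle formula to see that $(X_\eta,cD_\eta+\alpha F_\eta)$ and $(X^{\sta}_\eta,cD^{\sta}_\eta+\alpha^{\sta}F^{\sta}_\eta)$ are crepant near $p_\eta$. It then reads off the discrepancy $-\alpha\,\mult_G F_\eta$ of a component $G$ of $F_\eta$, and concludes by monotonicity from $\alpha^{\sta}F^{\sta}_\eta\le (F^{\sta}_\eta)^{\red}$.

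You instead compare $(X_\eta,cD_\eta)$ directly with the pair in the statement. Zariski's lemma shows that the two discrepancy functions differ by a single constant times $\mult_E F_\eta$. You fix the sign of that constant by testing one lc place (a component of $(F^{\sta}_\eta)^{\red}$) against the bound $>-1$ supplied by (Sing). This is the same mechanism made explicit, and it needs only \Cref{step_2_properness} and (Sing). In particular you never use $\alpha^{\sta}$, which presupposes that $K_{X^{\sta}_\eta}+cD^{\sta}_\eta$ is $\bQ$-Cartier, nor the monotonicity step.

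What the paper's formulation gives at no extra cost is the companion fact used for $g_\eta$ in Step~\ref{step_3_properness}: $(X_\eta,cD_\eta+\alpha F_\eta)$ has negative discrepancy along the components of $F^{\sta}_\eta$. In your set-up this needs one more observation. Log canonicity of the $X^{\sta}$-pair (\Cref{lemma_singularities_tsp}), together with its lc place over $p_\eta$, forces $|t|=\alpha$.

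Two further comments. First, there is a sign slip. With $\Theta$ as you wrote it, $\Theta=\sum_E\bigl(a(E;X^{\sta}_\eta,cD^{\sta}_\eta+(F^{\sta}_\eta)^{\red})-a(E;X_\eta,cD_\eta)\bigr)E$. So $\Theta=t\,\alpha^*F_\eta$ yields $+t\,\mult_E F_\eta$ in your identity, and your Step~2 then gives $t<0$. This is harmless, since you only use that there is one constant and fix its sign afterwards.

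Second, and more important, abandon the plan for arbitrary exceptional $E$: under your broad reading the statement is false. Let $G$ be a component of $F_\eta$. Let $E$ be the exceptional divisor of the blow-up of a general point of $G$, where $X_\eta$ and $G$ are smooth, $F_\eta=(\mult_G F_\eta)\,G$ locally, and $D_\eta$ is absent. Then $a(E;X_\eta,cD_\eta)=\dim X_\eta-1$ and $\mult_E F_\eta=\mult_G F_\eta$. Your identity, with either sign convention, therefore gives
\[
a\bigl(E;X^{\sta}_\eta,cD^{\sta}_\eta+(F^{\sta}_\eta)^{\red}\bigr)\ =\ \dim X_\eta-1+a\bigl(G;X^{\sta}_\eta,cD^{\sta}_\eta+(F^{\sta}_\eta)^{\red}\bigr)\ \ge\ \dim X_\eta-2\ \ge\ 0 ,
\]
where the inequality uses log canonicity of the $X^{\sta}$-pair. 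For instance, at a general point of a cuspidal fiber of an elliptic surface with $c=0$ one gets $1-\tfrac56=\tfrac16$.

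Hence ``divisor over the fiber $F_\eta$'' can only mean ``component of $F_\eta$'', and that case is already complete after your Step~3. No appeal to the Cartier property of $\cF^{\ \sta}$ is needed.
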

\begin{proof}
    Let $\alpha^{\sta}$ be the log canonical threshold of $(X^{\sta}_\eta,cD^{\sta}_\eta)$ along the fiber $F_{\eta}^{\sta}$ over $p_{\eta}$, which is positive by \Cref{def_CY_fibration} and \Cref{lemma_singularities_tsp}. Note that the pairs $(X_\eta,cD_\eta+\alpha F_{\eta})$ and $(X^{\sta}_\eta,cD_{\eta}^{\sta}+\alpha^{\sta}F^{\sta}_{\eta})$ are crepant birational to one another over a neighborhood of $p_{\eta}\in C_{\eta}$; this follows immediately from the canonical bundle formula, see \cite[Theorem 8.5.1]{Kol07}. Consequently, the discrepancy of $(X_\eta,cD_\eta+\alpha F_{\eta})$ with respect to every component of $F^{\sta}_{\eta}$ is negative, and likewise the discrepancy of $(X^{\sta}_\eta,cD^{\sta}_\eta+\alpha^{\sta} F^{\sta}_{\eta})$ with respect to every component of $F_{\eta}$ is negative. Since $\alpha^{\sta} F^{\sta}_\eta \le (F^{\sta}_\eta)^{\red}$, the discrepancy of $(X^{\sta}_\eta,cD_{\eta}^{\sta}+(F^{\sta}_{\eta})^{\red})$ with respect to every component of $F_{\eta}$ is negative as well.
\end{proof}
By \cite[Theorem 1]{moraga2020extracting}, there exists a birational morphism
\[
g_{\eta}\ \colon\  W_{\eta} \ \longrightarrow \  X_\eta
\]
extracting precisely the components of $F^{\sta}_{\eta}$ that are not contained in $F_{\eta}$. Note that applying \cite[Theorem 1]{moraga2020extracting} may require replacing the fraction field $K(R)$ with a finite extension, since the results in \textit{loc.\ cit.} are stated over an algebraically closed field; this causes no issue, as we are free to replace $R$ by a finite extension. By construction, the $g_\eta$-exceptional divisors have strictly negative discrepancies with respect to the pair $(X_\eta,cD_\eta + \alpha F_\eta)$, where $\alpha$ is the log canonical threshold with respect to the fiber $F_{\eta}$ over $p_{\eta}$.

\smallskip

Fix an $\epsilon_0>0$ such that, for every rational number $0<\epsilon \le\epsilon_0$, by \Cref{step_2_properness} the $\bQ$-divisors
\begin{equation}\label{eq:log-can}
    K_{X^{\sta}}+(c+2\epsilon)D^{\sta} + \sum_{i=1}^n(F^{\sta}_i)^{\red}\sim_{\bQ,C^{\sta}}2\epsilon D^{\sta}
\qquad\text{and}\qquad
K_{X_\eta} + (c+2\epsilon)D_\eta\sim_{\bQ,C_{\eta}}2\epsilon D_{\eta}
\end{equation} are ample over $C^{\sta}$ and over $C_\eta$, respectively, and the corresponding pairs are log canonical. Then, for any rational number $0<\alpha'<\alpha$ sufficiently close to $\alpha$ and any $0\le\epsilon\le \epsilon_0$, $(X_\eta,(c+\epsilon)D_\eta +\alpha' F_{\eta})$ has negative discrepancy with respect to every $g_{\eta}$-exceptional divisor. Fix such a rational number $\alpha'\in(0,\alpha)$, and let $G_{\eta}$ be the $g_{\eta}$-exceptional $\bQ$-divisor determined by
    \begin{equation}\label{eq:defining-eq}
        K_{W_{\eta}} + (c+\epsilon_0)D_{W_{\eta}} + (g_{\eta})_*^{-1}(\alpha'F_{\eta}) + G_{\eta} \ \sim_\bQ\  g_{\eta}^*\big(K_{X_\eta}+ (c+\epsilon_0)D_\eta +\alpha' F_{\eta}\big).
    \end{equation}

\begin{lem}\label{lem:step_G_eta}
    The $\bQ$-divisor $G_{\eta}$ is effective, is supported on the proper transform of $F^{\sta}_{\eta}$ on $W_{\eta}$, and satisfies $\lfloor G_{\eta}\rfloor=0$. Moreover, the log canonical model of
    \[
    \big(W_{\eta}, (c+\epsilon)D_{W_\eta} + G_{\eta}\big)
    \]
    agrees with that of $\big(X_\eta,(c+\epsilon)D_\eta\big)$ over $C_\eta$ for every $0\le \epsilon \le \epsilon_0$.
\end{lem}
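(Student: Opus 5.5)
The plan is to read off all assertions from the defining relation \eqref{eq:defining-eq} by comparing discrepancies, and then to compare section rings over $C_\eta$.

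\emph{Effectivity, support and $\lfloor G_\eta\rfloor=0$.} Since $g_\eta$ extracts exactly the components of $F^{\sta}_\eta$ not contained in $F_\eta$, the $g_\eta$-exceptional prime divisors are precisely the proper transforms $E_1,\dots,E_k$ of these components. Comparing \eqref{eq:defining-eq} with the definition of discrepancy gives
\[
G_\eta\ =\ -\sum_{i=1}^k a(E_i;X_\eta,(c+\epsilon_0)D_\eta+\alpha'F_\eta)\,E_i .
\]
By the choice of $\alpha'$ (the paragraph preceding \eqref{eq:defining-eq}), every such discrepancy is negative. Hence $G_\eta$ is effective and supported on $\bigcup E_i$.

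For $\lfloor G_\eta\rfloor=0$ I would show each discrepancy is $>-1$. The pair $(X_\eta,cD_\eta+\alpha F_\eta)$ is log canonical near $F_\eta$, since $\alpha$ is the lct, so $a(E_i;cD_\eta+\alpha F_\eta)\ge -1$. Because each $E_i$ lies over $p_\eta$, we have $\mult_{E_i}g_\eta^*F_\eta>0$, and therefore
\[
a(E_i;cD_\eta+\alpha'F_\eta)\ \ge\ -1+(\alpha-\alpha')\mult_{E_i}g_\eta^*F_\eta\ >\ -1 .
\]
Adding $\epsilon_0 D_\eta$ lowers this by $\epsilon_0\mult_{E_i}g_\eta^*D_\eta$. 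There are only finitely many $E_i$, so this stays $>-1$ after shrinking $\epsilon_0$ if necessary, or equivalently after choosing $\alpha'$ closer to $\alpha$ compatibly with $\epsilon_0$. Making this order of choices consistent with how $\epsilon_0$ and $\alpha'$ were fixed in the text is the one genuinely delicate point. The fallback is to use that $\lct(X_\eta,(c+\epsilon)D_\eta;F_\eta)\to\alpha$ as $\epsilon\to0$.

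\emph{Canonical models.} For $0\le\epsilon\le\epsilon_0$, subtract $g_\eta^*(K_{X_\eta}+(c+\epsilon)D_\eta)$ from \eqref{eq:defining-eq}. Since $F_\eta$ is a fiber, $g_\eta^*F_\eta\sim_{\bQ,C_\eta}0$, and we obtain
\[
K_{W_\eta}+(c+\epsilon)D_{W_\eta}+G_\eta\ \sim_{\bQ,C_\eta}\ g_\eta^*\big(K_{X_\eta}+(c+\epsilon)D_\eta\big)+\Theta_\epsilon,
\]
where
\[
\Theta_\epsilon\ \coloneqq\ \alpha'\big(g_\eta^*F_\eta-(g_\eta)^{-1}_*F_\eta\big)+(\epsilon_0-\epsilon)\big(g_\eta^*D_\eta-D_{W_\eta}\big).
\]
Both brackets are effective and $g_\eta$-exceptional, so $\Theta_\epsilon$ is effective and $g_\eta$-exceptional.

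By the standard fact that adding an effective exceptional divisor to a pullback does not change the pushforward of sections, we get
\[
(g_\eta)_*\cO_{W_\eta}\big(\lfloor m(g_\eta^*L+\Theta_\epsilon)\rfloor\big)=\cO_{X_\eta}(mL)
\qquad\text{for } L=K_{X_\eta}+(c+\epsilon)D_\eta \text{ and } m \text{ divisible enough}.
\]
Hence the relative section rings over $C_\eta$ coincide. The pair $(W_\eta,(c+\epsilon)D_{W_\eta}+G_\eta)$ is log canonical: it is crepant to a sub-lc pair with effective coefficients at most $1$, by the previous step. So its lc model over $C_\eta$ is the Proj of this ring. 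That is $X_\eta$ for $\epsilon>0$, since $K_{X_\eta}+(c+\epsilon)D_\eta\sim_{\bQ,C_\eta}\epsilon D_\eta$ is relatively ample by \eqref{eq:log-can}, and it is $C_\eta$ for $\epsilon=0$. This matches the lc model of $(X_\eta,(c+\epsilon)D_\eta)$ in both cases.
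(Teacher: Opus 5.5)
Your proposal is correct and follows the paper's argument in expanded form. The paper also reads the claims on $G_\eta$ directly off \eqref{eq:defining-eq}, and it obtains the comparison of canonical models from \cite[Corollary 3.53]{KM98}, which is exactly your observation that adding the effective $g_\eta$-exceptional divisor $\Theta_\epsilon$ to a pullback does not change the relative section ring. The subtlety you flag is real, and the paper's one-line proof passes over it: $\lfloor G_\eta\rfloor=0$ and log canonicity need $\alpha'<\lct(X_\eta,(c+\epsilon_0)D_\eta;F_\eta)$, which can fail for $\alpha'$ close to $\alpha$ when an extracted lc place has center inside $D_\eta$, so your fallback of fixing $\alpha'$ first and then shrinking $\epsilon_0$ is the right repair (it is harmless, since the negativity requirement is sharpest at $\epsilon=0$), whereas moving $\alpha'$ closer to $\alpha$, as your parenthetical suggests, goes the wrong way.
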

\begin{proof}
    The claims on $G_{\eta}$ follow immediately from the defining relation \eqref{eq:defining-eq}, together with the facts that $\alpha$ is the log canonical threshold and $\alpha'<\alpha$. It then follows from \cite[Corollary 3.53]{KM98} that the canonical models of $(W_{\eta}, (c+\epsilon)D_{W_\eta} + G_{\eta})$ and $(X_\eta,(c+\epsilon)D_\eta)$ over $C_\eta$ are isomorphic for every $0\le \epsilon \le \epsilon_0$.
\end{proof}

\begin{Step}[Modification of $X^{\sta}$]\label{step_5_properness} Construct a birational model $W^{\sta}\rightarrow X^{\sta}$ together with boundary divisors such that the relative log canonical model over $C^{\sta}$ exists, and its generic fiber is isomorphic to $\big(X_\eta,(c+\epsilon)D_\eta\big)$.
\end{Step}

By \cite[Theorem 2.10]{Has24}, one can take a crepant $\bQ$-factorial dlt modification
\[
h\ \colon\  \big(Z,(c+\epsilon_0)D_Z+T_Z\big)\ \longrightarrow \ \big(X^{\sta},(c+\epsilon_0)D^{\sta}\big),
\]
where $D_Z$ is the strict transform of $D^{\sta}$ and $T_Z$ is the sum of the $h$-exceptional divisors with coefficient $1$. Since there are no log canonical centers contained in the fibers of
\(X^{\sta}\to C^{\sta}\) by \Cref{lemma_singularities_tsp} and $(X^{\sta}, (c+2\epsilon_0)D^{\sta})$ is log canonical by (\ref{eq:log-can}), every component of $T_Z$ is horizontal, i.e. dominates $C^{\sta}$. Let $D_Z$ (resp. $F_Z^{\red})$ be the proper transform of $D^{\sta}$ (resp. $(F^{\sta})^{\red}$).
\begin{lem}\label{lem:crepant-birational}
    For every $0\leq \lambda\leq 1$ and every $ \epsilon\ge 0$, the morphism
\[
\big(Z,T_Z+(c+\epsilon)D_Z+\lambda F_Z^{\red}+Z_\xi\big)\longrightarrow \big(X^{\sta},(c+\epsilon)D^{\sta}+\lambda(F^{\sta})^{\red}+X^{\sta}_\xi\big) 
\]
is crepant birational.
\end{lem}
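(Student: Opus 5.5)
We must show that $K_Z+T_Z+(c+\epsilon)D_Z+\lambda F_Z^{\red}+Z_\xi = h^*\big(K_{X^{\sta}}+(c+\epsilon)D^{\sta}+\lambda(F^{\sta})^{\red}+X^{\sta}_\xi\big)$ as $\bQ$-divisors. The plan is to compare coefficients of prime divisors on $Z$, using that $h$ is already crepant for the boundary $(c+\epsilon_0)D^{\sta}$.

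\textbf{Step 1: interpolate in $\epsilon$.} Since $h$ is a crepant dlt modification of $(X^{\sta},(c+\epsilon_0)D^{\sta})$, we have
\[
K_Z+T_Z+(c+\epsilon_0)D_Z = h^*\big(K_{X^{\sta}}+(c+\epsilon_0)D^{\sta}\big).
\]
It therefore suffices to show that $h^*D^{\sta}=D_Z$, i.e.\ that no $h$-exceptional divisor lies in the support of $h^*D^{\sta}$. Each component $E$ of $T_Z$ is an lc place of $(X^{\sta},(c+\epsilon_0)D^{\sta})$. By \eqref{eq:log-can}, the pair $(X^{\sta},(c+2\epsilon_0)D^{\sta})$ is log canonical. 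If $\mult_E h^*D^{\sta}>0$, the discrepancy of $E$ for the larger coefficient $c+2\epsilon_0$ would drop below $-1$, which is a contradiction. Hence $h^*D^{\sta}=D_Z$. Subtracting multiples of this from the crepant relation gives crepancy for every $\epsilon\ge 0$; the same identity also holds for negative shifts, since both sides are linear in $\epsilon$.

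\textbf{Step 2: the vertical terms.} We need
\[
h^*\big(\lambda (F^{\sta})^{\red}+X^{\sta}_\xi\big)=\lambda F_Z^{\red}+Z_\xi,
\]
where the right-hand side consists of strict transforms. Write $h^*(F^{\sta})^{\red}=F_Z^{\red}+\sum a_E E$ and $h^*X^{\sta}_\xi=(\text{strict transform})+\sum b_E E$, with $E$ ranging over the $h$-exceptional divisors. Every such $E$ is a component of $T_Z$, and each component of $T_Z$ is horizontal over $C^{\sta}$, as noted just before the statement.

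A horizontal divisor cannot map into $(F^{\sta})^{\red}$, which lies over the point $p$, nor into $X^{\sta}_\xi$, which lies over the special fiber of $C^{\sta}\to\Spec R$. Consequently $a_E=b_E=0$. We should also check that the strict transform of $X^{\sta}_\xi$ is all of $Z_\xi$, i.e.\ that $Z_\xi$ has no exceptional components; this again holds because all exceptional divisors are horizontal. The pullbacks are well defined because $(F^{\sta})^{\red}$ is $\bQ$-Cartier near $p$ by \Cref{step_2_properness} (it is $\bQ$-linearly equivalent to a pullback minus $K+cD$), and $X^{\sta}_\xi$ is Cartier.

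\textbf{Step 3: combine.} Adding the identities from Steps 1 and 2 yields the crepant relation for all $0\le\lambda\le1$ and $\epsilon\ge0$.

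The main point needing care is Step 1, namely that exceptional divisors do not lie over $\Supp D^{\sta}$. This uses the strict log canonicity at coefficient $c+2\epsilon_0$; if that argument fails, we would instead invoke the horizontality of $T_Z$ together with the fact that $D^{\sta}$ is $\bQ$-Cartier.
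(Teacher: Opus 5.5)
Your proposal is correct and follows the same route as the paper. Both arguments use crepancy of $h$ at coefficient $c+\epsilon_0$, together with the fact that $h^*D^{\sta}$, $h^*(F^{\sta})^{\red}$ and $h^*X^{\sta}_\xi$ have no $h$-exceptional components, since every such component is a horizontal lc place (a component of $T_Z$). Your proof that $h^*D^{\sta}=D_Z$, via log canonicity of $(X^{\sta},(c+2\epsilon_0)D^{\sta})$, is more careful than the paper's, which attributes it to horizontality of $T_Z$ alone; that reasoning (which is also your stated fallback) does not suffice, since $D^{\sta}$ is itself horizontal.
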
 

\begin{proof}   
As $h:\big(Z,(c+\epsilon_0)D_Z+T_Z\big)\ \longrightarrow \ \big(X^{\sta},(c+\epsilon_0)D^{\sta}\big)$ is crepant, the morphism
\[
h\ \colon\  \big(Z,T_Z+(c+\epsilon)D_Z+Z_\xi\big)\ \longrightarrow \ \big(X^{\sta},(c+\epsilon)D^{\sta}+X^{\sta}_\xi\big)
\]
is also crepant, and hence the pair $\big(Z,T_Z+(c+\epsilon)D_Z+Z_\xi\big)$
is log canonical for every $0\le \epsilon \le \epsilon_0$.
Let $F_Z$ denote the fiber of $Z\to C^{\sta}$ over $p$. By \Cref{lemma_singularities_tsp}, the divisor $(F^{\sta})^{\red}$ is $\bQ$-Cartier. Since every component of $T_Z$ is horizontal, the pull-back of $(F^{\sta})^{\red}$ coincides with $F_Z^{\red}$; the same conclusion holds for $D^{\sta}$. Therefore, for every $\lambda$ and $\epsilon $, 
\begin{equation}\label{eq:crepant-birational}
   \big(Z,T_Z+(c+\epsilon)D_Z+\lambda F_Z^{\red}+Z_\xi\big)\longrightarrow \big(X^{\sta},(c+\epsilon)D^{\sta}+\lambda(F^{\sta})^{\red}+X^{\sta}_\xi\big) 
\end{equation}
is crepant birational.
\end{proof}

Let $\alpha^{\sta}$ be the log canonical threshold of $(X^{\sta}_\eta,cD^{\sta}_\eta)$ with respect to the fiber $F_{\eta}^{\sta}$ over $p_{\eta}$ as in \Cref{lem:negative-discrepancy}. As $\alpha^{\sta} (F_Z)_{\eta} \le (F_Z)^{\red}_{\eta}$ and every component of $F_Z$ dominates $\spec R$, one also has \begin{equation}\label{eq_alpa}
    \alpha^{\sta}F_Z \le (F_Z)^{\red}.
\end{equation}
From \Cref{lem:crepant-birational} and the proof of \Cref{lem:negative-discrepancy}, the pair \[\big(Z_\eta,T_{Z,\eta} + (c+\epsilon)D_{Z,\eta} + \alpha^{\sta} (F_Z)_{\eta}\big)\] has negative discrepancy with respect to every component of $F_{\eta}\subseteq X_\eta$, for every $\epsilon> 0$. Then by \Cref{eq_alpa} the same applies also for the pair \[(Z,T_Z + (c+\epsilon)D_Z + F_Z^{\red}+Z_\xi).\] Thus by \cite[Theorem 1]{moraga2020extracting} there exists a birational contraction
\[
h^{\sta}\ \colon \ W^{\sta} \ \longrightarrow \ Z
\]
extracting precisely the components of $F_\eta$ that are not components of $F_Z$, such that all extracted divisors are $\bQ$-Cartier. Denote by $G^{\sta}\subseteq W^{\sta}$ the proper transform in $W^{\sta}$ of the divisor $G_\eta$ defined in \Cref{eq:defining-eq}, and denote by $F_{W^{\sta}}$ the fiber of $W^{\sta}\to C^{\sta}$ over $p$.

\begin{lem}\label{lem:two-statements}
The following statements hold:
\begin{enumerate}
    \item[\textup{(1)}]\label{pt_1}
    The support of $G^{\sta}$ is contained in the support of $(h^{\sta})^{-1}_*F_Z$.

    \item[\textup{(2)}]\label{pt_2_step_6}
    The pair
    \[
    \big(W^{\sta},
    (h_*^{\sta})^{-1}(T_Z+(c+\epsilon)D_Z)
    +G^{\sta}
    +\epsilon'F_{W^{\sta}}
    +W^{\sta}_\xi\big)
    \]
    is log canonical for any $0\le \epsilon\le \epsilon_0$ and any sufficiently small $0<\epsilon'\ll 1$.
\end{enumerate}
\end{lem}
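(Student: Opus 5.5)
For (1), I will work on the generic fiber, where the question reduces to comparing the prime divisors of $W^{\sta}_\eta$ and $W_\eta$. By \Cref{lem:step_G_eta}, $G_\eta$ is supported on the proper transform on $W_\eta$ of the components of $F^{\sta}_\eta$. The morphism $h^{\sta}$ only extracts components of $F_\eta$ that are not already components of $F_Z$. Hence every prime divisor in $\Supp G_\eta$, being a component of $F^{\sta}_\eta$ and therefore of $(F_Z)_\eta$, is a divisor on $Z_\eta$. Its center on $W^{\sta}$ is therefore a component of $(h^{\sta})^{-1}_*F_Z$.

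Every component of $G^{\sta}$ is the closure of a component of $G^{\sta}_\eta$, since $G^{\sta}$ is defined as a proper transform from the generic fiber. Hence no component of $G^{\sta}$ lies in $W^{\sta}_\xi$, and (1) follows.

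For (2), I will write $K_{W^{\sta}}$ plus the given boundary as a pullback from $Z$ and compare it with the lc pair of \Cref{lem:crepant-birational}. Define
\[
\Delta_{W^{\sta}} \coloneqq (h^{\sta})^{-1}_*\big(T_Z+(c+\epsilon)D_Z+F_Z^{\red}\big) + E_{W^{\sta}} + W^{\sta}_\xi
\]
by crepant pullback of $(Z,T_Z+(c+\epsilon)D_Z+F_Z^{\red}+Z_\xi)$, so that $E_{W^{\sta}}$ is supported on the $h^{\sta}$-exceptional divisors. This pair is lc by \Cref{lem:crepant-birational} with $\lambda=1$ and the log canonicity of the pair on $X^{\sta}$. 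I then do two coefficient comparisons.

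\begin{enumerate}
\item \emph{Non-exceptional components.} Components of $F_{W^{\sta}}$ that are strict transforms of components of $F_Z$ have coefficient $1$ in $\Delta_{W^{\sta}}$. Their coefficient in the target boundary is $\operatorname{coeff}(G^{\sta})+\epsilon'\operatorname{mult}$. This is strictly less than $1$ for $\epsilon'\ll 1$, because $\lfloor G_\eta\rfloor=0$ by \Cref{lem:step_G_eta}.
\item \emph{Exceptional components.} For the $h^{\sta}$-exceptional components, which are the extracted components of $F_\eta$, the coefficient in $\Delta_{W^{\sta}}$ equals $-a(E;Z,\dots)$. This is positive, since those discrepancies are negative, as shown just before the construction of $h^{\sta}$. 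The target coefficient is only $\epsilon'\operatorname{mult}_E F_{W^{\sta}}$, because $G^{\sta}$ does not contain them by (1). So it is smaller once $\epsilon'$ is small.
\end{enumerate}

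The main obstacle is that the target is not obviously $\bQ$-Cartier, or of the form ``lc pair minus effective'', as an honest comparison of divisors. The difference between $\Delta_{W^{\sta}}$ and the target boundary is vertical, supported on $F_{W^{\sta}}$. I will use that $F_{W^{\sta}}$ is the pullback of the Cartier divisor $p$ on $C^{\sta}$, and that the extracted divisors are $\bQ$-Cartier, so all $\bQ$-Cartier issues are local over $p$.

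Then I write the target boundary as
\[
\Delta_{W^{\sta}} - (1-t)\,F_{W^{\sta}}^{\red\text{-part}} + (\text{small corrections}),
\]
and rewrite it as a convex combination of two pairs. The first is $\Delta_{W^{\sta}}$. The second is the crepant pullback of $(Z,T_Z+(c+\epsilon)D_Z+\lambda F_Z^{\red}+Z_\xi)$ for some $\lambda<1$, which is lc again by \Cref{lem:crepant-birational}.

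Concretely, choose $\lambda$ slightly less than $1$ so that the pullback coefficients dominate those of $G^{\sta}+\epsilon'F_{W^{\sta}}$ componentwise. Subtracting an effective $\bQ$-Cartier divisor, namely a multiple of $F_{W^{\sta}}$ together with an effective exceptional correction, then keeps the pair lc. Verifying this domination is exactly the coefficient check above. Log canonicity is preserved under decreasing boundaries, provided $K+\Delta$ stays $\bQ$-Cartier. This holds because $W^{\sta}$ is $\bQ$-factorial over a neighborhood of $p$: it is a $\bQ$-factorial dlt modification followed by extraction of $\bQ$-Cartier divisors.
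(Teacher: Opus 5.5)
Your proposal is correct and follows essentially the same route as the paper. Both pull back the lc pair $(Z,T_Z+(c+\epsilon)D_Z+F_Z^{\red}+Z_\xi)$ crepantly to $W^{\sta}$, with exceptional part effective and fully supported on the extracted divisors, then use $\lfloor G^{\sta}\rfloor=0$ and $\epsilon'\ll1$ for componentwise domination, and conclude by $\bQ$-factoriality of $W^{\sta}$. The paper pulls back once with $\epsilon_0$, which makes $\epsilon'$ visibly uniform in $\epsilon$, and it skips your convex-combination step with $\lambda<1$, which is unneeded: with $\bQ$-factoriality the $\lambda=1$ pullback already dominates the target, the difference being a general effective divisor supported on $F_{W^{\sta}}$.
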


\begin{proof}
Since $Z$ is $\bQ$-factorial and every $h^{\sta}$-exceptional divisor is $\bQ$-Cartier, the variety $W^{\sta}$ is also $\bQ$-factorial. Statement~\textup{(1)} follows immediately from the definition of $G_\eta$; see \Cref{eq:defining-eq}. Indeed, each component of $G_\eta$ corresponds to a component of $F^{\sta}_\eta$, whose pushforward to $Z_\eta$ is a component of $(F_Z)_\eta$. Let $T_{W^{\sta}}$ and $D_{W^{\sta}}$ denote the strict transforms of $T_Z$ and $D_Z$ on $W^{\sta}$, respectively. Then there exists an \emph{effective} divisor $\Gamma$, supported on the whole $h^{\sta}$-exceptional locus, such that
\begin{equation}\label{eq_W^s-->Z_crepant}
h^{\sta}\colon
\big(
W^{\sta},
T_{W^{\sta}}
+(c+\epsilon_0)D_{W^{\sta}}
+(h_*^{\sta})^{-1}F_Z^{\red}
+\Gamma
+W^{\sta}_\xi
\big)
\longrightarrow
\big(
Z,
T_Z+(c+\epsilon_0)D_Z+F_Z^{\red}+Z_\xi
\big)
\end{equation}
is a crepant birational morphism. Moreover, we have
\begin{equation}\label{eq_support_G}
\Supp\big((h_*^{\sta})^{-1}F_Z^{\red}+\Gamma\big)
\ =\ \Supp F_{W^{\sta}}.
\end{equation} To prove \textup{(2)}, note that \Cref{lem:step_G_eta} shows that
$\lfloor G_\eta\rfloor=0$. Since $G^{\sta}$ is the closure of $G_\eta$, we have
$\lfloor G^{\sta}\rfloor=0$, and hence
$G^{\sta}<(h_*^{\sta})^{-1}F_Z^{\red}$.
Therefore, \Cref{eq_W^s-->Z_crepant,eq_support_G} imply that the pair
\[
\big(
W^{\sta},
(h_*^{\sta})^{-1}(T_Z+(c+\epsilon)D_Z)
+G^{\sta}
+\epsilon'F_{W^{\sta}}
+W^{\sta}_\xi
\big)
\]
is log canonical for every $0\le \epsilon\le \epsilon_0$ and every sufficiently small $0<\epsilon'\ll1$.
\end{proof}

\begin{lem}\label{Step_7_properness}
For any $0\le \epsilon \le \epsilon_0$, the relative log canonical model of
\[
\big(W^{\sta},
T_{W^{\sta}}+(c+\epsilon)D_{W^{\sta}}+G^{\sta}+W^{\sta}_\xi\big)
\]
over $C^{\sta}$ exists, and its generic fiber is isomorphic to the relative log canonical model of the pair
$(X_\eta,(c+\epsilon)D_\eta)$ over $C_\eta$. In particular, both $T_{W^{\sta}}$ and
$G^{\sta}$ are contracted by the log canonical model morphism.
\end{lem}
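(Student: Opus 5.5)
The plan is to reduce existence of the relative log canonical model to a statement about an explicit MMP over $C^{\sta}$. Set
\[
L_\epsilon\coloneqq K_{W^{\sta}}+T_{W^{\sta}}+(c+\epsilon)D_{W^{\sta}}+G^{\sta}+W^{\sta}_\xi .
\]
Since $W^{\sta}_\xi$ is the pullback of the Cartier divisor $C^{\sta}_\xi$, it is $\bQ$-linearly trivial over $C^{\sta}$. The pair is log canonical and $W^{\sta}$ is $\bQ$-factorial, by \Cref{lem:two-statements}(2) with $\epsilon'\to 0$ and its proof. By the crepant relation \eqref{eq_W^s-->Z_crepant} and \Cref{lem:crepant-birational}, we can write
\[
L_\epsilon\ \sim_{\bQ,C^{\sta}}\ (h\circ h^{\sta})^*\big(K_{X^{\sta}}+(c+\epsilon)D^{\sta}+(F^{\sta})^{\red}\big)\;-\;E,
\qquad E\coloneqq (h^{\sta}_*)^{-1}F_Z^{\red}+\Gamma-G^{\sta}.
\]
By \Cref{step_2_properness} and \eqref{eq:log-can}, the first term is $\bQ$-linearly equivalent over $C^{\sta}$ to $\epsilon$ times the pullback of the relatively ample divisor $D^{\sta}$, so it is nef and big over $C^{\sta}$. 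By \Cref{lem:two-statements}(1), $\lfloor G_\eta\rfloor=0$ and \eqref{eq_support_G}, the divisor $E$ is effective with $\Supp E=\Supp F_{W^{\sta}}$.

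Since $p$ is a section of $C^{\sta}\to\spec R$ through smooth points, $F_{W^{\sta}}$ is the pullback of a Cartier divisor and is $\bQ$-trivial over $C^{\sta}$. I would therefore replace $E$ by $E'=E-tF_{W^{\sta}}$, with $t$ the largest value keeping $E'\ge 0$. Then $E'$ is an effective divisor supported over $p$ that does not contain every component of $F_{W^{\sta}}$. Such a divisor is degenerate (very exceptional) over $C^{\sta}$. Running an $L_\epsilon$-MMP over $C^{\sta}$ with scaling of an ample divisor contracts $\Supp E'$, by the standard result that an MMP contracts degenerate divisors (Birkar's / Hacon--Xu's results for lc pairs whose negative part is vertical and degenerate). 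On the output, $L_\epsilon$ equals the pushforward of a pullback of a relatively ample class. I would deduce semiampleness, and hence the log canonical model, by noting that the output is crepant to $(X^{\sta},(c+\epsilon)D^{\sta}+(F^{\sta})^{\red})$ away from the contracted locus. Alternatively, I would cite the existence of good minimal models for lc pairs whose generic fiber over $C^{\sta}$ has one, since the boundary contains the big divisor $\epsilon D$.

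To identify the generic fiber, restrict to $C_\eta$. On $W^{\sta}_\eta$ every divisor is either a divisor on $W_\eta$ (a component of $F_\eta$ or of $F^{\sta}_\eta$), or a component of $T$. The log canonical model over $C_\eta$ depends only on the discrepancy data of divisors with non-positive log discrepancy. So I would compare with $(W_\eta,(c+\epsilon)D_{W_\eta}+G_\eta)$ using \cite[Corollary 3.53]{KM98}, exactly as in \Cref{lem:step_G_eta}, which yields $(X_\eta,(c+\epsilon)D_\eta)$. Concretely, one checks that every divisor of $W^{\sta}_\eta$ not on $X_\eta$ appears in the boundary with coefficient at least its negated discrepancy relative to $(X_\eta,(c+\epsilon)D_\eta+\alpha'F_\eta)$. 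For components of $F^{\sta}_\eta$ this holds by the defining equation \eqref{eq:defining-eq} for $G_\eta$. For components of $T$, which have coefficient $1$, it holds automatically. These divisors are therefore contracted, which gives the final assertion about $T_{W^{\sta}}$ and $G^{\sta}$; the case $\epsilon=0$ is handled the same way, using $\alpha'<\alpha$.

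I expect the main obstacle to be this comparison on the generic fiber, in particular making sure the components of $T_{W^{\sta}}$ and the non-$F_\eta$ components of $F^{\sta}_\eta$ really are contracted rather than surviving as lc centers. The second delicate point is the semiampleness at the end of the MMP. For the former, I would first try to show that $W^{\sta}_\eta\dashrightarrow X_\eta$ is $L_\epsilon$-negative on every divisor it contracts, by pulling both back to a common resolution and using that $\epsilon D$ is ample on $X_\eta$.
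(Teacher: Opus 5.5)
Your generic-fiber comparison is sound, but the existence half of the proposal fails at the step ``an $L_\epsilon$-MMP over $C^{\sta}$ contracts $\Supp E'$''. The degenerate/very exceptional divisor results of Birkar and Hacon--Xu apply when $L_\epsilon\sim_{\bR,C^{\sta}}M$ with $M\ge 0$ very exceptional. You have instead $L_\epsilon\sim_{\bQ,C^{\sta}}P-E'$ with $P$ nef, so the very exceptional divisor enters with the opposite sign, and nothing forces $E'$ to be contracted.

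In fact the claim is false in this setting. Take $n=2$, $c=\tfrac12$, $X_\eta\to C_\eta$ a $\bP^1$-fibration, and $D_\eta$ four sections, two of which meet transversally at a point $x\in F_\eta$. Near $p$ one gets:
\begin{itemize}
\item $X^{\sta}_\eta=Z_\eta=W^{\sta}_\eta=\mathrm{Bl}_xX_\eta$, with exceptional curve $E_x$;
\item $T=\Gamma=0$ and $G^{\sta}=(2\epsilon_0+\alpha')E_x$;
\item your $E=\tilde F+(1-2\epsilon_0-\alpha')E_x$, hence $E'=(2\epsilon_0+\alpha')\tilde F$, where $\tilde F$ is the strict transform of $F_\eta$.
\end{itemize}
On $W^{\sta}_\eta$ one computes $L_\epsilon\cdot E_x=2\epsilon-2\epsilon_0-\alpha'<0<2\epsilon+2\epsilon_0+\alpha'=L_\epsilon\cdot\tilde F$. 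So the MMP contracts $E_x=\Supp G^{\sta}$. Afterwards $L_\epsilon\cdot F_\eta=4\epsilon>0$, and $\tilde F=\Supp E'$ survives.

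This is the general pattern whenever $G^{\sta}\neq 0$. After normalizing by multiplicities in $F_{W^{\sta}}$, the defining equation of $G_\eta$ forces the components of $G^{\sta}$ to carry the smallest coefficients of $E$. Hence $\Supp E'$ contains every component of the closure of $F_\eta$, which are exactly the divisors that must survive. Two further problems: your ``nef and big'' first term is $\bQ$-trivial over $C^{\sta}$ at $\epsilon=0$, a case the statement includes, and the final semiampleness step is only sketched.

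The missing ingredient is control of the lc centers, which is what makes a relative MMP for this lc (not klt) pair available at all. You already show that $(h^{\sta}_*)^{-1}F_Z^{\red}+\Gamma-G^{\sta}$ is effective with support equal to $\Supp F_{W^{\sta}}$. Combine this with log canonicity of the crepant pair in \eqref{eq_W^s-->Z_crepant}. It follows that $(W^{\sta},T_{W^{\sta}}+(c+\epsilon)D_{W^{\sta}}+G^{\sta})$ has no lc center over $p$, and none inside $W^{\sta}_\xi$, which is $\bQ$-trivial over $C^{\sta}$ anyway. So all its lc centers dominate $C^{\sta}$. Then \cite[Theorem 1.1]{HX13}, together with \cite[Theorem 2.10]{Has24}, gives a good minimal model, hence the lc model over $C^{\sta}$, for every $0\le\epsilon\le\epsilon_0$. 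This is the paper's argument. Your fallback sentence about good minimal models of lc pairs is only a theorem under exactly this lc-center hypothesis, which you never check.

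Your generic-fiber paragraph takes a different route from the paper. The paper extracts the $T$-components on $W_\eta$ and notes that the two pairs become isomorphic in codimension one. You instead compare discrepancies directly. If you keep your route, note that \cite[Corollary 3.53]{KM98} is stated for morphisms. You therefore need the negativity lemma on a common resolution of the birational contraction $W^{\sta}_\eta\dashrightarrow X_\eta$, which works because $K_{X_\eta}+(c+\epsilon)D_\eta$ is nef over $C_\eta$.
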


\begin{proof}
Since the support of the effective divisor
$(h_*^{\sta})^{-1}F_Z^{\red}+\Gamma-G^{\sta}$ coincides with the fiber of
$W^{\sta}\to C^{\sta}$ over $p$, and the pair
\[
\big(
W^{\sta},
T_{W^{\sta}}+(c+\epsilon_0)D_{W^{\sta}}
+(h_*^{\sta})^{-1}F_Z^{\red}
+\Gamma
+W^{\sta}_\xi
\big)
\]
constructed in \Cref{eq_W^s-->Z_crepant} is log canonical, the pair
$\big(W^{\sta},T_{W^{\sta}}+(c+\epsilon_0)D_{W^{\sta}}+G^{\sta}+W^{\sta}_\xi\big)$
has no log canonical centers lying over $p$. Thus every log canonical center
of $(W^{\sta},T_{W^{\sta}}+(c+\epsilon_0)D_{W^{\sta}}+G^{\sta}+W^{\sta}_\xi)$
dominates $C^{\sta}$. Applying \cite[Theorem 1.1]{HX13} together with
\cite[Theorem 2.10]{Has24}, the pair
\[
\big(
W^{\sta},
T_{W^{\sta}}+(c+\epsilon)D_{W^{\sta}}
+G^{\sta}
+W^{\sta}_\xi
\big)
\]
admits a relative log canonical model over $C^{\sta}$. Since the log canonical
model is obtained as the relative Proj of an $S_2$-algebra, its generic fiber
coincides with the log canonical model of the generic fiber. Similarly, after extracting from $W_\eta$ the (horizontal) log canonical centers which are irreducible components of $T_{W^{\sta}_\eta}$, from how the birational contractions $W^{\sta}\to X^{\sta}$ and $W_\eta\to X_\eta$ are constructed, the two pairs $(W^{\sta}_\eta
,T_{W^{\sta}_\eta}
+(c+\epsilon)D_{W^{\sta}_\eta}
+G^{\sta}_\eta)$ and $(W_\eta, (c+\epsilon)D_{W_\eta}
+G_\eta)$ are isomorphic in codimension one. Since the log canonical
model is obtained as the relative Proj of an $S_2$-algebra, the pairs 
\[
(W^{\sta}_\eta
,T_{W^{\sta}_\eta}
+(c+\epsilon)D_{W^{\sta}_\eta}
+G^{\sta}_\eta)\text{ and }(W_\eta, (c+\epsilon)D_{W_\eta}
+G_\eta)
\]have the same log canonical model over $C^{\sta}$, for every $0\le \epsilon\le\epsilon_0$. The log canonical model over $C^{\sta}$ for the pair $(W_\eta, (c+\epsilon)D_{W_\eta}
+G_\eta)$ is precisely the log canonical model of
$(X_\eta,(c+\epsilon)D_\eta)$ over $C_\eta$ by \Cref{lem:step_G_eta}. In particular,
both $T_{W^{\sta}}$ and $G^{\sta}$ are contracted by the relative log
canonical model morphism.
\end{proof}

We denote by
\[
\big(Y,(c+\epsilon)D_Y+Y_\xi\big)
\]
the relative log canonical model constructed in \Cref{Step_7_properness}. See \Cref{fig:stable-reduction} for a summary of the birational modifications involved in the construction.

\begin{Step}
    Study the geometry of $\big(Y,(c+\epsilon)D_{Y}+Y_\xi\big)\rightarrow C^{\sta}$, as a preparation for running MMP in the next step.
\end{Step}
Notice that $Y$ and $X^{\sta}$ are isomorphic away from the fiber over $p$, and that the map $Y\to C^{\sta}$ has equidimensional fibers.

\begin{lem}\label{lem:relative-log-can-model}
The relative log canonical model of $\big(W^{\sta},T_{W^{\sta}}+cD_{W^{\sta}} + G^{\sta}+W^{\sta}_\xi\big)$ over $C^{\sta}$ is $C^{\sta}$.
\end{lem}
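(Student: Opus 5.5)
The plan is to show that the log canonical divisor in question is $\bQ$-linearly equivalent over $C^{\sta}$ to an effective vertical divisor that omits at least one component of the fiber over $p$. The section ring of such a divisor over $C^{\sta}$ is trivial, so the log canonical model is $C^{\sta}$. Existence of the relative log canonical model at $\epsilon=0$ is not an issue. The proof of \Cref{Step_7_properness} already gives it: it uses only that every log canonical center of $\big(W^{\sta},T_{W^{\sta}}+cD_{W^{\sta}}+G^{\sta}+W^{\sta}_\xi\big)$ dominates $C^{\sta}$, together with \cite[Theorem 1.1]{HX13} and \cite[Theorem 2.10]{Has24}. So the whole task is to identify the model.

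\emph{Step 1: a crepant relatively trivial model.} By \Cref{step_2_properness}, $K_{X^{\sta}}+cD^{\sta}+(F^{\sta})^{\red}\sim_{\bQ,C^{\sta}}0$. Adding $X^{\sta}_\xi$, which is a pullback from $C^{\sta}$, preserves this. \Cref{lem:crepant-birational} with $\lambda=1$ and $\epsilon=0$ transports the relation to $Z$:
\[
K_Z+T_Z+cD_Z+F_Z^{\red}+Z_\xi\sim_{\bQ,C^{\sta}}0 .
\]
The crepant relation \eqref{eq_W^s-->Z_crepant} was stated at coefficient $c+\epsilon_0$. Subtracting $\epsilon_0$ times the $\bQ$-Cartier divisor $D_Z$, whose pullback is $D_{W^{\sta}}$ because $h^{\sta}$ only extracts vertical divisors not contained in $D_Z$, shows it also holds at coefficient $c$. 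Hence
\[
K_{W^{\sta}}+T_{W^{\sta}}+cD_{W^{\sta}}+(h^{\sta}_*)^{-1}F_Z^{\red}+\Gamma+W^{\sta}_\xi\sim_{\bQ,C^{\sta}}0 .
\]

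\emph{Step 2: rewriting as an effective vertical divisor.} Subtracting the two sides gives
\[
K_{W^{\sta}}+T_{W^{\sta}}+cD_{W^{\sta}}+G^{\sta}+W^{\sta}_\xi\sim_{\bQ,C^{\sta}}-P,\qquad P\coloneqq (h^{\sta}_*)^{-1}F_Z^{\red}+\Gamma-G^{\sta}.
\]
The divisor $P$ is effective with $\Supp P=\Supp F_{W^{\sta}}$. This uses \eqref{eq_support_G}, the fact that $\Gamma$ is effective and supported on the whole $h^{\sta}$-exceptional locus, and \Cref{lem:two-statements}(1) together with $\lfloor G^{\sta}\rfloor=0$, which give that $(h^{\sta}_*)^{-1}F_Z^{\red}-G^{\sta}$ has positive coefficients along every non-exceptional fiber component. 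Write $F_{W^{\sta}}=\sum_i m_iE_i$ and $P=\sum_i a_iE_i$ with all $a_i>0$, and set $\mu\coloneqq\max_i a_i/m_i$. Since $F_{W^{\sta}}\sim_{\bQ,C^{\sta}}0$, we get
\[
K_{W^{\sta}}+T_{W^{\sta}}+cD_{W^{\sta}}+G^{\sta}+W^{\sta}_\xi\sim_{\bQ,C^{\sta}}E\coloneqq\mu F_{W^{\sta}}-P .
\]
Here $E\ge 0$, $E$ is supported in the fiber over $p$, and $E$ has coefficient zero along some $E_{i_0}$.

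\emph{Step 3: the section ring is trivial.} Over a neighborhood of $p$, $E$ is a vertical effective divisor not containing the full fiber, so it is very exceptional over $C^{\sta}$. A Zariski-lemma argument shows $\phi_*\cO_{W^{\sta}}(mE)=\cO_{C^{\sta}}$ for all divisible $m$. Concretely, a rational function regular off $\Supp E$ with poles bounded by $mE$ has, by normality of $C^{\sta}$ and the connectedness of fibers of $W^{\sta}\to C^{\sta}$ (inherited from $X^{\sta}$), no pole along $E_{i_0}$, and hence none at all. Away from $p$ the divisor is $\bQ$-linearly trivial over $C^{\sta}$ anyway. Therefore the relative section ring is $\bigoplus_m\cO_{C^{\sta}}$, and its $\Proj$ is $C^{\sta}$.

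The main obstacle is the bookkeeping in Step 2. One must check that every fiber component receives a strictly positive coefficient in $P$ after subtracting $G^{\sta}$, which rests on $\lfloor G^{\sta}\rfloor=0$ and the support statement \eqref{eq_support_G}. One must also check that the crepant relations established at coefficient $c+\epsilon_0$ descend to coefficient $c$, which rests on $D$ pulling back to its strict transform under both $h$ and $h^{\sta}$.
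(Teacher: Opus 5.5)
Your argument is correct, but it takes a different route from the paper's. The paper never writes down a global relative $\bQ$-linear equivalence on $W^{\sta}$. It restricts to $U=C^{\sta}\setminus\{p_\xi\}$, the complement of the single closed point $p\cap C^{\sta}_\xi$. It identifies the model over $U$ by gluing two descriptions it already has: over $C^{\sta}_\eta$ via \Cref{Step_7_properness}, and away from the section $p$ via crepant birationality with $X^{\sta}$ and \Cref{step_2_properness}. This gives $(\alpha_U)_*\cO(md(K+\Delta))\simeq\cM_U^{\otimes d}$. It then extends across $p_\xi$, using that $p_\xi$ is a smooth point of $C^{\sta}$ and that $W^{\sta}_U$ contains every codimension-one point of $W^{\sta}$.

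You instead push everything through the crepant chain $W^{\sta}\to Z\to X^{\sta}$. This leaves the log canonical divisor differing from a relatively trivial one only by a divisor supported on $F_{W^{\sta}}$. You then remove that correction with the $\mu$-normalization and a very-exceptional argument. The gain is that you never use \Cref{Step_7_properness}, so you avoid the comparison with the original $X_\eta$. You also use only the support of $G^{\sta}$, not its coefficients, and your Step 3 makes existence of the model automatic. The paper's route gains economy: it reuses what was just proved and reduces to a Hartogs statement at one point.

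Both routes rest on the same geometric fact, which your Step 3 uses silently and you should state: every component of $F_{W^{\sta}}$ dominates the curve $p$. Such a component is either the strict transform of a component of $F_Z$, which dominates $p$ because $X^{\sta}\to C^{\sta}$ is flat and $T_Z$ is horizontal, or an $h^{\sta}$-exceptional divisor, which is the closure of a divisor on the generic fiber. This is what gives $\ord_{E_i}\phi^*t=m_i\ord_p t$, and what lets the component $E_{i_0}$ force $\ord_p t\ge 0$.

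Two corrections to your bookkeeping.

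\emph{The justification for $(h^{\sta})^*D_Z=D_{W^{\sta}}$ is wrong.} An exceptional divisor is never a component of $D_Z$. What matters is whether its center on $Z$ lies in $D_Z$, and nothing in the construction rules that out. This is harmless: $(h^{\sta})^*D_Z-D_{W^{\sta}}$ is $h^{\sta}$-exceptional, hence supported on $F_{W^{\sta}}$, so it only changes $\Gamma$ by something supported on the fiber. If you want positivity anyway, combine \Cref{lem:negative-discrepancy} at $\epsilon=0$ with \Cref{lem:crepant-birational}. This gives every extracted divisor negative discrepancy with respect to $(Z,T_Z+cD_Z+F_Z^{\red}+Z_\xi)$, so the coefficient-$c$ analogue of $\Gamma$ is positive on each of them.

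\emph{The positivity you call the main obstacle is not needed.} With $\mu=\max_i a_i/m_i$, the divisor $\mu F_{W^{\sta}}-P$ is effective with a zero coefficient whatever the signs of the $a_i$. All you use is $\Supp P\subseteq\Supp F_{W^{\sta}}$.
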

\begin{proof}
Since $W^{\sta}_\xi$ is pulled back from $C^{\sta}$, it suffices to prove the
statement for the pair $\big(W^{\sta},T_{W^{\sta}}+cD_{W^{\sta}} + G^{\sta}\big)$.
Let $U\coloneqq C_\eta^{\sta}\cup(C^{\sta}\setminus\{p\})\subseteq C^{\sta}$
be the resulting open subscheme, and let $\Delta$ denote the restriction of
$T_{W^{\sta}}+cD_{W^{\sta}} + G^{\sta}$ to $W^{\sta}_U$. Consider the
Cartesian square
\[
\begin{tikzcd}
W^{\sta}_U \arrow[r, "i"] \arrow[d, "\alpha_U"'] & W^{\sta} \arrow[d, "\alpha"] \\
U \arrow[r, "j"'] & C^{\sta}.
\end{tikzcd}
\]
For $m>0$ sufficiently divisible, since $W^{\sta}_U$ contains all
codimension-one points of $W^{\sta}$, there is an isomorphism
\[
i_*\cO_{W^{\sta}_U}\big(m(K_{W^{\sta}_U} +\Delta)\big)\ \simeq\
\cO_{W^{\sta}}\big(m(K_{W^{\sta}}+ cD_{W^{\sta}} + G^{\sta})\big).
\]
Now observe that $\alpha_U$ is the canonical model of $(W^{\sta}_U,\Delta)$
over $U$. Indeed, the canonical model commutes with restriction to open
subsets of $C^{\sta}$, so it suffices to check this over $U_\eta=C^{\sta}_\eta$
and over $C^{\sta}\setminus\{p\}$ separately: the former holds by
\Cref{Step_7_properness}, while the latter holds because, over this locus,
our pair is crepant birational to $(X^{\sta},cD^{\sta}+(F^{\sta})^\red)\big|_{C^{\sta}\setminus\{p\}}$,
by \Cref{step_2_properness}. In particular, there is a line bundle $\cM_U$ on $U$ such that
\[
(\alpha_U)_*\cO_{W^{\sta}_U}\big(md(K_{W^{\sta}_U} + \Delta)\big)\ \simeq\ \cM_U^{\otimes d}
\]
for every $d\in \bN$ and every sufficiently divisible $m>0$. Since $p$ is a
smooth point of $C^{\sta}$, the line bundle $\cM_U$ extends to a line bundle
$\cM$ on $C^{\sta}$. We then obtain isomorphisms
\begin{align*}
\alpha_*\cO_{W^{\sta}}\big(md(K_{W^{\sta}}+ cD_{W^{\sta}} + G^{\sta})\big)
&\simeq \alpha_*i_* \cO_{W^{\sta}_U}\big(md(K_{W^{\sta}_U} + \Delta)\big) \\
&\simeq j_* (\alpha_U)_*\cO_{W^{\sta}_U}\big(md(K_{W^{\sta}_U} + \Delta)\big) \\
&\simeq j_*\cM_U^{\otimes d} \ =\ \cM^{\otimes d},
\end{align*}
and therefore
\[
\operatorname{Proj}_{C^{\sta}}\Big(\bigoplus_d \alpha_*\cO_{W^{\sta}}\big(md(K_{W^{\sta}}+ cD_{W^{\sta}} + G^{\sta})\big)\Big)
= \operatorname{Proj}_{C^{\sta}}\big(\bigoplus_d \cM^{\otimes d}\big) = C^{\sta}.
\]
\end{proof}

We next show that the pair $
\big(Y,(c+\epsilon)D_Y+Y_\xi\big)$
is independent of $\epsilon$.

\begin{lem}\label{lem:log-CY-over-Cs}
The divisor $D_Y$ is $\bQ$-Cartier and relatively ample over $C^{\sta}$. Moreover, the relative log canonical model of
$(Y,cD_Y+Y_\xi)$ over $C^{\sta}$ is $C^{\sta}$.
\end{lem}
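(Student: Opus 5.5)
The plan is to deduce everything from \Cref{Step_7_properness} and \Cref{lem:relative-log-can-model}, using that the log canonical model $Y$ was built for the pair $\big(W^{\sta},T_{W^{\sta}}+(c+\epsilon)D_{W^{\sta}}+G^{\sta}+W^{\sta}_\xi\big)$, and that for $\epsilon=0$ the canonical model is $C^{\sta}$ itself.

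\textbf{Step 1: $K_Y+cD_Y$ is trivial over $C^{\sta}$.} Let $\psi\colon W^{\sta}\dashrightarrow Y$ be the log canonical model map for a fixed $0<\epsilon\le\epsilon_0$, and write $\Delta_0\coloneqq T_{W^{\sta}}+cD_{W^{\sta}}+G^{\sta}+W^{\sta}_\xi$. By \Cref{lem:relative-log-can-model}, for $m$ divisible enough
\[
\alpha_*\cO_{W^{\sta}}\big(md(K_{W^{\sta}}+\Delta_0)\big)\simeq \cM^{\otimes d}.
\]
Comparing sections, $K_{W^{\sta}}+\Delta_0\sim_{\bQ,C^{\sta}} N\ge 0$, with $N$ in the relative stable base locus. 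Since $\lfloor G^{\sta}\rfloor=0$ and the relevant pairs are log canonical (\Cref{lem:two-statements}), I would argue that $N$ is supported on divisors contracted by $\psi$. This uses that $Y$ is the canonical model, and that $T_{W^{\sta}}$ and $G^{\sta}$ are contracted by \Cref{Step_7_properness}.

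Pushing forward via $\psi$ (a birational contraction) then gives
\[
K_Y+cD_Y+Y_\xi\sim_{\bQ,C^{\sta}}\psi_*N=0.
\]
This step is the main obstacle: we must show that the fixed part $N$ lives only on $\psi$-exceptional divisors. I would attempt this by the negativity lemma applied to a common resolution. A first attempt is to compare $K+\Delta_0+\epsilon D$ with $K+\Delta_0$. The $\epsilon D$-MMP is $(K+\Delta_0)$-trivial, which holds because $K+\Delta_0$ is $\bQ$-linearly a pullback from $C^{\sta}$ away from $p$. Over $p$ one then uses that $Y\to C^{\sta}$ has equidimensional fibers, so any vertical divisor on $Y$ over $p$ that is pulled back from $C^{\sta}$ must be a multiple of the whole fiber.

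Concretely, over $p$ the divisor $K_Y+cD_Y+Y_\xi$ is $\bQ$-linearly equivalent over $C^{\sta}$ to an effective vertical divisor $\psi_*N$. Its support cannot be the full fiber, because on the generic fiber over $C_\eta$ it is trivial by \Cref{lem:step_G_eta}, as $(X_\eta,cD_\eta)$ is log Calabi--Yau over $C_\eta$. By Zariski's lemma on the equidimensional fibration $Y\to C^{\sta}$ (fibers over a curve), a vertical divisor supported on a proper part of a fiber with $\bQ$-trivial class over the base must vanish. Hence $\psi_*N=0$.

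\textbf{Step 2: $D_Y$ is $\bQ$-Cartier and relatively ample.} $Y$ is the relative log canonical model, so $K_Y+(c+\epsilon)D_Y+Y_\xi$ is $\bQ$-Cartier and ample over $C^{\sta}$. Subtracting the $\bQ$-Cartier, relatively trivial divisor $K_Y+cD_Y+Y_\xi$ from Step 1 shows $\epsilon D_Y$ is $\bQ$-Cartier and ample over $C^{\sta}$.

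\textbf{Step 3: the canonical model of $(Y,cD_Y+Y_\xi)$ is $C^{\sta}$.} Since $K_Y+cD_Y+Y_\xi\sim_{\bQ}\phi^*M$ for a line bundle $M$ on $C^{\sta}$, and $\phi_*\cO_Y=\cO_{C^{\sta}}$ (connected fibers, normality), the relative section ring is $\bigoplus_d M^{\otimes d}$, whose relative Proj is $C^{\sta}$. Equivalently, $\psi$ is crepant for the $\epsilon=0$ pair, so this agrees with \Cref{lem:relative-log-can-model}. This also shows that $Y$ is independent of $\epsilon\in(0,\epsilon_0]$, since it is $\Proj_{C^{\sta}}$ of the section ring of the relatively ample $D_Y$.
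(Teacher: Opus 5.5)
You are taking a different route from the paper: you try to control the relative fixed part $N$ of $K_{W^{\sta}}+\Delta_0$ directly on $Y$. The paper instead passes to a good minimal model $W^{\min}$ of the $\epsilon=0$ pair. Your route could work, but the step you yourself flag as the main obstacle fails as written.

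\textbf{The gap in Step 1.} From the triviality of $K_Y+cD_Y+Y_\xi$ over $C_\eta$ you cannot conclude that $\psi_*N$ is not supported on the whole fiber. Any multiple $a\,\phi^*p$ is also trivial over $C_\eta$, and indeed over $C^{\sta}$. In addition, Zariski's lemma cannot be invoked on $Y\to C^{\sta}$ with "$\bQ$-trivial over the base" as input: that is what you are trying to prove, and the base is a surface. The only triviality you actually know is on $Y_\eta\cong X_\eta\to C_\eta$.

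Done correctly, the argument gives the following. $\psi_*N|_{Y_\eta}$ is $\bQ$-Cartier, numerically trivial over $C_\eta$, and supported on $\phi_\eta^{-1}(p_\eta)$, so it equals $a\,\phi_\eta^*p_\eta$ for some $a\ge0$. Since $\phi$ is equidimensional, every component of $\phi^{-1}(p)$ dominates $\operatorname{Spec} R$, so $\psi_*N=a\,\phi^*p$. Your earlier assertion that $N$ is $\psi$-exceptional, i.e. $a=0$, is never proved.

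\textbf{What survives and what does not.} For a fixed $\epsilon$, $\psi_*N=a\,\phi^*p$ already suffices for the two literal assertions, because $a\,\phi^*p\sim_{C^{\sta}}0$. So Steps 2 and 3 go through for each $\epsilon$ separately.

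It does not give the independence of $Y$ from $\epsilon$ that you claim at the end of Step 3. That independence is needed to speak of $D_Y$ at all and is used afterwards. Saying that $Y_\epsilon$ is $\operatorname{Proj}$ of the section ring of $D_{Y_\epsilon}$ does not compare different $\epsilon$. On $W^{\sta}$ that ring is the ring of $N+\epsilon D_{W^{\sta}}$, and these rings are not Veronese subrings of one another as $\epsilon$ varies. What is needed is that $\psi$ be $(K_{W^{\sta}}+\Delta_0)$-non-positive, i.e. $a=0$.

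\textbf{How to close it in your framework.} Take a common resolution $q\colon\Gamma\to W^{\sta}$, $r\colon\Gamma\to Y$, and set $F_t=q^*(K_{W^{\sta}}+\Delta_t)-r^*(K_Y+\Delta_{Y,t})$.
\begin{enumerate}
\item Both $F_0$ and $q^*N-a\,q^*\alpha^*p$ are $r$-exceptional and $\bQ$-linearly equivalent, so the negativity lemma gives $F_0=q^*N-a\,q^*\alpha^*p$.
\item The lc-model property gives $F_\epsilon\ge0$.
\item Compare coefficients of $F_\epsilon=F_0+\epsilon(q^*D_{W^{\sta}}-r^*D_Y)$ along strict transforms of components of $\alpha^*p$. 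Use that $D_{W^{\sta}}$ has no components over $p$ and that $r^*D_Y\ge0$. This yields $N\ge a\,\alpha^*p$.
\item That contradicts $\alpha_*\mathcal{O}_{W^{\sta}}(mN)=\mathcal{O}_{C^{\sta}}$ unless $a=0$.
\item With $a=0$, $F_0\ge0$. Then $F_{\epsilon'}$, a convex combination of $F_0$ and $F_\epsilon$, is $\ge0$ for $\epsilon'\le\epsilon$, so $Y_\epsilon$ is also the lc model for $\epsilon'$.
\end{enumerate}

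\textbf{The paper's route.} The paper avoids all of this by taking a relative good minimal model $W^{\min}$ of the $\epsilon=0$ pair over $C^{\sta}$. By \Cref{lem:relative-log-can-model}, $K+\Delta_0$ becomes $\bQ$-linearly trivial over $C^{\sta}$ on $W^{\min}$. Hence $Y$ is the relative $\operatorname{Proj}$ of the section ring of $\epsilon D_{W^{\min}}$. This shows at once that $Y$ is independent of $\epsilon$ (by passing to Veronese subrings) and that $D_Y$ is $\bQ$-Cartier and relatively ample.
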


\begin{proof}
Consider a relative good minimal model
\[ \mu\ :\ 
\big(W^{\min},T_{W^{\min}} + cD_{W^{\min}} + G^{\min} + W_\xi^{\min}\big)\ \longrightarrow \ C^{\sta}
\]
of $\big(W^{\sta}, T_{W^{\sta}} +cD_{W^{\sta}} +G^{\sta} +W^{\sta}_\xi\big)$
over $C^{\sta}$. Since the pair
$\big(W^{\sta}, T_{W^{\sta}} +(c+\epsilon)D_{W^{\sta}} +G^{\sta} +W^{\sta}_\xi\big)$
is lc, so is the pair
\[
\big(W^{\min},T_{W^{\min}} + (c+\epsilon)D_{W^{\min}} + G^{\min} + W_\xi^{\min}\big)
\] for $0\le \epsilon \ll 1$, and it is immediate to check that it admits a good minimal model over
$C^{\sta}$. Moreover, for $0\le \epsilon \ll 1$, a relative weak canonical model for
the pair
$\big(W^{\min},T_{W^{\min}} + (c+\epsilon)D_{W^{\min}} + G^{\min} + W_\xi^{\min}\big)$
will also be a relative weak canonical model for the original pair
$\big(W^{\sta}, T_{W^{\sta}} +(c+\epsilon)D_{W^{\sta}} +G^{\sta} +W^{\sta}_\xi\big)$
over $C^{\sta}$. In particular, since
$\big(W^{\min},T_{W^{\min}} + cD_{W^{\min}} + G^{\min} + W_\xi^{\min}\big)$
is a \textit{good} minimal model over $C^{\sta}$, \Cref{lem:relative-log-can-model}
gives
\[
K_{W^{\min}}
+T_{W^{\min}}
+(c+\epsilon)D_{W^{\min}}
+G^{\min}
+W^{\min}_\xi
\ \sim_{\bQ,C^{\sta}}\
\epsilon D_{W^{\min}}.
\]
Hence, for any $0<\epsilon\ll1$ and any sufficiently divisible integer $N=N(\epsilon)>0$,
\[\begin{split}
    Y \ &= \  \Proj_{C^{\sta}}\bigoplus_{m\ge 0}\mu_{*}
\cO_{W^{\min}}(
mN\big(
K_{W^{\min}}
+T_{W^{\min}}
+(c+\epsilon)D_{W^{\min}}
+G^{\min}
+W^{\min}_\xi
\big)
\big)\\
& =\ \Proj_{C^{\sta}}\bigoplus_{m\ge 0}\mu_{*}
\cO_{W^{\min}}(mN\epsilon D_{W^{\min}}).
\end{split}
\] The right-hand side is independent of $\epsilon$ because
varying $\epsilon$ only amounts to passing to a different Veronese subring of the variety, and hence $Y$ does not depend on $0<\epsilon\ll1$ either; in particular, $D_Y$ is $\bQ$-Cartier and
relatively ample over $C^{\sta}$. Moreover, since for any $0<\epsilon\ll1$, $\big(Y,(c+\epsilon)D_Y\big)$ is the relative log canonical model of $\big(W^{\sta}, T_{W^{\sta}} +(c+\epsilon)D_{W^{\sta}} +G^{\sta}\big)$ over $C^{\sta}$, we know that $(Y,cD_Y)$ is a relative weak canonical model of $\big(W^{\sta}, T_{W^{\sta}} +cD_{W^{\sta}} +G^{\sta}\big)$. Therefore, by \Cref{lem:relative-log-can-model}, the relative log canonical model of $(Y,cD_Y+Y_\xi)$ over $C^{\sta}$ is
$C^{\sta}$. In particular, one has \[K_Y+cD_{Y}\ \sim_{\bQ,C^{\sta}}0.\]
\end{proof}

\begin{lem}\label{step_10_properness}
For any sufficiently small $0<\epsilon'\ll1$, the pair
\[
\big(Y,(c+\epsilon)D_Y+\epsilon'F_Y+Y_\xi\big)
\]
is log canonical, where $F_Y$ denotes the fiber of $Y\to C^{\sta}$ over $p$.
\end{lem}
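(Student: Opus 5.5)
The plan is to transport the log canonical statement of \Cref{lem:two-statements}(2) from $W^{\sta}$ to $Y$, using that $Y$ is an lc model of a pair on $W^{\sta}$ and hence is obtained from it by an MMP-type birational contraction that is crepant for suitable boundaries. Concretely, set
\[
\Delta_{W}\coloneqq T_{W^{\sta}}+(c+\epsilon)D_{W^{\sta}}+G^{\sta}+W^{\sta}_\xi .
\]
By \Cref{lem:two-statements}(2), the pair $(W^{\sta},\Delta_W+\epsilon'F_{W^{\sta}})$ is log canonical for $0\le\epsilon\le\epsilon_0$ and $0<\epsilon'\ll1$. Since $F_{W^{\sta}}$ is the pullback of the Cartier divisor $p$ from $C^{\sta}$, we have $K_{W^{\sta}}+\Delta_W+\epsilon'F_{W^{\sta}}\sim_{\bQ,C^{\sta}}K_{W^{\sta}}+\Delta_W$. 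Consequently the relative lc model over $C^{\sta}$ of the perturbed pair is the same $Y$ from \Cref{Step_7_properness}, and its boundary is the pushforward $(c+\epsilon)D_Y+\epsilon'F_Y+Y_\xi$. Here $T_{W^{\sta}}$ and $G^{\sta}$ are contracted, and the pushforward of $F_{W^{\sta}}$ is $F_Y$, because $Y\to C^{\sta}$ is equidimensional and $F_Y$ is the pullback of $p$.

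The key step is the standard fact that the lc model of an lc pair is lc. I would argue this via a good minimal model, as in the proof of \Cref{lem:log-CY-over-Cs}. First run an MMP over $C^{\sta}$ for $(W^{\sta},\Delta_W+\epsilon'F_{W^{\sta}})$, which exists by \cite[Theorem 1.1]{HX13} together with \cite[Theorem 2.10]{Has24}. The required input is that every lc center dominates $C^{\sta}$. This was shown for $\Delta_W$ in the proof of \Cref{Step_7_properness}, and adding $\epsilon'F_{W^{\sta}}$ with $\epsilon'$ small creates no new lc centers over $p$, because the pair with $\Delta_W$ has none over $p$ and $(W^{\sta},\Delta_W+\epsilon'F_{W^{\sta}})$ is lc. The MMP steps preserve log canonicity, so the resulting good minimal model $W^{\min}$ is lc. The ample model morphism $W^{\min}\to Y$ is then crepant, since $K+\Delta$ on $W^{\min}$ is the pullback of $K_Y+(c+\epsilon)D_Y+\epsilon'F_Y+Y_\xi$. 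A pair crepant-birational to an lc pair is lc, which gives the claim for this fixed $\epsilon$.

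The main obstacle is uniformity: the pair on $Y$ is required to be lc for every $0<\epsilon\ll1$ with a single choice of $\epsilon'$. My first approach is to use \Cref{lem:log-CY-over-Cs}, which shows that $Y$ and $D_Y$ are independent of $\epsilon$. Log canonicity of $(Y,(c+\epsilon)D_Y+\epsilon'F_Y+Y_\xi)$ is a convex condition in the coefficients. So it suffices to prove it at $\epsilon=0$ and at a fixed $\epsilon=\epsilon_1\le\epsilon_0$, with a single $\epsilon'$ valid for both, and then interpolate linearly. At $\epsilon=0$ the lc model of $W^{\sta}$ is $C^{\sta}$ rather than $Y$, so I would not use it directly. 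Instead, I would regard $(Y,cD_Y+\ldots)$ as a weak lc model, which is crepant to the good minimal model by the relation $K_Y+cD_Y\sim_{\bQ,C^{\sta}}0$. This gives log canonicity at $\epsilon=0$ via the same crepant comparison with $W^{\min}$.

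If the interpolation creates difficulties, the fallback is to observe that a pair with $\bQ$-Cartier boundary components is lc under small perturbations only if the discrepancies vary linearly. Since only finitely many divisors over $Y$ have discrepancy close to $-1$ on a fixed log resolution, the needed smallness of $\epsilon'$ can be chosen independent of $\epsilon\in[0,\epsilon_1]$.
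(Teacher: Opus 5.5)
Your proposal is correct and follows essentially the same route as the paper. Log canonicity of the perturbed pair on $W^{\sta}$ comes from \Cref{lem:two-statements}(2); adding $\epsilon'F_{W^{\sta}}$ does not change the relative log canonical model over $C^{\sta}$, since $F_{W^{\sta}}$ is pulled back from the Cartier divisor $p$; and the log canonical model of an lc pair is lc, which the paper cites from \cite[Lemma 1.28]{Kol13} where you reprove it via a good minimal model. The uniformity detour (convexity, the separate $\epsilon=0$ case) is unnecessary: $D_Y$ is effective and $\bQ$-Cartier, so log canonicity at one fixed $0<\epsilon\ll1$ with a given $\epsilon'$ already gives it for all smaller $\epsilon$, including $\epsilon=0$, with the same $\epsilon'$.
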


\begin{proof}
By \Cref{lem:two-statements}, the pair
\begin{equation}\label{pair_W_withfibers}
\big(
W^{\sta},
(h_*^{\sta})^{-1}(T_Z+(c+\epsilon)D_Z)
+G^{\sta}
+\epsilon'F_{W^{\sta}}
+W^{\sta}_\xi
\big)
\end{equation}
is log canonical for every $0\le \epsilon\le \epsilon_0$ and every sufficiently small
$0<\epsilon'\ll1$. Moreover, the same argument as in the proof of
\Cref{Step_7_properness} shows that this pair admits a relative log canonical
model over $C^{\sta}$. Since $F_{W^{\sta}}$ is the pullback of the Cartier divisor
$p\subset C^{\sta}$, we have
\[
(h_*^{\sta})^{-1}(T_Z+(c+\epsilon)D_Z)
+G^{\sta}
+\epsilon'F_{W^{\sta}}
+W^{\sta}_\xi
\ \sim_{\bQ,C^{\sta}}\ 
(h_*^{\sta})^{-1}(T_Z+(c+\epsilon)D_Z)
+G^{\sta}
+W^{\sta}_\xi.
\]
Therefore, by \cite[Lemma 1.28]{Kol13}, the relative log canonical model of
the pair in \Cref{pair_W_withfibers} over $C^{\sta}$ is
$\big(Y,(c+\epsilon)D_Y+\epsilon'F_Y+Y_\xi\big)$.
In particular, this pair is log canonical.
\end{proof}

\begin{Step}
Apply the canonical bundle formula to $(Y,cD_Y)$, run a minimal model program for the induced generalized pair, and lift the resulting birational transformations to $(Y,cD_Y)$.
\end{Step}

By \cite[Definition--Theorem 2.3]{Kol23}, the morphism
\[
f_Y \colon (Y,cD_Y)
\xrightarrow{\ \phi_Y\ }
C^{\sta}
\longrightarrow
\Spec R
\]
is a locally stable family over $\Spec R$. Moreover, $D_Y$ is ample over
$C^{\sta}$, and $\phi_Y$ is a log Calabi--Yau fibration over $C^{\sta}$ by
\Cref{lem:log-CY-over-Cs}. Apply the canonical bundle formula to $\phi_Y$, and let
$B^{(0)}$ and $\bfM^{(0)}$ denote the boundary and moduli parts,
respectively. Since $B^{(0)}$ is supported on the smooth locus of
$C^{\sta}\to \Spec R$, it is a $\bQ$-Cartier divisor. Run a minimal model
program for the generalized \emph{surface} pair
$(C^{(0)},B^{(0)}+\bfM^{(0)})\coloneqq(C^{\sta},B^{(0)}+\bfM^{(0)})$ over $\Spec R$:
\begin{equation}\label{eq:MMP1}
\big(C^{(0)},B^{(0)}+\bfM^{(0)}\big)
\longrightarrow
\big(C^{(1)},B^{(1)}+\bfM^{(1)}\big)
\longrightarrow
\cdots
\longrightarrow
\big(C^{(k)},B^{(k)}+\bfM^{(k)}\big),
\end{equation}
where $K_{C^{(k)}}+B^{(k)}+\bfM^{(k)}$ is relatively nef over $\Spec R$. Note that:
\begin{enumerate}
    \item this MMP \textit{only contracts rational tails} contained in the special fiber
    $C^{(i)}_\xi$ of $C^{(i)}\to \Spec R$;

    \item $\lfloor B^{(0)}\rfloor=\lfloor B^{(j)}\rfloor=0$ for every
    $0\le j\le k$;

    \item for every sufficiently small $0<\epsilon'\ll1$, the above MMP is
    also an MMP for the generalized pair
    $(C^{\sta},B^{(0)}+\bfM^{(0)}+\epsilon' p)$, which arises from applying
    the canonical bundle formula to the log canonical pair
    $(Y,cD_Y+\epsilon'F_Y)$.
\end{enumerate}

\smallskip
\begin{prop}\label{step_properties_of_Yk_properness}
    There exist birational contractions
\[
(Y,(c+\epsilon)D_{Y})\ \dashrightarrow \ (Y^{(1)},(c+\epsilon)D_{Y^{(1)}})\dashrightarrow \ \cdots \ \dashrightarrow \ (Y^{(k)},(c+\epsilon)D_{Y^{(k)}})
\]
which admit a fibration
\[
\phi^{(i)}\ : \ (Y^{(i)},(c+\epsilon)D_{Y^{(i)}}) \ \longrightarrow \  (C^{(i)},B^{(i)} + \textbf{M}^{(i)})
\] such that the following properties hold:
\begin{enumerate}
    \item[\textup{(1)}] $(Y^{(i)},D_{Y^{(i)}})_{\eta}= (X_\eta,D_{\eta})$;
    \item[\textup{(2)}] $\phi^{(i)}$ is a log Calabi--Yau fibration with boundary \textup{(resp.} moduli\textup{)} part $B^{(i)}$ \textup{(resp.}  $ \bfM^{(i)}$\textup{)};
    \item[\textup{(3)}] $D_{Y^{(i)}}$ is ample over $C^{(i)}$;
    \item[\textup{(4)}] $\phi^{(i)}$ has equidimensional fibers; and
    \item[\textup{(5)}] the pair $\big(Y^{(i)},(c+\epsilon)D^{(i)} + Y^{(i)}_\xi\big)$ is log canonical.
\end{enumerate}
\end{prop}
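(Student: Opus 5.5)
We argue by induction on $i$, starting from $(Y^{(0)},cD_{Y^{(0)}})\coloneqq(Y,cD_Y)\to C^{(0)}=C^{\sta}$. For $i=0$, property (1) holds by \Cref{Step_7_properness}, since $Y$ restricted to $C_\eta$ is the log canonical model of $(X_\eta,(c+\epsilon)D_\eta)$ over $C_\eta$. This model is $(X_\eta,(c+\epsilon)D_\eta)$ itself, because $D_\eta$ is relatively ample and $K_{X_\eta}+cD_\eta\sim_{\bQ,C_\eta}0$. Property (2) is \Cref{lem:log-CY-over-Cs} together with the definition of $B^{(0)},\bfM^{(0)}$. Property (3) is \Cref{lem:log-CY-over-Cs}. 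Property (4) was already observed, since $Y\to C^{\sta}$ has equidimensional fibers. Property (5) follows from \Cref{step_10_properness}.

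For the inductive step, suppose $C^{(i)}\to C^{(i+1)}$ contracts a rational tail $E\subseteq C^{(i)}_\xi$ to a point $q$. Remark (1) after \eqref{eq:MMP1} says this is the only possibility, and it is $(K+B^{(i)}+\bfM^{(i)})$-negative. Because $K_{Y^{(i)}}+cD_{Y^{(i)}}\sim_{\bQ}(\phi^{(i)})^*(K_{C^{(i)}}+B^{(i)}+\bfM^{(i)})$, the divisor $K_{Y^{(i)}}+cD_{Y^{(i)}}$ is anti-ample on curves over $E$. Set $V\coloneqq(\phi^{(i)})^{-1}(E)$, a divisor in $Y^{(i)}$ by (4). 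We construct $Y^{(i+1)}$ as the relative log canonical model over $C^{(i+1)}$ of $(Y^{(i)},(c+\epsilon)D_{Y^{(i)}}+Y^{(i)}_\xi)$. Concretely, we first pass to a $\bQ$-factorial dlt modification, using \cite[Theorem 2.10]{Has24}. We then run an MMP over $C^{(i+1)}$, which terminates with a good model by \cite[Theorem 1.1]{HX13}, because every lc center not dominating $C^{(i+1)}$ lies over $q$ and is controlled. The output is denoted $Y^{(i+1)}$. The contraction is an isomorphism away from $V$, so (1) is preserved. Pushing forward the relation $K+cD\sim_{\bQ}\phi^*(\cdots)$ gives $K_{Y^{(i+1)}}+cD_{Y^{(i+1)}}\sim_{\bQ,C^{(i+1)}}0$. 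Hence, exactly as in \Cref{lem:log-CY-over-Cs}, the model is independent of $\epsilon$ and equals $\Proj_{C^{(i+1)}}$ of the section ring of $D$. This yields (3) and the fibration $\phi^{(i+1)}$. Compatibility of the canonical bundle formula with birational contraction of the base gives (2), with $B^{(i+1)},\bfM^{(i+1)}$ the pushforwards. Property (5) is preserved because log canonical models of lc pairs are lc, and $Y^{(i+1)}_\xi$ is the pushforward of $Y^{(i)}_\xi$.

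The main obstacle is (4), namely showing that the fiber over $q$ has dimension $n-1$. Equivalently, $V$ must be contracted onto a codimension-one subset rather than surviving as a divisor, with all fibers of $\phi^{(i+1)}$ staying equidimensional. For this we use remark (3) after \eqref{eq:MMP1}: the same MMP is also an MMP for $(C^{(i)},B^{(i)}+\bfM^{(i)}+\epsilon' p)$, coming from the lc pair $(Y^{(i)},cD_{Y^{(i)}}+\epsilon'F+Y^{(i)}_\xi)$. We then compare the discrepancies of $V$ and of the other components over $q$. Let $\alpha$ be the lc threshold of the fiber over $q$ for the pushed-forward pair, which is positive because $\lfloor B^{(i+1)}\rfloor=0$. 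Via the canonical bundle formula \cite[Theorem 8.5.1]{Kol07}, we argue that $(Y^{(i)},cD+\alpha\,(\phi^{(i)})^*(\text{pullback of }q))$ is crepant to the model over $C^{(i+1)}$. Since $E$ is $(K+B+\bfM)$-negative, the divisor $V$ then has positive discrepancy with respect to the target pair. Negativity-lemma arguments \cite[Lemma 3.38]{KM98} then force $V$ to be contracted by the canonical model. The fiber over $q$ consists of the remaining horizontal components over the other branch of $C^{(i)}_\xi$ through $E$; these divisors are the strict transforms of components of the fiber over the adjacent branch, so the fiber has pure dimension $n-1$. Finally, irreducible components of $C^{(i+1)}_\xi$ other than $q$ are untouched, which gives equidimensionality elsewhere. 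If this discrepancy comparison fails in some configuration, we would fall back on a direct argument: show that every divisor over $q$ appearing in $Y^{(i+1)}$ is a component of the reduced fiber, and use that $Y^{(i+1)}\to C^{(i+1)}$ is a surjective projective morphism onto a normal surface whose fiber over $q$ is cut out by the Cartier pullback of a local equation near $q$.
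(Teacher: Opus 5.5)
Your skeleton is the paper's: induction along \eqref{eq:MMP1}, $Y^{(i+1)}$ obtained as a relative log canonical model over $C^{(i+1)}$ via \cite[Theorem 1.1]{HX13}, contraction of everything over the tail $E$ as the key input, and (5) from \cite[Lemma 1.28]{Kol13}. However, the two substantive steps are not justified as written.

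\emph{The application of \cite{HX13}.} That theorem needs every lc center to meet the preimage of an open $U^0\subseteq C^{(i+1)}$ over which a good minimal model is already known. With $Y^{(i)}_\xi$ in the boundary, each component of $V=(\phi^{(i)})^{-1}(E)$ is an lc center lying over the point $q$, so no such $U^0$ exists. Your claim that lc centers not dominating $C^{(i+1)}$ lie over $q$ is also false: components of $Y^{(i)}_\xi$ over other components of $C^{(i)}_\xi$ are counterexamples. The paper drops $Y^{(i)}_\xi$. By (5) for $i$, the pair $(Y^{(i)},cD_{Y^{(i)}})$ has no lc center inside $Y^{(i)}_\xi$, so all lc centers meet the generic fiber, where the pair is already a good minimal model. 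Since $Y^{(i)}_\xi\sim_{\bQ,C^{(i+1)}}0$, adding it back changes nothing.

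\emph{The contraction of $V$.} This is the real gap, and your (2), (3), the independence of $\epsilon$, and (4) all depend on it. Pushing forward gives $K+cD\sim_{\bQ,C^{(i+1)}}0$ only once $V$ is gone. The argument via $\alpha$ is not meaningful as stated. The fiber over $q$ and the ``pullback of $q$'' are not divisors, and $\lfloor B^{(i+1)}\rfloor=0$ says nothing about the point $q$. Moreover, \cite[Lemma 3.38]{KM98} compares discrepancies along a given map but cannot force a divisor to be contracted: if $V$ survived, its discrepancy would be $0$ on both sides.

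What works is \cite[Lemma 3.1]{ISZ25}. Let $h\colon C^{(i)}\to C^{(i+1)}$ and write $K_{C^{(i)}}+B^{(i)}+\bfM^{(i)}=h^*(K_{C^{(i+1)}}+B^{(i+1)}+\bfM^{(i+1)})+aE$. Intersecting with $E$ and using $E^2<0$ gives $a>0$, so
\[
K_{Y^{(i)}}+cD_{Y^{(i)}}\ \sim_{\bQ,C^{(i+1)}}\ a\,(\phi^{(i)})^*E .
\]
The right-hand side is effective and maps to the point $q$, i.e.\ it is very exceptional over $C^{(i+1)}$. On a good minimal model $Y^{\min}$ of $(Y^{(i)},cD_{Y^{(i)}})$ over $C^{(i+1)}$, its pushforward is nef and very exceptional, hence zero by the negativity lemma for very exceptional divisors. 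At coefficient $c+\epsilon$ the class becomes $a(\phi^{(i)})^*E+\epsilon D$, which is not very exceptional. So you should first take the minimal model at coefficient $c$, as the paper does, and only afterwards the ample model of $D$ over $C^{(i+1)}$.

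\emph{Equidimensionality.} Given the above, (4) needs no description of the fiber over $q$; your ``strict transforms over the adjacent branch'' is unjustified. No prime divisor of $Y^{(i+1)}$ lies over $q$, because $Y^{\min}\dashrightarrow Y^{(i+1)}$ extracts nothing. Meanwhile every component of a fiber over a closed point of the surface $C^{(i+1)}$ has dimension at least $\dim Y^{(i+1)}-2$. This uses two local equations, not one as in your fallback.
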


\begin{proof}
We proceed by induction. Thus, suppose that
$C^{(i)}\to C^{(i+1)}$ contracts a single rational tail of
$C^{(i)}_\xi$. The desired statement follows essentially from the same
argument as in \cite[Propositions 3.1 and 3.2]{ISZ25}, which we now recall
and slightly generalize to our setting.

The pair
$\big(Y^{(i+1)},cD_{Y^{(i+1)}}\big)$
is obtained by taking the relative log canonical model of the pair
$\big(Y^{(i)},(c+\epsilon)D_{Y^{(i)}}\big)$ over $C^{(i+1)}$ for
$0<\epsilon\ll1$; see \cite[Proposition 3.2]{ISZ25}.
In \cite[Proposition 3.2]{ISZ25}, the authors assume that the pair has klt singularities. However, the klt assumption is used only to apply \cite[Theorem 5.59]{HK}, which guarantees that the log canonical model of
$\big(Y^{(i)},(c+\epsilon)D_{Y^{(i)}}\big)$ over $C^{(i+1)}$ exists and is
independent of $\epsilon$ for $0<\epsilon\ll1$. We now explain why the same
conclusion remains valid in our setting.

Since
$\big(Y^{(i)},(c+\epsilon)D_{Y^{(i)}}+Y^{(i)}_\xi\big)$
is log canonical, the pair
$\big(Y^{(i)},(c+\epsilon)D_{Y^{(i)}}\big)$
has no log canonical centers contained in the closed fiber
$Y^{(i)}_\xi$. It follows from \cite[Theorem 1.1]{HX13} that
$(Y^{(i)},cD_{Y^{(i)}})\to C^{(i+1)}$
admits a good minimal model
\[
\big(Y^{\min},cD_{Y^{\min}}\big)\ \longrightarrow \ C^{(i+1)}.
\] By \cite[Lemma 3.1]{ISZ25}, this good minimal model contracts every divisor
of $Y^{(i)}$ mapping to the tail contracted by
$C^{(i)}\to C^{(i+1)}$. Moreover,
$\big(Y^{\min},cD_{Y^{\min}}\big)$
is still log Calabi--Yau over $C^{(i+1)}$, since a positive multiple of
$K_{Y^{\min}}+cD_{Y^{\min}}$ agrees in codimension one with the pullback of
a line bundle on $C^{(i+1)}$. Since the birational map
$Y^{(i)} \dashrightarrow Y^{\min}$
is a minimal model and
$\big(Y^{(i)},(c+\epsilon)D_{Y^{(i)}}+Y^{(i)}_\xi\big)$
is log canonical for every $0<\epsilon\ll1$, the pair
$\big(Y^{\min},(c+\epsilon)D_{Y^{\min}}\big)$
is also log canonical. Furthermore,
\[
K_{Y^{\min}}+(c+\epsilon)D_{Y^{\min}}
\ \sim_{\bQ,C^{(i+1)}}\ 
\epsilon D_{Y^{\min}}.
\]
Hence the relative log canonical model over $C^{(i+1)}$
\[
\big(Y^{\min},(c+\epsilon)D_{Y^{\min}}\big)
\ \dashrightarrow\ 
\big(Y^{(i+1)},(c+\epsilon)D_{Y^{(i+1)}}\big)
\]
is independent of $\epsilon$ for $0<\epsilon\ll1$. It is also the log
canonical model of
$\big(Y^{(i)},(c+\epsilon)D_{Y^{(i)}}\big)$
over $C^{(i+1)}$.

Properties \textup{(1)}--\textup{(3)} follow immediately from the
construction, while \textup{(4)} follows from
\cite[Lemma 3.1]{ISZ25}. Finally, \textup{(5)} follows from
\cite[Lemma 1.28]{Kol13}, since the original pair
$\big(Y,(c+\epsilon)D_Y+Y_\xi\big)$
is log canonical and
$(c+\epsilon)D_Y+Y_\xi
\sim_{\bQ,\Spec R}
(c+\epsilon)D_Y$.
\end{proof}

These birational transformations are summarized in \Cref{eq:diagram-of-MMP}.

\begin{lem}\label{step_properness_on_slct}
    For every smooth point $q\in C^{(i)}_\xi$, we have
    \[
    \slct\big(Y^{(i)}_\xi,(c+\epsilon)D_{Y^{(i)}_\xi}; (\phi^{(i)})^{-1}(q)\big)>0.
    \]
    In other words, there is no log canonical center of $\big(Y^{(i)},(c+\epsilon)D_{Y^{(i)}} + Y^{(i)}_\xi\big)$ lying over $q$.
\end{lem}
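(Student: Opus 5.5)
We argue by induction on $i$, using that the lc property of $\big(Y^{(i)},(c+\epsilon)D_{Y^{(i)}}+Y^{(i)}_\xi\big)$ is already given by \Cref{step_properties_of_Yk_properness}(5). The two formulations in the statement are equivalent by inversion of adjunction. If some lc center $W$ of the pair lay over a smooth point $q\in C^{(i)}_\xi$, then, $Y^{(i)}_\xi$ being a reduced Cartier divisor in the pair, adjunction would show that $W$ is an slc center of $\big(Y^{(i)}_\xi,(c+\epsilon)D_{Y^{(i)}_\xi}\big)$ contained in the fiber over $q$. This forces $\slct=0$. Conversely, positivity of the slct gives the absence of such centers by inversion of adjunction. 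So it suffices to show that no lc center of $\big(Y^{(i)},(c+\epsilon)D_{Y^{(i)}}+Y^{(i)}_\xi\big)$ maps to a smooth point $q$ of $C^{(i)}_\xi$. Equivalently, for $0<\epsilon'\ll1$ the pair $\big(Y^{(i)},(c+\epsilon)D_{Y^{(i)}}+\epsilon'(\phi^{(i)})^*q'+Y^{(i)}_\xi\big)$ should remain lc, where $q'$ is a local horizontal section through $q$.

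For $i=0$, consider smooth points $q\in C^{\sta}_\xi$. Away from the closures of the marked points $p_j$, $Y$ agrees with $X^{\sta}$. By \Cref{lemma_singularities_tsp}, $\big(X^{\sta},(c+\epsilon)D^{\sta}+\sum_j (F^{\sta}_j)^{\red}\big)$ is locally stable over $R$, so its lc centers are horizontal over $C^{\sta}$. Over a smooth point this means no lc center of $\big(X^{\sta},(c+\epsilon)D^{\sta}+X^{\sta}_\xi\big)$ lies over $q$. Since $C^{\sta}\to\Spec R$ is smooth at $q$, fibers over $q$ are reduced, and the usual adjunction argument gives lc-ness after adding $\epsilon'$ times a fiber. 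At the points $p_j\cap C^{\sta}_\xi$, we instead invoke \Cref{step_10_properness}: $\big(Y,(c+\epsilon)D_Y+\epsilon'F_Y+Y_\xi\big)$ is lc, so no lc center lies over $p_j$. At nodes of $C^{\sta}_\xi$ nothing needs to be proved.

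For the inductive step, $C^{(i)}\to C^{(i+1)}$ contracts a rational tail $\Gamma$ to a point $q_0$, which is a smooth point of $C^{(i+1)}_\xi$. Over $C^{(i+1)}\setminus\{q_0\}$, the model $Y^{(i+1)}$ agrees with $Y^{(i)}$ (the log canonical model over $C^{(i+1)}$ changes nothing there), so the claim there is inherited from step $i$. Note that a node of $C^{(i)}_\xi$ meeting $\Gamma$ becomes the smooth point $q_0$. The main obstacle is $q_0$ itself. By remark (3) after \eqref{eq:MMP1}, the MMP is also an MMP for $\big(C^{\sta},B^{(0)}+\bfM^{(0)}+\epsilon' p\big)$. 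More relevantly, we use the generalized pair $\big(C^{(i)},B^{(i)}+\bfM^{(i)}+C^{(i)}_\xi\big)$ induced by the canonical bundle formula for $\big(Y^{(i)},cD_{Y^{(i)}}+Y^{(i)}_\xi\big)$, together with the divisor $q'$ through $q_0$. The plan is to show that $\big(Y^{(i+1)},cD_{Y^{(i+1)}}+\epsilon'(\phi^{(i+1)})^*q'+Y^{(i+1)}_\xi\big)$ is lc. By the canonical bundle formula (lc-ness of the total space is equivalent to generalized lc of the base with the induced boundary), this amounts to showing that $\big(C^{(i+1)},B^{(i+1)}+\bfM^{(i+1)}+C^{(i+1)}_\xi+\epsilon' q'\big)$ is generalized lc at $q_0$.

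For this, pull back to $C^{(i)}$. Since $K+B+\bfM$ is negative on $\Gamma$, the contraction is an MMP step, and discrepancies only increase. So we need the generalized log discrepancy of the divisor $\Gamma$ (a component of $C^{(i)}_\xi$, with coefficient $1$) to be strictly positive after adding $\epsilon'q'$ on $C^{(i+1)}$. We plan to verify this through a negativity computation on the surface $C^{(i)}$. Write the pullback of $K_{C^{(i+1)}}+B^{(i+1)}+\bfM^{(i+1)}+C^{(i+1)}_\xi$ as $K_{C^{(i)}}+B^{(i)}+\bfM^{(i)}+C^{(i)}_\xi+a\Gamma$. Negativity of $K+B+\bfM$ on $\Gamma$ forces $a<0$, since $\Gamma^2<0$. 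This strict gap absorbs $\epsilon'$ times the pullback of $q'$ for $\epsilon'$ small. Here $\lfloor B^{(i)}\rfloor=0$ from remark (2) is used to ensure that there are no other coefficient-one components through $q_0$. Finally, lifting back via the canonical bundle formula (together with the inductive hypothesis along the rest of the fiber over $\Gamma$) yields the claim at $q_0$. Passing from $c$ to $c+\epsilon$ is harmless, since $D_{Y^{(i+1)}}$ is horizontal and (5) supplies lc-ness for small $\epsilon$, so we may interpolate.
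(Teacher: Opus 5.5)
Your strategy works, but at the crucial points $q_0$ (images of contracted rational tails $\Gamma$) it takes a different route from the paper. The paper never passes through the base. It chooses a section $s$ through a \emph{general} point of $\Gamma$, where $\big(Y^{(i-1)},(c+\epsilon)D_{Y^{(i-1)}}+\epsilon'(\phi^{(i-1)})^{-1}(s)\big)$ is lc for free. It notes that the base MMP is also an MMP for $(C^{(i-1)},B^{(i-1)}+\bfM^{(i-1)}+\epsilon's)$. Then it uses two facts: lc models of lc pairs are lc, and the fibers are equidimensional. Hence the pushforward of $(\phi^{(i-1)})^{-1}(s)$ is $(\phi^{(i)})^*s^{(i)}$, the resulting model is $Y^{(i)}$ itself, and since $s^{(i)}$ passes through $q_0$, adjunction finishes exactly as in case (a).

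You instead descend to the generalized pair $(C^{(i)},B^{(i)}+\bfM^{(i)}+C^{(i)}_\xi)$. You show via the negativity computation $a<0$ that $q_0$ is not a generalized lc center after contraction, and lift back through the canonical bundle formula. You then interpolate with \Cref{step_properties_of_Yk_properness}(5) to pass from $c$ to $c+\epsilon$; this is correct, and necessary, since only $cD$ is trivial over the base. Your version isolates the mechanism: a $(K+B+\bfM)$-negative contraction cannot create a center at the image of its exceptional curve. It also works for an arbitrary local section $q'$. The price is the implication ``base generalized pair glc $\Rightarrow$ total space lc'', which needs the moduli b-divisor over the two-dimensional base to be b-nef and to descend to a model. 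The paper only needs lc-preservation by lc models plus the equidimensionality of \cite[Lemma~3.1]{ISZ25}.

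One step must be restated. Generalized lc-ness at $q_0$ is not governed by the log discrepancy of $\Gamma$ alone, and ``discrepancies only increase'' plus the inductive hypothesis does not suffice. The point $x$ where $\Gamma$ meets the rest of $C^{(i)}_\xi$ is a generalized lc center of $(C^{(i)},B^{(i)}+\bfM^{(i)}+C^{(i)}_\xi)$, since two coefficient-one components meet there. It is a node, so the inductive hypothesis says nothing at $x$. Moreover, every divisor $E$ over $x$ has $\mathrm{mult}_E(h^*q')>0$, where $h\colon C^{(i)}\to C^{(i+1)}$ is the contraction, so a non-strict increase leaves no room for $\epsilon'q'$.

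The fix is already in your computation. Since $h^*(K_{C^{(i+1)}}+B^{(i+1)}+\bfM^{(i+1)}+C^{(i+1)}_\xi)=K_{C^{(i)}}+B^{(i)}+\bfM^{(i)}+C^{(i)}_\xi+a\Gamma$, the log discrepancy of \emph{every} divisor $E$ centered in $\Gamma$ rises by $-a\,\mathrm{mult}_E\Gamma>0$. Checking on a single log resolution of $(C^{(i)},B^{(i)}+C^{(i)}_\xi+h^{-1}_*q')$ on which $\bfM$ descends involves only finitely many divisors, so this gain absorbs $\epsilon'q'$ with no appeal to the inductive hypothesis at $q_0$.

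Relatedly, in the base case the lc centers of $(X^{\sta},(c+\epsilon)D^{\sta}+\sum_j(F^{\sta}_j)^{\red})$ are not all horizontal, since the $(F^{\sta}_j)^{\red}$ are themselves centers. Local stability over $R$ from \Cref{lemma_singularities_tsp} also does not by itself exclude centers inside a fiber over $q$. What you need is that near an unmarked smooth point the family is pulled back from the universal KSBA family along the regular surface $C^{\sta}$, hence is locally stable over $C^{\sta}$.
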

\begin{proof}
Suppose $q$ is a smooth point over a neighborhood of which the morphism $\big(Y^{(i)},(c+\epsilon)D_{Y^{(i)}}\big)\to C^{(i)}$ fails to be a KSBA-stable family. Then $q$ is one of the following:
\begin{enumerate}
    \item[(a)] the intersection of the section $p\colon \Spec R\to C^{\sta}$ (introduced in the paragraph preceding \Cref{step_2_properness}) with $C^{(i)}_{\xi}$, which we denote by $p_\xi$; or
    \item[(b)] the image, under $C^{\sta}\to C^{(i)}$, of a contracted rational tail.
\end{enumerate}

\smallskip
\noindent\textbf{Case (a): $q=p_\xi$.} Consider the pair $(Y,cD_Y + \epsilon' F_Y + Y_\xi)$, which is log canonical for $0<\epsilon'\ll 1$ by \Cref{step_10_properness}. As before, we run an MMP for
    \[
    (C^{\sta},B^{(0)} + \bfM^{(0)} + \epsilon' p),
    \]
    which, for $0<\epsilon'\ll 1$, agrees with an MMP for $(C^{\sta},B^{(0)} + \bfM^{(0)})$. This produces a weak canonical model
    \[
    \widetilde{\phi}^{(i)}\ : \ \big(\widetilde{Y}^{(i)},(c+\epsilon)\widetilde{D}_{Y^{(i)}} + \epsilon' F_Y^{(i)}\big) \ \longrightarrow \  C^{(i)}
    \]
    whose fibers are equidimensional. Consequently, no divisor of $\widetilde{Y}^{(i)}_\xi$ is contracted to a closed point of $C^{(i)}_\xi$, so $F_Y^{(i)}$ is the pullback of the Cartier divisor $p^{(i)}\subseteq C^{(i)}$ (the two agree in codimension one). In particular,
    \[
    K_{\widetilde{Y}^{(i)}}+(c+\epsilon)\widetilde{D}_{Y^{(i)}}\sim_{\bQ, C^{(i)}} \epsilon\widetilde{D}_{Y^{(i)}}
    \]
    is ample over $C^{(i)}$. Hence $(\widetilde{Y}^{(i)},\widetilde{D}_{Y^{(i)}})$ and $(Y^{(i)},D^{(i)})$ are isomorphic in codimension one, and therefore isomorphic, being the Proj of the same graded ring. In particular, the pair
    \[
    \big(Y^{(i)}, (c+\epsilon)D^{(i)} + \epsilon' (\phi^{(i)})^{-1}(p^{(i)})\big)
    \]
    is log canonical, and by \cite[Lemma 1.28]{Kol13}, so is
    \[
    \big(Y^{(i)}, (c+\epsilon)D^{(i)} + \epsilon' (\phi^{(i)})^{-1}(p^{(i)}) + Y^{(i)}_\xi\big).
    \]
    Adjunction now yields the desired statement.

\smallskip
\noindent\textbf{Case (b).} We may assume $q$ is the image of a rational tail $\Gamma$ contracted by $C^{(i-1)}\to C^{(i)}$. Choose a general smooth point $s_\xi$ on $\Gamma$; after possibly a base change of $\Spec R$, we may assume $s_\xi$ is the intersection of a section $s$ of $C^{(i-1)}\to \Spec R$ with $C^{(i-1)}_\xi$. Let $F^{(i-1)}$ denote the inverse image of $s$ under $\phi^{(i-1)}$. Then $\big(Y^{(i-1)}, (c+\epsilon)D_{Y^{(i-1)}}+\epsilon' F^{(i-1)}\big)$ is log canonical for all $0<\epsilon'\ll 1$. Since an MMP for $(C^{(i-1)},B^{(i-1)}+\bfM^{(i-1)})$ is also an MMP for $(C^{(i-1)},B^{(i-1)}+\epsilon's+\bfM^{(i-1)})$, the same argument as in Case (a) applies to give the result.
\end{proof}

\begin{Step}\label{step_properness_construction_fam_over_twisted_curve}
Modify the family near the nodes of $C^{(k)}\to \Spec R$ to obtain a log Calabi--Yau fibration over a twisted curve, and verify the stability conditions.
\end{Step}

\begin{prop}\label{prop:stacky-curve}
For every $i=0,\dots,k$, the following statements hold.
\begin{enumerate}
    \item[\textup{(1)}]
    There exists a twisted curve
    $\cC^{(i)}\to \Spec R$ together with a flat projective morphism
    \[
    f^{(i)}\colon
    \big(\cY^{(i)},c\cD^{(i)}\big)
    \longrightarrow
    \cC^{(i)}
    \]
    whose induced morphism on coarse moduli spaces is
    \[
    \phi^{(i)}\colon
    \big(Y^{(i)},cD_{Y^{(i)}}\big)\longrightarrow C^{(i)}.
    \]

    \item[\textup{(2)}]
    There exists a finite (possibly empty) set
    $S^{(i)}\subseteq (\cC^{(i)}_\xi)_{\mathrm{sm}}$
    such that
    \[f^{(i)}\colon
    \big(\cY^{(i)},(c+\epsilon)\cD^{(i)}\big)
    \longrightarrow
    \cC^{(i)}
    \]
    is KSBA-stable over
    $\cC^{(i)}\setminus S^{(i)}$
    for every sufficiently small $0<\epsilon\ll1$.
\end{enumerate}
\end{prop}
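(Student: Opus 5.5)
Since every construction in \Cref{step_properties_of_Yk_properness} is local over $C^{(i)}$, I will build $\cC^{(i)}$ and $\cY^{(i)}$ by gluing. First I isolate the problematic loci. Let $S^{(i)}\subseteq C^{(i)}_\xi$ be the finite set of smooth points of the special fiber where $\phi^{(i)}$ fails to be KSBA-stable for coefficient $c+\epsilon$. By Case (a) and Case (b) in the proof of \Cref{step_properness_on_slct}, $S^{(i)}$ is contained in the union of $p_\xi$ and the images of the contracted rational tails. Away from $p$ and these images, $C^{(i)}$ is isomorphic to an open subset of $C^{\sta}$, and over that open set $Y^{(i)}\simeq Y\simeq X^{\sta}$ (recall that $Y$ and $X^{\sta}$ agree away from the fiber over $p$). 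I then take $\cC^{(i)}$ to be the twisted curve obtained from $C^{(i)}$ by putting, at every node of $C^{(i)}$, the twisted structure that $\cC^{\sta}$ has at the corresponding node. These nodes are never touched by the MMP, which only contracts rational tails and creates smooth points. Over the complement of $S^{(i)}$, including a neighborhood of the nodes, I define $\cY^{(i)}$ as the restriction of $\cX^{\sta}$, which is the pullback of the universal family along the twisted stable map $\cC^{\sta}\to\PCY$. This family is KSBA-stable and flat, and its coarse space is $X^{\sta}\simeq Y^{(i)}$ there.

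Next I must check that this gluing is legitimate. Over smooth points of $C^{(i)}$, where $\cC^{(i)}=C^{(i)}$, I take $\cY^{(i)}=Y^{(i)}$ with $\cD^{(i)}=D_{Y^{(i)}}$. On the overlap, which consists of smooth points away from the markings, the stack and the coarse space agree, so the two pieces glue to a Deligne--Mumford stack whose coarse space is $Y^{(i)}$. Projectivity of $\cY^{(i)}\to\cC^{(i)}$ follows because a power of $\cD^{(i)}$ is relatively ample: $D_{Y^{(i)}}$ is relatively ample by \Cref{step_properties_of_Yk_properness}(3), and near the nodes the divisor $(c+\epsilon)\cD^{\sta}$ is relatively ample since $K+c\cD$ is pulled back from the base there. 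Statement (2) then holds by construction, with $S^{(i)}$ as above. Here I use that the markings $p_j$ other than the one treated lie in the smooth locus, and that the node neighborhoods come from the KSBA family $\cX^{\sta}$.

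The main obstacle is flatness of $\phi^{(i)}$ over the smooth points of $C^{(i)}$, in particular over $S^{(i)}$ and over the images of the contracted tails, since over the nodes flatness is inherited from $\cX^{\sta}$. My argument would be the following. The base $C^{(i)}$ is a regular surface at these points, and $\phi^{(i)}$ has equidimensional fibers by \Cref{step_properties_of_Yk_properness}(4). By \Cref{step_properties_of_Yk_properness}(5) and \Cref{step_properness_on_slct}, the pair $(Y^{(i)},(c+\epsilon)D_{Y^{(i)}}+\epsilon'(\phi^{(i)})^{*}q)$ is log canonical, so $Y^{(i)}$ is Cohen--Macaulay there. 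For this I would cite the result that lc pairs $(Y,\Delta)$ with no lc centers in the relevant locus are CM, or more precisely the reduction to klt-type total space. Since $Y^{(i)}$ is Cohen--Macaulay, $\phi^{(i)}$ is flat by miracle flatness. If the CM property is not available directly, I would try a second route: verify local stability over $\Spec R$ of $(Y^{(i)},cD_{Y^{(i)}})$ together with $S_2$ of the fibers via \cite[Definition--Theorem 4.7]{Kol23}, and deduce flatness over the regular base from equidimensionality and $S_1$ fibers.

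Finally, flatness and the $S_1$ property of $\cD^{(i)}$ follow from \Cref{rem:stable-LCY}(6), which I would apply étale locally.
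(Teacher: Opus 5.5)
Your construction is the paper's: since the MMP \eqref{eq:MMP1} only contracts rational tails to smooth points, you take a neighborhood $U$ of the nodes of $C^{(i)}_\xi$ over which $C^{\sta}\to C^{(i)}$ is an isomorphism and whose preimage misses the sections $p_j$. Over $U$ you use $(\cX^{\sta},c\cD^{\sta})$ with the twisted structure of $\cC^{\sta}$, and elsewhere $(Y^{(i)},cD_{Y^{(i)}})$.

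Glue along exactly such a $U$, not over ``the complement of $S^{(i)}$''. Over the generic points of the sections $p_j$, $X^{\sta}$ and $Y^{(i)}$ differ. Over the image of a contracted tail, $\cX^{\sta}$ does not live over $\cC^{(i)}$ at all. For the same reason, the KSBA-unstable locus in (2) contains the sections $p_j$ generically. So $S^{(i)}$ must be read as in (Qmap): a closed subset whose fibers over $\Spec R$ are finite sets of smooth points. This imprecision is already in the statement.

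Your discussion of flatness over smooth points $q\in C^{(i)}_\xi$ is worthwhile, since the paper silently takes it from \Cref{step_properties_of_Yk_properness}(4). However, your first route does not work.
\begin{itemize}
\item Log canonical does not imply Cohen--Macaulay.
\item \Cref{step_properness_on_slct} only rules out lc centers \emph{contained in} the fiber over $q$. Horizontal lc centers may pass through that fiber, so the pair need not be klt there.
\item For example, if the general fiber of $X_\eta\to C_\eta$ is a projective cone over an abelian surface, which is lc but not Cohen--Macaulay, then $Y^{(i)}$ is not Cohen--Macaulay along the closure of the vertex locus. That closure meets the fiber over $q$.
\end{itemize}

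Your fallback is the right argument, and it needs only normality of $Y^{(i)}$.
\begin{itemize}
\item Normality makes $Y^{(i)}_\xi=(\varpi=0)$ an $S_1$ scheme.
\item By equidimensionality, no component of $Y^{(i)}_\xi$ maps to a point.
\item Hence, for a local parameter $t$ such that $(\varpi,t)$ is a regular system of parameters at $q$, the pair $(\varpi,t)$ is a regular sequence on $\cO_{Y^{(i)},y}$ for every $y$ over $q$.
\item The local flatness criterion then gives flatness.
\end{itemize}
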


\begin{proof}
This follows from observation \textup{(1)} following \Cref{eq:MMP1}, namely,
that the MMP only contracts rational tails contained in the special fiber
$C^{(i)}_\xi$ of $C^{(i)}\to \Spec R$. Since every rational tail is contracted
to a smooth point, there exists an open neighborhood
$U\subseteq C^{(i)}$ containing all the nodes of $C^{(i)}_\xi$ such that the
morphism $C^{\sta}\to C^{(i)}$ is an isomorphism over $U$.

After possibly shrinking $U$, we may further assume that its preimage in
$C^{\sta}$ is disjoint from the section $p$. It follows that, over $U$, the
family
\[
(Y^{(i)},cD_{Y^{(i)}})|_U\to U
\]
agrees with
\[
(X^{\sta},cD^{\sta})|_U\to U.
\]
Therefore, it suffices to replace $U$ by the corresponding open substack of
$\cC^{\sta}$ and
$(Y^{(i)},cD_{Y^{(i)}})|_U$
by
$(\cX^{\sta},c\cD^{\sta})|_U$.
\end{proof}

\smallskip

For simplicity, we denote the morphism
$f^{(k)}:(\cY^{(k)},c\cD^{(k)})\to \cC^{(k)}$
by $f:(\cX,c\cD)\to \cC$. By \Cref{step_properties_of_Yk_properness}, its generic fiber is isomorphic to
$f_{\eta}:(\cX_{\eta},c\cD_{\eta})\to \cC_{\eta}$. Let $(X,cD)\to C$ denote the coarse moduli space of $(\cX,c\cD)\to \cC$.

\smallskip
Denote by $\lambda_{\Cho}^{(i)}$ the Chow $\bQ$-line bundle of the fibration
$f^{(i)}$ from \Cref{prop:stacky-curve}(2), and by $\Lambda_{\Cho}^{(i)}$ its
descent to $C^{(i)}$.

\begin{prop}\label{step_properness_where_we_prove_stability}
For every irreducible component $\Gamma\subseteq C^{(k)}_\xi$ with
$\deg_{\Gamma}(K_{C^{(k)}}+B^{(k)}+\bfM^{(k)})= 0$, one has
$\deg_{\Gamma}(\Lambda_{\Cho}^{(k)})>0$.
\end{prop}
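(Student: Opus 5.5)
The plan is to combine \Cref{cor_degree_of_chow_minimized_by_pull_back_from_KSBA} with the stability of the twisted stable map produced in \Cref{step_1_properness}, and to argue by contradiction. Fix a component $\Gamma\subseteq C^{(k)}_\xi$ with $\deg_\Gamma(K_{C^{(k)}}+B^{(k)}+\bfM^{(k)})=0$ and suppose $\deg_\Gamma\Lambda^{(k)}_{\Cho}\le 0$.

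\textbf{Step 1: the family over $\Gamma$ is KSBA-stable and isotrivial.} Pass to the normalization of $\Gamma$, or to a root stack of it in $\cC^{(k)}$, and restrict the family $f^{(k)}$ of \Cref{prop:stacky-curve}. By \Cref{cor_degree_of_chow_minimized_by_pull_back_from_KSBA} and \Cref{rem:local-nature-of-minimizing}, $N\lambda^{(k)}_{\Cho}|_\Gamma$ equals the pullback of the ample bundle $L_{\Cho}^{\otimes N}$ along $\Gamma\dashrightarrow\textbf{PCY}$, plus a strictly positive multiple of every point of $S^{(k)}\cap\Gamma$ over which the fiber is not KSBA-stable. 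Hence degree $\le 0$ forces two things:
\begin{itemize}
\item every fiber over $\Gamma$ is KSBA-stable with coefficient $c+\epsilon$;
\item the classifying map $\Gamma\to\textbf{PCY}$ is constant.
\end{itemize}
Consequently $\bfM^{(k)}|_\Gamma$ is numerically trivial and $B^{(k)}$ has no support on $\Gamma$: KSBA-stable fibers with coefficient $c+\epsilon$ have $\lct=1$ for $(Y^{(k)},cD)$. The degree condition then reads
\[
2g(\Gamma)-2+\#(\text{nodes of }C^{(k)}_\xi\text{ on }\Gamma)=0 .
\]
So either $\Gamma$ is a rational bridge, or $\Gamma$ has arithmetic genus one and is the whole (irreducible) special fiber.

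\textbf{Step 2: the genus-one case.} If $\Gamma=C^{(k)}_\xi$, then $\Lambda^{(k)}_{\Cho}$ is a $\bQ$-line bundle on the flat proper curve $C^{(k)}\to\Spec R$, so its degree is constant in the family. It therefore equals $\deg\Lambda_{\Cho}$ on $C_\eta$. The generic fiber is a stable log Calabi--Yau fibration with $\deg(K_{C_\eta}+B+\bfM)=0$, so condition (Stab) forces $\deg\Lambda_{\Cho}|_{C_\eta}>0$, which is a contradiction.

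\textbf{Step 3: the rational-bridge case.} Here I would compare with the twisted stable map $(\cC^{\sta};\mtf p_1,\dots,\mtf p_n)\to\PCY$. Let $\Gamma^{\sta}\subseteq C^{\sta}$ be the strict transform of $\Gamma$, and let $\mathcal T$ be the union of rational trees contracted onto points of $\Gamma$ by the MMP \eqref{eq:MMP1}. Away from the sections $p_j$ and from $\mathcal T$, the family $Y^{(k)}$ coincides with $X^{\sta}$, as shown in the proof of \Cref{prop:stacky-curve}. There are two subcases.
\begin{itemize}
\item If $\Gamma^{\sta}$ meets neither any $p_j$ nor $\mathcal T$, then the stable-map family over $\Gamma^{\sta}$ is the given isotrivial one. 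The map $\Gamma^{\sta}\to\PCY$ is then constant, so stability of twisted stable maps requires $\omega_{\cC^{\sta}}(\sum\mtf p_j)|_{\Gamma^{\sta}}$ to be ample. For a bridge with no markings this degree is $0$, which is a contradiction.
\item Otherwise, use that all fibers over $\Gamma$ are KSBA-stable. Together with the uniqueness of KSBA-stable limits, this shows the stable map is constant on $\Gamma^{\sta}\cup\mathcal T$ near the attaching points. I would try to show it is then constant on each tree in $\mathcal T$ as well: the trees have stable general fibers at their attaching points, and for constant stable maps \Cref{lemma_singularities_tsp} and the canonical bundle formula identify $B^{(i)}$ on a tree with the markings on it. Since the MMP contracted these trees, $K+B+\bfM$ is negative on them, meaning they carry too few markings for the twisted stable map to be stable, again a contradiction.
\end{itemize}
The main obstacle is this second subcase: controlling how the sections $p_j$ and the contracted tails interact with $\Gamma$. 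I would handle it using \Cref{step_properness_on_slct} together with the local minimizing statement \Cref{thm_generalize_wx}. The aim is to show that a contracted tail or a marking hitting $\Gamma$ at a point with a KSBA-stable fiber contributes a strictly positive Chow degree; failing that, that it contributes positive boundary, contradicting the assumption $\deg_\Gamma(K+B+\bfM)=0$.
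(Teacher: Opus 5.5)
Your Step 1, the genus-one case in Step 2, and the first subcase of Step 3 are sound. However, the proposal stops exactly where the proposition has content. That is the case where trees $\mathcal T\subseteq C^{\sta}_\xi$ are contracted by the MMP onto points of $\Gamma$, or a section $p_j$ meets $\Gamma^{\sta}$.

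The mechanism you sketch there does not work, because being contracted by the MMP does not force a tree to carry too few markings. First, the boundary part does not see all markings. A point $p_{j,\eta}$ can have $\lct=1$, hence $b_j=0$, for instance when $D_\eta$ passes through a log canonical center of $(X_\eta,cD_\eta+F_\eta)$. Second, nothing prevents a tail $T$ with constant classifying map from carrying two markings with $b_1+b_2<1$. Such a tail has $(K+B+\bfM)\cdot T<0$, so it is contracted, yet it is a stable component of the twisted stable map. The tools you name at the end, \Cref{step_properness_on_slct} and \Cref{thm_generalize_wx}, do not address contracted trees either.

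\textbf{The paper's argument.} The missing idea is that the contractions transfer Chow degree onto $\Gamma$.
\begin{enumerate}
    \item One has $(\Lambda^{(0)}_{\Cho})^{\otimes m}\simeq\rho^*L_{\Cho}^{\otimes m}\otimes\cO_{C^{\sta}}(E^{(0)})$, with $E^{(0)}$ effective, horizontal and $E^{(0)}\ge\sum_j p_j$.
    \item Hence $\deg\Lambda^{(0)}_{\Cho}\ge0$ on every component of $C^{\sta}_\xi$. It is $>0$ on every tail or bridge, since by stability of the twisted map such a component either carries a marking or has nonconstant classifying map.
    \item Suppose $h\colon C^{(i)}\to C^{(i+1)}$ contracts a tail $T$ onto a point of $\Gamma'$. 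The two families agree off $T$, so $\Lambda^{(i)}_{\Cho}\sim_{\bQ}h^*\Lambda^{(i+1)}_{\Cho}+aT$.
    \item Since $T^2<0$ and $\deg_T\Lambda^{(i)}_{\Cho}>0$, we get $a<0$.
    \item Therefore $\deg_{\Gamma'}\Lambda^{(i+1)}_{\Cho}=\deg_{\Gamma'}\Lambda^{(i)}_{\Cho}-a\,(T\cdot\Gamma')$ strictly increases. This is what the paper's remark that ``the total degree is preserved'' amounts to.
\end{enumerate}

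\textbf{A repair inside your contradiction argument.} Once all fibers over $\Gamma$ are KSBA-stable, argue as follows.
\begin{enumerate}
    \item Openness of the slc locus \cite[Corollary~4.45]{Kol23} makes the family KSBA-stable over a neighborhood $U$ of $\Gamma$ in $C^{(k)}$.
    \item No section $p_j$ can meet $\Gamma^{\sta}\cup\mathcal T$, because its fiber over $p_{j,\eta}$ is not slc.
    \item Separatedness of KSBA limits over the DVRs at the generic points of the components of $\mathcal T$ identifies $\cX^{\sta}$ there with the pullback of the family over $U$. So the twisted map is constant on $\mathcal T$.
    \item A leaf of a nonempty $\mathcal T$ is then an unmarked rational tail with constant map, which contradicts stability. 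If $\mathcal T=\emptyset$, you are back in your first subcase.
\end{enumerate}
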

\begin{proof}
By the construction of $C^{\sta}$, there is a morphism
$\rho\colon C^{(0)} =C^{\sta}\to \bf{PCY}$. Let $L_{\Cho}$ be the Chow line
bundle (equivalently, the CM line bundle) on $\bf{PCY}$. Here we do not
distinguish between the moduli of stable Calabi--Yau pairs and the
corresponding KSBA moduli space, so $L_{\Cho}$ is ample on $\bf{PCY}$. By
\Cref{cor_degree_of_chow_minimized_by_pull_back_from_KSBA}, for sufficiently
divisible $m>0$ we have
\[
\Lambda^{\otimes m}_{\Cho,f_\eta}\ \simeq \ \big(\rho^*L^{\otimes m}_{\Cho}|_{C^{\sta}_\eta}\big) \otimes \cO_{C^{\sta}_\eta}(E_\eta)
\]
for an effective Cartier divisor $E_\eta$ with $\Supp E_\eta= \sum_j p_{j,\eta}$;
see \Cref{step_1_properness} for the definition of $p_{j,\eta}$. Denote by
$E^{(i)}$ the closure of $E_\eta$ in $C^{(i)}$. Then
\[
(\Lambda_{\Cho}^{(0)})^{\otimes m}\ =\ \rho^*L_{\Cho}^{\otimes m}\otimes \cO_{C^{\sta}}(E^{(0)})
\qquad\text{and}\qquad
E^{(0)}\ \ge\ \sum_i p_{i};
\]
the equality holds because the two line bundles agree in codimension one:
over the generic fiber, and over the locus where the fibration
$\big(X^{(0)},(c+\epsilon)D^{(0)}\big)\to C^{\sta}=C^{(0)}$ is
KSBA-stable. By the construction of $C^{(i)}$, the divisors $E^{(i)}$ are
Cartier, since each of their components is contained in the smooth locus of
$C^{(i)}\to \Spec R$.

Since $(\cC^{\sta},\mathfrak{p}_1 + \ldots + \mathfrak{p}_n)\to \PCY$ is a
twisted \textit{stable} map, for every rational tail or rational bridge
$\Gamma$ of $C^{\sta}_\xi$ we have
\begin{center}
either $E^{(0)}\cap \Gamma \neq \emptyset$, \quad or \quad
$\deg_{\Gamma}(\rho^*L_{\Cho})>0$.
\end{center}
In particular, for every $\Gamma$ that is a tail or a rational bridge of
$C^{\sta}_\xi$, we have
\[
\deg\big(\Lambda_{\Cho}^{(0)}|_\Gamma\big)>0.
\]
One checks that this condition is preserved after contracting a tail, using
that
\[
\deg\big(\Lambda_{\Cho}^{(i)}|_{C^{(i)}_\xi}\big)\ =\ \deg\big(\Lambda_{\Cho}^{(i)}|_{C^{(i)}_\eta}\big)\ =\ \deg\big(\Lambda_{\Cho}^{(i+1)}|_{C^{(i+1)}_\eta}\big)\ =\ \deg\big(\Lambda_{\Cho}^{(i+1)}|_{C^{(i+1)}_\xi}\big),
\]
where the first and third equalities hold because $C^{(i)}\to \Spec R$ is
flat, and the second holds because $C^{(i)}_\eta = C^{(i+1)}_\eta$.
\end{proof}

By \Cref{step_properties_of_Yk_properness}(5), the morphism
$(X,(c+\epsilon)D)\to \Spec R$ is locally stable. We claim that
$(\cX,c\cD)\to \Spec R$ is also locally stable. Indeed, this is clear over
the locus where $\cX\to X$ is an isomorphism. Over the nodes of
$\cC_\xi$, the morphism
$f\colon (\cX,c\cD)\to \cC$
is locally stable and
$\cC\to \Spec R$
is a family of nodal curves. Hence the composition
\[
(\cX,c\cD)\longrightarrow \cC \longrightarrow \Spec R
\]
is locally stable. We now verify that
$f\colon (\cX,c\cD)\to \cC\to \Spec R$
is a stable log Calabi--Yau fibration (cf. \Cref{def_CY_fibration}). Indeed,

\begin{itemize}
    \item Condition \textup{(LCY)} follows from
    \Cref{step_properties_of_Yk_properness}(2) and
    \Cref{eq:log-CY-condition}.

    \item Condition \textup{(Sing)} follows from
    \Cref{step_properness_on_slct}.

    \item Condition \textup{(Qmap)} follows from
    \Cref{prop:stacky-curve}(2).

    \item Condition \textup{(Stab)} follows from
    \Cref{step_properties_of_Yk_properness}(3) and
    \Cref{step_properness_where_we_prove_stability}.
\end{itemize} 
\end{proof}

Denote by $\delta$ the degree of the map $C_\eta\to \bf{PCY}$ in \Cref{thm:valuative-criterion}, and by $d$ the maximum number of irreducible components of a curve $\widehat{C}$ with a Kontsevich stable map $\widehat{C}\to \bf{PCY}$ of degree $\delta$.

\begin{cor}\label{cor:number-components-bounded}
     The number of irreducible components of $\cC_\xi$ is bounded above by $d$.
\end{cor}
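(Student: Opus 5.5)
The plan is to trace the number of components through the construction in the proof of \Cref{thm:valuative-criterion}. The curve $\cC_\xi$ is the special fiber of $\cC^{(k)}$. Its coarse space $C^{(k)}_\xi$ is obtained from $C^{\sta}_\xi$ by the MMP \eqref{eq:MMP1}, which only contracts rational tails (observation (1) after \eqref{eq:MMP1}). The modification in \Cref{prop:stacky-curve} changes only the stacky structure near the nodes and leaves the coarse space untouched. Since the coarse space map $\cC_\xi\to C^{(k)}_\xi$ is a homeomorphism, it induces a bijection on irreducible components. Contracting a tail strictly decreases the number of components. Hence
\[
\#\{\text{components of } \cC_\xi\} \;\le\; \#\{\text{components of } C^{\sta}_\xi\},
\]
and it suffices to bound the right-hand side by $d$.

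To do this, I would use the twisted stable $n$-pointed map $(\cC^{\sta},\mtf p_1,\dots,\mtf p_n)\to\PCY$ from \Cref{step_1_properness}. Its coarse map $C^{\sta}\to\mathbf{PCY}$ is a flat family over $\Spec R$ of maps of constant degree with respect to the fixed ample class, so the special fiber also has degree $\delta$. The obstacle is that $C^{\sta}_\xi$ with its markings is stable only as a \emph{pointed} map, while $d$ is defined through unpointed Kontsevich stable maps. Forgetting the markings can destabilize some components, namely rational components contracted to a point and carrying fewer than three special points once the markings are removed.

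I would handle this in one of two ways.
\begin{enumerate}
\item \emph{Reduce to the contracted configuration.} First try to show that every such destabilized component of $C^{\sta}_\xi$ is eventually contracted by the MMP, or else stays and gives a component where $\deg\Lambda_{\Cho}>0$ only because of marked points. For this I would use the argument of \Cref{step_properness_where_we_prove_stability} together with \Cref{lemma_tsm_family_is_stable}. The goal is to identify $C^{(k)}_\xi$, after stabilization, with a Kontsevich stable map of degree $\delta$ whose component count dominates that of $C^{(k)}_\xi$.
\item \emph{Absorb the markings into $d$.} Alternatively, bound the number of markings by a constant depending only on $\Phi$. Here $n$ is the number of unstable fibers of $f_\eta$; this is bounded using the canonical bundle formula with coefficients of $B$ bounded below, since (Sing) bounds the lct and $\deg(K_C+B+\bfM)$ is fixed by $V$. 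With $n$ bounded, the number of components of a pointed stable map of degree $\delta$ is bounded in terms of the unpointed bound $d$.
\end{enumerate}
I expect the intended argument to be the first route, so that the bound is exactly $d$. The key point there is that a component of $C^{(k)}_\xi$ contracted by the map to $\mathbf{PCY}$ must meet the support of $B^{(k)}$ in enough points, or carry enough nodes, to make $K_{C^{(k)}}+B^{(k)}+\bfM^{(k)}$ nef. By the argument in the proof of \Cref{lemma_tsm_family_is_stable}, these points come from ramification, so such components still receive positive stability from the curve structure.

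The main obstacle is making this matching between contracted components of $C^{(k)}_\xi$ and stability of the unpointed map precise. I would first try to show directly that every component of $C^{(k)}_\xi$ on which the map to $\mathbf{PCY}$ is constant has at least three nodes, or is of positive genus. Such components would then survive stabilization, which yields the bound $d$.
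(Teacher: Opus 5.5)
Your first paragraph matches the paper's argument. $C^{(k)}_\xi$ is obtained from $C^{\sta}_\xi$ by contracting rational tails, and \Cref{prop:stacky-curve} does not change the coarse curve. The paper then bounds the components of $C^{\sta}_\xi$ in one line: $(\cC^{\sta};\mtf{p}_1,\dots,\mtf{p}_n)\to\PCY$ is an $n$-pointed twisted stable map of degree $\delta$. So the $d$ actually used is the maximal number of components of an $n$-pointed genus-$g$ Kontsevich stable map of degree $\delta$, where $n$ is the number of KSBA-unstable fibers of $f_\eta$. Since $f_\eta$ fixes $n$, the corollary is then immediate.

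The gap is in your preferred route (1), which aims at an unpointed bound. Its key claim, that every component of $C^{(k)}_\xi$ with constant classifying map has at least three nodes or positive genus, is false, and the unpointed statement itself fails. Such components survive the MMP as soon as their boundary part makes $K+B+\bfM$ nef. They satisfy (Stab) only because their unstable fibers force $\deg\lambda_{\Cho}>0$; these are exactly the rational tails with at least two unstable fibers that appear in the proof of \Cref{lemma_tsm_family_is_stable}. For instance, take an isotrivial Weierstrass elliptic fibration over $\bP^1$ with four $I_0^*$ fibers, $c=0$, and $D$ the zero section, and let the four singular fibers collide in pairs. The limit lives over two rational tails, each with one node and two fibers of boundary coefficient $\tfrac12$. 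So $\deg(K+B+\bfM)=0$ on each tail and nothing is contracted. Here $g=\delta=0$, so no unpointed stable map exists at all, yet $\cC_\xi$ has two components. The markings have to stay in $d$.

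Your route (2) is the correct repair, with two corrections. First, the resulting bound depends on $(g,n,\delta)$, not on an unpointed $d$, which may not exist, as the example shows. Second, the canonical bundle formula does not bound $n$. A KSBA-unstable fiber can have log canonical threshold $1$, hence boundary coefficient $0$; for example, a bisection through the node of an $I_1$ fiber. The corollary itself needs no uniform bound on $n$, but \Cref{thm_bounding_fam} does, and it comes from \Cref{cor_degree_of_chow_minimized_by_pull_back_from_KSBA}. Write $\lambda_{\Cho}^{\otimes N}\simeq\rho^*L_{\Cho}^{\otimes N}\otimes\cO_C(\sum_i a_ip_i)$, with $\rho$ the classifying map to $\mathbf{PCY}$ and all $a_i\ge 1$. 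Since $\deg\rho^*L_{\Cho}\ge 0$, this gives $n\le N\deg\lambda_{\Cho}$, and $\deg\lambda_{\Cho}$ is fixed by $\Phi$.
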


\begin{proof}
By \Cref{lemma_tsm_family_is_stable} the map $C^{\sta}\to \bf{PCY}$ is $n$-pointed Kontsevich-stable, of degree $\delta$, and $C^{\sta}\to C$ is a contraction.
\end{proof}

\begin{thm}\label{thm:extension-of-automorphism}
Let \[\begin{tikzcd}[ampersand replacement=\&]\ 
	{(\cX,c\cD)} \& \cC \& {\Spec R}
	\arrow["f", from=1-1, to=1-2]
	\arrow[ from=1-2, to=1-3]
\end{tikzcd}\] be the family of stable log Calabi--Yau fibrations constructed in \Cref{thm:valuative-criterion}. Then any automorphism $s_{\eta}$ of the generic fiber extends uniquely over $\Spec R$.
\end{thm}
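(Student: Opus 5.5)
Uniqueness is immediate and I will treat it first. $\cX$ and $\cC$ are separated and flat over $\Spec R$, and the generic fibers are schematically dense. Hence two extensions of $s_\eta$ that agree over $\eta$ agree everywhere. So the work is existence, and the plan is to follow the construction in the proof of \Cref{thm:valuative-criterion} step by step, showing at each stage that $s_\eta$ (together with its induced automorphism of $C_\eta$) extends. Every stage in Figure~\ref{fig:stable-reduction} and Figure~\ref{eq:diagram-of-MMP} is canonical once the previous stage is fixed, so an automorphism of the input should propagate to the output.

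\emph{Step 1: the twisted stable map.} The automorphism $s_\eta$ preserves $D_\eta$ and, by (LCY), the fibration structure. So it induces an automorphism $\sigma_\eta$ of $C_\eta$ preserving the non-KSBA-stable locus $\{p_{j,\eta}\}$, possibly permuting these points. The stacky KSBA model is canonical: the root stack is unique by \cite[Theorem 3.1]{BV24}, and the KSBA family over it is a pullback from $\PCY$. Hence $s_\eta$ induces an automorphism of the pointed twisted stable map $(\cC^{\sta}_\eta,\mtf p_{\bullet,\eta})\to\PCY$, which may permute the markings, together with an automorphism of the pulled-back family. The moduli stack of pointed twisted stable maps is separated, and a proper DM stack modulo $S_n$ is still separated. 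Therefore this automorphism extends uniquely to $(\cC^{\sta},\mtf p_\bullet)$ over $\Spec R$. By the universal property of $\PCY$ it then lifts to $(\cX^{\sta},\cD^{\sta})$, and hence to the coarse spaces $(X^{\sta},D^{\sta})\to C^{\sta}$.

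\emph{Step 2: the birational modifications.} I then check that each modification is determined canonically by data that $s_\eta$ preserves.
\begin{itemize}
\item The extraction $W_\eta\to X_\eta$ is determined by the set of divisors it extracts, namely the components of $F^{\sta}_\eta$, which are $s_\eta$-invariant as a set. A variety that extracts a prescribed set of divisors is unique up to small modification. I would fix the choice by taking the $\bQ$-factorial dlt modification and extractions equivariantly: run the relevant MMP $G$-equivariantly for the finite group generated by the automorphism, or replace $s_\eta$ by the finite group it generates after noting that $\Aut$ of the generic fiber is finite. Then $G_\eta$, defined by \eqref{eq:defining-eq}, is invariant.
\item Similarly, I take $Z$ and $W^{\sta}$ to be $G$-equivariant models. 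This uses the equivariant versions of \cite[Theorem 2.10]{Has24} and \cite[Theorem 1]{moraga2020extracting}, which hold because the extracted divisors form $G$-invariant sets.
\item In any case the relative log canonical model $Y$ of \Cref{Step_7_properness} is the $\Proj$ of a section ring. It therefore depends only on the divisorial valuations involved, and is independent of the choice of $W^{\sta}$. So the automorphism of $X^{\sta}$ over $C^{\sta}$, which fixes the relevant boundary, induces an automorphism of $(Y,D_Y)\to C^{\sta}$.
\end{itemize}

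\emph{Step 3: the MMP on the base.} The moduli and boundary parts in the canonical bundle formula are functorial, so the automorphism of $C^{\sta}$ preserves $B^{(0)}+\bfM^{(0)}$. The minimal model of the generalized surface pair over $\Spec R$ contracts exactly the curves with negative degree. By running the MMP equivariantly, or simply passing directly to the unique relative minimal model (the result of contracting all such rational tails), the automorphism descends to $C^{(k)}$. Each $Y^{(i+1)}$ is the log canonical model over $C^{(i+1)}$, again a $\Proj$ of a canonical section ring, so the automorphism descends to $(Y^{(k)},D_{Y^{(k)}})\to C^{(k)}$. Finally, near the nodes the stacky structure is the one from $\cC^{\sta}$ (\Cref{prop:stacky-curve}), where the automorphism already exists by Step 1. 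The lifts glue because they agree on the overlap, which is dense in the fibers.

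\emph{Main obstacle.} I expect the main obstacle to be Step 2: the choices of dlt modification and extractions are not unique. I would handle this by arguing that the final output depends only on canonical data, so any two choices give isomorphic $Y$ compatibly with the extended automorphism. If that proves awkward, I would make all choices $G$-equivariant from the start, using that the automorphism group of the generic fiber is finite. A secondary subtlety is the permutation of the markings $p_j$ and of the rational tails; this is harmless because all constructions are local over $C^{\sta}$ and are performed simultaneously at all $p_j$.
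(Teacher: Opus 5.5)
Your proposal is correct, but it reaches the total space by a different route from the paper. Your Step~1 matches the paper: separatedness of pointed twisted stable maps, allowing a permutation $\rho$ of the gerbes.

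After Step~1, the paper never transports the automorphism through $W_\eta$, $Z$, $W^{\sta}$, $Y$ or the $Y^{(i)}$; it works only on the base.
\begin{itemize}
\item The boundary part $B_Y$ of $(Y,cD_Y)\to C^{\sta}$ is the closure of the boundary part of $f_\eta$, whose coefficients satisfy $b_i=b_{\rho(i)}$.
\item The moduli part $\bfM_Y$ depends only on the generic fiber. So it equals the moduli part of $(X^{\sta},cD^{\sta})\to C^{\sta}$, which is $\tau$-invariant because $\sigma$ preserves $\cD^{\sta}$.
\item Hence $\tau$ descends to the minimal model $C^{(k)}$.
\item The paper then gets the automorphism of the total space in one stroke. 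It chooses a $\tau^{(k)}$-invariant divisor $S$ in the smooth locus such that $(X,(c+\epsilon)D+\phi^{-1}(S))$ is KSBA-stable over $\Spec R$. It then invokes separatedness of KSBA moduli, together with the fact that $X\to C$ is the ample model of $(X,cD)$.
\item Finally it lifts to $\cC$ and $\cX$ via $\tau$ and $\Proj$ of a $\tau$-invariant algebra.
\end{itemize}

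Your route instead produces automorphisms of every intermediate model. This buys you something: invariance of $B^{(0)}+\bfM^{(0)}$ becomes plain functoriality of the canonical bundle formula, and you need neither the auxiliary divisor $S$ nor KSBA separatedness. The price is that you must neutralize the non-canonical choices in the construction, which the paper sidesteps entirely.

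On that point your Step~2 is a bit quick, for two reasons.
\begin{itemize}
\item The set of divisors on $W^{\sta}$ is not itself canonical: different $\bQ$-factorial dlt modifications can extract different lc places as components of $T_Z$.
\item Invariance of $G_\eta$ requires choosing $\alpha'$ constant along $\rho$-orbits.
\end{itemize}

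The cleanest fix avoids equivariant MMP altogether. $Y$ is normal, it is isomorphic to $X^{\sta}$ away from the fibers over the $p_j$, and $Y\to C^{\sta}$ is equidimensional. So every prime divisor of $Y$ over $p_j$ dominates $\Spec R$ and is the closure of a component of $F_{j,\eta}\subseteq X_\eta$. Thus the divisorial valuations of $Y$, and the divisor $D_Y$, are intrinsic. Since $D_Y$ is relatively ample, $Y=\Proj_{C^{\sta}}\bigoplus_m(\phi_Y)_*\cO_Y(mND_Y)$ is determined by this data. Because $\sigma$ and $s_\eta$ induce the same automorphism of the function field, they give an automorphism of this graded algebra covering $\tau$. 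This argument makes no use of $\alpha'$ or of the choice of dlt modification.

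The same remark handles $Y^{(k)}$; so does your observation that the iterated relative log canonical models agree with the one over $C^{(k)}$. Before gluing, take the open set near the nodes in \Cref{prop:stacky-curve} to be invariant.
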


\begin{proof}
Roughly speaking, the argument amounts to checking that all the steps in \Cref{thm:valuative-criterion} can be performed equivariantly, together with the input that the moduli space of pointed twisted stable maps is separated.

In \Cref{step_1_properness} of \Cref{thm:valuative-criterion}, we start by replacing the original family
with a fibration
\[
f^{\sta}_\eta:(\cX^{\sta}_\eta,c\cD^{\sta}_\eta)\to\cC^{\sta}_\eta
\]
over a pointed stacky curve $(\cC_\eta^{\sta};\mathfrak{p}_{1,\eta},\ldots ,\mathfrak{p}_{n,\eta})$, and the family above comes from a morphism $\cC_\eta^{\sta}\to \PCY$. We then use pointed twisted stable maps to extend this family over $\Spec R$:
\[
f^{\sta}:(\cX^{\sta},c\cD^{\sta})\to(\cC^{\sta}, \mathfrak{p}_{1},\ldots ,\mathfrak{p}_{n}).
\]
It follows from \cite[Theorem 3.1]{BV24} that the family $f^{\sta}_\eta$ is uniquely determined by the original family $f_\eta$, so $s_\eta$ induces an automorphism of the fibration $f^{\sta}_\eta$, which we denote by $\sigma_\eta$. Observe that this need not induce an isomorphism of the corresponding \textit{pointed} twisted stable map, as the gerbes $\mathfrak{p}_{i,\eta}$ need not be fixed. Instead, it permutes the gerbes $\mathfrak{p}_{i,\eta}$, sending 
\[
\mathfrak{p}_{i,\eta}\mapsto \mathfrak{p}_{\rho(i),\eta}\ \ \ \text{ for } \ \ \ \rho \in S_n.
\]
As the moduli stack of pointed twisted stable maps is separated, the automorphisms $\sigma_\eta,\tau_{\eta}$ 
\[
\xymatrix{(\cX_\eta^{\sta},c\cD^{\sta}_\eta)\ar[r]^-{\sigma_\eta} \ar[d] & (\cX_\eta^{\sta},c\cD_\eta^{\sta})\ar[d] \\ (\cC_\eta^{\sta};\mathfrak{p}_{1,\eta},\ldots,\mathfrak{p}_{n,\eta})\ar[r]^-{\tau_\eta} & (\cC_\eta^{\sta};\mathfrak{p}_{\rho^{-1}(1),\eta},\ldots,\mathfrak{p}_{\rho^{-1}(n),\eta}).}
\]
extend uniquely to isomorphisms $\sigma,\tau$ as follows: 
\[
\xymatrix{(\cX^{\sta},c\cD^{\sta})\ar[r]^-{\sigma} \ar[d] & (\cX^{\sta},c\cD^{\sta})\ar[d] \\ (\cC^{\sta};\mathfrak{p}_{1},\ldots,\mathfrak{p}_{n})\ar[r]^-{\tau} & (\cC^{\sta};\mathfrak{p}_{\rho^{-1}(1)},\ldots,\mathfrak{p}_{\rho^{-1}(n)}).}
\]
Now, if we denote by $(X^{\sta},cD^{\sta})\to C^{\sta}$ the morphism on coarse moduli spaces, then:
\begin{enumerate}
    \item the boundary part is supported at $p_i$, with coefficient $b_i^{\sta}$;
    \item since $s_\eta$ is an isomorphism of the \textit{morphism} $(X_\eta,cD_\eta)\to C_\eta$, we have $b_i^{\sta} = b_{\rho(i)}^{\sta}$ for every $i$.
\end{enumerate}
In other words, the boundary part for the morphism $(X^{\sta},cD^{\sta})\to C^{\sta}$ is $\tau$-invariant.

Since $s_\eta$ sends $D_\eta$ to itself, we have $\sigma_\eta^*(\cD_\eta) = \cD_\eta$, and since $\cD$ is horizontal, we also have $\sigma^*(\cD) = \cD$. In particular, from the canonical bundle formula, the moduli part for the morphism $(X^{\sta},cD^{\sta})\to C^{\sta}$ is also $\tau$-invariant. 

Now, recall that in \Cref{Step_7_properness} we constructed a pair $(Y,cD)\to C^{\sta}$ that is birational to $(X^{\sta}, cD^{\sta})\to C^{\sta}$ over $C^{\sta}$, agrees with $(X_\eta,cD_\eta)\to C_\eta$ over the generic fiber, and whose boundary part $B_Y$ is the closure of the boundary part for $(X_\eta,cD_\eta)\to C_\eta$.
The boundary part for the original morphism $(X_\eta,cD_\eta)\to C_\eta$ is supported at the points $p_{i,\eta}$, the coarse spaces of $\mathfrak{p}_{i,\eta}$; let $b_i$ be the coefficient of the point $p_{i,\eta}$. Since $s_\eta$ is an isomorphism of the pair $(X_\eta,cD_\eta)\to C_\eta$, it follows that \[b_i = b_{\rho(i)}\text{ for every }i.\]
In particular, the boundary part for $(X_\eta,cD_\eta)\to C_\eta$ is $\tau_\eta$-invariant, so $B_Y$ is also $\tau$-invariant. 
It follows from \cite[Theorem 8.5.1]{Kol07} that the moduli part $\bfM_Y$ of $(Y,cD_Y)\to C^{\sta}$ depends only on the generic fiber, so it agrees with the one for $(X^{\sta},cD^{\sta})\to C^{\sta}$, which is $\tau$-invariant. Therefore $\tau_\eta$ extends to an automorphism $\tau^{(k)}$ of the minimal model $(C^{(k)},B^{(k)} + \bfM^{(k)})$ of the generalized pair $(C^{\sta}, B_Y + \bfM_Y)$. 

Since the coarse space of $\cC$ agrees with $C^{(k)}$,  we can find a $\tau^{(k)}$-invariant divisor $S$ supported on the smooth locus of $C^{(k)}\to \Spec R$ such that the pair $(X,(c+\epsilon)D + \phi^{-1}(S))$ is KSBA-stable.
As the KSBA-moduli space is separated, the automorphism $\tau_\eta$ extends to an automorphism of $(X,(c+\epsilon)D+\phi^{-1}(S))$. As the fibration $X\to C$ is the ample model of $(X,cD)$, this automorphism induces an automorphism of the fibration $(X,cD)\to C$. Since $\tau$ is an isomorphism of the \textit{twisted} curve $\cC^{\sta}$, the induced automorphism of $C$ lifts, by the construction of $\cC$, to an automorphism of the original twisted curve $\cC$. To summarize, for the original family $(\cX,c\cD)\to \cC$, we have produced an extension of the automorphism $\tau_\eta$ to $\cC$, and to $(\cX,c\cD)$ away from finitely many (stacky) fibers of the central fiber. Since $\cD$ is ample over $\cC$, we may write $\cX$ as $\Proj$ of a $\tau$-invariant algebra, and hence this automorphism extends to an automorphism of $(\cX,c\cD)\to \cC$.
\end{proof}

\smallskip

\subsection{Separatedness}

We prove that the moduli stack of stable log Calabi--Yau fibrations is separated.

\begin{thm}[Separatedness]\label{thm:separatedness}
Let $(R,\eta,\xi)$ be a DVR, and for $i=1,2$ let
\[
\sigma_i\colon\
(\cX_i,\cD_i)\ \xrightarrow{\ \phi_i\ }\
\cC_i\ \xrightarrow{\ \pi_i\ }\
\Spec R
\]
be two families of stable log Calabi--Yau fibrations over $\Spec R$. Assume that the
generic fibers of $\sigma_i$ are normal and that
\[\begin{tikzcd}[ampersand replacement=\&]
	{\zeta_{\eta}\colon} \& {(\cX_1,\cD_1)|_{\eta}} \&\& {(\cX_2,\cD_2)|_{\eta}} \\
	{\vartheta_{\eta}\colon} \& {\cC_{1,\eta}} \&\& {\cC_{2,\eta}}
	\arrow["\begin{array}{c} \\ \sim \end{array}", from=1-2, to=1-4]
	\arrow[from=1-2, to=2-2]
	\arrow[from=1-4, to=2-4]
	\arrow["{  \sim}", from=2-2, to=2-4]
\end{tikzcd}\]
is a commutative diagram of isomorphisms. Then the isomorphisms extend to a unique commutative diagram of isomorphisms
\[\begin{tikzcd}[ampersand replacement=\&]
	{\zeta\colon} \& {(\cX_1,\cD_1)} \&\& {(\cX_2,\cD_2)} \\
	{\vartheta\colon} \& {\cC_{1}} \&\& {\cC_{2}}
	\arrow["\begin{array}{c} \\ \sim \end{array}", from=1-2, to=1-4]
	\arrow[from=1-2, to=2-2]
	\arrow[from=1-4, to=2-4]
	\arrow["{  \sim}", from=2-2, to=2-4]
\end{tikzcd}\]
over $\Spec R$.
\end{thm}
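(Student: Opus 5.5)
The proof follows the outline in the introduction and has three stages. First, the coarse curves $C_1$ and $C_2$ are shown to be isomorphic, and then the twisted curves $\cC_1$ and $\cC_2$. Next, the families $(\cX_i,\cD_i)$ are shown to be isomorphic over this identification. Finally, the isomorphism is shown to be the unique extension of $(\zeta_\eta,\vartheta_\eta)$, using \Cref{thm:extension-of-automorphism}.

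\textbf{Step 1: the bases.} For each $i$, apply the canonical bundle formula to the normal generic fiber and take closures. This gives generalized pairs $(C_i,B_i+\bfM_i)$ over $\Spec R$. The same argument as in \Cref{lemma_slct_positive_is_open} and \Cref{step_properness_on_slct}, together with condition (Sing), shows that $\lfloor B_i\rfloor=0$ and that $(C_i,B_i+\bfM_i+C_{i,\xi})$ is generalized lc. By (LCY) and (Stab), $K_{C_i}+B_i+\bfM_i$ is nef over $R$. Both $C_1$ and $C_2$ are therefore relatively minimal models of the same generalized pair on the common generic fiber $C_\eta$.

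The curves need not be canonical models, since on some components $K+B+\bfM$ may have degree zero. On those components, (Stab), in the form (Stab$'$), gives positivity of $\Lambda_{\Cho}$. I therefore compare the two models using the $\bQ$-divisor $K_{C_i}+B_i+\bfM_i+\epsilon\Lambda_{\Cho,i}$, which is ample for $0<\epsilon\ll1$.

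To compare the Chow terms, I take a common resolution $\widetilde C$ dominating both $C_i$ and pass to the stacky KSBA model over it. By \Cref{cor_degree_of_chow_minimized_by_pull_back_from_KSBA} and the minimizing property \Cref{thm_generalize_wx}, each $\Lambda_{\Cho,i}$ equals the pullback of $L_{\Cho}$ from $\mathbf{PCY}$ plus an effective correction supported at the non-KSBA points. The correction is determined by the generic fiber, because the local comparison in \Cref{rem:local-nature-of-minimizing} only sees the family near a smooth point. Hence the pullbacks of $K_{C_i}+B_i+\bfM_i+\epsilon\Lambda_{\Cho,i}$ to $\widetilde C$ differ only by exceptional divisors. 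The negativity lemma then shows that both $C_i$ are the ample model of the same divisor, so $C_1\cong C_2$ extending $\vartheta_\eta$ on coarse spaces.

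The twisted structure at the nodes is determined by the coarse data, by \Cref{lem:uniquely-determined} and \cite[Theorem 3.1]{BV24}. The representable extension of the map to $\PCY$ is unique, and this gives $\cC_1\cong\cC_2$.

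\textbf{Step 2: the families.} Over the generic points of components of the central fiber, the fibers of $(\cX_i,(c+\epsilon)\cD_i)$ are KSBA-stable by (Qmap). Separatedness of KSBA moduli therefore extends $\zeta_\eta$ over an open set whose complement has codimension at least two in each $\cX_i$. Because $\cD_i$ is relatively ample over $\cC$ (from (LCY) and (Stab)), we can write
\[\cX_i=\Proj_{\cC}\bigoplus_m \phi_{i*}\cO(m\cD_i).\]
These graded algebras are $S_2$ and agree in codimension one, so the isomorphism extends to $\zeta$ over $\vartheta$.

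\textbf{Step 3: uniqueness.} Given two extensions, their difference is an automorphism extending the identity on the generic fiber. To show it is trivial, replace $\sigma_1$ by the family constructed in \Cref{thm:valuative-criterion}; this is possible by Steps 1–2, since every stable extension is isomorphic to that one. For that family, \Cref{thm:extension-of-automorphism} shows that automorphisms of the generic fiber extend uniquely, so the two extensions agree.

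I expect Step 1 to be the main obstacle. The difficulty is controlling the components where $K+B+\bfM$ is numerically trivial, so that the Chow correction terms on the two models really coincide; this is exactly where (Sing) and \Cref{thm_generalize_wx} must be combined carefully.
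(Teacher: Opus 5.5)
Your Steps 2 and 3 match the paper: KSBA separatedness at the generic points of the central components plus the $S_2$/$\Proj$ argument, then \Cref{thm:extension-of-automorphism}. The problem is in Step 1, at the comparison of the Chow terms, and more care there will not fix it.

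\textbf{Step 1 never uses (Sing) where it matters.} You use (Sing) only to get $\lfloor B_i\rfloor=0$ and generalized log canonicity. Neither prevents a smooth point of $C_{i,\xi}$ from being a generalized lc centre. Without that, separatedness fails: see the elliptic K3 example in the introduction, where distinct limits exist once (Sing) is dropped.

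\textbf{The Chow term has the wrong sign.} Your negativity-lemma/ample-model argument needs $p_i^*(K_{C_i}+B_i+\bfM_i+\epsilon\Lambda_{\Cho,i})$ to differ from a common divisor by \emph{effective} $p_i$-exceptional divisors. For the Chow part this fails. Take $\widetilde C=C_1$, and suppose $p_2$ contracts a rational tail $\Gamma$ of $C_{1,\xi}$ to a smooth point $q\in C_{2,\xi}$ over which family $2$ is not KSBA-stable.
\begin{itemize}
\item At the generic point of $\Gamma$, family $1$ is KSBA-stable by (Qmap), while the pullback of family $2$ is not.
\item So \Cref{thm_generalize_wx}, applied over the local ring at the generic point of $\Gamma$, gives $p_2^*\Lambda_{\Cho,2}=\Lambda_{\Cho,1}+a\Gamma$ near $\Gamma$ with $a>0$.
\item If the $K+B+\bfM$ parts are crepant, then $A_1\sim_{\bQ}p_2^*A_2-\epsilon a\Gamma$. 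This is compatible with $A_1$ being ample, since $A_1\cdot\Gamma=-\epsilon a\Gamma^2>0$.
\item Then $C_1$ and $C_2$ are ample models of \emph{different} divisors, and nothing forces them to agree.
\end{itemize}
Your claim that the correction is determined by the generic fiber is also false. Isolated non-KSBA points on the central fiber, such as images of contracted tails in \Cref{thm:valuative-criterion}, are invisible on $C_\eta$. They have codimension two on $C_i$ but produce exactly these exceptional terms on $\widetilde C$.

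\textbf{The missing idea} is to use (Sing) to rule out this configuration, as the paper does.
\begin{enumerate}
\item First prove that the two models are crepant (\Cref{lem:E_1=E_2}). You could do this on the generalized pairs $(C_i,B_i+C_{i,\xi}+\bfM_i)$. Then a component that survives on one model has log discrepancy $0$ for the other model's pair.
\item By (Sing), $(X_i,cD_i+X_{i,\xi}+\epsilon f_i^*H)$ is lc for a general $H$ through a smooth point of $C_{i,\xi}$. So every divisor over such a point has positive log discrepancy.
\item Hence every component exceptional for $\widetilde C\to C_i$ is contracted to a \emph{node} of $C_{i,\xi}$ (\Cref{prop:contracted-to-node}).
\item Over nodes the fibers are KSBA-stable. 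So on a common twisted model the two pulled-back families coincide (\Cref{prop:two-pullback-families-isomorphic}, which is your Step 2 argument).
\item A component contracted by one map but not the other would then be a component of the other central fiber with $\deg\cL=0$, carrying a trivial family with KSBA-stable fibers. Its Chow degree is zero, contradicting (Stab) (\Cref{step_sep_isom_fam}).
\end{enumerate}
Only after this does one get $p_1^*\Lambda_{\Cho,1}=p_2^*\Lambda_{\Cho,2}$, and at that point your $\epsilon\Lambda_{\Cho}$ comparison is no longer needed.
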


\begin{proof}
Let
\[
(X_i,cD_i)\ \xrightarrow{\ f_i\ }\ C_i\ \longrightarrow\ \Spec R
\]
be the coarse spaces associated with the two families $\sigma_i$. Let $\widehat{C}$ be the
normalization of the graph of the birational map $C_1\dashrightarrow C_2$, and let
$(\widetilde{C},\widetilde{C}_{\xi})$ be a log resolution of $(\widehat{C},\widehat{C}_{\xi})$.
After a finite base change of $\Spec R$ and a further minimal log resolution, we may assume
that $(\widetilde{C},\widetilde{C}_{\xi})$ is an SNC pair. Let $(C,C_{\xi})$ be the relative
log canonical model of $(\widetilde{C},\widetilde{C}_{\xi})$ over $\widehat{C}$, and let
$g_i\colon C\to C_i$ be the induced morphisms for $i=1,2$.

\begin{lemma}\label{lem:cannot-contract-by-both}
If $\Gamma$ is a rational tail of $C_{\xi}$, then $(g_i)_*\Gamma\neq 0$ for at least one of
$i=1,2$.
\end{lemma}

\begin{proof}
Suppose, to the contrary, that $(g_i)_*\Gamma=0$ for both $i=1,2$. Then $\Gamma$ is contracted
by the induced morphism $C\to\widehat{C}$. Since $\Gamma$ is a rational tail, it is a
$(-1)$-curve on $C$, contradicting the minimality of the relative log minimal model $C$.
\end{proof}

\begin{lemma}\label{lem:E_1=E_2}
The two pairs $(X_1,cD_1)$ and $(X_2,cD_2)$ are crepant.
\end{lemma}

\begin{proof}
Let $X$ be a normal variety fitting into a diagram as follows, with the $h_i$ isomorphisms
over $\eta$:
\[
\begin{tikzcd}[ampersand replacement=\&]
X_1 \&\& X \&\& X_2 \\
C_1 \&\& C \&\& C_2
\arrow["f_1", from=1-1, to=2-1]
\arrow["h_1"', from=1-3, to=1-1]
\arrow["h_2", from=1-3, to=1-5]
\arrow["f"', from=1-3, to=2-3]
\arrow["f_2"', from=1-5, to=2-5]
\arrow["g_1"', from=2-3, to=2-1]
\arrow["g_2", from=2-3, to=2-5]
\end{tikzcd}.
\]
As both $D_1$ and $D_2$ are horizontal and agree over the generic point $\eta$, their proper
transforms on $X$ coincide; we denote this common divisor by $D$. Since
$(X_i,cD_i+X_{i,\xi})$ is log canonical for $i=1,2$, and $h_i$ only extracts divisors lying
over $X_{i,\xi}$, one can write
\[
K_X+cD\ \sim_{\bQ}\ h_i^*(K_{X_i}+cD_i)+E_i
\]
for some effective $h_i$-exceptional $\bQ$-divisor $E_i$. Hence
\[
h_1^*(K_{X_1}+cD_1)-h_2^*(K_{X_2}+cD_2)\ \sim_{\bQ}\ E_2-E_1.
\]
Since $K_{X_2}+cD_2$ is nef, the divisor $E_2-E_1$ is $h_1$-anti-nef. Moreover,
\[
(h_1)_*(E_2-E_1)=(h_1)_*E_2\geq 0.
\]
By the negativity lemma, it follows that $E_1-E_2\geq 0$. By symmetry, we also have
$E_2-E_1\geq 0$, and hence $E_1=E_2$.
\end{proof}

\begin{lemma}\label{prop:contracted-to-node}
If $\Gamma$ is an irreducible component of $C_{\xi}$ that is $g_i$-exceptional for either
$i=1$ or $i=2$, then $\Gamma$ is contracted to a node of $C_{i,\xi}$.
\end{lemma}

\begin{proof}
It suffices to treat the case where $\Gamma$ is a rational tail. Indeed, if a non-tail
component of $C_{\xi}$ is contracted to a smooth point of either $C_1$ or $C_2$, then there
exists a rational tail of $C_{\xi}$ contracted to the same point. Thus, without loss of
generality, we may assume that $g_1(\Gamma)=p$ is a \emph{smooth} point of $C_{1,\xi}$.

By \Cref{lem:cannot-contract-by-both}, the curve $\Gamma$ is not contracted by $g_2$; denote
its image by $\Gamma_2:=g_2(\Gamma)$. Let $\Xi$ be the proper transform in $X$ of an
irreducible component of $X_2|_{\Gamma_2}$. Then $\Xi$ is a divisor on $X$ mapping onto
$\Gamma\subseteq C$. Since $\Gamma$ is contracted by $g_1$, the divisor $\Xi$ is contracted to
the point $p$ by the morphism $f_1\circ h_1$. As $f_1$ has equidimensional fibers and
$\dim C_1=2$, there are no divisors on $X_1$ contracted to a point of $C_1$. Hence $\Xi$ must
be contracted by $h_1$.

By condition (Sing) of \Cref{def_CY_fibration}, the fibers of $f_1$ over smooth points of
$C_1\to\Spec R$ do not contain any log canonical center of
$\big(X_1,(c+\epsilon)D_1+X_{1,\xi}\big)$. Therefore, for a general very ample divisor $H$ on
$C_1$ passing through $p$, the pair $\big(X_1,cD_1+\epsilon f_1^*H+X_{1,\xi}\big)$ is log
canonical for any $0<\epsilon\ll 1$. Since $g_1\circ f(\Xi)=p$, we may write
\[
K_X+cD+(h_1)_*^{-1}(X_{1,\xi}+\epsilon f_1^*H)
\ \sim_{\bQ}\
h_1^*\big(K_{X_1}+cD_1+X_{1,\xi}+\epsilon f_1^*H\big)+R+a\,\Xi,
\]
where $R$ is $h_1$-exceptional with $\Xi\not\subseteq\Supp R$, and
\[
a\ =\ a_{\Xi}(X_1,cD_1)-\mult_{\Xi}(X_{1,\xi})-\epsilon\,\ord_{\Xi}\big(h_1^*f_1^*H\big)\ \geq\ -1.
\]
In particular, $a_{\Xi}(X_1,cD_1)>0$, and it follows that
\[
\Xi\ \subseteq\ \Supp E_1=\Supp E_2.
\]
This is impossible, since $E_2$ is $h_2$-exceptional whereas $\Xi$ is not contracted by $h_2$.
\end{proof}

\begin{lemma}\label{step_construction_twisted_curve_separatedness}
There exists a twisted curve $\cC$ with coarse space $C$, together with morphisms
\[
\cC\longrightarrow\cC_1
\qquad\text{and}\qquad
\cC\longrightarrow\cC_2
\]
lifting the maps $g_i\colon C\longrightarrow C_i$.
\end{lemma}

\begin{proof}
There is a map $C_\eta\to\cC_1\times_R\cC_2$. Let $\Gamma$ be an irreducible component of
$C_\xi$, and let $\eta_{\Gamma}$ be its generic point. As $\eta_{\Gamma}$ is a codimension-one
point of $C$, the local ring $A:=\cO_{C,\eta_{\Gamma}}$ is a DVR, together with a morphism
$\rho\colon R\to A$. As the central fiber $C_\xi$ is reduced, the image $\rho(\varpi)$ of a
uniformizer $\varpi$ of $R$ is a uniformizer for $A$. By the valuative criterion for the
properness of tame Deligne--Mumford stacks \cite[Tag 0CLY]{stacks-project}, there is a diagram
as follows for some integer $N>0$:
\[
\begin{tikzcd}[ampersand replacement=\&]
	\&\& {\Spec K(A)} \& {\cC_1\times_{R}\cC_2} \\
	{\Spec A[t]/(t^N-\rho(\varpi))} \&\& {\Spec A} \& {\Spec R}
	\arrow[from=1-3, to=1-4]
	\arrow[from=1-3, to=2-3]
	\arrow[from=1-4, to=2-4]
	\arrow["\exists"{description}, dotted, from=2-1, to=1-4]
	\arrow[from=2-1, to=2-3]
	\arrow[from=2-3, to=2-4]
\end{tikzcd},
\]
where $K(A)$ is the fraction field of $A$. Observe that $\Spec K(A)$ is the generic point of
$C$. Then, up to possibly replacing $\Spec R$ with $\Spec R[t]/(t^N-\varpi)$, we have the
commutative diagram
\[
\begin{tikzcd}[ampersand replacement=\&]
	{\Spec K(A)} \&\& {\cC_1\times_{R}\cC_2} \\
	{\Spec A} \&\& {\Spec R}
	\arrow[from=1-1, to=1-3]
	\arrow[from=1-1, to=2-1]
	\arrow[from=1-3, to=2-3]
	\arrow["\exists"{description}, dotted, from=2-1, to=1-3]
	\arrow[from=2-1, to=2-3]
\end{tikzcd}.
\]
In particular, the rational map $C_\eta\to\cC_1\times_{R}\cC_2$ extends to $C$ away from a
finite set of closed smooth points $\{x_1,\ldots,x_m\}\subseteq C_\xi$:
\[
\begin{tikzcd}[ampersand replacement=\&]
C\setminus\{x_1,\ldots,x_m\} \arrow[d] \arrow[r] \& \cC_1\times_{R}\cC_2 \arrow[d] \\
C \arrow[r] \& C_1\times_{R}C_2.
\end{tikzcd}
\]
As $C$ has only $A_m$-singularities, one can take the canonical smooth covering stack
$\cC\to C$ (cf.\ \cite[2.8 and 2.9]{vistoli1989intersection} and \cite[Theorem 4.6]{FMN10}).
Formally locally over an $A_m$-singularity of $C$, the covering stack is of the form
\[
\big[(\Spec k[\![u,v]\!])/\bmu_m\big]\ \longrightarrow\ \Spec k[\![x,y,z]\!]/(xy-z^m),
\]
where a generator $\zeta\in\bmu_m$ acts via $\zeta\ast u=\zeta u$ and
$\zeta\ast v=\zeta^{-1}v$. It is straightforward to check that $\cC\to\Spec R$ is a twisted
curve. Then by \cite[Lemma 2.1]{twisted_map_2}, the diagram
\[
\begin{tikzcd}[ampersand replacement=\&]
\cC\setminus\{x_1,\ldots,x_m\} \arrow[d] \arrow[r] \& \cC_1\times_{R}\cC_2 \arrow[d] \\
\cC \arrow[r] \& C_1\times_{R}C_2.
\end{tikzcd}
\]
admits a lifting $\cC\to\cC_1\times_{R}\cC_2$.
\end{proof}

Let $\wt{\varphi}_i\colon(\wt{\cX}_i,c\wt{\cD}_i)\to\cC$ be the pullback of the family
$(\cX_i,c\cD_i)\to\cC_i$ along $\gamma_i\colon\cC\to\cC_i$, for $i=1,2$.

\begin{lemma}\label{prop:two-pullback-families-isomorphic}
The two families
\[
\wt{\varphi}_i\colon(\wt{\cX}_i,c\wt{\cD}_i)\longrightarrow\cC,\qquad i=1,2,
\]
are isomorphic over $\cC$.
\end{lemma}

\begin{proof}
By \Cref{prop:contracted-to-node}, each component of $C_{\xi}$ contracted by $C\to C_i$ is
contracted to a node of $C_{i,\xi}$. Since the fibers of $(\cX_i,(c+\epsilon)\cD_i)\to\cC_i$
over these nodes are KSBA-stable for any $0<\epsilon\ll1$ and $i=1,2$, the uniqueness of the
KSBA-stable limit implies that the two families $(\wt{\cX}_i,c\wt{\cD}_i)\to\cC$ agree in
codimension one. As $\cD_i$ is relatively ample and $\wt{\cX}_i$ is $S_2$, the two families
agree, since they are the relative $\operatorname{Proj}$ over $\cC$ of the same algebra
$\bigoplus_{m\geq0}(\wt{\varphi}_i)_{*}\cO_{\wt{\cX}_i}(m\wt{\cD}_i)$.
\end{proof}

\begin{prop}\label{step_sep_isom_fam}
The two families $\sigma_i\colon(\cX_i,c\cD_i)\to\cC_i\to\Spec R$ for $i=1,2$ are isomorphic.
\end{prop}

\begin{proof}
It suffices to show that $\cC_1$ and $\cC_2$ are isomorphic over $R$. Indeed, if this is the
case, then one can take $\cC=\cC_i$ and $(\cX_i,c\cD_i)\simeq(\wt{\cX}_i,c\wt{\cD}_i)$, so the
desired result follows from \Cref{prop:two-pullback-families-isomorphic}.

Suppose that $\cC_1$ and $\cC_2$ are not isomorphic. We may assume that there exists a rational
bridge or rational tail $\Gamma$ of $\cC$ whose coarse space is contracted by $g_1$ but not by
$g_2$; denote $g_2(\Gamma)$ by $\Gamma_2$. Then the family
\[
(\wt{\cX}_1,(c+\epsilon)\wt{\cD}_1)|_\Gamma\ =\ (\wt{\cX}_2,(c+\epsilon)\wt{\cD}_2)|_\Gamma\longrightarrow\Gamma
\]
is trivial, with all fibers KSBA-stable. In particular, the Chow line bundle associated with
the family over $\Gamma$ is trivial, and hence so is the one associated with
$(\cX_2,(c+\epsilon)\cD_2)|_{\Gamma_2}\to\Gamma_2$. However, this contradicts the stability of
the family $(\cX_2,c\cD_2)\to\cC_2$.
\end{proof}

To complete the proof, it suffices to show that any automorphism of the generic fiber of a
fixed family over $R$ extends uniquely to the whole family. This is the content of
\Cref{thm:extension-of-automorphism}.
\end{proof}

\section{Boundedness of stable log Calabi--Yau fibrations}\label{sec:Boundedness of stable log Calabi--Yau fibrations}

Throughout this section, we fix an admissible quadruple $\Phi$ and fix an $\epsilon$-coefficient $\epsilon_0$ for $\Phi$ (cf. \Cref{defn:epsilon-coefficient}). The goal of this section is to establish the following result, which
guarantees that the moduli space of stable log Calabi--Yau fibrations of
non-degenerate type with fixed numerical invariants is bounded.

\begin{thm}[Boundedness]\label{thm_bounding_fam}
There exist a smooth scheme $B$ of finite type over $\bC$ and a family
\[
(\cX_B,c\cD_B)\longrightarrow \cC_B\longrightarrow B
\]
of stable log Calabi--Yau fibrations of non-degenerate type with numerical invariant $\Phi$, such that every such fibration is isomorphic to a fiber over $B$.
\end{thm}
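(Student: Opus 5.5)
We bound the coarse spaces first and recover the stacky structure afterwards. By \Cref{lem:uniquely-determined}, the stacky structure of $(\cX,c\cD)\to\cC$ is determined by its coarse space $(X,cD)\to C$ up to finitely many choices. More precisely, the twisting at each node is the unique root stack making each branch map representably to $\PCY$. So it suffices to bound the coarse triples $(X,cD)\to C$, and then to reconstruct the twisted curves and stacky families in families, for instance as a locally closed substack of a Hom-stack over the bounded family.

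\textbf{Step 1: bounding the base.} For a fibration of non-degenerate type, the non-degenerate deformation $(\cX_\eta,c\cD_\eta)\to\cC_\eta$ has a classifying map $C_\eta\dashrightarrow\mathbf{PCY}$. Its degree with respect to $L_{\Cho}$ is controlled by $\Phi$, namely by the intersection numbers $V(t_1,t_2)$ together with \Cref{cor_degree_of_chow_minimized_by_pull_back_from_KSBA}. By the separatedness in \Cref{thm:separatedness}, our fibration is the limit constructed in \Cref{thm:valuative-criterion}. Hence \Cref{cor:number-components-bounded} bounds the number of components of $C$ by a constant $d=d(\Phi)$. Since the genus $g$ is fixed, the coarse curves $C$ with at most $d$ components lie in finitely many topological types.

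\textbf{Step 2: bounding the total space.} Here we use condition (Stab). The divisor
\[
A_\epsilon\coloneqq K_X+(c+\epsilon^2)D+\epsilon\,\psi^*\Lambda_{\Cho}
\]
is ample, and by \Cref{lem:stable-lcy-implies-slc} the pair $(X,(c+\epsilon^2)D)$ is slc for small $\epsilon$. Its volume $A_\epsilon^{\dim X}$ can be computed from $V(t_1,t_2)$ and $\deg\Lambda_{\Cho}$. The degree $\deg\Lambda_{\Cho}$ is itself an intersection number $\big((K_{X/C}+(c+\epsilon_0)D)^{n}\big)$, so it is fixed by $\Phi$ once we note $K_{X/C}$ versus $K_X$ differ by pullback of $K_C$, whose degree is fixed by $g$.

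The remaining issue is that $\Lambda_{\Cho}$ is only a $\bQ$-line bundle on $C$. To get a genuine boundary divisor, we replace $\epsilon\psi^*\Lambda_{\Cho}$ by $\epsilon$ times a general fiber combination $\psi^*H$, with $H$ an effective divisor of fixed degree on $C$ that is $\bQ$-equivalent to $\Lambda_{\Cho}$. The pair $(X,(c+\epsilon^2)D+\epsilon\psi^*H)$ is then KSBA-stable, of fixed dimension, with coefficients in a fixed finite set and fixed volume. Boundedness of KSBA-stable pairs (\cite{Kol23} and \cite{HMX}-type results) then bounds $(X,D)$ in a finite type family. I expect this step to be the main obstacle. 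One must show that $\epsilon$ can be chosen uniformly in $\Phi$, and this should use the uniform $\delta$ of \Cref{lemma_slct_positive_is_open} and the uniform index $N$ of \Cref{def_index_num_data}. One must also handle the fact that ampleness holds only for sufficiently small $\epsilon$, which requires a uniform positivity bound for $\deg_\Gamma\Lambda_{\Cho}$ on the components $\Gamma$ where $\cL$ is not ample. That bound comes from \Cref{cor_degree_of_chow_minimized_by_pull_back_from_KSBA}, since its degrees lie in $\frac1N\bZ_{>0}$.

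\textbf{Step 3: recovering the morphism and passing to a smooth base.} Over the bounded family of pairs, the morphism $\psi\colon X\to C$ is recovered from $K_X+cD\sim_\bQ\psi^*(\text{line bundle})$. Thus it is the Iitaka-type fibration of $K_X+cD$ on components where that divisor has positive degree. In general, and in families, we parametrize $\psi$ by the Hom/Hilbert scheme of graphs in $X\times C$ with $C$ embedded by $\omega_C\otimes\Lambda_{\Cho}^{n}$, which is ample by \Cref{lemma_ampleness_of_omega_C_plus_lambda}. Stratifying the resulting finite type parameter space, the conditions (LCY), (Qmap), (Sing), (Stab) and the non-degenerate-type condition cut out a constructible locus. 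Here (Sing) is constructible by \Cref{lemma_slct_positive_is_open}, and (Stab) is open by openness of ampleness. Taking a stratification into smooth locally closed pieces and performing the root-stack and twisted-curve construction of \Cref{lem:uniquely-determined} in families, possibly after a finite étale cover of each stratum, we obtain the required smooth $B$ and family $(\cX_B,c\cD_B)\to\cC_B\to B$.
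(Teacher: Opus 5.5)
Your Step 1 is fine and matches the paper's use of \Cref{cor:number-components-bounded}. Reading the bound on $\deg(C_\eta\to\mathbf{PCY})$ directly off $V(t_1,t_2)$ via \Cref{cor_degree_of_chow_minimized_by_pull_back_from_KSBA} is even slightly more direct than the paper's route, which first bounds the non-degenerate fibrations.

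The gap is in Step 2, exactly where you flag it. HMX/KSBA boundedness needs the coefficients to lie in a fixed DCC set and the volume to be fixed, so you need one $\epsilon$ that works for every fibration with invariant $\Phi$. If the admissible $\epsilon$ shrinks along a sequence of fibrations, the coefficients $c+\epsilon^2$ strictly decrease to $c$, which violates DCC, and the volumes $A_\epsilon^{\dim X}$ vary. The uniformity you propose is not available at this stage:
\begin{itemize}
\item The constant $\delta$ of \Cref{lemma_slct_positive_is_open} is produced only for a family over a base of finite type. Invoking it for all fibrations with invariant $\Phi$ presupposes the boundedness being proved; the paper uses it only in \Cref{sec:projectivity}, once $\overline{\MM}_\Phi$ is known to be of finite type.
\item Even granting $\deg_\Gamma\Lambda_{\Cho}\ge 1/N$, this does not control the ampleness threshold of $\psi^*(\cL+\epsilon\Lambda_{\Cho})+\epsilon^2D$. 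That threshold depends on how negative $D$ can be on horizontal curves relative to their degree over $C$, which is again a bounded-family statement.
\end{itemize}

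The missing idea is to avoid needing a uniform small parameter at all. The paper adds a reduced sum of general fibers $F=f^*(p_1+\cdots+p_{3k})$, three on each of the $k\le d$ components. Then $K_X+cD+F$ is the pullback of an ample divisor on $C$ (as $\cL$ is nef), and $(X,cD+F;D)$ is a stable minimal model with volume polynomial $V(t,3k)$ fixed by $\Phi$. Birkar's boundedness of stable minimal models \cite[Theorem~1.14]{Bir22} only asks that $(X,B+tA)$ be slc for \emph{some} $t>0$; a uniform $t$ is an output of that theorem, not an input. This also recovers $\psi$ as the ample model of $K_X+cD+F$, which streamlines your Step 3.

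Separately, your Step 3 understates the work of reconstructing the stacky family over a base. \Cref{lem:uniquely-determined} is pointwise. In families one must extend the root stacks by Noetherian induction. Then, after finite covers of the base, one must choose isomorphisms of the gerbes and of the KSBA-stable fibers over the two branches of each node in order to glue. The paper does this with Isom stacks and geometric pushouts.
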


\begin{defn}
Fix a positive integer $d$. Define $\cS_{\Phi,d}$ to be the collection of stable log Calabi--Yau fibrations
\[
\phi \colon (\cX,c\cD)\to \cC
\]
with numerical invariant $\Phi$ such that $\cC$ has at most $d$ irreducible components.
\end{defn}

To prove \Cref{thm_bounding_fam}, we first prove the following weaker version.

\begin{thm}\label{thm_weak_boundedness}
For any fixed integer $d\geq 1$, there exists a smooth scheme $B$ of finite type, a family of twisted curves
$\cC_B\to B$, and a flat proper morphism $(\cX_B,c\cD_B)\longrightarrow \cC_B$ such that
\begin{enumerate}
\item[\textup{(1)}] every object of $\cS_{\Phi,d}$ is isomorphic to a fiber \((\cX_b,c\cD_b)\to\cC_b\) for some $b\in B$; and
\item[\textup{(2)}] for every $b\in B$, the fiber $(\cX_b,c\cD_b)\to\cC_b$ belongs to $\cS_{\Phi,d}$.
\end{enumerate}
\end{thm}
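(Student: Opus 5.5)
The strategy is to reduce to the boundedness of stable minimal models, \Cref{thm:stable-minimal-model}, applied to the coarse pair $(X,(c+\epsilon)D)$ with an auxiliary polarization pulled back from $C$. The stacky structure is then recovered afterwards via \Cref{lem:uniquely-determined}.

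\textbf{Step 1: bound the coarse spaces.} Given $\phi\colon(\cX,c\cD)\to\cC$ in $\cS_{\Phi,d}$ with coarse space $(X,cD)\to C$, condition (Stab) says that $K_X+(c+\epsilon^2)D+\epsilon\psi^*\Lambda_{\Cho}$ is ample. I would first fix a uniform choice of $\epsilon$. By \Cref{lem:stable-lcy-implies-slc} and \Cref{lemma_slct_positive_is_open}, the fibers over smooth points have slct bounded below by a constant depending only on $\Phi$. Since $C$ has genus $g$ and at most $d$ components, I would then add a reduced divisor $\psi^*S$, where $S$ is a bounded number of general smooth points on each component of $C$. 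The resulting pair $(X,(c+\epsilon)D+\delta\psi^*S)$ is slc, and its log canonical divisor is ample on $X$. Its volume is computed from $V(t_1,t_2)$, since $\psi^*S$ is numerically a multiple of the fiber class. Its coefficients lie in a fixed finite set. KSBA boundedness \cite[Theorem 1.12]{Bir22} then bounds the pairs $(X,D,\psi^*S)$. The map $\psi$ is determined by the semiample divisor $K_X+cD$, and its degree on each component is bounded by the volume data, so $C$ and $\psi$ are bounded as well. In practice, the relevant $\bQ$-divisor is $K_X+(c+\epsilon^2)D+\epsilon\psi^*(\text{bounded ample})$, and $\deg\Lambda_{\Cho}$ is bounded because it is expressible through $V$ and $v$ via \eqref{eq:first-chern-class}.

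\textbf{Step 2: parametrize.} Embed all such $X$ by a uniform multiple of the ample divisor, which is possible by effective very ampleness on a bounded family. Then take a locally closed subscheme of a product of Hilbert schemes (for $X$, $D$, $C$) and a Hom-scheme for $\psi$. Next stratify, using \Cref{lemma_slct_positive_is_open} for the openness and constructibility of (Sing), openness of flatness, local stability, and ampleness for (Stab), and constructibility for (Qmap). The result is a finite-type base on which the coarse fibration is in $\cS_{\Phi,d}$. Over each stratum, \Cref{lem:uniquely-determined} shows that the stacky structure is determined up to finitely many choices. Concretely, I would construct $\cC_B$ by taking the root stacks at nodes prescribed by \cite[Theorem 3.1]{BV24} in families, and $\cX_B$ by extending the KSBA-stable family near the nodes via $\PCY$. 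After a finite cover of the base this yields the required family. Finally, replace $B$ by its reduced smooth stratification.

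\textbf{Main obstacle.} I expect the main obstacle to be performing the stacky enhancement uniformly in families. The root orders at the nodes must be constant on strata, and the local KSBA-stable family over the twisted node must glue with the coarse family. My first attempt would be to bound the root orders by the index $N$ of \Cref{def_index_num_data}. I would then take the finite-type Hom-stack of representable maps from the twisted curve to $\PCY$ over the nodal neighborhoods and stratify accordingly.
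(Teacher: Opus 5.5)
There are two genuine gaps.

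\textbf{Step 1 is circular.} You assert a uniform positive lower bound on semi-log canonical thresholds, depending only on $\Phi$, citing \Cref{lem:stable-lcy-implies-slc} and \Cref{lemma_slct_positive_is_open}. Those lemmas give such a bound only for a family over a base of finite type. A constant depending only on $\Phi$ therefore presupposes that $\cS_{\Phi,d}$ is already bounded, which is what you are proving; the paper takes such a constant only in \Cref{sec:projectivity}, for the universal family. Without a uniform $\epsilon$, two things fail for a fixed $\epsilon$:
\begin{enumerate}
\item the slc property of $(X,(c+\epsilon)D+\delta\psi^*S)$;
\item the ampleness of its log canonical divisor, which is $\epsilon D$ plus the pullback of an ample class on $C$, hence ample only for $\epsilon$ small depending on the object.
\end{enumerate}
So the coefficient set and the volume are not fixed, and KSBA boundedness does not apply. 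The same objection applies to your variant $K_X+(c+\epsilon^2)D+\epsilon\psi^*A$.

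The paper avoids choosing $\epsilon$ at all (\Cref{cor_cms_are_bounded}). Take $F=\psi^*(p_1+\cdots+p_{3k})$ for general points, three on each component. Then $(X,cD+F;D)$ is a stable minimal model. That notion only asks that $(X,cD+F+tD)$ be slc for \emph{some} $t>0$, and the volume polynomial $V(t,3k)$ is fixed. So the pair is a point of the finite-type stack of \Cref{thm:stable-minimal-model}, and uniformity comes out a posteriori from \cite[Theorem 1.12]{Bir22}. Relatedly, $\psi$ is not determined by $K_X+cD$: that divisor is only the pullback of a nef class, which is trivial for an elliptic K3 with $c=0$. The map $\psi$ must instead be recovered as the ample model of $K_X+cD+F$ (\Cref{first_lemma_boundedness}).

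\textbf{The stacky enhancement is not carried out.} You rightly flag it as the main obstacle, but the Hom-stack you propose needs a proper source, and the twisted curve is exactly what is being built. The missing construction runs as follows.
\begin{enumerate}
\item After stratifying and a finite cover, take a simultaneous normalization $C^\nu\to C$. There the preimage $\Delta$ of the nodes consists of disjoint sections in the \emph{smooth} locus.
\item Extend the map to $\PCY$ over a root stack $\cC^\nu$ along $\Delta$, using \cite[Proposition 1.6]{bejleri2025root} at the generic point, then spreading out and Noetherian induction.
\item Glue the two branches. First lift the involution of $\Delta$ to the gerbes $\wt\Delta$, via the finite-type $\operatorname{Isom}$ stack.
\item Base change along $I=\wt\Delta\times_{\PCY\times\PCY}\PCY$. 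This is finite over $\wt\Delta$ and parametrizes isomorphisms between the two limit families.
\item Form a geometric pushout \cite{alper2024artin}. Its formation commutes with base change (\Cref{lemma_pushout}), which gives flatness and twisted nodal fibers.
\item Identify the coarse space of the glued family with the original $(X,cD)\to C$. The paper does this by base changing to the $\operatorname{Isom}$ scheme of the two induced families of stable minimal models (\Cref{lemma_boundedness_constructing_stack}).
\end{enumerate}

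Finally, (Stab) quantifies over all $0<\epsilon\ll1$, so openness of ampleness alone is not enough. The paper uses the nefness of the Chow line bundle (\Cref{cor_degree_of_chow_minimized_by_pull_back_from_KSBA}) and of the (LCY) line bundle to show that the ampleness locus is independent of $\epsilon$.
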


\begin{proof}
Let $\phi\colon (\cX,c\cD)\to \cC$ be an element of \(\cS_{\Phi,d}\), and let
\((X,cD)\to C\) denote its coarse space. Note that the set of all possible curves \(C\) is bounded since the number of components and the genus are bounded.

\begin{lem}\label{cor_cms_are_bounded}
There exists a finite collection of Hilbert polynomials
\(\sigma_1(t),\ldots,\sigma_n(t)\) such that the following holds. For every pair
\((X,cD)\to C\) as above, there exists a divisor $F$ which is a sum of fibers of \(X\to C\) such that \((X,cD+F;D)\) determines a point of
\(\coprod_{i=1}^n \MM^{\Bir}_{\sigma_i(t)}\), where $\MM^{\Bir}_{\sigma_i(t)}$ is the moduli stack of stable minimal models with volume function $\sigma_i(t)$.
\end{lem}
\begin{remark}
In the definition of the moduli of stable minimal models (cf.
\Cref{defn:data-set} and \Cref{thm:stable-minimal-model}), there are other
numerical invariants that one needs to fix. However, in this context these
should be clear and are determined by $\Phi$; hence, in order not to
introduce too many notations, we are deliberately imprecise here and only
record the volume function $\sigma(t)$.
\end{remark}

\begin{proof}
By assumption, the number of irreducible components of \(C\) is bounded by \(d\). Let \(k\) be the number of irreducible components of \(C\). Choose general points \(p_1,\ldots,p_{3k}\in C\) such that each irreducible component of \(C\) contains three of them. Then
\[
\bigl(X,cD+f^*(p_1+\cdots+p_{3k});D\bigr)
\]
is a \emph{stable minimal model} (cf. \Cref{defn:stable-mimimal-model}). Moreover, the divisor \[K_X+(c+t)D+f^*(p_1+\cdots+p_{3k})\] has volume polynomial \(V(t,3k)\) for \(0<t\ll 1\). Hence the above pair determines a point of \(\MM^{\Bir}_{V(t,3k)}\), and therefore a point of
\(\coprod_{i=1}^{d}\MM^{\Bir}_{V(t,3i)}\).
\end{proof}

Fix a surjective morphism $B\longrightarrow \coprod_i \MM^{\Bir}_{\sigma_i(t)}$ from a smooth scheme $B$, and let \[(X,cD+F;D) \ \longrightarrow \ B\] be the pullback of the universal family over \(\coprod_i \MM^{\Bir}_{\sigma_i(t)}\). We will construct a locally closed partial decomposition of $B$ (cf.~\cite[Definition~10.83]{Kol23}), which, by abuse of notation, will still be denoted by $B$, such that the resulting family
\((X,cD+F;tD)\rightarrow B\)
satisfies the desired properties.

\smallskip

To simplify the statements, we use the following terminology. 
\begin{defn}
Let $\cP$ be a property of families over $B$ that is preserved by pullback. We say that $\cP$ is
\emph{constructible} if the set of points of $B$ satisfying $\cP$ is constructible. Equivalently, if there exists a morphism of finite type
\(i_{\cP}\colon B^{\cP}\to B\) from a (smooth) scheme such that:
\begin{enumerate}
\item for every $b\in B$, if the fiber of the family over the point $b$ satisfies $\cP$, then there is $b^{\cP}\in B^{\cP}$ mapping to $b$; and
\item the pullback of the family to \(B^{\cP}\) is such that every fiber satisfies \(\cP\).
\end{enumerate}
\end{defn}
In other terms, every fiber over $B$ satisfying $\cP$ has a representative over $B^{\cP}$, and every fiber over $B^{\cP}$ satisfies $\cP$.

\smallskip

The following lemmas show that all the desired properties are constructible. Throughout the argument, whenever a property \(\cP\) is shown to be constructible, we replace \(B\) and the family over it by the corresponding base \(B^{\cP}\) and the pullback family.

\begin{lemma}\label{first_lemma_boundedness}
   The following properties of the morphism
    \[
    (X,cD+F;D)\ \longrightarrow \ B
    \]
   are constructible:
   \begin{enumerate}
       \item[\textup{(1)}] there is a flat family of nodal curves $C\to B$ and a morphism $X\stackrel{\pi}{\to} C\to B$ such that for every $b\in B$, $C_b$ is the ample model of $(X_b,cD_b + F_b)$;
       \item[\textup{(2)}] $C$ has locally constant topological type, and
       \item[\textup{(3)}] every fiber of $\pi$ has pure-dimension $n-1$.
       
   \end{enumerate}
\end{lemma}
\begin{proof}{From \cite[Lemma 2.14]{Bir22}, we can stratify $B$ so that $K_{X/B}+cD+F$ is semiample over $B$. Consider then the semiample model $X\to C\to B$.}
By \cite[Theorem 3.19]{Kol23}, flatness is representable, so we can stratify $B$ so that $C\to B$ is flat.  Since for every $b\in B$, $X_b\to C_b$ is the ample model for $K_{X_b}+cD_b+F_b$, the map $C\to B$ is a family of nodal curves.
We can again stratify so that the topological type of $C\to B$ is constant on connected components. From upper-semicontinuity of the fiber dimension applied to $X\to C$, and from the fact that $C\to B$ is universally closed, we can further stratify so that the fibers of $X\to C$ have dimension $n-1$.    
\end{proof}

\begin{lemma} The following property of the family
\(
(X,cD+F;D)\to C\to B
\)
is constructible: 
 the fibers of
\[
\bigl(X,(c+\epsilon_0)D+F\bigr)\to C
\] over the smooth locus of \(C\to B\),
are semi-log canonical away from possibly finitely many points of each fiber of \(C\to B\).
\end{lemma}

\begin{proof}
By \cite[Proposition~III.9.7]{Har77}, the restriction $X|_{C^{\sm}}\longrightarrow C^{\sm}$ is flat. By \cite[Corollary~4.45]{Kol23}, the locus where the fibers fail to be semi-log canonical is closed. Let $Z\subseteq C^{\sm}$ denote this locus. Then the desired property is equivalent to requiring that the fibers of \(Z\to B\) are finite. Since \(\rho\colon Z\to B\) is of finite type, upper semicontinuity of fiber dimension implies that the locus
\[
\big\{\,z\in Z \mid \dim Z_{\rho(z)}\le 0\,\big\}
\]
is open, so its image in \(B\) is constructible. Hence the property is represented by a constructible subscheme of $B$.
\end{proof}

\begin{lemma}\label{lemma_line_bundle_G}
The following property of the family
  \((X,cD+F;D)\xrightarrow{\pi}  C \to  B
    \)
is represented by an open subscheme of \(B\): the divisor \(F\) is supported on a union of smooth fibers of \(X\to C\). In particular, after replacing \(B\) by this open subscheme, there exists a line bundle \(\cG\) on \(C\) such that
\[K_{X/B}+cD\ \sim_{\bQ}\ \pi^*\cG.\]
\end{lemma}

\begin{proof}
Let \(\pi(F)\subseteq C\) denote the scheme-theoretic image of \(F\) under \(\pi\). By upper semicontinuity of fiber dimension, after replacing \(B\) by a nonempty open subscheme, we may assume that the fibers of \(\pi(F)\to B\) are zero-dimensional. Since the family over \(B\) consists of stable minimal models, \(\pi(F)\) is disjoint from the nodal locus of \(C\to B\). It follows that \(\pi(F)\to B\) is finite \'etale. Hence, \'etale locally over \(B\), the scheme \(\pi(F)\) is a disjoint union of sections of \(C\to B\).

There exists a natural morphism
\[
F\to \pi^{-1}(\pi(F)),
\]
and the sublocus of $B$ over which this morphism is an isomorphism is open in \(B\). Indeed, its complement is the image under the proper morphism
\(\pi^{-1}(\pi(F))\to B\) of the support of the ideal sheaf and the Coker of
\[
\cO_{\pi^{-1}(\pi(F))}
\longrightarrow
\cO_F.
\] Therefore, the locus in \(B\) over which
\(F=\pi^{-1}(\pi(F))\) is open. In particular, over this open subscheme, \(F\) is a union of smooth fibers of \(\pi\). Let \(\cM\) be the line bundle on \(C\) satisfying
\(K_{X/B}+cD+F\sim_{\bQ}\pi^*\cM\), which exists from the first point of \Cref{first_lemma_boundedness}.
Since \(F=\pi^{-1}(\pi(F))\), we have
\[
K_{X/B}+cD+\pi^{-1}(\pi(F))
\ \sim_{\bQ}\ 
\pi^*\cM,
\]
and therefore
\[
K_{X/B}+cD
\ \sim_{\bQ}\ 
\pi^*\bigl(\cM(-\pi(F))\bigr).
\]
Setting \(\cG:=\cM(-\pi(F))\) completes the proof.
\end{proof}

As a consequence, both \(K_{X/B}\) and \(cD\) are \(\bQ\)-Cartier. Moreover, away from finitely many points on each fiber of \(C\to B\), the fibers of
\((X,(c+\epsilon_0)D)\to C\) are KSBA-stable. After replacing \(B\) by a union of connected components, we may assume that for every \(b\in B\) and every general point \(z\in C_b\),
\[
v(t)=\vol\bigl(K_{X_z}+cD_z+tD_z\bigr).
\]
In particular, the general fibers of \((X,cD;D)\to C\) determine points of
\(\PCY_{v(t)}\).

\begin{lemma}
The following property of the family
\(
(X,cD) \to C\to  B
\)
is constructible: for every \(b\in B\), the semi-log canonical thresholds of
\((X_b,cD_b)\) with respect to the fibers of
\(X_b\to C_b\) over smooth points of \(C_b\) are positive.
\end{lemma}
\begin{proof}
This follows from \Cref{lemma_slct_positive_is_open}.
\end{proof}

The following result is well-known.

\begin{lemma}
There exists a locally closed stratification of $B$ followed by a finite cover such that the pullback family of \(C\to B\) admits a simultaneous normalization
\[
C^\nu\to C,
\]
and the preimage of the double locus is given by pairwise disjoint sections of \(C^\nu\to B\).
\end{lemma}

Denote by \((C^\nu,\Delta)\to C\to B\) the simultaneous normalization of \(C\to B\), where \(\Delta\) is the preimage of the nodal locus. Consider the pullback of the family \((X,(c+\epsilon_0)D)\to C\) to \(C^\nu\). Then there exists a dense open subset \(U\subseteq C^\nu\) such that the induced family over \(U\) is KSBA-stable, and hence determines a natural morphism
\[
U \ \longrightarrow \ \PCY_{v(t)}.
\]
We assume that \(U\) is maximal with this property.

\begin{lemma}
After passing to a surjective morphism \(B'\to B\) from a smooth scheme \(B'\) of finite type, there exists a root stack
\(\cC^\nu\to C^\nu\) such that the morphism
\(U\to \textup{\PCY}_{v(t)}\) extends across \(\Delta\).
\end{lemma}

\begin{proof}
We argue by Noetherian induction on \(B\). Since we only need to extend the morphism across \(\Delta\), we may replace \(C^\nu\) by an open neighborhood of \(\Delta\). Working one connected component at a time, we may further assume that \(B\) is connected and smooth, and let \(\eta\in B\) be its generic point.

By \cite[Proposition~1.6]{bejleri2025root}, the morphism
\(U_\eta\to \PCY_{v(t)}\) extends to a morphism from a root stack
\(\cC^\nu_\eta\to C^\nu_\eta\). Since both the construction of root stacks and the stack \(\PCY_{v(t)}\) are of finite type, this extension spreads out over an open neighborhood \(V\subseteq B\) of \(\eta\). Applying Noetherian induction to the complement \(B\setminus V\), we obtain the desired extension after replacing \(B\) by another smooth scheme of finite type.
\end{proof}
Let \(\widetilde{\Delta}\subseteq \cC^\nu\) denote the preimage of \(\Delta\), which is locally a gerbe over $B$. We denote by \(\widetilde{\cU}\subseteq \cC^\nu\) the union of the preimage of \(U\) and \(\widetilde{\Delta}\), which is an open substack of $\cC^{\nu}$. 

\begin{lemma}\label{lemma_boundedness_constructing_stack}
Up to replacing $B$ by another smooth scheme of finite type $B'$ equipped with a morphism $B'\to B$, there exists a family of twisted curves $\cC\to B$ together with a flat and proper morphism
\[
(\cX,c\cD)\ \longrightarrow \ \cC\ \longrightarrow \  B,
\]
whose coarse moduli space is
\[
(X,cD)\ \longrightarrow \  C\ \longrightarrow \  B,
\]
such that there exists an open neighborhood $\VV$ of the nodal locus $\cN\subseteq \cC$ for which the restricted family
\[
\big(\cX,(c+\epsilon_0)\cD\big)|_{\VV}\ \longrightarrow \   \VV
\]
is KSBA-stable.
\end{lemma}

\begin{proof}
Since $\Delta$ is the preimage of the double locus under the normalization $C^\nu\to C$, there is an involution on $\Delta$, which we denote by $\tau$. Consider now
\[\operatorname{Isom}_B(\widetilde{\Delta},\widetilde{\Delta}),\]
the algebraic stack parametrizing automorphisms of $\widetilde{\Delta}$ over $B$. This is an open substack of the Hom-stack $\operatorname{Hom}_B(\widetilde{\Delta},\widetilde{\Delta})$ of \cite{HR19}. Any automorphism of $\widetilde{\Delta}$ induces an automorphism of its coarse moduli space $\Delta$, so there is a morphism
\[
\operatorname{Isom}_B(\widetilde{\Delta},\widetilde{\Delta})\to \operatorname{Isom}_B({\Delta},{\Delta}).
\]
Now, the map $\tau$ gives a section $B\to \operatorname{Isom}_B({\Delta},{\Delta})$, and one can consider the following fiber diagram
\[
\xymatrix{B'\ar[r]\ar[d] & B\ar[d]\\\operatorname{Isom}_B(\widetilde{\Delta},\widetilde{\Delta})\ar[r] & \operatorname{Isom}_B({\Delta},{\Delta}).}
\]
By \cite[Theorem C.2]{AOV} together with the universal property of coarse moduli spaces, the bottom arrow is of finite type. We may replace $B'$ with a resolution of singularities; over $B'$ we then have an automorphism, which we denote by $\widetilde{\tau}$, of (the pullback of) $\widetilde{\Delta}$ inducing $\tau$ on coarse moduli spaces.

Consider the following fiber product
\[
\xymatrix{
I\ar[rr] \ar[d]  && \PCY_{v(t)}\ar[d]^{\operatorname{diag}}\\ \widetilde{\Delta}\ar[rr]^-{(F,F\circ\widetilde{\tau})}&& \PCY_{v(t)}\times \PCY_{v(t)}
}
\]
where $F\colon \widetilde{\Delta}\hookrightarrow \widetilde{\cU}\to \PCY_{v(t)}$ is the natural map. Since $\PCY_{v(t)}$ is a proper Deligne--Mumford stack, its diagonal morphism is finite, and hence $I\rightarrow\wt{\Delta}$ is finite. Moreover, $I$ parametrizes isomorphisms between the two families over $\widetilde{\Delta}$ corresponding to $F$ and $F\circ \tau$.

After replacing $B$ with the resolution of singularities of an \'{e}tale surjective cover of $I$, we may assume that there is an isomorphism over $B$ between the two maps $F$ and $F\circ\tau$. We can then glue $\cC^\nu$ to itself via $\tau\colon \widetilde{\Delta}\to \widetilde{\Delta}$, using \cite[Theorem 1.8]{alper2024artin}, to obtain a family of nodal stacky curves $\cC\to B$. More explicitly, if we write $\widetilde{\Delta}=\widetilde{\Delta}_1 \sqcup \widetilde{\Delta}_2$ with $\tau$ sending $\widetilde{\Delta}_1$ to $\widetilde{\Delta}_2$, the family of curves is the pushout of the diagram
\[\begin{tikzcd}[ampersand replacement=\&]
	{\widetilde{\Delta}_1\sqcup \widetilde{\Delta}_2} \&\& {\cC^\nu} \\
	{\widetilde{\Delta}_1} \&\& {\cC}
	\arrow[hook, from=1-1, to=1-3]
	\arrow["{(\Id,\widetilde{\tau} )}"', from=1-1, to=2-1]
	\arrow[from=1-3, to=2-3]
	\arrow[from=2-1, to=2-3]
\end{tikzcd}\]
Since $\widetilde{\Delta}_1\sqcup \widetilde{\Delta}_2\to B$ is flat over $B$ and the pushout is a \emph{geometric pushout} (cf. \cite[Definition 4.1 and Theorem 4.2]{alper2024artin}), it follows from \Cref{lemma_pushout} that its formation commutes with base change with respect to $B$. Hence we may verify that \(\cC\to B\) has nodal twisted curves as
geometric fibers by taking \(\TT=\overline{k(b)}\) for a point \(b\in B\), in which case the claim is trivial.
Moreover, since $B$ is smooth, \(\cC\to B\) is flat by miracle flatness
\cite[\href{https://stacks.math.columbia.edu/tag/00R4}{Tag 00R4}]{stacks-project}.

By construction, the morphism \((\cX,c\cD)\to \cC\) agrees with
\((X,cD)\to C\) away from the singular locus of \(C\). Consider $(X',cD')\to B$, the coarse moduli space of $(\cX,c\cD)$. By \Cref{lemma_line_bundle_G}, $F$ consists of fibers of $X\to C$ over certain smooth points of $C$, and we let $F'$ denote the fibers of $X'\to C$ over the same points. Then the morphism $(X',(c+\epsilon)D'+F')\to B$ is a family of stable minimal models; together with $(X,(c+\epsilon)D+F)\to B$, this gives two morphisms $\xi,\xi'\colon B\to \MM^{\Bir}_{\sigma(t)}$ to the moduli of stable minimal models. Consider the Isom-scheme $\operatorname{Isom}_B(\xi,\xi')$. By definition, the two families are isomorphic when pulled back to $\operatorname{Isom}_B(\xi,\xi')$ along $\operatorname{Isom}_B(\xi,\xi')\rightarrow B$. Since taking the coarse moduli space commutes with base change, after replacing $B$ with a resolution of $\operatorname{Isom}_B(\xi,\xi')$, we may assume that the rational map
\((\cX,c\cD)\dashrightarrow (X,cD)\) is everywhere defined and coincides
with the coarse moduli space morphism.
\end{proof}

The following lemma is a standard result on geometric pushouts (cf. \cite[Definition 4.1 and Theorem 4.2]{alper2024artin}) that was used in the proof of \Cref{lemma_boundedness_constructing_stack}; we record it here due to the lack of an available reference.

\begin{lemma}\label{lemma_pushout}
Let $X$, $Y$, $Z$ be quasi-separated algebraic stacks, flat over a base scheme $B$, and let
\[
\xymatrix{X\ar[r]^\iota \ar[d]_\rho & Y \ar[d] & \\ Z\ar[r] & P\ar[r] &B}
\]
be a geometric pushout diagram in which $\iota$ is a closed embedding and $\rho$ is finite. Then $P$ is also flat over $B$, and for every $T\to B$, the fiber product $P_T$ is the pushout of $X_T$, $Y_T$, and $Z_T$.
\end{lemma}
\begin{proof}
By \cite[Lemma~4.3]{alper2024artin}, the formation of $P$
commutes with flat base change. We first show that the formation of $P$ commutes with arbitrary base change. After replacing $B$ by an affine open cover, we may assume that
$B=\Spec \BB$. Given a morphism $T\to B$ from an affine scheme
$T=\Spec \TT$, consider the pushout diagram
\begin{equation}\label{eq:pushout}
\xymatrix{
X_T
\ar[r]\ar[d]
&
Y_T\ar[d]
\\
Z_T\ar[r]
&
P'.
}\end{equation}
By the universal property of pushouts, there is a natural morphism
$P'\to P$. For any flat morphism $\Spec \AAA\to P$, by \cite[Lemma 4.3]{alper2024artin}, the fiber products
\[
X\times_{P}\Spec\AAA,\qquad
Y\times_{P}\Spec\AAA,\qquad
Z\times_{P}\Spec\AAA
\]
are affine, which we denote by
$\Spec \AAA_1$, $\Spec \AAA_2$, and $\Spec \AAA_3$, respectively. By \cite[Theorem~4.2]{alper2024artin},
$P'\times_{P}\Spec(\AAA\otimes_{\BB}\TT)$
is likewise affine. Since the diagram \eqref{eq:pushout} is a geometric pushout, we have
$P'\times_{P}\Spec(\AAA\otimes_{\BB}\TT)=\Spec \PP'$ for some $\PP'$ fitting into an exact sequence
\[
0 \ \longrightarrow \  \PP' \ \longrightarrow \ 
(\AAA_1\otimes_{\BB}\TT)\times
(\AAA_2\otimes_{\BB}\TT)
 \ \longrightarrow \ 
\AAA_3\otimes_{\BB}\TT
 \ \longrightarrow \  0.
\]
Similarly, since the diagram defining $P$ is itself a geometric pushout, we have
\begin{equation}\label{eq_lemma_pushouts}
0 \ \longrightarrow \  \AAA \ \longrightarrow \ \AAA_1\times 
\AAA_2 \ \longrightarrow \ \AAA_3 \ \longrightarrow \  0.
\end{equation}
Since $\AAA_3$ is flat over $\BB$, we obtain
$\PP'\simeq \AAA\otimes_{\BB}\TT$.
Flatness of $P$ over $B$ then follows from the long exact sequence for $\mathrm{Tor}$ applied to \Cref{eq_lemma_pushouts}.
\end{proof}

In particular, there exists a \emph{flat} family
\(\cX\to \cC\) together with a divisor \(\cD\subseteq \cX\) such that, over the nodal locus of \(\cC\), it agrees with the family constructed above, while over the smooth locus it agrees with \((X,cD)\to C\).

\begin{lemma}\label{lemma_for_boundedness_G_nef}
    After replacing $B$ by an open subset, we may assume that the pairs $(\cX,c\cD)\to \cC$ satisfy $\cO_{\cX}\big(m(K_{\cX/B}+c\cD)\big) = \pi^*\cL$ with $\cL$ nef, for every sufficiently divisible $m$, and that $\cD$ is ample on $\cX$ over $\cC$.
\end{lemma}
\begin{proof}
    Let $\cG$ be the line bundle on $C$ such that $K_{X/B}+cD \sim_\bQ \pi^*\cG$ as in \Cref{lemma_line_bundle_G}, and $\cL$ be the pullback of $\cG$ to $\cC$. The topological type of the family $C\to B$ is constant on connected components of $B$, so there are finitely many connected components of $B$ on which $\cG$ is nef on the fibers of $C\to B$; we replace $B$ by this union of connected components. Since being ample is an open condition and $\cC\to B$ is proper, after further shrinking $B$ we may also assume that $\cD$ is ample on $\cX$ over $\cC$.
\end{proof}
We are now ready to finish the proof. We constructed a smooth scheme $B$ with flat morphisms
\[
(\cX,c\cD)\to \cC \to B
\]
with $\cC$ a family of twisted curves, $\cD$ a $\bQ$-Cartier divisor ample over $\cC$, and satisfying conditions (LCY), (Sing) and (Qmap). Consider then $\lambda_{\Chow}$ the Chow line bundle for $\cX\to \cC$ with respect to the polarization $K_{\cX/\cC}+(c+\epsilon_0)\cD$. It follows from \Cref{cor_degree_of_chow_minimized_by_pull_back_from_KSBA} that $\lambda_{\Chow}$ is nef. In particular, since the line bundle $\cG$ is nef, the locus of points $b\in B$ where $\cG\otimes \lambda_{\Chow}^{m}$ is ample does not depend on $m$ as long as $m>0$. Therefore, also the locus of points $b\in B$ where
\[
K_{\cX/B} + (c +\epsilon^2)\cD + \pi^*\lambda_{\Chow}^{\otimes \epsilon}
\]
descends to an ample line bundle on the coarse space of $\cX_b$ does not depend on $\epsilon$, as long as $0<\epsilon \ll 1$. Consider then $M>0$ divisible enough such that $\cO_{\cX}\big(M(K_{\cX/B} + c\cD +\epsilon^2\cD)\big)\otimes \pi^*\lambda_{\Chow}^{\otimes \epsilon M}$
descends to a line bundle on $X$. The locus where (Stab) is satisfied is open since being ample is an open condition \cite[\href{https://stacks.math.columbia.edu/tag/0D2S}{Tag 0D2S}]{stacks-project}.
\end{proof}
The following is an immediate consequence of \Cref{thm_weak_boundedness} with \(d=1\).

\begin{cor}\label{cor_interior_is_bounded}
The set of non-degenerate stable log Calabi--Yau fibrations
\(\pi:(X,cD)\to C\) with numerical invariant \(\Phi\) is bounded.
\end{cor}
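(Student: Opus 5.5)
The plan is to apply \Cref{thm_weak_boundedness} with $d=1$ and check that every non-degenerate fibration lies in $\cS_{\Phi,1}$, so that it appears as a fiber of the bounded family constructed there.

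Let $\pi\colon (X,cD)\to C$ be a non-degenerate stable log Calabi--Yau fibration with numerical invariant $\Phi$. By \Cref{defn:non-degenerate-type}, $X$ is normal and $D$ has no vertical components. Connectedness of the fibers then makes $C$ a connected normal curve, so $C$ is a smooth irreducible projective curve of genus $g$ with a single irreducible component. Moreover, a twisted curve is an isomorphism to its coarse space away from the nodes. Since $C$ has no nodes, the stacky structure is trivial, and the fibration coincides with its own stacky model. Hence $\pi$ is an element of $\cS_{\Phi,1}$.

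By \Cref{thm_weak_boundedness}\textup{(1)} with $d=1$, there is a smooth scheme $B$ of finite type and a family $(\cX_B,c\cD_B)\to\cC_B\to B$ such that every element of $\cS_{\Phi,1}$, and in particular our $\pi$, is isomorphic to some fiber over $B$. This gives the boundedness statement.

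If boundedness is meant in the stronger sense that the non-degenerate members form a family, one further step is needed. One restricts to the locus of $b\in B$ where $\cX_b$ is normal and $\cD_b$ has no vertical components. Normality of fibers in a flat proper family is an open condition. The vertical-component condition is constructible, by arguing as in \Cref{lemma_line_bundle_G} with upper semicontinuity of fiber dimension for $\pi(\cD_B)$. Passing to this locus via a stratification, as in the proof of \Cref{thm_weak_boundedness}, yields a bounded family whose fibers are exactly the non-degenerate ones.

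The only real point is this constructibility, and it is routine. The substance of the corollary is already contained in \Cref{thm_weak_boundedness}.
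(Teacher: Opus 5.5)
Your argument is correct and is exactly the paper's: non-degeneracy forces $\cC$ to be a smooth, hence irreducible and non-stacky, curve, so every such fibration lies in $\cS_{\Phi,1}$, and \Cref{thm_weak_boundedness}(1) with $d=1$ provides the bounding family. Your last paragraph, which cuts out the non-degenerate locus, is a harmless extra refinement that the paper does not need: boundedness only requires that every member appear as a fiber.
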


\begin{proof}[Proof of \Cref{thm_bounding_fam}]
By \Cref{cor_interior_is_bounded}, non-degenerate Calabi--Yau fibrations with numerical invariant $\Phi$ are bounded; let
\[
(X_{B^\circ},cD_{B^\circ})\ \stackrel{\pi^\circ}{\longrightarrow} \ C_{B^\circ} \ \longrightarrow \ B^\circ
\]
be such a family, and let $m>0$ be an integer such that the $m$-th multiple $\lambda_{\Chow,\pi^\circ}^{\otimes m}$ of the Chow ($\bQ$-)line bundle associated to $\pi^\circ:\big(X_{B^\circ},(c+\epsilon_0)D_{B^\circ}\big)\to C_{B^\circ}$ is a line bundle. By \Cref{cor_degree_of_chow_minimized_by_pull_back_from_KSBA}, for every non-degenerate stable log Calabi--Yau fibration $(X,cD)\xrightarrow{\pi} C$ with numerical invariant $\Phi$ and for any sufficiently divisible $m>0$, the line bundle \[(\lambda_{\Chow,\pi}\otimes f_\pi^{*}L_{\Chow}^{-1})^{\otimes m}\] is effective, where $f_{\pi}:C\to \bf{PCY}$ is the morphism induced by $\pi$ and $L_{\Chow}$ is the Chow line bundle on $\bf{PCY}$. Fix such an $m$, and let $\delta$ be an upper bound for the degree of the restriction of $m\lambda_{\Chow,\pi^\circ}$ to the fibers of $C_{B^\circ}\to B^\circ$. Then the degree of the corresponding map $C\to \textbf{PCY}$ is likewise bounded by $\delta$.

By \Cref{cor:number-components-bounded}, for any stable log Calabi--Yau fibration $(X_0,cD_0)\to C_0$ arising as a limit of $(X,cD)\to C$, the curve $C_0$ has a bounded number of irreducible components. Then, by \Cref{thm_weak_boundedness}, all stable log Calabi--Yau fibrations of non-degenerate type belong to a bounded family.
\end{proof}

\section{Representability by a Deligne--Mumford stack}\label{sec:Representability by a Deligne--Mumford stack}In this section, we construct an algebraic stack which, over reduced bases, represents the functor of families of stable log Calabi--Yau fibrations of non-degenerate type. We will also prove that the resulting algebraic stack is proper and Deligne--Mumford; the following is our main result.
\begin{thm}\label{thm:representability}
For every admissible quadruple $\Phi$, there exists a reduced proper Deligne--Mumford stack $\overline{\MM}_{\Phi}$ whose $S$-points, for every reduced scheme $S$, parametrize families
\[(\cX_S,c\cD_S)\ \longrightarrow \ \cC_S\ \longrightarrow \ S\] of stable log Calabi--Yau fibrations of non-degenerate type with numerical invariant $\Phi$.
\end{thm}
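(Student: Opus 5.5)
The plan is to realize $\overline{\MM}_{\Phi}$ as a locally closed substack of an algebraic stack parametrizing triples $(\cX,\cD)\to\cC$. Once algebraicity is in hand, the Deligne--Mumford property, finite type and properness will follow from the results already proved. Concretely, I start from the stack $\mathfrak{M}^{\mathrm{tw}}_g$ of twisted nodal curves of genus $g$, which is algebraic and locally of finite type by Olsson and Abramovich--Vistoli. Over its universal curve $\cC^{\univ}$ I consider the stack of flat projective families $\cX\to\cC$ together with a relative closed substack $\cD$ that is flat over the base with $S_1$ fibers. Because $\cX\to\cC$ is representable and projective, this is a relative Hilbert-type (or Hom-type, following \cite{HR19}) stack. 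I realize it by fixing a relatively very ample polarization coming from $\cD$ and $K$ as in (Stab), embedding into a projective bundle over $\cC$, and taking the Hom-stack from the universal fibration into $\Hilb$ over $\mathfrak{M}^{\mathrm{tw}}_g$. The result is an algebraic stack $\cH$, locally of finite type, whose objects over a scheme $S$ are flat families $(\cX_S,\cD_S)\to\cC_S\to S$. This is the step where one keeps track of the morphism to $\cC$, and not only of $(\cX,\cD)$.

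Next I cut out the conditions one at a time, working over reduced bases and using the constructibility framework from the proof of \Cref{thm_weak_boundedness}.
\begin{itemize}
\item Demi-normality of fibers and local stability of $(\cX,c\cD)\to S$ are locally closed conditions, by \cite[Chapter~4]{Kol23} applied \'etale locally, which is allowed since local stability is \'etale local.
\item (LCY) is locally closed: it asks that $\cO(m(K+c\cD))$ be fiberwise trivial on $\cX_s\to\cC_s$, which is a condition on a relative Picard-type sheaf, and one then pushes forward.
\item (Qmap) is open along the nodes, since KSBA-stability of fibers over the nodal locus is open.
\item (Sing) is open by \Cref{lemma_slct_positive_is_open}.
\item (Stab) is open because ampleness is open; the $\epsilon$-independence argument at the end of the proof of \Cref{thm_weak_boundedness} gives this, with the Chow line bundle formed functorially via Knudsen--Mumford.
\item Fixing $\Phi$ selects a union of connected components.
\item Being of non-degenerate type means taking the closure of the non-degenerate locus. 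I then pass to the reduced induced structure, which is why the statement is only about reduced $S$.
\end{itemize}
Call the result $\overline{\MM}_\Phi$. Boundedness (\Cref{thm_bounding_fam}) shows that it is of finite type: the family over $B$ maps surjectively onto it.

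For the Deligne--Mumford property I show that automorphism groups of geometric points are finite and reduced; in characteristic $0$ finiteness suffices. An automorphism of $(\cX,c\cD)\to\cC$ preserves the ample $\bQ$-line bundle $K_X+(c+\epsilon^2)D+\epsilon f^*\Lambda_{\Cho}$ from (Stab). The automorphism group is therefore a linear algebraic group acting on a polarized slc pair of log general type, namely $(X,(c+\epsilon^2)D)$ with the extra fiber boundary coming from $\Lambda_{\Cho}$. Alternatively, as in \Cref{lem:uniquely-determined}, the group maps with finite kernel (the kernel being controlled by automorphisms of the gerbes and of the KSBA fibers over nodes) to $\Aut$ of the coarse data. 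Finiteness of the latter follows from the finiteness of automorphisms of KSBA-stable pairs, applied to $(X,(c+\epsilon)D+f^*S)$ for a suitable invariant divisor $S$ of smooth fibers, exactly as in the proof of \Cref{thm:extension-of-automorphism}.

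Separatedness and universal closedness come from \Cref{thm:separatedness} and \Cref{thm:valuative-criterion}. Generic points of the reduced non-degenerate-type stack are non-degenerate, so it suffices to test the valuative criteria on DVRs whose generic point maps into the non-degenerate locus. A limit of non-degenerate-type objects is again of non-degenerate type because I took the closure. This gives properness.

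I expect the main obstacle to be the first step: showing that the stack of flat representable projective families over a varying twisted curve, with a flat divisor and with the polarization needed for (Stab), is algebraic. The difficulty is that the polarization depends on $\Lambda_{\Cho}$, so I would first try to use only the relative polarization $\cD$ over $\cC$ together with the Hom-stack of \cite{HR19}.
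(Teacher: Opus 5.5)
Your construction of the stack is essentially the paper's. You take a Hom-stack from Olsson's universal twisted curve into a Hilbert-type target and a Hilbert stack for $\cD$, then impose the defining conditions as open or locally closed conditions. You take the closure of the non-degenerate locus with its reduced structure, use boundedness for finite type, and test the valuative criteria on DVRs with non-degenerate generic fiber. One small correction: the target should be $[\Hilb/\PGL_{r+1}]$ rather than $\Hilb$, because the embeddings of the fibers are only defined up to projective linear change. Your explicit use of \Cref{lemma_slct_positive_is_open} to impose (Sing) is welcome; the paper's list of steps does not spell it out.

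The genuine difference is the Deligne--Mumford step.

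\textbf{The paper's route.} It maps automorphisms of $(\cX,c\cD)\to\cC$ to automorphisms of the twisted stable map $(\cC^{\sta};\sigma_1,\ldots,\sigma_m)\to\PCY$ of \Cref{lemma_tsm_family_is_stable}, allowing the markings to be permuted. This map is injective: an automorphism trivial on the stacky KSBA model is trivial away from finitely many fibers, hence trivial. Finiteness then follows from that of twisted stable maps, and no auxiliary divisor is needed.

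\textbf{Your route.} Going through the coarse fibration and finiteness for KSBA-stable pairs also works, but not as written. In \Cref{thm:extension-of-automorphism} the divisor $S$ only has to be invariant under a single automorphism. Here $S$ must be invariant under the whole group before you know that group is finite. A positive-dimensional group, for instance $\mathbb{G}_m$ acting on a rational component, has too few invariant finite sets to supply the needed positivity.

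\textbf{The fix.} Choose $S$ canonically as $S=\rho^*H+E$, where:
\begin{itemize}
\item $\rho\colon C\dashrightarrow\mathbf{PCY}$ is the classifying map;
\item $H$ is a general member of $|\mathfrak{L}_{\Chow}^{\otimes N}|$;
\item $E=\sum a_ip_i$ is the divisor of \Cref{cor_degree_of_chow_minimized_by_pull_back_from_KSBA}.
\end{itemize}
Both summands are invariant: $\rho$ is constant on isomorphism classes of fibers, and each $a_i$ depends only on the germ of the family at $p_i$. Moreover $\cO_C(S)\simeq\Lambda_{\Cho}^{\otimes N}$, as in \Cref{lemma_lambda_Chow_is_equivalent_to_an_effective_divisor}. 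Then $(X,(c+\epsilon_1)D+\tfrac{\epsilon_2}{N}f^*S)$ is KSBA-stable, as in the proof of \Cref{lemma_nefness_of_vb}. The coefficient on $f^*S$ must be small, since coefficient one fails over the points of $E$. The automorphism group of $(X,cD)\to C$ then embeds into the finite automorphism group of this pair.

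\textbf{Your first sketch.} This should be dropped. The pair $(X,(c+\epsilon^2)D)$ need not be of log general type; an elliptic K3 with a section and $c=0$ is a counterexample. Also, $\Lambda_{\Cho}$ is only a line bundle class, so on its own it gives no invariant boundary.

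\textbf{Comparison.} The paper's argument is shorter and reuses \Cref{lemma_tsm_family_is_stable}. Yours stays on coarse spaces and needs only finiteness for KSBA-stable pairs, at the cost of the canonical choice of $S$.
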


\begin{proof}
Once the existence of the moduli stack $\ove{\MM}_{\Phi}$ has been established, its properness follows immediately from \Cref{Valuative criterion} and \Cref{sec:Boundedness of stable log Calabi--Yau fibrations}. The construction of the stack proceeds in two stages:
\begin{enumerate}
    \item[(1)] Construct finite-type ambient stacks, parametrizing successively: fibers; fibrations; and fibrations equipped with a divisor.
    \item[(2)] Obtain the moduli stack by successively imposing the defining conditions.
\end{enumerate}
\smallskip

Let
\[
(\cX_B,c\cD_B)\to \cC_B\to B \ \ \ \ \textup{and} \  \ \ \ (X_B,cD_B)\to C_B\to B
\]
be the family constructed in \Cref{thm_bounding_fam} and its relative coarse space family respectively. Fix an $\epsilon$-coefficient \(0<\epsilon_0<1\) for $\Phi$ (cf. \Cref{defn:epsilon-coefficient}), and \(d\in \bN\) such that

\begin{itemize}
    \item the divisors \(dK_{\cX_B/\cC_B}\), \(d\cD_B\), \(d(c+\epsilon_0)\cD_B\), \(dK_{X_B/C_B}\), \(dD_B\), \(d(c+\epsilon_0)D_B\) are Cartier; and
    
    \item the line bundle
    \[
    \cL_B:=d\bigl(K_{\cX_B/\cC_B}+(c+\epsilon_0)\cD_B\bigr)
    \]
    is relatively very ample with vanishing higher direct images over \(\cC_B\).
\end{itemize} It follows that the fibers of \((\cX_B,\cL_B)\to \cC_B\) are parametrized by subloci of finitely many Hilbert schemes \(\Hilb_i\), \(i=1,\ldots,m\). Since the argument is identical for each \(\Hilb_i\), we suppress the index \(i\) and write simply \(\Hilb\). Let
\[\begin{tikzcd}[ampersand replacement=\&] {\sX_{\Hilb}} \&\& {\bP^{r}\times \Hilb} \\ \& \Hilb \arrow[hook, from=1-1, to=1-3] \arrow[from=1-1, to=2-2] \arrow[from=1-3, to=2-2] \end{tikzcd}.\]
be the universal family over \(\Hilb\).

\begin{Step_dm} Construct the Hilbert scheme of fibers.
\end{Step_dm}

\begin{lemma}\label{step_1_dm__get_lc_stable_locus_in_H}
There exists a $\PGL_{r+1}$-equivariant closed subscheme of \(\Hilb\), still denoted by \(\Hilb\), together with a $\PGL_{r+1}$-equivariant dense open subscheme \(\Hilb^\circ\subseteq \Hilb\), such that
\begin{enumerate}
\item[\textup{(1)}] the universal family
$\sX_{\Hilb^\circ}\to \Hilb^\circ$ is locally stable;
\item[\textup{(2)}] every fiber of
\((\cX_B,\cL_B)\to \cC_B\) endowed with the fixed embedding into \(\bP^r\) given by the complete linear system of $\cL_B$ is represented by a point of \(\Hilb\);
\item[\textup{(3)}] the subset of \(\Hilb\) corresponding to the generic fibers of
\((\cX_B,\cL_B)\to \cC_B\)
is dense in \(\Hilb\).
\end{enumerate}
\end{lemma}

\begin{proof}
By \cite[Theorem 3.2]{Kol23}, there exists a functorial locally closed partial decomposition (cf. \cite[Definition 10.83]{Kol23})
\[
\iota\colon Z \ \longrightarrow \ \Hilb
\]
whose points parametrize locally stable families. Since the decomposition is functorial, the natural \(\PGL_{r+1}\)-action on \(\Hilb\) preserves \(Z\). The family $(\cX_B,\cL_B)\rightarrow\cC_B$ gives rise to a constructible subset of \(\Hilb\), and all generic fibers occurring in this family belong to finitely many connected components of \(Z\). Replacing \(Z\) by the \(\PGL_{r+1}\)-orbit of the union of these connected components, we may assume that every fiber arising from \Cref{thm_bounding_fam} is represented by a point of \(Z\).

Finally, replace \(\Hilb\) with the scheme-theoretic image in \(\Hilb\) of each of the chosen connected components of \(Z\). Since \(\iota\) is a locally closed embedding, it follows from \cite[\href{https://stacks.math.columbia.edu/tag/01RG}{Tag 01RG}]{stacks-project} that, after this replacement, \(\Hilb\) satisfies all the desired properties.
\end{proof}

\smallskip

\begin{Step_dm} Construct the \emph{Hom} stack of fibrations. \end{Step_dm} \begin{lem}\label{step_2_consider_hom_stack} There exists an algebraic stack locally of finite presentation over $\bC$, parametrizing all flat fibrations $\cX \to \cC$ over a twisted curve of genus $g$ with fibers in $\Hilb$ such that, away from possibly finitely many smooth points of $\cC$, the morphism $\cX \to \cC$ is locally stable. \end{lem}

\begin{proof}
Since the embedding of the fibers of $\cX_B\to \cC_B$ in $\bP^r$ is via a complete linear series, there is an injective set-theoretic map from the set of isomorphic classes of all fibers of $\cX_B\to \cC_B$ to the set of $\PGL_{r+1}$-orbits in $\Hilb$. Consider the quotient stack
\[
\HH := [\Hilb/\PGL_{r+1}].
\]
An $S$-point of $\HH$ consists of a diagram
\[
\begin{tikzcd}[ampersand replacement=\&]
	\sX_S \&\& \sP_S \\
	\& S
	\arrow[hook, from=1-1, to=1-3]
	\arrow[from=1-1, to=2-2]
	\arrow[from=1-3, to=2-2]
\end{tikzcd}
\]
where $\sP_S\to S$ is a family of Brauer--Severi varieties of dimension $r$, the morphism
$\sX_S\hookrightarrow \sP_S$ is a closed immersion whose geometric fibers are represented by points of $\Hilb$, and $\sX_S\to S$ is flat. The open subscheme $\Hilb^\circ\subseteq \Hilb$ constructed in \Cref{step_1_dm__get_lc_stable_locus_in_H} induces an open dense substack
\[
\HH^\circ \ \coloneqq \ [\Hilb^\circ/\PGL_{r+1}] \ \subseteq\  \HH,
\]
which parametrizes locally stable families.

Let $\MM_g^{\tw}$ denote the moduli stack of twisted curves of genus $g$ constructed in \cite{olsson2007log}, and let
$\cC_g^{\tw}\to \MM_g^{\tw}$ be the universal twisted curve. By
\cite[Theorem~1.10 and Corollary~1.12]{olsson2007log},
$\MM_g^{\tw}$ is a smooth Artin stack locally of finite type over $\bC$. Consider the Hom-stack
\[
\underline{\Hom}  \ \coloneqq \ 
\underline{\Hom}_{\MM_g^{\tw}}
\bigl(\cC_g^{\tw},\, \HH\times \MM_g^{\tw}\bigr),
\]
an $S$-point of which consists of a twisted curve
$\cC_S\to S$ of genus $g$ together with a morphism
$\cC_S\to \HH$. Since $\cC_g^{\tw}\to \MM_g^{\tw}$ is proper and flat and $\HH$ is algebraic, quasi-separated and locally of finite presentation with affine stabilizers, \cite[Theorem~1.2]{HR19} implies that $\underline{\Hom}$ is an algebraic stack locally of finite presentation over $\bC$. Pulling back the universal family over $\HH$ yields a diagram
\[
\begin{tikzcd}[ampersand replacement=\&]
	{\cX_{\underline{\Hom}}} \&\& {\cC_{\underline{\Hom}}} \\
	\& \underline{\Hom}
	\arrow["{\pi_{\underline{\Hom}}}", from=1-1, to=1-3]
	\arrow[from=1-1, to=2-2]
	\arrow[from=1-3, to=2-2]
\end{tikzcd}
\]
whose structure morphisms are flat and proper.

After replacing $\underline{\Hom}$ by a suitable open substack, still denoted by $\underline{\Hom}$, we may furthermore assume that every morphism
$\cC\to \HH$ represented by a geometric point of $\underline{\Hom}$ satisfies the following properties:
\begin{enumerate}
    \item the complement of
    $\HH^\circ\times_{\HH}\cC \subseteq \cC$
    is either empty or consists of finitely many smooth points of $\cC$;
    \item the morphism $\cC\to \HH$ is representable. Indeed, representability is an open condition by \cite[Theorem~7.2.7]{AV02}.
\end{enumerate}
\end{proof}

From now on, we write \(\underline{\Hom}\) for the algebraic stack in \Cref{step_2_consider_hom_stack}.

\begin{Step_dm}
Construct the stack of fibrations in pairs.

\begin{lemma}\label{step_3_hilb_scheme_to_parametrize_D}
There exists an algebraic stack $\GG$ locally of finite presentation over $\bC$, whose $S$-points correspond to diagrams
\[
\xymatrix{
\cD \ar[r]^{\iota} &
\cX \ar[rd] \ar[rr]^{\pi} &&
\cC \ar[ld] \\
&& S &
}
\]
such that:
\begin{enumerate}
    \item[\textup{(1)}] $\iota$ is a closed immersion;
    \item[\textup{(2)}] the composition $\cD\to S$ is flat;
    \item[\textup{(3)}] for every geometric point $s\in S$, the fiber $\cD_s\subseteq \cX_s$ is a closed subscheme of pure codimension one;
    \item[\textup{(4)}] the fibration $\cX\to \cC\to S$ is induced by a morphism
    $S\to \underline{\Hom}$.
\end{enumerate}
\end{lemma}
\end{Step_dm}

\begin{proof}
Let $\GG'$ denote the \emph{Hilbert stack} (cf. \cite{HR15}) of closed substacks of
$\cX_{\underline{\Hom}}$ that are flat over $\underline{\Hom}$.
By \cite[Theorem~4.4]{HR15}, the stack $\GG'$ is algebraic.
Moreover, it is locally of finite presentation over $\underline{\Hom}$.
Indeed, this property can be checked after passing to an \'etale cover of $\underline{\Hom}$, where it follows from \cite[Theorem~1.1]{OS03}.

Pulling back the universal family $\cX_{\underline{\Hom}}
\to
\cC_{\underline{\Hom}}
\to
\underline{\Hom}$ along the morphism $\GG'\to \underline{\Hom}$, we obtain
\[
\cD_{\GG'} \ \hookrightarrow\  \cX_{\GG'}
\ \longrightarrow \ \cC_{\GG'}
\ \longrightarrow \ \GG',
\]
where $\cD_{\GG'}\hookrightarrow \cX_{\GG'}$ is the universal closed substack and
$\cD_{\GG'}\to \GG'$ is flat. By \cite[Chapter III, Corollary 9.6]{Har77}, the subset
\[
\GG
\ 
\coloneqq \ \bigl\{
h\in \GG'
\mid
(\cD_{\GG})_h \subseteq (\cX_{\GG})_h
\text{ is of pure codimension one}
\bigr\}
\]
is a union of connected components of $\GG'$.
Hence $\GG$ is an open and closed substack of $\GG'$, and $\GG$ satisfies the required properties.
\end{proof}

To simplify the notation, we henceforth suppress the subscript $\GG$ and denote by \[ (\cX,\cD) \xrightarrow{\pi} \cC\to \GG \qquad\text{and}\qquad (X,D) \to C\to \GG \] the universal family over \(\GG\) and its relative coarse moduli space, respectively.

\smallskip

The second stage of the proof is to impose additional conditions on $\GG$ and show that each of them is represented by a functorial locally closed substack.

\begin{Step_dm}\label{step_construction_dm_stack_locally_stable} Representability of local stability and Cartierness of divisors. 
\end{Step_dm}
 
 \begin{lem}\label{lem:construction_dm_stack_locally_stable}
There exists a functorial locally closed partial decomposition of $\GG$ parametrizing families
\((\cX,\cD)\to \cC\to S\) such that the coarse moduli space families
\[
(X,(c+\epsilon_0)D)\to S
\qquad\text{and}\qquad
(X,cD)\to S
\]
are locally stable, and
\[
\bigl(K_{X/S}+(c+\epsilon_0)D\bigr),\qquad K_{X/S},
\qquad\text{and}\qquad D
\]
are $\bQ$-Cartier.
\end{lem}

\begin{proof}
Recall that the locus
\[
\Bigl\{h\in \GG \ \Big| \ \sX_h \text{ is reduced}\Bigr\}
\]
is open. Replacing $\GG$ by this open substack, we may therefore assume that
$\sX\to \GG$ has reduced fibers. Since the quotient of a reduced scheme by a finite group remains reduced, the relative coarse moduli space $X\to \GG$ also has reduced fibers. Consequently, the relative singular loci $\operatorname{Sing}(\sX\to \GG)$ and $\operatorname{Sing}(X\to \GG)$
have codimension at least one in every fiber. Since both singular loci are closed, upper semicontinuity of
fiber dimensions \cite[\href{https://stacks.math.columbia.edu/tag/0DRQ}{Tag 0DRQ}]{stacks-project}
implies that the locus
\[
\Bigl\{
h\in \GG \;\Big|\;
\text{Sing}(\sX_h)\cap \sD_h
\text{ and }
\text{Sing}(X_h)\cap D_h
\text{ have codimension at least }2
\Bigr\}
\]
is open.
Replacing $\GG$ by the reduced structure on this open substack, it follows
from \cite[Theorem~4.3]{Kol23} that $(X,D)\to \GG$ is a well-defined family
of pairs. Since the relative coarse moduli space of a flat Deligne--Mumford morphism of algebraic stacks is itself flat, the morphism $D\to \GG$ is flat. Applying \cite[Theorem~4.42]{Kol23}, we conclude that the
locus where both
\[
\big(X,(c+\epsilon_0)D\big)\to \GG
\qquad\text{and}\qquad
(X,cD)\to \GG
\]
are locally stable is represented by a locally closed partial decomposition of $\GG$. In particular, by \cite[Definition--Theorem 3.1]{Kol23}, $X\to \GG$ is locally stable as well. Finally, the $\bQ$-Cartierness of
$(K_{X/S}+(c+\epsilon_0)D)$, $K_{X/S}$, and $D$ follows immediately from the local stability of the two families.
\end{proof}

\begin{Step_dm}\label{step_dm_pi_is_generically_ksba_stab_is_open} Representability of KSBA-stability away from finitely many smooth points.

\end{Step_dm}
\begin{lemma}\label{lem:dm_pi_is_generically_ksba_stab_is_open}
Let $(\cX,(c+\epsilon_0)\cD)\to \cC$ be the family over $\GG$. Then the locus in $\GG$ over which this family is KSBA-stable away from a closed substack
$\cW\subseteq \cC$, whose fiber over every geometric point of $\GG$ is a \textup{(}possibly empty\textup{)}
union of smooth points of the corresponding curve, is open.
\end{lemma}

\begin{proof}
The sublocus of $\cD$ where $\cD\to \cC$ is flat is open by
\cite[\href{https://stacks.math.columbia.edu/tag/0399}{Tag 0399}]{stacks-project}.
Let $\cW^{(1)}\subseteq \cC$ denote the image of its complement. The upper semicontinuity of fiber dimensions implies that the locus where
\(\text{Sing}(\cX\rightarrow \cC)\cap \cD\) has codimension at least $2$ in every fiber of $\cX\to \cC$ is open. Let $\cW^{(2)}\subseteq \cC$ be its complement and set
\[
\cW\coloneqq \cW^{(1)}\cup \cW^{(2)}.
\] By upper semicontinuity of fiber dimensions again, there exists an open
substack $\UU\subseteq \GG$ such that for every geometric point
$u\in \UU$,
\[
\cW_u\subseteq \cC_u^{\sm}
\qquad\text{and}\qquad
\dim \cW_u=0.
\] Restricting to $\UU$, \Cref{step_2_consider_hom_stack} yields an open
substack $\cV\subseteq \cC|_{\UU}$ such that
\((\cX,\cD)|_{\cV}\to \cV\)
is a well-defined family of pairs. Moreover,
\((\cC|_{\UU})\setminus \cV\)
is contained in the smooth locus of \(\cC|_{\UU}\to \UU\) and has pure codimension one in every fiber.

By \Cref{prop_descent_KX_D} and 
\Cref{rem:weaken-the-conditions}, a multiple of $K_{\cX/\GG}$ descends to a multiple of $K_{X/\GG}$ on a big open substack. Since $K_{X/\GG}$ is $\bQ$-Cartier by
\Cref{lem:construction_dm_stack_locally_stable}, it follows that $K_{\cX/\GG}$ is
also $\bQ$-Cartier. As $\cC\to \GG$ is Gorenstein, we conclude that
$K_{\cX/\cC}$ is $\bQ$-Cartier. Likewise, since $D$ is $\bQ$-Cartier, $\cD$ is also $\bQ$-Cartier. Applying \cite[Corollary~4.45]{Kol23}, we obtain an open
substack
\(
\cU\subseteq \cV\)
parametrizing those points over which the fibers of
\((\cX,(c+\epsilon_0)\cD)\to \cC\) are semi-log canonical. Since relative ampleness of
\(K_{\cX/\cC}+(c+\epsilon_0)\cD\) is an open condition, there exists an open substack of $\cU$ over which \((\cX,(c+\epsilon_0)\cD)|_{\cU}\to \cU\) is a KSBA-stable family. The complement
\((\cC|_{\UU})\setminus \cU\) is closed in $\cC|_{\UU}$ and, by construction, is contained in the smooth
locus of the fibers of $\cC|_{\UU}\to \UU$. Therefore, the image in $\GG$
of the complement of the locus described in the statement is closed.
Hence the desired locus is open.
\end{proof}

\begin{Step_dm}\label{step_dm_non_genenerate}
Closedness of the non-degenerate type locus.
\end{Step_dm}

\begin{lem}\label{lem:dm_non_genenerate}
There exists a reduced closed substack of $\GG$ parametrizing stable log
Calabi--Yau fibrations of non-degenerate type.
\end{lem}

\begin{proof}
Since being non-degenerate is an open condition, the locus parametrizing non-degenerate stable log Calabi--Yau fibrations forms an open substack of $\GG$. Replacing $\GG$ by the reduced induced
structure on its closure gives the desired reduced closed substack.
\end{proof}

\begin{Step_dm}\label{step_condition_lcy}
Openness of log Calabi--Yau condition.
\end{Step_dm}

\begin{lem}\label{lem:condition_lcy}
    The condition \textup{(LCY)} in \Cref{def_CY_fibration} is functorial and defines an open substack of $\GG$.
\end{lem}

\begin{proof}
Since the pairs under consideration form a bounded family, there exists a sufficiently divisible integer $m>0$ such that for every pair $(Y,cD_Y)$ arising either as a fiber of the bounding family
\[
(\cX_B,c\cD_B)\ \longrightarrow \ \cC_B \ \longrightarrow \ B
\]
of \Cref{thm_bounding_fam}, or as a point of $\PCY$, one has
\[
\cO_Y\bigl(m(K_Y+cD_Y)\bigr)\ \simeq\ \cO_Y.
\] Let
\(
(\cX,c\cD)\xrightarrow{\pi}\cC\to \GG\) be the universal family. By upper semicontinuity of fiber dimension, the locus
\[
\cU\ \coloneqq \ 
\Bigl\{
p\in \cC
\;\Big|\;
\dim\!\bigl(
\pi_*\cO_{\cX}(m(K_{\cX/\cC}+c\cD))
\otimes k(p)
\bigr)
\le 1
\Bigr\}
\]
is open. On the other hand, for every non-degenerate Calabi--Yau fibration
\(
(\cX_0,c\cD_0)\to \cC_0\),
we have
\[
\dim\!\bigl(
\pi_*\cO_{\cX_0}(m(K_{\cX_0/\cC_0}+c\cD_0))
\otimes k(p)
\bigr)
\ =\ 1
\]
for every $p\in \cC_0$.
Since the locus of non-degenerate Calabi--Yau fibrations is open dense in $\GG$
by \Cref{lem:dm_non_genenerate}, it follows that
\[
\dim\!\bigl(
\pi_*\cO_{\sX}(m(K_{\sX/\sC}+c\sD))
\otimes k(p)
\bigr)
=1
\]
for every $p\in \cU$. Let
\(
\Xi\coloneqq \cC\setminus \cU\) be the complement of $\cU$. Since $\cC\to\GG$ is proper, the image of $\Xi$ in $\GG$ is closed.
Therefore its complement is open in $\GG$. By construction, this open substack parametrizes precisely those families satisfying condition \textup{(LCY)}.
\end{proof}

\begin{Step_dm}\label{step_dm_cndition_stab} Openness of stability condition.
\end{Step_dm}

\begin{lem}\label{lem:dm_cndition_stab}
   The locus in $\GG$ parametrizing pairs which satisfy condition \textup{(Stab)} in \Cref{def_CY_fibration} is open.
\end{lem}

\begin{proof}
    This follows from the fact that being ample is an open condition.
\end{proof}  

Therefore, the functor of interest is represented by an algebraic stack $\overline{\MM}_{\Phi}$ of finite type. It remains to prove that $\overline{\MM}_{\Phi}$ is Deligne--Mumford. It suffices to show that the stabilizers of its geometric points are finite. Indeed, any automorphism of a stable log Calabi--Yau fibration induces an automorphism of the corresponding stacky KSBA model, which in turn induces, by \Cref{lemma_tsm_family_is_stable}, an automorphism of the associated twisted stable map. Moreover, if an automorphism $\sigma$ of a stable log Calabi--Yau fibration induces the identity automorphism of the associated twisted stable map, then $\sigma=\Id$. Since twisted stable maps have finite automorphism groups, the desired conclusion follows.
\end{proof}

\section{Projectivity}\label{sec:projectivity}

In this section, we prove the projectivity of the coarse moduli space $\overline{\mathfrak{M}}_{\Phi}$.

\begin{thm}\label{thm:ampleness}
    The coarse moduli space $\ove{\fM}_{\Phi}$ is projective.
\end{thm}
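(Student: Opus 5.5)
Since $\overline{\MM}_{\Phi}$ is a proper Deligne--Mumford stack of finite type (\Cref{thm:representability}), its coarse space $\ove{\fM}_{\Phi}$ is a proper algebraic space. The plan is to write down a $\bQ$-line bundle on $\overline{\MM}_{\Phi}$ that descends to $\ove{\fM}_{\Phi}$, and to prove it ample by Koll\'ar's ampleness lemma in the generalized form of Appendix~\ref{appendix:Ampleness Lemma} (\Cref{thm:ampleness-lemma}). By Nakai--Moishezon for proper algebraic spaces, it suffices to show that its top self-intersection is positive on every closed integral subspace. Working through a finite cover of each such subspace by a normal projective variety $T$ (which exists by the properness and boundedness of \Cref{thm_bounding_fam}), we are reduced to families $(\cX_T,c\cD_T)\to\cC_T\to T$ whose classifying map $T\to\ove{\fM}_{\Phi}$ is generically finite.

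\textbf{Step 1: the polarizing bundle.} By \Cref{lemma_ampleness_of_omega_C_plus_lambda} and (Stab), the $\bQ$-line bundle
\[
\cA_\epsilon\coloneqq K_{X/T}+(c+\epsilon^2)D+\epsilon\,\psi^*\Lambda_{\Cho}
\]
is relatively ample over $T$ for $0<\epsilon\ll1$, where $X\to C\to T$ is the coarse family. Since the family is bounded, a single $\epsilon$ and a single $r$ work uniformly, with $r\cA_\epsilon$ Cartier and relatively very ample with vanishing higher cohomology.

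\textbf{Step 2: the ampleness lemma.} Set $W\coloneqq f_*\cO_X(r\cA_\epsilon)$ and $Q_k\coloneqq f_*\cO_X(kr\cA_\epsilon)$. Their formation commutes with base change, and the multiplication maps $\Sym^kW\to Q_k$ are surjective. The classifying map to the Grassmannian quotient $\Sym^kW\to Q_k$ is finite on isomorphism classes once we record not only $X$ but the full data $(X,D,X\to C)$. This is the key point: the fibration is extra data not recovered from $X$, which is why the appendix removes Koll\'ar's hypothesis on the structure group. I would handle it by enlarging the vector bundle data. Add the subsheaves cutting out $D$, and the pullback of $H^0(C,\cO_C(N\Lambda_{\Cho}\otimes\omega_C^{N}))$, so that the structure group is reduced to the stabilizer of this extra data. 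Quasi-finiteness of the classifying map then follows from separatedness (\Cref{thm:separatedness}) and finite automorphisms.

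\textbf{Step 3: nefness of $W$ and $Q_k$.} This is the main obstacle. For KSBA-stable families, nefness follows from Fujino's or Kov\'acs--Patakfalvi's positivity of $f_*\omega^{m}_{X/T}(mD)$. Here, however, $\cA_\epsilon$ contains the twist $\epsilon\psi^*\Lambda_{\Cho}$ and uses the coefficient $c+\epsilon^2$, so the pair is not of log general type fiberwise in the usual sense. I would first try to write $kr\cA_\epsilon=kr(K_{X/T}+(c+\epsilon^2)D)+\psi^*(\text{nef})$. Semipositivity of the first term pushed forward comes from local stability of $(X,(c+\epsilon^2)D)\to T$ (\Cref{lem:stable-lcy-implies-slc}). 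For the second term, $\Lambda_{\Cho}$ is nef on $C_T$: it is the pullback of the ample Chow bundle on $\mathbf{PCY}$ plus effective boundary, by \Cref{cor_degree_of_chow_minimized_by_pull_back_from_KSBA}, with the positivity of the CM line bundle applied in families over $T$. Combining the two pieces via a projection-formula argument yields nefness of $Q_k$. Once this holds, the ampleness lemma makes $\det Q_k$ big on $T$, and applying this to every subvariety gives ampleness of the descent of $\det Q_k$ on $\ove{\fM}_{\Phi}$, proving projectivity.
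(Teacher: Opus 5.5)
Your outline (a Nakai--Moishezon-type reduction, a polarization coming from (Stab), and Koll\'ar's ampleness lemma with extra summands recording $D$ and $X\to C$) matches the paper's architecture. However, Step~3, which you correctly flag as the crux, does not work as proposed.

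The splitting $kr(K_{X/T}+(c+\epsilon^2)D)+\psi^*(\text{nef})$ gives no control over the pushforward. By the projection formula, $f_*\cO_X(A+\psi^*N)=g_*(\psi_*\cO_X(A)\otimes N)$ with $g\colon C_T\to T$. Nefness is not inherited under pushforward from $C_T$, so nefness of $f_*\cO_X(A)$ and nefness of $N$ do not combine.

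Moreover, neither piece has the positivity you attribute to it:
\begin{itemize}
\item The divisor $K_{X/T}+(c+\epsilon^2)D\sim_{\bQ,C_T}\epsilon^2D$ is not nef over $T$. For an elliptic K3 surface with $D$ a section and $c=0$, it is $\epsilon^2 D$ with $D^2=-2$; this is exactly why (Stab) needs the $\epsilon\Lambda_{\Cho}$ term. The semipositivity results for stable families that you invoke require the log canonical divisor to be relatively ample, so they do not apply to this piece.
\item \Cref{cor_degree_of_chow_minimized_by_pull_back_from_KSBA} only controls $\deg\Lambda_{\Cho}$ on curves over which the family is generically KSBA-stable, such as fibers of $C_T\to T$. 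It says nothing on a curve of $C_T$ lying in the non-KSBA locus, i.e.\ a multisection carrying the effective correction, possibly of negative self-intersection. So nefness of $\Lambda_{\Cho}$ on the total space is unjustified.
\end{itemize}

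The missing idea is to convert the Chow twist into boundary rather than treat it as a separate nef summand. In the paper's notation over a base $B$:
\begin{itemize}
\item By \Cref{lemma_lambda_Chow_is_equivalent_to_an_effective_divisor}, $\Lambda_{\Cho}^{\otimes N}\simeq\cO_C(S)$ with $S$ effective and generically supported in the smooth locus of $C\to B$.
\item Take a uniform lower bound $\delta$ on semi-log canonical thresholds of fibers over smooth points (\Cref{lemma_slct_positive_is_open}) and a uniform bound $d_0$ on the fiber degree of $S$. Fix $\epsilon_2$ with $d_0\epsilon_2<\delta$.
\item Then for $\epsilon_1<\epsilon_2^2$, the pair $(X_\eta,(c+\epsilon_1)D_\eta+\tfrac{\epsilon_2}{N}\psi^*S_\eta)$ is slc, and it is ample by (Stab). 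Hence it is KSBA-stable.
\item So $\cL(\epsilon_1,\epsilon_2,n)$ is $n$ times the log canonical divisor of a generically KSBA-stable family. Fujino's semipositivity theorem then gives nefness of its pushforward directly (\Cref{lemma_nefness_of_vb}).
\item The same trick gives nefness of $\tau_*(\omega_{C/B}^{\otimes n}\otimes\Lambda_{\Cho}^{\otimes n^2})$.
\end{itemize}

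That second bundle is needed because of a further problem in your Step~2. Reducing the structure group to the stabilizer of a subspace of sections, as you describe it, yields a non-reductive (parabolic-type) group, which violates the hypothesis of \Cref{thm:ampleness-lemma}. The paper instead records $X_b\to C_b$ via the ideal of its graph in $\bP^{r_2-1}\times\bP^{r_3-1}$. It uses the summand $\Sym^{d_1}\phi_*L(\epsilon_1,\epsilon_2,n)\otimes\Sym^{d_2}\tau_*(\omega_{C/B}^{\otimes n}\otimes\Lambda_{\Cho}^{\otimes n^2})$ with the reductive group $\Sym^{d_1}\GL(r_2)\times\Sym^{d_2}\GL(r_3)$. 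This is why the appendix removes the normality hypothesis on the closure of the structure group in $\bP(\Mat)$.

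Finally, the classifying map only sees the coarse data $(X_b,D_b,X_b\to C_b)$. Finiteness of its fibers therefore needs \Cref{lem:uniquely-determined}, not just separatedness.
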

\begin{remark}
    We observe that at the end of the proof of \Cref{thm:ampleness}, we explicitly construct the ample line bundle on $\ove{\fM}_{\Phi}$.
\end{remark}

Throughout this section, we fix an admissible quadruple $\Phi$. We will consider various base schemes $B$, together with a family of stable log Calabi--Yau fibrations of non-degenerate type
\begin{equation}\label{eq:Stable-LCY-fib}
    f\ :\ (\cX,c\cD) \ \stackrel{\pi}{\longrightarrow} \ \cC \ \stackrel{g}{\longrightarrow} \  B
\end{equation}
with numerical invariant $\Phi$ over an integral scheme $B$, with coarse spaces
\[\phi\ :\ (X,cD) \ \stackrel{\psi}{\longrightarrow} \ C \ \stackrel{\tau}{\longrightarrow} \  B.\]
We fix an $\epsilon$-coefficient (cf. \Cref{defn:epsilon-coefficient}) $\epsilon_0$ of $\Phi$, and denote by $\PCY$ the moduli stack of stable Calabi--Yau pairs parametrizing the fibers of $\pi$, and by $\bf{PCY}$ its coarse moduli space. For every stable Calabi--Yau pair $(Y,cH;H)$ parametrized by $\PCY$, we identify it with the KSBA-stable pair $(Y,(c+\epsilon_0)H)$ as before, and let $\mtf{L}_{\Chow}\in \Pic(\bf{PCY})_{\bQ}$ be the descent of the Chow line bundle on $\PCY$ associated to the corresponding universal KSBA-stable family. Let $\lambda_{\Chow}$ be the Chow line bundle associated to $\pi\colon\big(\cX,(c+\epsilon_0)\cD\big)\rightarrow \cC$, and let $\Lambda_{\Cho}$ be its descent on $C$. For any rational numbers $0<\epsilon_1,\epsilon_2<\epsilon_0$ and any $n\in \bN$ such that
\begin{enumerate}
    \item $n\epsilon_2\in \bZ$ and $\lambda_{\Chow}^{\otimes n\epsilon_2}$ is a line bundle; and 
    \item $n(K_{\cX/B}+(c+\epsilon_1)\cD)$ is Cartier.
\end{enumerate}
we define \[
\cL(\epsilon_1,\epsilon_2,n)\ \coloneqq \ \cO_{\cX}\big(n(K_{\cX/B} + (c+\epsilon_1)\cD)\big)\otimes \pi^*\lambda_{\Chow}^{\otimes n\epsilon_2}
\] and its descent on the coarse space $X$ \[
L(\epsilon_1,\epsilon_2,n)\ \coloneqq \ \cO_{X}\big(n(K_{X/B} + (c+\epsilon_1)D)\big)\otimes\psi^*\Lambda_{\Chow}^{\otimes n\epsilon_2}.
\] 
We start by fixing a uniform index. Let \[
    f^{\univ}\ :\ \big(\cX^{\univ},(c+\epsilon_0)\cD^{\univ}\big) \ \stackrel{\pi^{\univ}}{\longrightarrow} \ \cC^{\univ} \ \stackrel{g^{\univ}}{\longrightarrow} \  \overline{\MM}_{\Phi}\] be the universal family over $\overline{\MM}_{\Phi}$ with relative coarse spaces \[\phi^{\univ}\ :\ \big(X^{\univ},(c+\epsilon_0)D^{\univ}\big) \ \stackrel{\psi^{\univ}}{\longrightarrow} \ C^{\univ} \ \stackrel{\tau^{\univ}}{\longrightarrow} \  \overline{\MM}_{\Phi}.\] Then one can choose an integer $N>0$ such that 
    \begin{enumerate}
        \item[(i)] $(\lambda_{\Chow}^{\univ})^{\otimes N}$ is a line bundle on $\cC^{\univ}$ which descends to a line bundle $(\Lambda_{\Chow}^{\univ})^{\otimes N}$ on $C^{\univ}$, where $\lambda_{\Chow}^{\univ}$ is the Chow ($\bQ$-)line bundle associated to $\pi^{\univ}$;
        \item[(ii)] $\omega_{C^{\univ}/\overline{\MM}_{\Phi}}\otimes (\Lambda_{\Chow}^{\univ})^{\otimes N}$ is relatively ample over $\overline{\MM}_{\Phi}$ (cf. \Cref{lemma_ampleness_of_omega_C_plus_lambda}); and
        \item[(iii)] $\mathfrak{L}_{\Chow}^{\otimes N}$ is a very ample line bundle on $\bf{PCY}$.
    \end{enumerate}
\smallskip
\noindent We will use the notation introduced above throughout this section.

\begin{lemma}\label{lemma_lambda_Chow_is_equivalent_to_an_effective_divisor}
For any smooth projective curve $B$ and any stable log Calabi--Yau fibration as in \Cref{eq:Stable-LCY-fib}, there exists an effective Cartier divisor $S$ on $C$ such that:
\begin{enumerate}
    \item[\textup{(1)}] $\Lambda_{\Cho}^{\otimes N}\simeq \cO_C(S)$;
    \item[\textup{(2)}] $\Supp(S)_\eta\subseteq C_\eta^{\sm}$; and
    \item[\textup{(3)}] $\omega_{C/B}(S)$ is relatively ample over $B$.
\end{enumerate}
\end{lemma}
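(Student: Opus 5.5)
The plan is to build $S$ from two pieces: a pullback of a general hyperplane section of the very ample bundle $\mathfrak{L}_{\Chow}^{\otimes N}$ on $\bf{PCY}$, and the effective correction divisor supported at the KSBA-unstable points coming from \Cref{cor_degree_of_chow_minimized_by_pull_back_from_KSBA}. Here $B$ is a smooth projective curve, so $C\to B$ is a family of nodal curves over a curve. The generic fiber $C_\eta$ is a curve over the function field $K(B)$.

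\textbf{Step 1: the formula over the generic fiber.} By (Qmap) there is an open subset of $C$ over which the family is KSBA-stable. Over $C_\eta$ this open subset contains all nodes, and it yields a morphism $\rho$ from this open subset to $\bf{PCY}$. Since $C$ is a normal surface and $\bf{PCY}$ is proper, $\rho$ extends to a rational map $C\dashrightarrow\bf{PCY}$ defined away from finitely many closed points. Applying \Cref{cor_degree_of_chow_minimized_by_pull_back_from_KSBA} on each component of the normalization of $C_\eta$ (after a finite extension of $K(B)$, which is harmless by Galois descent), we get
\[
\Lambda_{\Cho}^{\otimes N}|_{C_\eta}\ \simeq\ \rho^*\mathfrak{L}_{\Chow}^{\otimes N}\otimes\cO\big(\textstyle\sum a_ip_i\big),
\]
with $a_i>0$ and the $p_i$ smooth points of $C_\eta$. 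At the nodes no correction term appears, because there $\pi$ is already KSBA-stable on the stacky curve. The formula is stated for smooth curves, but the same codimension-one comparison works on the twisted nodal curve, since the two sides agree near the nodes.

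\textbf{Step 2: spreading out.} Choose a general member $H\in|\mathfrak{L}_{\Chow}^{\otimes N}|$ that avoids the images of the finitely many nodes of $C_\eta$ and of the points $p_i$. Let $S_\eta:=\rho^*H+\sum a_ip_i$, and let $S$ be its closure in $C$. Then $S$ is supported away from the nodes of $C_\eta$, which gives (2). To get (1), I would compare $\Lambda_{\Cho}^{\otimes N}$ and $\cO_C(S)$: they agree on $C_\eta$, so they differ by $\cO_C(V)$ for a divisor $V$ supported on finitely many fibers of $C\to B$.

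\textbf{Main obstacle: removing $V$ and keeping $S$ Cartier.} The difficulty is twofold. First, the vertical discrepancy $V$ need not be a sum of full fibers, since it may involve individual components of reducible fibers. Second, $S$ may pass through nodes of special fibers, where $C$ can have $A_m$-singularities, so $S$ need not be Cartier there.

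To handle both, I would not take the closure of $S_\eta$ directly. Instead I would use the degree characterization from \Cref{thm_generalize_wx}: on every component $\Gamma$ of every fiber, $\deg_\Gamma\Lambda_{\Cho}\ge0$, and it is positive unless the family over $\Gamma$ is isotrivial and KSBA-stable. Replacing $\Lambda_{\Cho}^{\otimes N}$ by a high power if necessary, I would use the resulting global sections of $\Lambda_{\Cho}^{\otimes N}$ to choose $S$ directly as the zero divisor of a section that vanishes on no fiber component. A section with these properties is Cartier automatically and agrees with the choice above on $C_\eta$.

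Failing that, I would fall back on an alternative: twist by pullbacks of points of $B$, and treat the non-full vertical components via the negativity lemma on the surface $C$.

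\textbf{Step 3: relative ampleness.} For (3), note that $\deg_\Gamma\big(\omega_{C/B}(S)\big)=\deg_\Gamma\big(\omega_{C/B}\otimes\Lambda_{\Cho}^{\otimes N}\big)$ on every component $\Gamma$ of every fiber. By condition (ii) in the choice of $N$, which uses \Cref{lemma_ampleness_of_omega_C_plus_lambda}, these degrees are positive. Relative ampleness over $B$ then follows fiberwise.
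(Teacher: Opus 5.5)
\textbf{The gap: statement (1) is not established.} Your Steps 1 and 3 are fine. The problem is (1), the global statement that $\Lambda_{\Cho}^{\otimes N}$ itself is $\cO_C(S)$ for an \emph{effective Cartier} $S$, and your argument does not prove it.

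By defining $S$ as the closure of $\rho^*H+\sum a_ip_i$ from $C_\eta$, you discard all information about the special fibers. You are then left with an unknown vertical $V$, and possibly non-Cartier points at the $A_m$ singularities, as you yourself note. Neither proposed repair fixes this.
\begin{itemize}
\item Nonnegativity of $\deg_\Gamma\Lambda_{\Cho}$ on fiber components gives no global sections on the surface $C$. Fiber degrees are unchanged by twisting with $\tau^*$ of an arbitrarily negative line bundle on $B$. Effectivity of $\Lambda_{\Cho}^{\otimes N}$, for the fixed $N$ and not a high power, is exactly what must be shown.
\item Twisting by pullbacks of points of $B$ changes the isomorphism class of $\Lambda_{\Cho}^{\otimes N}$.
\item A negativity-lemma argument would need $\deg_\Gamma\Lambda_{\Cho}^{\otimes N}$ on special-fiber components, which you never compute.
\end{itemize}

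\textbf{The missing idea: compare on the whole surface, not just on $C_\eta$.} You should compare $\lambda_{\Cho}^{\otimes N}$ with $\rho^*\mathfrak{L}_{\Chow}^{\otimes N}$ on all of $\cC$.
\begin{itemize}
\item By (Qmap), the KSBA-stable locus contains the generic point of every component of every special fiber. Over it, $\lambda_{\Cho}^{\otimes N}$ is canonically $\rho^*\mathfrak{L}_{\Chow}^{\otimes N}$ by functoriality of the Knudsen--Mumford expansion.
\item Extend $\rho$ across the remaining codimension-one points by properness of $\textbf{PCY}$. These are the generic points of the closures of the unstable points of $C_\eta$, and they lie in the regular locus $C^{\sm}$. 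So you do not need $C$ to be normal, which in fact can fail when $C_\eta$ is nodal. This gives $\rho_{\cU}$ on a big open $\cU\subseteq\cC$ whose complement is finitely many smooth points.
\item The canonical trivialization over the stable locus then shows that $\lambda_{\Cho}^{\otimes N}$ differs from the extension of $\rho_{\cU}^*\mathfrak{L}_{\Chow}^{\otimes N}$ only by a horizontal divisor. That divisor is the closure $\cE$ of the effective divisor from \Cref{cor_degree_of_chow_minimized_by_pull_back_from_KSBA}, and it is Cartier because it lies in $C^{\sm}$. No vertical $V$ ever appears, and the finitely many leftover points are handled by $S_2$ at regular points.
\item Take $\cH$ to be the closure of the \emph{full} pullback $\rho_{\cU}^*H$ on $\cU$, vertical components included, rather than the closure of its generic-fiber part. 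Since $\rho_{\cU}$ is defined near the twisted nodes and factors through the coarse space there, $\cH$ is Cartier near the $A_m$ points. This removes your second obstacle.
\end{itemize}
The descent $S$ of $\cH+\cE$ then gives (1), and (2) and (3) follow as you describe.
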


\begin{proof}
By the valuative criterion for the properness of $\textbf{PCY}$ and Condition (Qmap) in \Cref{def_CY_fibration}, there exists a big open substack $\cU\subseteq\cC$, whose complement consists of finitely many smooth points of $\cC\to B$, together with a natural morphism
\[
\rho_{\cU}\colon  \cU \ \longrightarrow  \ \textbf{PCY}.
\]
Since the complement of $\cU$ consists of finitely many smooth points on the surface $\cC$, the pullback $\rho_{\cU}^*\mathfrak L_{\Cho}^{\otimes N}$ extends uniquely to a line bundle on $\cC$, which we denote by $\cL_{\Cho}^{\otimes N}$. Since $\mathfrak L_{\Cho}^{\otimes N}$ is very ample, we may choose an effective divisor on $\textbf{PCY}$ representing it whose pullback to $\cU$, denoted by $\cH_{\cU}$, is disjoint from the nodal locus of $\cC_\eta$. Let $\cH$ be the closure of $\cH_{\cU}$ in $\cC$. Then $\cL_{\Cho}^{\otimes N}\simeq \cO_{\cC}(\cH)$.

On the other hand, \Cref{cor_degree_of_chow_minimized_by_pull_back_from_KSBA}, applied to the generic fiber, yields an effective Cartier divisor $\cE_\eta$ on $\cC_\eta$ such that
\[
\cL_{\Cho}^{\otimes N}|_{\cC_\eta}\ 
\simeq\ 
\lambda_{\Cho,\pi}^{\otimes N}|_{\cC_\eta}(-\cE_\eta),
\]
whose support consists of points over which $\bigl(\cX_\eta,(c+\epsilon_0)\cD_\eta\bigr)\to\cC_\eta$ fails to be KSBA-stable. Let $\cE$ denote the closure of $\cE_\eta$ in $\cC$, which is supported on the smooth locus of $\cC\rightarrow B$ by the Condition (Qmap). Since $\cC$ is $S_2$, we obtain $\lambda_{\Cho}^{\otimes N}\simeq\cO_{\cC}(\cH+\cE)$.

Set $\cS:=\cH+\cE$. Since $\rho_{\cU}:\cU\to \bf{PCY}$ factors through a
morphism $U\to \bf{PCY}$ from the coarse space of $\cU$, the divisor $\cH_U$
is the pullback of a Cartier divisor on $U$. Moreover, since $\Supp(\cE)$ is
disjoint from the stacky locus of $\cC$, the coarse moduli map is an
isomorphism near $\Supp(\cE)$, so $\cE|_{\cU}$ also descends to a Cartier
divisor on $U$. Combining the two, the restriction $\cS|_{\cU}$ descends to
a Cartier divisor on $U$. Finally, since the stacky locus of $\cC$ is
contained in $\cU$, the coarse moduli map $\cC\to C$ is an isomorphism away
from $\cU$, and it follows that $\cS$ descends to a Cartier divisor $S$ on
$C$. Therefore $\Lambda_{\Cho}^{\otimes N}\simeq\cO_C(S)$, and $\Supp(S)_\eta\subseteq C_\eta^{\sm}$ by construction. Finally, the relative ampleness of $\omega_{C/B}(S)$ follows from \Cref{lemma_ampleness_of_omega_C_plus_lambda}.
\end{proof}

Let $\delta>0$ be a lower bound as in \Cref{lemma_slct_positive_is_open} for the universal family $f^{\univ}$, i.e., for any stable log Calabi--Yau fibration $(\cX,c\cD)\to\cC$ with numerical invariant $\Phi$ of non-degenerate type, $\slct(\cX,c\cD;\cX_p)>\delta$ for every $p\in \cC^{\sm}$. Since the degree of $(\Lambda^{\univ}_{\Chow})^{\otimes N}$ on the fiber $C_b$ is locally constant as $b$ varies over $\overline{\MM}_{\Phi}$, there exists an integer $d_0$ bounding these degrees from above. We fix such a bound $d_0$ and a rational number $\epsilon_2>0$ such that $d_0\epsilon_2<\delta$.

\begin{lemma}\label{lemma_nefness_of_vb}
For every rational number $\epsilon_1 \in (0, \epsilon_2^2)$, there exists an integer $n = n(\epsilon_1) > 0$ such that, for every smooth projective curve $B$ with generic point $\eta$ and every stable log Calabi--Yau fibration as in \Cref{eq:Stable-LCY-fib} over $B$, the following holds:
\begin{enumerate}
    \item[\textup{(1)}]
    \(f_*\cL(\epsilon_1,\epsilon_2,n)=
    g_*\!\big(
    \pi_*\cO_{\cX}\bigl(n(K_{\cX/B}+(c+\epsilon_1)\cD)\bigr)
    \otimes
    \lambda_{\Cho}^{\otimes n\epsilon_2}
    \big)
    \)
    is a nef vector bundle;
    \item[\textup{(2)}]
    \(
    \cO_{\cX}\bigl(n(K_{\cX/B}+(c+\epsilon_1)\cD)\bigr)
    \otimes
    \pi^*\lambda_{\Cho}^{\otimes n\epsilon_2}
    \)
    descends to an ample line bundle on $X$; and
    \item[\textup{(3)}]
    \(
    g_*\bigl(\omega_{\cC/B}^{\otimes n}\otimes \lambda_{\Cho}^{\otimes n^2}\bigr)
    \)
    is a nef vector bundle.
\end{enumerate}
\end{lemma}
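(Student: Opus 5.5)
The plan is to reduce all three statements to Fujino/Kollár-type semipositivity of pushforwards of relative pluri-log-canonical sheaves for locally stable families over a curve. The non-standard twist $\pi^*\lambda_{\Cho}^{\otimes n\epsilon_2}$ will be absorbed into the boundary via \Cref{lemma_lambda_Chow_is_equivalent_to_an_effective_divisor}.

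\textbf{Step 1: boundary substitution and uniform $n$.} By \Cref{lemma_lambda_Chow_is_equivalent_to_an_effective_divisor}, write $\Lambda_{\Cho}^{\otimes N}\simeq\cO_C(S)$ with $S$ effective, supported over smooth points of $C_\eta$, and of degree at most $d_0$ on the fibers of $C\to B$. Then on the coarse space
\[
L(\epsilon_1,\epsilon_2,n)\ \simeq\ \cO_X\Big(n\big(K_{X/B}+(c+\epsilon_1)D+\tfrac{\epsilon_2}{N}\psi^*S\big)\Big).
\]
Because $\slct>\delta$ over smooth points and $d_0\epsilon_2<\delta$, I expect that after a general choice of $S$ (moving $\cH$ in its very ample system, as in the lemma) the pair $(X,(c+\epsilon_1)D+\tfrac{\epsilon_2}{N}\psi^*S)\to B$ is locally stable, at least over the generic point and hence, after checking closedness of lc, over all of $B$. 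Near the nodes, (Qmap) gives KSBA-stability, so local stability persists there. Take $n$ so that $n(c+\epsilon_1)$ and $n\epsilon_2/N$ are integers and the relevant divisors are Cartier. This choice depends only on $\epsilon_1$ and the fixed $\Phi,N,\epsilon_2$, by boundedness (\Cref{thm_bounding_fam}) and properness of $\overline{\MM}_\Phi$.

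\textbf{Step 2: parts (2) and (1).} For (2), (Stab) gives ampleness of $K_{X/B}+(c+\epsilon^2)D+\epsilon\Lambda_{\Cho}$ for small $\epsilon$. Our class is $K+(c+\epsilon_1)D+\epsilon_2\Lambda$ with $\epsilon_1<\epsilon_2^2$. This is a convex combination of that ample class and $K+cD+t\Lambda$ with $t\ge0$, which is nef: $K+cD$ is pulled back from the nef $\cL$ and $\Lambda$ is nef by \Cref{cor_degree_of_chow_minimized_by_pull_back_from_KSBA}. Relative ampleness over $C$ comes from $D$. One must verify this holds uniformly in the chosen range, using (Stab$'$) and finiteness of the possible degree data. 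For (1), apply \cite{Kol23}/Fujino's semipositivity theorem to the locally stable family over the smooth curve $B$ with $\bQ$-Cartier relatively ample log canonical divisor from Step 1: $\phi_*\cO_X(nL')$ is nef for $n$ divisible. Then identify $f_*\cL(\epsilon_1,\epsilon_2,n)$ with $\phi_*L(\epsilon_1,\epsilon_2,n)$ via \Cref{prop_descent_KX_D}, since pushforward along coarse maps is exact and sections descend. Cohomology-and-base-change vanishing, uniform by boundedness, gives that it is a vector bundle.

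\textbf{Step 3: part (3), the main obstacle.} We need nefness of $g_*(\omega_{\cC/B}^{n}\otimes\lambda_{\Cho}^{n^2})=\tau_*\cO_C(n(K_{C/B}+\tfrac{n}{N}S))$, where the coefficient $n/N$ of $S$ is large, not boundary-sized. I would first try to rescale: since $\omega_{C/B}(S)$ is ample by \Cref{lemma_lambda_Chow_is_equivalent_to_an_effective_divisor}(3), write $K_{C/B}+\tfrac nN S$ as a sum of $K_{C/B}+\tfrac{1}{k}S'$ terms, with $S'$ a general reduced member of a multiple of $\Lambda$ (so $(C,\tfrac1kS')$ is slc over $B$), plus a nef pullback. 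Then apply semipositivity of $\tau_*\omega^{m}_{C/B}(\Delta)$ for stable pointed curves (Kollár), possibly after passing to a finite cover of $B$ to make $S'$ a union of sections, and descending nefness. If that fails, use instead the ampleness of $\omega_C\otimes\Lambda^n$ and the standard argument that pushforwards of $\omega^{m}_{C/B}\otimes A$ with $A$ nef and relatively ample are nef via Viehweg's fiber-product trick.
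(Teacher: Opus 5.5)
Your Steps 1--2 follow the paper's proof of (1) and (2). The paper likewise rewrites $\lambda_{\Cho}^{\otimes n\epsilon_2}$ as $\cO\bigl(\tfrac{n\epsilon_2}{N}\psi^*S\bigr)$ via \Cref{lemma_lambda_Chow_is_equivalent_to_an_effective_divisor}. It uses $d_0\epsilon_2<\delta$ and (Stab) to make $\bigl(X_\eta,(c+\epsilon_1)D_\eta+\tfrac{\epsilon_2}{N}\psi_\eta^*S_\eta\bigr)$ KSBA-stable, and then applies Fujino's semipositivity theorem, checking only the generic fiber over $B$.

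The gap is (3), and you located it correctly. The sheaf $\omega_{C/B}^{\otimes n}\otimes\Lambda_{\Cho}^{\otimes n^2}\simeq\cO_C\bigl(n(K_{C/B}+\tfrac{n}{N}S)\bigr)$ has boundary coefficient $n/N>1$, so Fujino's theorem does not apply. The paper's ``the same argument shows'' passes over exactly this point. Neither of your fallbacks closes it.
\begin{itemize}
\item Route (i) fails because there is no general reduced member of a multiple of $\Lambda_{\Cho}$. We have $\Lambda_{\Cho}^{\otimes N}\simeq\cO_C(H+E)$, where $E$ is a fixed effective divisor on the loci of KSBA-unstable fibers. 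Nothing shows $\Lambda_{\Cho}$ is semi-ample, or even nef, on the surface $C$. Moreover, no regrouping changes the ratio $n/N$ of $\Lambda_{\Cho}$ to $\omega_{C/B}$, so the leftover is a large multiple of $\Lambda_{\Cho}$, not a pullback from $B$.
\item Route (ii) needs $\omega_{C/B}^{\otimes(n-1)}\otimes\Lambda_{\Cho}^{\otimes n^2}$ to be nef on the total space $C$. But \Cref{lemma_ampleness_of_omega_C_plus_lambda} and \Cref{cor_degree_of_chow_minimized_by_pull_back_from_KSBA} only give positivity on the fibers of $C\to B$.
\end{itemize}

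This positivity is not merely unproved; it can fail, and then (3) fails for all $n\gg0$, which is what the proof of \Cref{thm:ampleness} uses. Let $\Gamma\subseteq\Supp E$ be a section of $C\to B$; it lies in the smooth locus by (Qmap). For $n\gg0$, restriction to $\Gamma$ surjects $g_*(\omega_{\cC/B}^{\otimes n}\otimes\lambda_{\Cho}^{\otimes n^2})$ onto a line bundle of degree $n\,(\omega_{C/B}\cdot\Gamma)+n^2\,(\Lambda_{\Cho}\cdot\Gamma)$. Here $N\,\Lambda_{\Cho}\cdot\Gamma=H\cdot\Gamma+E\cdot\Gamma$, which is negative once $\Gamma^2\ll0$ and $H\cdot\Gamma$ is small.

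One configuration where this seems to happen (I have sketched the computation, not checked every detail) is the following.
\begin{itemize}
\item Let $c=0$ and let $F$ have genus $g\ge4$.
\item Let $h\colon B\to F$ be an \'etale cover chosen so that $\Gamma_h+\sum_{i=1}^{5}B\times\{q_i\}$ is divisible by $6$ in $\Pic(B\times F)$.
\item Over $C=B\times F$, take the isotrivial Weierstrass family $y^2=x^3+a_6$ with $a_6\in H^0(\cM^{\otimes 6})$ and $\operatorname{div}(a_6)=\Gamma_h+\sum_iB\times\{q_i\}$, and let $D$ be the zero section.
\end{itemize}
Every fiber over $B$ is then a non-degenerate stable log Calabi--Yau fibration; its unstable fibers are cuspidal cubics over zeros of $a_6$ of order at most $2$. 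One gets $H=0$ and $\lambda_{\Cho}=\epsilon_0(2-\epsilon_0)\cM$. Further, $6\,\cM\cdot\Gamma_h=(7-2g)\deg h<0$ while $\omega_{C/B}\cdot\Gamma_h=(2g-2)\deg h$. Hence the quotient has negative degree for $n\gg0$.

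The same computation shows that $\omega_{\cC^{\univ}/\overline{\MM}_{\Phi}}\otimes(\lambda_{\Cho}^{\univ})^{\otimes n}$ is not nef for $n\gg0$. So the nefness postulated in the paper's choice of $n$ is not available either. A genuinely different input is needed for the third summand.
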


\begin{proof}
By \Cref{lemma_lambda_Chow_is_equivalent_to_an_effective_divisor}, there exists an effective Cartier divisor $S$ on $C$ such that $\Lambda_{\Cho}^{\otimes N} \simeq \cO_{C}(S)$ and $\omega_{C/B}(S)$ is ample over $B$. Since $S_\eta$ is a Cartier divisor of degree at most $d_0$ whose support lies in $\cC_\eta^{\sm}$, the choice of $\delta$ and $\epsilon_2$ ensures that the pair $\bigl(\cC_\eta, \tfrac{\epsilon_2}{N}S_\eta\bigr)$ is semi-log canonical. Hence, by condition (Stab), for every $\epsilon_1 \in (0,\epsilon_2^2)$ the pair
\[
\textstyle
\bigl(
X_\eta,\,
(c+\epsilon_1)D_\eta
+\tfrac{\epsilon_2}{N}\pi_\eta^*S_\eta
\bigr)
\]
is KSBA-stable. Now choose an integer $n>0$ satisfying the following conditions:
\begin{enumerate}
    \item $n\epsilon_2$ is divisible by $N$;
    \item $n(K_{\cX^{\univ}/\overline{\MM}_{\Phi}}+(c+\epsilon_1)\cD^{\univ})$ is Cartier and descends to $X^{\univ}$;
    \item $n(K_{X^{\univ}/\overline{\MM}_{\Phi}}+(c+\epsilon_1)D^{\univ})\otimes (\psi^{\univ})^*(\Lambda^{\univ}_{\Chow})^{\otimes n\epsilon_2}$ is very ample over $\overline{\MM}_{\Phi}$;
    \item $\omega_{\cC^{\univ}/\overline{\MM}_{\Phi}}\otimes (\lambda_{\Chow}^{\univ})^{\otimes n}$ is a nef line bundle on $\cC^{\univ}$, whose $n$-th tensor power descends to a very ample line bundle on $C^{\univ}$.
\end{enumerate}
With this choice of $n$, we have
\[
\textstyle
\cO_{\cX}\bigl(n(K_{\cX/B}+(c+\epsilon_1)\cD)\bigr)
\otimes
\pi^*\lambda_{\Cho}^{\otimes n\epsilon_2}
\ \simeq\
\cO_{\cX}\!\left(
n(K_{\cX/B}+(c+\epsilon_1)\cD)
+\tfrac{n\epsilon_2}{N}\pi^*S
\right),
\]
and hence
\[
\textstyle
g_*\!\left(
\pi_*\cO_{\cX}\bigl(n(K_{\cX/B}+(c+\epsilon_1)\cD)\bigr)
\otimes
\lambda_{\Cho}^{\otimes n\epsilon_2}
\right)
\ \simeq\
f_*\cO_{\cX}\!\left(
n(K_{\cX/B}+(c+\epsilon_1)\cD)
+\tfrac{n\epsilon_2}{N}\pi^*S
\right).
\]
Since $\bigl(X_\eta,\,(c+\epsilon_1)D_\eta+\pi_\eta^*\tfrac{\epsilon_2}{N}S_\eta\bigr)$ is KSBA-stable, \cite[Theorem~1.11]{fuj18_semipos} implies that the vector bundle on the right-hand side is nef. The same argument shows that $g_*\bigl(\omega_{\cC/B}^{\otimes n}\otimes \lambda_{\Cho}^{\otimes n^2}\bigr)$ is also nef.
\end{proof}

\smallskip

\begin{proof}[Proof of \Cref{thm:ampleness}]

By \cite[Theorem 16.6]{LMB00}, there exists a proper scheme $B$ admitting a finite surjective morphism to $\overline{\MM}_{\Phi}$. Composing with the natural map $\overline{\MM}_{\Phi}\to \ove{\mtf{M}}_{\Phi}$, we see that $B\to \ove{\mtf{M}}_{\Phi}$ is finite and surjective as well. Consider the pullback to $B$ of the universal family over $\ove{\MM}_{\Phi}$, which we denote as in \Cref{eq:Stable-LCY-fib}, keeping all the notation introduced there. 

We apply \Cref{thm:ampleness-lemma}, which is a mild generalization of Koll\'ar's ampleness lemma (cf. \cites{kol90,KP17}). To this end, we will construct two vector bundles $\cW$ and $\cQ$, each of which is a direct sum of three vector bundles. The first two summands keep track of the pair $X$ and the closed embedding $D\hookrightarrow X$, while the third keeps track of the map $X\to C$ by representing it as the closed embedding of $X$ into $X\times C$ given by its graph.

Let $N$ and $\epsilon_2$ be the two numbers fixed previously. Choose a rational number $\epsilon_1\in(0,\epsilon_2^2)$ and an integer $M=M(\epsilon_1)>0$ such that $M\epsilon_1\in \bZ$ and $M\epsilon_1\cD^{\univ}$ is Cartier. Take a sufficiently divisible $n_0>0$ such that
\begin{enumerate}
    \item $L(\epsilon_1,\epsilon_2,n_0)$ is relatively very ample over $B$;
    \item $R^i\phi_{*}L(\epsilon_1,\epsilon_2,n_0)=0$ for all $i>0$; and
    \item the ideal sheaf of $M\epsilon_1 D_b$ is generated by sections of $L(\epsilon_1,\epsilon_2,n_0)|_{X_b}$ for every $b\in B$; equivalently, the line bundle $L(\epsilon'_1,\epsilon_2,n_0)|_{X_b}$ is globally generated, where $\epsilon_1'\coloneqq (1-\frac{M}{n_0})\epsilon_1\in (0,\epsilon_1)$.
\end{enumerate}
Then, by cohomology and base change,
$\phi_{*}L(\epsilon_1,\epsilon_2,n_0)$ is a locally free sheaf on $B$,
and the relative complete linear system defines a closed immersion
\[
\iota\colon
X\ \hookrightarrow\ \bP_B\ \coloneqq \ 
\bP_B\!\bigl(\phi_*L(\epsilon_1,\epsilon_2,n_0)\bigr),
\]
such that
\[
\iota^*\cO_{\bP_B}(1)
=
L(\epsilon_1,\epsilon_2,n_0).
\] Twisting the exact sequence
\begin{equation}\label{eq:exact-sequence}
   0 \ \longrightarrow \  \cO_{X}(-M\epsilon_1D)
\ \longrightarrow \   \cO_{X}
\ \longrightarrow \   \cO_{M\epsilon_1D}
\ \longrightarrow \   0 
\end{equation}
by $L(\epsilon_1,\epsilon_2,n_0)$, we obtain
\[\textstyle
0
\ \longrightarrow \   L(\epsilon'_1,\epsilon_2,n_0)
\ \longrightarrow \   L(\epsilon_1,\epsilon_2,n_0)
\ \longrightarrow \   L(\epsilon_1,\epsilon_2,n_0)\big|_{M\epsilon_1D}
\ \longrightarrow \   0.
\] Pick a sufficiently divisible integer $n>0$ such that \(L(\epsilon'_1,\epsilon_2,nn_0)\) has no higher direct image and yields a closed embedding \[\textstyle X\ \hookrightarrow \ \bP_B\big(\phi_*L(\epsilon_1',\epsilon_2,nn_0)\big).\] Twisting the exact sequence \Cref{eq:exact-sequence} (with $M$ replaced by $Mn$) by $L(\epsilon_1,\epsilon_2,nn_0)$, we obtain the exact sequence \[\textstyle
0
\ \longrightarrow \   L(\epsilon_1',\epsilon_2,nn_0)
\ \longrightarrow \   L(\epsilon_1,\epsilon_2,nn_0)
\ \longrightarrow \   L(\epsilon_1,\epsilon_2,nn_0)\big|_{Mn\epsilon_1D}
\ \longrightarrow \   0.
\] Together with the fact that $L(\epsilon_1,\epsilon_2,nn_0)|_{Mn\epsilon_1D}$ has vanishing higher direct images over $B$, we obtain the following exact sequence of locally free sheaves on $B$:
\[\textstyle 
0 \ \longrightarrow \ \phi_*L(\epsilon'_1,\epsilon_2,nn_0\big)\ \longrightarrow \  \phi_*L(\epsilon_1,\epsilon_2,nn_0)\ \longrightarrow \ \phi_*\big(L(\epsilon_1,\epsilon_2,nn_0)|_{Mn\epsilon_1D}\big)\ \longrightarrow \  0.
\]
In particular, we have an isomorphism 
\begin{equation}\label{eq:isomorphism}
    \textstyle \det\phi_*L(\epsilon_1,\epsilon_2,nn_0)\ \simeq \
\det \phi_*L(\epsilon_1',\epsilon_2,nn_0)\ \bigotimes \ \det \phi_*\big(L(\epsilon_1,\epsilon_2,nn_0)|_{Mn\epsilon_1D}\big).
\end{equation}
For every $d>0$, the multiplication map induces a surjection 
\begin{equation}\label{eq:surjectivity1}
    \textstyle 
\Sym^d\!
\phi_*L(\epsilon_1,\epsilon_2,nn_0) \ \twoheadrightarrow\ 
\phi_*L(\epsilon_1,\epsilon_2,dnn_0),
\end{equation}
provided that $n$ is sufficiently divisible. It then follows that the composition
\begin{equation}\label{eq:surjectivity2}
\Sym^d\!\phi_*L(\epsilon_1,\epsilon_2,nn_0)\ 
\longrightarrow\ 
\phi_* L(\epsilon_1,\epsilon_2,dnn_0)\ 
\longrightarrow\ 
\phi_*\!\big(
L(\epsilon_1,\epsilon_2,dnn_0)\big|_{Mdn\epsilon_1D}
\big)\end{equation}
is surjective. We will use the domains and codomains of \Cref{eq:surjectivity1} and \Cref{eq:surjectivity2} as two of the direct summands of $\cW$ and $\cQ$, respectively.

We now construct a third summand of $\cW$ and $\cQ$, which will record the maps $X_b\to C_b$.
Consider the closed subscheme $i\colon X\to X\times_B C$, with ideal sheaf $I$, given by the graph of the map $X\rightarrow C$. Let $\pi_1\colon X\times_B C\to X$ and $\pi_2\colon X\times_B C\to C$ be the two projections, and let 
\[
G(\epsilon_1,\epsilon_2,n) \ \coloneqq \ \pi_1^*L(\epsilon_1,\epsilon_2,n)\otimes \pi_2^*(\omega_{C/B}^{\otimes n}\otimes \Lambda_{\Chow}^{\otimes n^2}).
\]
Then we have the exact sequence 
\[
0\ \longrightarrow \  I\otimes G(\epsilon_1,\epsilon_2,n)\ \longrightarrow \  G(\epsilon_1,\epsilon_2,n)\ \longrightarrow \   L(\epsilon_1,\epsilon_2, n)\otimes \psi^*(\omega_{C/B}^{\otimes n}\otimes \Lambda_{\Chow}^{\otimes n^2})\ \longrightarrow \   0.
\]
By the choice of $n$, $\omega_{C/B}\otimes \Lambda_{\Chow}^{\otimes n}$ is relatively ample over $B$. Up to choosing $n$ more divisible, the line bundle $G(\epsilon_1,\epsilon_2,n)$ is relatively very ample over $B$, and both $G(\epsilon_1,\epsilon_2,n)$ and $I\otimes G(\epsilon_1,\epsilon_2,n)$ have vanishing higher direct images to $B$. Let $\mu\colon X\times_B C\to B$ be the composition $\phi\circ \pi_1$, and consider the exact sequence
\[
0\ \longrightarrow\  \mu_*\big(I\otimes G(\epsilon_1,\epsilon_2,n)\big)\ \longrightarrow\ \mu_*G(\epsilon_1,\epsilon_2,n)\ \longrightarrow\ \mu_*\big(L(\epsilon_1,\epsilon_2, n)\otimes \psi^*(\omega_{C/B}^{\otimes n}\otimes \Lambda_{\Chow}^{\otimes n^2})\big)\ \longrightarrow\ 0.
\]
\begin{lem}
   There is an isomorphism \[\mu_*G(\epsilon_1,\epsilon_2,n)\ \simeq\  \phi_*L(\epsilon_1,\epsilon_2,n)\otimes \tau_*\big(\omega_{C/B}^{\otimes n}\otimes\Lambda_{\Chow}^{\otimes n^2}\big).\] 
\end{lem}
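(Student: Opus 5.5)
This is a Künneth-type statement for the fiber product $X\times_B C$ over $B$, and I will prove it by flat base change followed by the projection formula. Write $A\coloneqq L(\epsilon_1,\epsilon_2,n)$ on $X$ and $P\coloneqq \omega_{C/B}^{\otimes n}\otimes\Lambda_{\Chow}^{\otimes n^2}$ on $C$, so that $G(\epsilon_1,\epsilon_2,n)=\pi_1^*A\otimes\pi_2^*P$. I will factor $\mu=\phi\circ\pi_1$ and compute $\pi_{1*}$ first. The square
\[
\begin{tikzcd}[ampersand replacement=\&]
X\times_B C \arrow[r,"\pi_2"] \arrow[d,"\pi_1"'] \& C \arrow[d,"\tau"] \\
X \arrow[r,"\phi"'] \& B
\end{tikzcd}
\]
is Cartesian, and $\tau$ is flat and proper because $C\to B$ is a flat family of nodal curves. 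Flat base change therefore gives $\pi_{1*}\pi_2^*P\simeq \phi^*\tau_*P$, and likewise $R^i\pi_{1*}\pi_2^*P\simeq\phi^*R^i\tau_*P$. The projection formula along $\pi_1$ then gives
\[
\pi_{1*}G(\epsilon_1,\epsilon_2,n)\ \simeq\ A\otimes \pi_{1*}\pi_2^*P\ \simeq\ A\otimes\phi^*\tau_*P .
\]

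Next I push forward along $\phi$. For this, $\tau_*P$ must be locally free. By the choice of $n$, $\omega_{C/B}\otimes\Lambda_{\Chow}^{\otimes n}$ is relatively ample (via \Cref{lemma_ampleness_of_omega_C_plus_lambda} and condition (ii)), so $P$ is a large power of a relatively ample line bundle on a flat family of curves. After taking $n$ more divisible, $R^1\tau_*P=0$ on all fibers, and cohomology and base change makes $\tau_*P$ locally free with formation commuting with base change. The projection formula along $\phi$ then yields
\[
\mu_*G\ \simeq\ \phi_*\big(A\otimes\phi^*\tau_*P\big)\ \simeq\ \phi_*A\otimes\tau_*P,
\]
which is the claimed isomorphism.

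I expect the only genuine obstacle to be the bookkeeping that justifies the vanishing statements, and I would check these first. One needs $R^1\tau_*P=0$, which follows from uniform relative ampleness on the bounded family once $n$ is divisible enough. One also needs $\phi_*A$ to be locally free with vanishing higher direct images, which was arranged when $n_0$ and the multiple $n$ were chosen. Because $B$ need not be reduced or smooth, I would invoke flat base change only for the flat morphism $\tau$, and never for $\phi$, so flatness of $X\to B$ is not needed beyond what is given. As a consistency check, the same computation gives $R^i\mu_*G=0$ for $i>0$ via the Leray spectral sequence, which matches the vanishing assumed for $G$ in the preceding paragraph of the paper.
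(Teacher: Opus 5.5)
Your route is the paper's: base change along the Cartesian square to get $\pi_{1*}\pi_2^*P\simeq\phi^*\tau_*P$, then the projection formula. You are more careful than the paper in checking that $\tau_*P$ is locally free, which the final projection formula along $\phi$ needs.

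One correction. The flat base change theorem needs flatness of the map you base change \emph{along}, here $\phi$, not of the map $\tau$ being pushed forward. So your closing claim that flatness of $X\to B$ is never needed misattributes the first step. With only $\tau$ flat, the base-change map $\phi^*\tau_*P\to\pi_{1*}\pi_2^*P$ can fail to be an isomorphism when $h^0$ of the fibers jumps.

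The repair is already in your second paragraph. Since $P$ is flat over $B$ (because $\tau$ is) and $H^1(C_b,P_b)=0$ for every $b\in B$, cohomology and base change gives two things: $\tau_*P$ is locally free, and its formation commutes with arbitrary base change. That yields $\phi^*\tau_*P\simeq\pi_{1*}\pi_2^*P$ with no hypothesis on $\phi$.

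Alternatively, $\phi$ is flat in this setting, being the relative coarse space of the flat tame family $\cX\to B$. That is what makes the paper's direct appeal to flat base change legitimate.
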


\begin{proof}
  Consider the following commutative diagram, in which the square is Cartesian:
\[\begin{tikzcd}[ampersand replacement=\&]
	{X\times_BC} \&\& X \\
	C \&\& B
	\arrow["{\pi_1}", from=1-1, to=1-3]
	\arrow["{\pi_2}"', from=1-1, to=2-1]
	\arrow["\psi"{description}, from=1-3, to=2-1]
	\arrow["\phi"', from=1-3, to=2-3]
	\arrow["\tau", from=2-1, to=2-3]
\end{tikzcd}.\]
By flat base change, we have
\[
\phi^*\tau_*(\omega_{C/B}^{\otimes n}\otimes\Lambda_{\Chow}^{\otimes n^2})\ \simeq \ (\pi_1)_*\pi_2^*(\omega_{C/B}^{\otimes n}\otimes\Lambda_{\Chow}^{\otimes n^2}).
\] 
In particular, since $\phi$ and $\tau$ have connected fibers, we have
\[
\mu_*G(\epsilon_1,\epsilon_2,n)\ \simeq \ \phi_*L(\epsilon_1,\epsilon_2,n)\otimes \tau_*\big(\omega_{C/B}^{\otimes n}\otimes\Lambda_{\Chow}^{\otimes n^2}\big).
\]  
\end{proof}
By \Cref{lemma_nefness_of_vb}, after replacing $n$ by a sufficiently divisible multiple, both $\phi_*L(\epsilon_1,\epsilon_2,n)$ and $\tau_*(\omega_{C/B}^{\otimes n}\otimes\Lambda_{\Chow}^{\otimes n^2})$ are nef, and hence so is their tensor and symmetric products (cf. \cite[Theorem 6.2.12]{Lazarsfeld2}). In particular,
\[
\Sym ^{d_1}\phi_*L(\epsilon_1,\epsilon_2,n)\otimes \Sym^{d_2}\tau_*\big(\omega_{C/B}^{\otimes n}\otimes\Lambda_{\Chow}^{\otimes n^2}\big)
\]
is nef for any $d_1,d_2\in\bN$. Consider the surjection
\begin{equation}\label{eq:surjectivity4}
\Sym^{d_1}\phi_*L(\epsilon_1,\epsilon_2,n)\otimes
\Sym^{d_2}\tau_*\big(\omega_{C/B}^{\otimes n}\otimes\Lambda_{\Chow}^{\otimes n^2}\big)
\ \twoheadrightarrow\ 
\phi_*\!\big(
L(\epsilon_1,\epsilon_2,d_1n)\otimes\psi^*(\omega_{C/B}^{\otimes d_2n}\otimes \Lambda_{\Chow}^{\otimes d_2n^2})
\big)
\end{equation}
obtained by composing the natural multiplication maps. Set $\cW$ to be
\[\textstyle
(\Sym^d \phi_*L(\epsilon_1,\epsilon_2,nn_0))^{\oplus 2}\ \bigoplus\ \Big(
\Sym^{d_1}\phi_*L(\epsilon_1,\epsilon_2,n)\ \bigotimes\ 
\Sym^{d_2}\tau_*\big(\omega_{C/B}^{\otimes n}\otimes\Lambda_{\Chow}^{\otimes n^2}\big)\Big),
\]
and $\cQ$ to be
\[\textstyle
\phi_*L(\epsilon_1,\epsilon_2,dnn_0) \ 
\bigoplus\ 
\phi_*\!\big(
L(\epsilon_1,\epsilon_2,dnn_0)\big|_{Mdn\epsilon_1\cD}
\big) \ \bigoplus\ \Big(\phi_*\!\big(
L(\epsilon_1,\epsilon_2,d_1n)\bigotimes\psi^*(\omega_{C/B}^{\otimes d_2n}\otimes \Lambda_{\Chow}^{\otimes d_2n^2})
\big)\Big).
\]
Let
\[
r_1\coloneqq\rk \phi_*L(\epsilon_1,\epsilon_2,nn_0), \ \ \ 
r_2\coloneqq\rk \phi_*L(\epsilon_1,\epsilon_2,n),\ \ \ 
r_3\coloneqq\rk \tau_*\big(\omega^{\otimes n}_{C/B}\otimes\Lambda_{\Chow}^{\otimes n^2}\big), \ \ \ r_{\cQ}\coloneqq\rk(\cQ).
\]
Then
\[
r_{\cW}\ 
\coloneqq \ \rk\cW \ =\  
2\binom{r_1+d-1}{d}
+
\binom{r_2+d_1-1}{d_1}
\binom{r_3+d_2-1}{d_2}
.
\]
We denote by $\cW_1$, $\cW_2$, and $\cW_3$ (resp. $\cQ_1$, $\cQ_2$, and $\cQ_3$) the three direct summands of $\cW$ (resp. $\cQ$), with corresponding maps $\Lambda_i\colon \cW_i\to \cQ_i$ obtained from \Cref{eq:surjectivity1}, \Cref{eq:surjectivity2}, and \Cref{eq:surjectivity4}. We have a surjective morphism
\[
\Lambda\ \colon \ \cW \ \twoheadrightarrow \ \cQ.
\] Fix a point $b\in B$. The kernel of $(\Lambda_1)_b$ consists of degree-$d$
homogeneous polynomials vanishing along the subvariety
\[
X_b
\ \subseteq\ 
\bP\!  H ^0\bigl(
X_b,
L(\epsilon_1,\epsilon_2,nn_0)|_{X_b}
\bigr),
\]
while the kernel of $(\Lambda_2)_b$ consists of degree-$d$ polynomials vanishing
along $Mdn\epsilon_1D_b$; by the choice of $n_0$, these sections generate the ideal sheaf of
$Mdn\epsilon_1D_b$. Similarly, the kernel of $(\Lambda_3)_b$ consists of bi-homogeneous polynomials of bidegree $(d_1,d_2)$ vanishing along the subvariety $X_b$ in
\[
X_b\ \hookrightarrow \ X_b\times C_b
\ \subseteq \ \bP  H ^0\big(X_b,L(\epsilon_1,\epsilon_2,n)\big)\times \bP  H ^0\big(C_b,\omega_{C_b}^{\otimes n}\otimes \Lambda_{\Cho}^{\otimes n^2}\big).
\] By the construction of $\cW$, the structure group of $\cW$ admits a reduction to
\[
G\ \coloneqq \ \operatorname{Diag}\big(\Sym^d\GL(r_1)\big)\ \times \ \big(\Sym^{d_1}\GL(r_2)\times \Sym^{d_2}\GL(r_3)\big),
\]
where the first factor $\operatorname{Diag}(\Sym^d\GL(r_1))$ is the structure group of $\cW_1\oplus\cW_2$. The quotient map $\Lambda$ therefore determines a set-theoretic map
\[
\Psi\colon
|B|
\longrightarrow
\bigl|\Gr(r_{\cW},r_{\cQ})/G\bigr|,
\]
where a point of $\bigl|\Gr(r_{\cW},r_{\cQ})/G\bigr|$ represents the $G$-orbit of a quotient $\mathbb{C}^{r_{\cW}}\twoheadrightarrow \mathbb{C}^{r_{\cQ}}$. Here, the group $G$ acts via matrices of the form
\[
\begin{pmatrix}
\operatorname{Sym}^d A_1 & 0 & 0 \\
0 & \operatorname{Sym}^d A_1 & 0 \\
0 & 0 & \operatorname{Sym}^{d_1} A_2 \otimes \operatorname{Sym}^{d_2} A_3
\end{pmatrix},
\]
where $A_i$ has size $r_i\times r_i$ for $i=1,2,3$. In particular, for any $b\in |B|$, the point $\Psi(b)$ records:
\begin{enumerate}
    \item the homogeneous ideal of $X_b$, up to a change of coordinates, in
    \[\bP^{r_1-1}=\bP
 H ^0\bigl(
X_b,
L(\epsilon_1,\epsilon_2,nn_0)|_{X_b}
\bigr);\]
    \item the ideal sheaf of the Cartier divisor $Mdn\epsilon_1D_b$ in $X_b$;
    \item the bi-homogeneous ideal of the graph of $X_b\rightarrow C_b$, up to a change of coordinates, in
    \[\bP^{r_2-1}\times \bP^{r_3-1} = \bP  H ^0(X_b,L(\epsilon_1,\epsilon_2,n)|_{X_b})\times \bP  H ^0(C_b,\omega_{C_b}^{\otimes n}\otimes \Lambda_{\Cho}^{\otimes n^2}).\]
\end{enumerate}
Observe that the projection $\bP^{r_2-1}\times \bP^{r_3-1}\to \bP^{r_3-1}$ is equivariant with respect to the action of $\GL(r_2)\times \GL(r_3)$ on the domain, the action of $\GL(r_3)$ on the codomain, and the group homomorphism 
$\GL(r_2)\times \GL(r_3)\to \GL(r_3)$ given by the second projection. In particular, if two closed subschemes $X_b^{(1)}, X_b^{(2)}\subseteq \bP^{r_2-1}\times \bP^{r_3-1}$ lie in the same $\GL(r_2)\times \GL(r_3)$-orbit, their images in $\bP^{r_3-1}$ are projectively isomorphic.

Since $D_b$ is a $\bQ$-Cartier integral divisor, it is uniquely determined by the thickening
$Mdn\epsilon_1D_b$; consequently, for $d\gg 0$, $\Psi(b)$ determines the pair $(X_b,D_b)$ up to automorphisms. Similarly, for $d_1,d_2\gg 0$, the third component recovers the embedded curve $X_b\hookrightarrow \bP^{r_2-1}\times \bP^{r_3-1}$, together with its projection to the second factor, $X_b\to \bP^{r_3-1}$; the Stein factorization of this projection is isomorphic to the map $f_b\colon X_b\to C_b$.

We claim that $\Psi$ has finite fibers. Indeed, suppose that two points of $B$ have the same image under $\Psi$. Then the corresponding coarse space pairs $(X,D)$ are isomorphic, and the maps $X\to C$ are isomorphic as well. By \Cref{lem:uniquely-determined}, there are only finitely many log Calabi--Yau fibrations representing the point $b$; since the map $B\to \overline{\MM}_{\Phi}$ is finite by assumption, it follows that $\Psi$ has finite fibers.

By \Cref{lemma_nefness_of_vb}, the vector bundle $\cW$ is nef. Thus all the assumptions of \Cref{thm:ampleness-lemma} are satisfied, and hence
\begin{equation}\label{eq:det-Q-ample}
\begin{split}
\det\cQ\ =\ &\det \phi_*L(\epsilon_1,\epsilon_2,dnn_0)\ \otimes\ \det \phi_*L(\epsilon_1,\epsilon_2,dnn_0)\big|_{Mdn\epsilon_1D}\\
&\otimes\ \det\Big(\phi_*L(\epsilon_1,\epsilon_2,d_1n)\otimes\psi^*\big(\omega_{C/B}^{\otimes d_2n}\otimes \Lambda_{\Chow}^{\otimes d_2n^2}\big)\Big)
\end{split}
\end{equation}
is ample. Since $\det \phi_*L(\epsilon_1',\epsilon_2,dnn_0)$ is nef, it follows that $\det \cQ \otimes \det \phi_*L(\epsilon_1',\epsilon_2,dnn_0)$ is also ample. By \Cref{eq:isomorphism}, we conclude that
\begin{equation}\label{eq:line-bundle-ample}
\big(\det\phi_*L(\epsilon_1,\epsilon_2,dnn_0)\big)^{\otimes 2}\ \otimes\ \det\Big(\phi_*L(\epsilon_1,\epsilon_2,d_1n)\otimes\psi^*\big(\omega_{C/B}^{\otimes d_2n}\otimes \Lambda_{\Chow}^{\otimes d_2n^2}\big)\Big)
\end{equation}
is ample on $B$.

By cohomology and base change, the line bundle in \Cref{eq:line-bundle-ample} is the pullback of a line bundle on $\ove{\MM}_{\Phi}$ that descends to $\ove{\mtf{M}}_{\Phi}$. As $B\rightarrow \ove{\mtf{M}}_{\Phi}$ is finite and surjective, $\ove{\mtf{M}}_{\Phi}$ is projective.
\end{proof}

\appendix
\section[tocentry={Ampleness lemma}]{Ampleness lemma}\label{appendix:Ampleness Lemma}

In this appendix, we slightly generalize the ampleness lemmas of \cite[Lemma 3.9]{kol90} and \cite[Theorem 5.1]{KP17}, for use in \Cref{sec:projectivity}. More precisely, we introduce an additional argument to remove certain technical assumptions present in \emph{loc.\ cit.}, so that the result applies to a considerably wider range of situations.
\begin{theorem}\label{thm:ampleness-lemma}
    Let $X$ be a proper algebraic space, and for each $i=1,...,r$ let $\cW_i$ be a nef vector bundle of rank $w_{i}$ on $X$ such that each $\cW_i$ has reductive structure group $G_i$. Let $\cQ_i$ be vector bundles of rank $q_i$ on $X$ admitting surjective morphisms $\cW_i\twoheadrightarrow \cQ_i$, and let
    \[u\ :\ \big|X\big| \ \longrightarrow \ \bigtimes_{i=1}^r \big|\Gr(w_i,q_i)/G_i\big|\]
    be the induced classifying map on sets of points. If $u$ has finite fibers, then $\otimes_{i=1}^r\det \cQ_i$ is ample.
\end{theorem}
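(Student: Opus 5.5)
The plan is to reduce the statement to a single quotient $\cW\twoheadrightarrow\cQ$ with one reductive structure group, and then to run the argument of \cite[Lemma 3.9]{kol90} and \cite[Theorem 5.1]{KP17}, changing only the step where their extra hypothesis is used.

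\emph{Reduction to $r=1$.} Set $\cW\coloneqq\bigoplus_i\cW_i$, $\cQ\coloneqq\bigoplus_i\cQ_i$ and $G\coloneqq\prod_iG_i$, acting block-diagonally on $\bC^{w}$ with $w=\sum w_i$ and $q=\sum q_i$.
\begin{itemize}
\item $G$ is reductive, $\cW$ is nef as a direct sum of nef bundles, and $\det\cQ=\bigotimes_i\det\cQ_i$.
\item Taking block-diagonal direct sums of quotients defines a $G$-equivariant closed embedding $j\colon\prod_i\Gr(w_i,q_i)\hookrightarrow\Gr(w,q)$. The Plücker bundle of $\Gr(w,q)$ restricts to the exterior tensor product of the Plücker bundles of the factors.
\item If two block-diagonal quotients lie in one $G$-orbit, then the corresponding blocks lie in the same $G_i$-orbits, because $G$ preserves the block decomposition. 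So $j$ induces an injection $\bigtimes_i|\Gr(w_i,q_i)/G_i|\hookrightarrow|\Gr(w,q)/G|$.
\end{itemize}
Hence the classifying map of $\cW\twoheadrightarrow\cQ$ still has finite fibers, and it suffices to treat $r=1$.

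\emph{The case $r=1$.} Since $\cW$ is nef and $\cQ$ is a quotient of it, $\cQ$ and hence $\det\cQ$ are nef. By Nakai--Moishezon (for algebraic spaces, after passing to a finite cover by a scheme if needed), it suffices to show $(\det\cQ)^{\dim Z}\cdot Z>0$ for every integral closed subspace $Z\subseteq X$. Restricting to the normalization of $Z$ keeps all hypotheses, so I may assume $X$ is normal and irreducible and must show $\det\cQ$ is big.

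To do this, let $Y\to X$ be the principal $G$-bundle of frames of $\cW$ compatible with the $G$-structure, and $\tilde u\colon Y\to\Gr(w,q)$ the $G$-equivariant classifying morphism; the Plücker bundle pulls back to the pullback of $\det\cQ$. Following Koll\'ar, $G$-invariant sections of powers of the Plücker bundle give sections of powers of $\det\cQ$ on $X$. Where $\tilde u(Y)$ meets the GIT-stable locus, the finiteness of the fibers of $u$ makes the induced rational map from $X$ to the GIT quotient generically finite. Nefness of $\cW$ then lets one compare with $\Sym$ of $\cW$ and conclude bigness as in \emph{loc.\ cit.}

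\emph{Main obstacle.} The hypothesis I am dropping is exactly the one guaranteeing that $\tilde u(Y)$ meets the stable (or properly semistable with closed orbits) locus; otherwise there are no invariant sections and the quotient map is not defined. To get around this, I would first try Koll\'ar's stabilization trick. Replace $\cW\twoheadrightarrow\cQ$ by $\cW^{\oplus m}\twoheadrightarrow\cQ^{\oplus m}$ with diagonal $G$-action and take $m\gg0$. This multiplies $\det\cQ$ by $m$ and keeps the classifying map finite-to-one. For large $m$, a generic diagonal point should be $G$-stable whenever its stabilizer modulo the kernel of the action is finite, which can be checked with the Hilbert--Mumford criterion coordinate by coordinate.

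If a positive-dimensional stabilizer survives, I would instead pass to the closed subgroup fixing the relevant flag, as in \cite{KP17}, and use reductivity together with Matsushima to stay within the reductive setting. Once a generic point of $\tilde u(Y)$ is stable, the argument above gives that $\det\cQ$ is big on $X$. Combined with nefness, Nakai--Moishezon then yields ampleness of $\bigotimes_i\det\cQ_i$.
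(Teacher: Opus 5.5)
\paragraph{The gap: the GIT route fails.} Your reductions to $r=1$ and to bigness on a normal $X$ are fine. The core of the case $r=1$, however, is a GIT argument, and that is not how the lemma works; it cannot be made to work in this generality.

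With the natural linearization, any $G$ containing the scalars acts on $H^0(\Gr(w,q),\cO(k))$ through a nontrivial character of the scalars. The groups used in \Cref{sec:projectivity} are of this kind. So there are no nonzero $G$-invariant sections at all. Twisting by a character does not rescue this: you would still need a general point of $\tilde u(Y)$ to be GIT-stable for the semisimple part of $G$. That is unknown and false in general; in the application it amounts to asymptotic Hilbert/Chow stability of the fibers.

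Note also that, as written, nefness of $\cW$ plays no real role. If invariant sections of $\cO_{\Gr}(k)$ gave a generically finite map from $X$, then $\det\cQ$ would be big outright. But the lemma fails without nefness: take $G=\mathbb{G}_m$ acting by scalars and $\cW=L^{\oplus 2}$ on a curve with $\deg L\ll 0$.

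Your proposed repair does not help either. Replacing the quotient by $\cW^{\oplus m}\twoheadrightarrow\cQ^{\oplus m}$ gives a diagonal point with the same stabilizer, and its Hilbert--Mumford weights are just multiplied by $m$, so its (semi)stability is unchanged. Moreover, finite stabilizers do not imply stability: a binary quintic with a triple root and two further simple roots is unstable yet has finite stabilizer.

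\paragraph{What the argument actually does.} You have also misidentified the hypothesis being dropped. Neither Koll\'ar's finite-stabilizer assumption nor the normality assumption of \cite[Theorem 5.1]{KP17} is a GIT-stability condition, and no quotient is ever formed.

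The argument of \cite{kol90,KP17} runs as follows:
\begin{enumerate}
\item Take the bundle $\pi\colon\mathbf{P}\to X$ whose fiber is the closure $\overline{G}\subseteq\bP(\Mat_{w\times w})$.
\item Resolve the rational map $\mathbf{P}\dashrightarrow\Gr(w,q)$ by $\sigma\colon\widetilde{\mathbf{P}}\to\mathbf{P}$.
\item Use finiteness of $u$, through the image $T\subseteq X\times\Gr(w,q)$, to get bigness of the pulled-back Pl\"ucker bundle.
\item Push a section down to obtain a nonzero map $(\pi_*\cO_{\mathbf{P}}(mq))^*\otimes\cO_X(H)\to(\det\cQ)^{\otimes m}$. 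Its source is nef because it is built from symmetric powers of $\cW$; this is where nefness enters.
\end{enumerate}
Normality of $\overline{G}$ is used only to get $\sigma_*\cO_{\widetilde{\mathbf{P}}}=\cO_{\mathbf{P}}$, so that the section descends.

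The paper passes to the normalization $\nu\colon\mathbf{P}^\nu\to\mathbf{P}$, where the analogous equality is automatic. The new input is then the nefness of $(\pi_*\nu_*\cO_{\mathbf{P}^\nu}(mq))^*$. A surjection $\bigoplus_i\cO_{\mathbf{P}}(mq-d)\otimes\pi^*A^{*}\twoheadrightarrow\nu_*\cO_{\mathbf{P}^\nu}(mq)$ realizes this dual as a $G$-invariant subbundle of $\bigoplus_i\Sym^{mq-d}(\cW^{\oplus w})\otimes A$. That bundle is nef, so the subbundle is nef by \cite[Lemma 3.6]{kol90}.

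The core step has to be this argument rather than GIT.
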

Compared with \cite[Lemma 3.13]{kol90}, we remove the finiteness assumption in \cite[Definition 3.8(ii)]{kol90}. Compared with \cite[Theorem 5.1]{KP17}, we remove the technical assumption that the closure of the image of the structure group $G_i\subseteq \GL(w_i)$ in the projectivization $\bP(\Mat_{w_i\times w_i})$ is normal.

\begin{proof}
    The proof follows the same strategy as in \cite{kol90,KP17}, with several necessary modifications. We therefore only summarize the main ideas and provide complete details for the parts that differ.
   \begin{enumerate}
      \item 
     We proceed as in \cite[3.1.2]{kol90}: one may assume that $X$ is normal and projective, and it suffices to show that $\bigotimes_{i=1}^r \det \cQ_i$ is big. This comes at the cost of weakening the finiteness assumption on $u$: instead of requiring $u$ to have finite fibers over all of $X$, it suffices to assume that the restriction of $u$ to a dense open subset of $X$ has finite fibers.
       \item Following \cite[Lemma 5.6]{KP17}, one can reduce to the case of a single vector bundle $\cW=\cW_i$ of rank $w$ with quotient $\cQ=\cQ_i$ of rank $q$.
       \item Following the argument of \cite[Theorem 5.5]{KP17}, which is a modification of \cite[Lemma 3.13]{kol90}, one can produce the following commutative diagram: \[\begin{tikzcd}[ampersand replacement=\&]
	{\widetilde{\bf{P}}^\nu} \&\& {\widetilde{\bf{P}}} \&\&\&\& {\Gr(w,q)} \\
	{\bf{P}^{\nu}} \&\& {\bf{P}} \&\& {\bf{P}^{\circ}} \&\& {T\subseteq X\times \Gr(w,q)} \\
	\&\& X
	\arrow["{\widetilde{\nu}}", from=1-1, to=1-3]
	\arrow["{\widetilde{\sigma}}", from=1-1, to=2-1]
	\arrow["{\widetilde{\kappa}}", from=1-3, to=1-7]
	\arrow["\sigma", from=1-3, to=2-3]
	\arrow["\vartheta"{description}, from=1-3, to=2-7]
	\arrow["{\widetilde{\pi}}"{description, pos=0.3}, shift left, bend right = 40pt, from=1-3, to=3-3]
	\arrow["\nu", from=2-1, to=2-3]
	\arrow["\pi", from=2-3, to=3-3]
	\arrow[hook', from=2-5, to=1-3]
	\arrow["\kappa"{description}, shift left, from=2-5, to=1-7]
	\arrow[hook', from=2-5, to=2-3]
	\arrow[from=2-5, to=2-7]
	\arrow["\tau"', from=2-7, to=1-7]
	\arrow["\phi"{description}, from=2-7, to=3-3]
\end{tikzcd}\] see the diagram before \cite[Claim 5.5.3]{KP17}. Compared with \emph{loc.\ cit.}, we add the arrow $\nu:\bf{P}^\nu\to \bf{P}$, which is the normalization, and a corresponding blowup $\wt{\sigma}:\wt{\bf{P}}^\nu\to \bf{P}^\nu$. Here, in the diagram, $\pi:{\bf{P}}\rightarrow X$ is a fiber bundle whose fiber is the closure of the structure group $G\subseteq \GL(w)$ in the projectivization $\bP(\Mat_{w\times w})$, which \cite[Theorem 5.1]{KP17} assumes to be normal, an assumption we do not make. The normalization $\nu:\bf{P}^\nu\to \bf{P}$ is also a simultaneous normalization of the fibers of $\pi:{\bf{P}}\rightarrow X$.

\item (Main difference.) The normality assumption in \cite[Theorem 5.1]{KP17} is used in the last formula on \cite[p.~976]{KP17} to show that $\sigma_*\cO_{\wt{\bf{P}}}=\cO_{\bf{P}}$. This need not hold in our setting, but it is automatic that $(\sigma_\nu)_*\cO_{\widetilde{\bf{P}^\nu}} = \cO_{\bf{P}^\nu}$.
Thus, the same argument yields a morphism 
\[
\big(\pi_*\nu_*\cO_{\bf{P}^\nu}(mq)\big)^*\otimes \cO_X(H)\ \longrightarrow \  (\det\cQ)^{\otimes m}
\]
analogous to \cite[Equation (5.5.5)]{KP17}, where $\cO_{\bf{P}^\nu}(mq)\coloneqq \nu^*\cO_{\bf{P}}(mq)$. However, it is less straightforward to check that $\big(\pi_*\nu_*\cO_{\bf{P}^\nu}(mq)\big)^*$ is nef. 

As $\nu$ is finite, the sheaf $\nu_*\cO_{\textbf{P}^\nu}$ is a coherent $\cO_{\bf{P}}$-module. Thus one can take a very ample divisor on $X$ and a sufficiently large $d\in \bN$ such that \[\nu_*\cO_{\bf{P}^\nu}(d)\otimes \pi^*A \ \simeq \  (\nu_*\cO_{\bf{P}^\nu})\otimes \cO_{\bf{P}}(d)\otimes \pi^*A\] is globally generated. Then there exists a surjection
\[\alpha\ \colon \
\bigoplus_{i=1}^k \cO_{\bf{P}}(mq-d)\otimes \pi^*A^{*} \ \twoheadrightarrow \ \nu_*\cO_{\bf{P}^\nu}(mq).
\]
By taking $mq$ large enough, one has a surjection
\[\bigoplus_{i=1}^k
\pi_* \big(\cO_{\bf{P}}(mq-d)\otimes \pi^*A^{*}\big) \ \twoheadrightarrow\  \pi_*\nu_*\cO_{\bf{P}^\nu}(mq).
\] Moreover, as $mq$ is sufficiently large, $R^i\pi_*\nu_*\cO_{\bf{P}^\nu}(mq)=0$ for any $i>0$. As $\cO_{\bf{P}^\nu}$ is flat over $X$, $\pi_*\nu_*\cO_{\bf{P}^\nu}(mq)$ is a locally free sheaf by Grauert's theorem. Taking the dual, one obtains an injection of vector bundles
\[\big(\pi_*\nu_*\cO_{\bf{P}^\nu}(mq)\big)^*\ \hookrightarrow \ \bigoplus_{i=1}^k 
\big(\pi_* \cO_{\textbf{P}}(mq-d)\big)^*\otimes A \ = \ \bigoplus_{i=1}^k \Sym^{mq-d}(\cW^{\oplus w})\otimes A.
\]
Endowing $A$ with the trivial $G$-action, $\big(\pi_*\nu_*\cO_{\bf{P}^\nu}(mq)\big)^*$ becomes a $G$-invariant subbundle of $\bigoplus_{i=1}^k \Sym^{mq-d}(\cW^{\oplus w})\otimes A$. This ambient bundle is nef, being a direct sum of tensor products of symmetric powers of nef vector bundles, and a $G$-invariant subbundle of a nef vector bundle is again nef by \cite[Lemma 3.6]{kol90}. Hence $\big(\pi_*\nu_*\cO_{\bf{P}^\nu}(mq)\big)^*$ is nef.
\item The rest of the proof is identical to that in \cite{KP17}, using the fact that nef vector bundles are weakly positive (cf.~\cite[Remark 5.4]{KP17}). Alternatively, one can proceed as in \cite{kol90}: blowing up the image of the morphism $\big(\pi_*\nu_*\cO_{\bf{P}^\nu}(mq)\big)^*\otimes \cO_X(H)\ \longrightarrow \  (\det\cQ)^{\otimes m}$ yields a morphism $s\colon X'\to X$, and $s^*(\det\cQ)^{\otimes m}\cong \cO_{X'}(s^*H + N + F)$ where $N$ is nef and $F$ is effective. The desired statement then follows as in \cite[p.~251]{kol90}.
   \end{enumerate}
\end{proof}

\begin{remark}
\label{remark:kollar-clarifications}
We record two points concerning \cite[Lemma~3.13]{kol90}, which were explained to us
by J\'anos Koll\'ar. Although they may well be known to experts, we think they will be
helpful for the reader.
\begin{enumerate}
  \item In \cite[Definition~3.8(ii)]{kol90}, it is assumed that the stabilizer of the
    kernel of $\mathcal{W}_x \to \mathcal{Q}_x$ is a finite subgroup of $G$. In
    practice, however, $G$ is usually taken to be $\mathrm{GL}(r)$, and the scalar
    matrices always fix the kernel. Since scalar matrices act trivially on the
    Grassmannian, one can projectivize the group and disregard the scalars; see
    \cite[Theorem~5.9.2]{AlperModuli} for a projectivized version of Koll\'ar's ampleness
    lemma.
  \item A key step in the proof of \cite[Lemma~3.13, page~250]{kol90} is to produce a
    nonzero global section of
    \begin{equation}\label{eq_section_I_want}
    (\det \mathcal{Q})^{\otimes m} \otimes H^{-1}\otimes p_*\cO_{\textbf{P}}(mk)
    \end{equation}
    by first constructing a section of
    \begin{equation}\label{eq_section_I_have}
    g^*\left(p^*((\det \mathcal{Q})^{\otimes m} \otimes H^{-1})\otimes \cO_{\textbf{P}}(mk)\right),
    \end{equation}
    where $\textbf{P}' \xrightarrow{g} \textbf{P}\xrightarrow{p} X$ are the natural maps arising in the
    construction, and $H$ and $\cO_{\textbf{P}}(mk)$ are certain line bundles on $X$ and $\textbf{P}$,
    respectively. In other words, the line bundle in \eqref{eq_section_I_have} is of the form
    $g^*\cL\otimes g^*p^*\cG$ for a line bundle $\cL$ on $\textbf{P}$ and a line bundle $\cG$ on $X$,
    and one is interested in producing a section of $\cG\otimes p_*\cL$.
    If
    \[
    g_*\cO_{\textbf{P}'} \neq \cO_{\textbf{P}},
    \]
    for example if $\textbf{P}$ is not normal and $\textbf{P}'\to \textbf{P}$ is its normalization, it
    is not guaranteed that a section of \eqref{eq_section_I_have} induces a section of
    \eqref{eq_section_I_want}.
    Koll\'ar explained to us that one can instead require the section of \eqref{eq_section_I_have}
    to vanish along the conductor ideal of $\textbf{P}' \to \textbf{P}$; this directly ensures that it
    descends to the desired section of \eqref{eq_section_I_want}.
\end{enumerate}
\end{remark}

\bibliography{citation}

\end{document}